\definecolor{blue}{rgb}{0,0,0.9}
\definecolor{red}{rgb}{0.9,0,0}
\definecolor{green}{rgb}{0,0.9,0}
\def\red#1{\textcolor{red}{#1}}
\def\blue#1{\textcolor{blue}{#1}}
\newcommand{\ben}{\begin{enumerate}}
\newcommand{\een}{\end{enumerate}}
\newcommand{\bfr}{\begin{frame}}
\newcommand{\efr}{\end{frame}}
\newcommand{\bit}{\begin{itemize}}
\newcommand{\eit}{\end{itemize}}
\newcommand{\argmin}{\mathop{\rm argmin}}
\newcommand{\R}{\mathbb R}
\newcommand{\cA}{\mathcal A}
\newcommand{\cB}{\mathcal B}
\newcommand{\cC}{\mathcal C}
\newcommand{\cE}{\mathcal E}
\newcommand{\cF}{\mathcal F}
\newcommand{\cG}{\mathcal G}
\newcommand{\cJ}{\mathcal J}
\newcommand{\cN}{\mathcal N}
\newcommand{\cO}{\mathcal O}
\newcommand{\cS}{\mathcal S}
\newcommand{\cT}{\mathcal T}
\newcommand{\cV}{\mathcal V}
\newcommand{\bb}{{\mathbf{b}}}
\newcommand{\bd}{{\mathbf{d}}}
\newcommand{\bx}{{\mathbf{x}}}
\newcommand{\by}{{\mathbf{y}}}
\newcommand{\bz}{{\mathbf{z}}}
\newcommand{\bB}{{\mathbf{B}}}
\newcommand{\bD}{{\mathbf{D}}}
\newcommand{\bE}{{\mathbf{E}}}
\newcommand{\bI}{{\mathbf{I}}}
\newcommand{\bM}{{\mathbf{M}}}
\newcommand{\bT}{{\mathbf{T}}}
\newcommand{\bW}{{\mathbf{W}}}
\def\norm#1{\left\|#1\right\|}
\def\inner#1{\left\langle#1\right\rangle}
\def\kron{\mbox{{\scriptsize$\,\otimes\,$}}}
\def\eps{\epsilon}
\newtheorem{assumption}{Assumption}
\def\dist{{\rm dist}}
\newcommand{\cNout}{\mathcal{N}^{\mathrm{out}}}
\newcommand{\cNin}{\mathcal{N}^{\mathrm{in}}}
\newcommand{\tildecNin}{\widetilde{\mathcal{N}}^{\mathrm{in}}}
\newcommand{\tildecNout}{\widetilde{\mathcal{N}}^{\mathrm{out}}}
\newcommand{\tx}{{\texttt{x}}}
\newcommand{\tm}{{\texttt{m}}}
\newcommand{\bdelta}{{\bm\delta}}
\begin{document}
 
\title{  
A New Decomposition Paradigm for Graph-structured Nonlinear Programs via Message Passing 
\vspace{-.3cm}}


\author{Kuangyu Ding         \and
     Marie Maros  \and   {Gesualdo Scutari} \vspace{-2cm}}


\institute{K. Ding \at Edwardson School of Industrial Engineering, Purdue University, West Lafayette, IN. 
              \email{ding433@purdue.edu}.           
        \and
        M. Maros \at Wm Michael Barnes '64 Department of Industrial and Systems Engineering, Texas A\&M University, College Station, TX. 
              \email{mmaros@tamu.edu}.
              \and
        G. Scutari \at Edwardson School of Industrial Engineering \& Elmore Family School of Electrical and Computer Engineering, Purdue University, West Lafayette, IN. \email{gscutari@purdue.edu}.  \smallskip \newline The work of Ding and Scutari has been supported by the ONR Grant \#  N00014-24-1-2751.       
}

\date{ \vspace{-2.5cm} }

\maketitle

\begin{abstract}
We study finite-sum nonlinear programs with localized variable coupling   encoded by a (hyper)graph. We introduce   a graph-compliant decomposition framework that brings message passing into continuous optimization in a rigorous, implementable, and provable  way.  The (hyper)graph is partitioned into tree clusters (hypertree factor graphs). At each iteration, agents update in parallel by solving local subproblems whose objective splits into  an {\it intra}-cluster term summarized by cost-to-go messages from one min-sum sweep on the cluster tree, and   an {\it inter}-cluster coupling   term handled Jacobi-style using the latest out-of-cluster  variables. 
To reduce computation/communication, the method supports graph-compliant surrogates that replace exact messages/local solves with compact low-dimensional parametrizations; in hypergraphs,
  the same principle enables surrogate hyperedge splitting, to tame heavy hyperedge overlaps while retaining finite-time intra-cluster message updates and efficient computation/communication.  
We establish convergence for (strongly) convex and nonconvex objectives, with topology- and partition-explicit rates that quantify curvature/coupling effects and guide clustering and scalability. To our knowledge, this is the first convergent message-passing method on   loopy graphs.

\keywords{Distributed optimization \and Graph decomposition \and Message passing \and Block Jacobi \and Hypergraph}
\subclass{90C30 \and 90C25 \and 68W15 \and 05C85}
\end{abstract}
\vspace{-.3cm} 

\section{Introduction}\vspace{-.2cm} 
\label{sec:intro}
We study optimization problems with graphical structure, modeling localized interactions among decision variables.   These interactions are encoded by a (hyper)graph. Let $\cG=(\cV,\cE)$ be a hypergraph with node set  $\cV$ of cardinality $|\cV|=m$; nodes will henceforth be referred to as \emph{agents}. The hyperedge set is   $\cE\subset 2^{\cV}$. Agent $i$ controls a vector $x_i\in \mathbb{R}^d$;   we stack
 $ \mathbf{x} = [x_1^\top, \ldots, x_m^\top]^\top \in \mathbb{R}^{md}$.
For any hyperedge \(\omega\in\cE\), denote by \(x_\omega=(x_i)_{i\in\omega}\in\R^{d|\omega|}\)
the subvector indexed by \(\omega\).
Each agent $i$ and hyperedge $\omega\in \mathcal E$ is associated with the smooth functions $\phi_i:\mathbb{R}^d\to \mathbb{R}$ and $\psi_{\omega}:\mathbb{R}^{|\omega|d}$, respectively. 
The  optimization problem reads\vspace{-0.1cm}
\begin{equation}
    \label{prob_hyper}   \min_{\bx\in\R^{md}}\;\Phi(\bx):=\sum_{i=1}^m\phi_{i}(x_i)+\sum_{\omega\in\cE}\psi_{\omega}(x_{\omega}).
   \tag{P} 
\end{equation}
Throughout the paper,  we assume  
\eqref{prob_hyper}   admits a solution.

A common special case is {\it pairwise} coupling, where         \(\cE\subseteq\{\{i,j\}:i\neq j\}\) (every hyperedge has size two). The graphical model reduces to   an undirected graph with pairwise interactions, and   (\ref{prob_hyper}) specializes to \vspace{-0.2cm}\begin{equation}
    \min_{\mathbf{x} \in \mathbb{R}^{md}} \Phi(\mathbf{x}) = \sum_{i = 1}^m \phi_i(x_i) + \sum_{(i,j) \in \mathcal{E}} \psi_{ij}(x_i, x_j), 
    \label{P} 
  \tag{P$^\prime$}  
\vspace{-0.2cm}\end{equation}
where \(\psi_{ij}:\R^{2d}\to \mathbb{R}\) is symmetric in its arguments,  i.e., $\psi_{ij}(x_i, x_j)= \psi_{ji}(x_j, x_i).$ 

While many problems can be cast in the form~\eqref{prob_hyper} (e.g., by using a single factor over all variables), our focus is on \emph{sparse} graphical structure, meaning each coupling term $\psi_{\omega}$ depends only on a small subset of agents’ variables. Such sparsity is central in applications ranging from Markov random fields in statistical physics to graph-based signal processing, communication and power networks, and large-scale decentralized optimization; see Sec.~\ref{sec:app} for motivating examples.

Leveraging graphical sparsity, our goal is to design decentralized algorithms that decompose~\eqref{prob_hyper} into weakly coupled subproblems solved via \textit{localized computation} and \textit{single-hop} neighbor communication. Key desiderata are: \textbf{(i)} communication efficiency (costs scale with local degree, not network size), \textbf{(ii)} scalability to large graphs, and \textbf{(iii)} minimal coordination overhead (no central coordinator). As we detail next, existing approaches generally fall short. 


 \textbf{(i) Graph-agnostic decentralizations.} Naively decentralizing gradient descent or classical block methods (Jacobi/Gauss--Seidel)~\cite{BerTsi89} is communication inefficient and ignores the graphical structure. In gradient descent,  agent \(i\) needs\vspace{-0.1cm}
$$\nabla_{i}\Phi(\bx)
=\nabla\psi_i(x_i)+
\sum_{\omega\in \mathcal{E}:\, i\in \omega}\;
\nabla_{i}\psi_{\omega}\!\big(x_{\omega}\big),\vspace{-0.1cm}$$
which depends on all variables in each factor scope $\omega$  containing $i$; thus agents must collect and broadcast information beyond single-hop neighborhoods, incurring heavy coordination overhead. Likewise, vanilla Jacobi requires dense, iteration-by-iteration neighbor synchronization, while Gauss--Seidel enforces sequential updates and global scheduling. In both cases, per-iteration communication scales with network size rather than local degree.

\textbf{(ii) Consensus-based methods.} Problem~\eqref{prob_hyper}      can be cast as min-sum  program  over networks--i.e., $\min_x \sum_{i=1}^m f_i(x)$--for which a vast literature of gossip/consensus schemes exists, including  decentralized  gradient methods~\cite{AA09,AAP10,KQW16,Amir18}, gradient tracking-based  algorithms~\cite{di2016next,sun2022distributed,nedic2017achieving,scutari2019distributed} and      primal-dual variants~\cite{shi2015extra,yuan2018exact,WQKGW14};  we refer to the tutorial~\cite{Nedic_Olshevsky_Rabbat2018} and the monograph~\cite{sayed2014adaptation} for   further references. 
Despite their differences, these methods share a common \emph{lifting}: each agent maintains a local copy $x_i$ of the \emph{full} decision vector $\mathbf{x}$ and enforces agreement via gossip averaging or dual reformulations. This is ill-suited for \emph{sparsely coupled} problems like~\eqref{prob_hyper}:
\textbf{(a)} \emph{memory/communication inflation}---agents store and exchange $md$-dimensional vectors regardless of  their degree or size of local factor scopes; and
\textbf{(b)} \emph{consensus-limited rates}---convergence is governed by the mixing spectral properties (e.g., spectral gap), which can be slow on poorly connected graphs and scale poorly with $m$ on some topologies~\cite{scaman2017optimal,Xu_Tian_Sun_Scutari_2020,Cao_Chen_Scutari_2025}.
This motivates methods that operate on \emph{single-hop, scope-sized} messages without replicating the entire decision vector.

 \textbf{(iii) Methods for   overlapping neighborhood objectives.} The line of work  \cite{Cai_Keyes_02,Shin_et_al_20,Cannelli2021,Chezhegov_et_al_22,Latafat02112022} develops   decentralized algorithms  tailored to
\emph{neighborhood}-{\it scoped} cost functions--a special instance of~\eqref{prob_hyper}--of the form \vspace{-0.2cm}\begin{equation}\label{eq:neigh_P}  \min_{\mathbf{x} \in \mathbb{R}^{md}} \,\,\Phi(\bx)=\sum_{i=1}^m  \phi_{i}(x_{\mathcal{N}_i}),\vspace{-0.2cm}\tag{P$^{\prime\prime}$}\end{equation} where $x_{\cN_i}$ is the vector containing the variables of agents $i$ and its immediate neighbors. 
The difficulty here is that a local update at agent $i$ (or on $x_{\cN_i}$) requires \emph{current} information on
blocks  {\it controlled by other agents}. The following are existing strategies  to cope with this multi-hop coupling (hyperedge).  


\emph{(a) Overlapping domain decomposition/Schwarz-type patch updates.}These methods form \emph{overlapping} subproblems by enlarging each \emph{block of variables} (patch) by creating \emph{local copies} of the variables in the overlap (one copy per patch), thereby decoupling the local minimizations.  The algorithm then alternates block updates with the exchange of overlap values (or boundary
interfaces)  across neighboring patches to reconciliate overlap-inconsistency; see~\cite{Shin_et_al_20} and Schwarz/domain-decompositions~\cite{Toselli_Widlund_05, Cai_Keyes_02}.  
In graph settings as \eqref{eq:neigh_P}, this yields decentralized (asynchronous)    Jacobi- or Gauss-Seidel-like updates on overlapping neighborhoods. Their convergence is established most notably for strongly convex {\it quadratic} objectives \cite{Shin_et_al_20}--typically under implicit con- tractivity/diagonal-dominance-type conditions  
that are difficult to verify a priori. Moreover, when convergence rates are provided, they are rarely explicit: the contraction factor is usually buried in graph- and overlap-dependent constants (e.g., spectral radii of certain operators), making it hard to extract clean scaling laws with respect $d$, $m$, or the degree of overlapping. 

  \emph{(b) Asynchrony, delays, and event-triggered exchanges.}
A complementary line of work reduces the communication burden   by
limiting \emph{when} and \emph{which} information (primal blocks, gradients, or local models) is exchanged,
via asynchronous updates, randomized activations, or event-triggered communication; see, e.g.,~\cite{Cannelli2021,Chezhegov_et_al_22,Latafat02112022} and references therein.
These works typically prove convergence   making the effect of asynchrony explicit through delay/activation parameters.
However, the \emph{structural} dependence of the rate on overlap-induced coupling--e.g., hyperedge size $|\cN_i|$, overlap multiplicity, or topology features such as diameter/expansion--is rarely exposed.
Instead, scalability is absorbed into global constants (Lipschitz/monotonicity moduli, operator norms, error-bound parameters, etc.) that conflate graph and overlap effects and are hard to interpret or certify a priori.
Consequently, these schemes provide limited guidance on
how communication savings trade off against convergence as $m$ grows or overlap increases.\vspace{-.4cm}

 \subsection{A message-passing viewpoint for graph-structured optimization} \label{sec:message-passing-primer}\vspace{-.2cm}
This paper takes a fundamentally different route than existing graph-compliant decompositions: we develop (to our knowledge) the first framework for nonlinear programs on graphs and hypergraphs that is \emph{rigorously built around message passing}, with an explicit focus on computational and communication efficiency.

Message passing originates in statistics and information theory for inference on graphical models~\cite{WainwrightJordan2008}. Its direct transplant to continuous nonlinear programs typically leads to methods that may fail to converge on loopy graphs and incur prohibitive communication overhead. 
These bottlenecks help explain why message passing has remained peripheral in optimization, despite its appealing locality.

To make this perspective precise,  next we provide a brief primer on min-sum/message passing and then review existing min-sum-type methods for nonlinear programs and their limited convergence guarantees.

 \noindent\textbf{Message passing on acyclic graphs: a primer.} Consider the pairwise instance of   (\ref{P}) over a path graph $\mathcal G=(\mathcal V, \mathcal E)$, $\mathcal V=\{1,\ldots, m\}$ and $\mathcal E=\{\{i,i+1\}\,:\, i=1,\ldots m-1\}$:\vspace{-.1cm}
 \begin{equation}
   \underset{x_1,\ldots, x_m \in \mathbb{R}^{d}}{\text{minimize}}  \,\, \sum_{i = 1}^m \phi_i(x_i) + \sum_{j=1}^{m-1} \psi_{j, j+1}(x_j, x_{j+1}). 
    \label{P_path}
\end{equation}
Here, each node $i$ controls the variable $x_i$,  has access to $\phi_i$,  $\psi_{i-1, i}$, $\psi_{i, i+1}$, and communicates with its immediate neighbors (nodes $i-1$ and $i+1$). 

When the   graph is \emph{cycle-free} as above, min-sum/sum-product message passing reduces to dynamic programming, leveraging  the following decomposition of (\ref{P_path}):   
   \begin{equation}
  x_i^\star\in  \underset{x_i \in \mathbb{R}^{d}}{\text{argmin}}\,\,    \phi_i(x_i) +  \mu_{i-1\to i}^\star(x_i)+  \mu_{i+1\to i}^\star(x_i),  \quad \forall i=1,\ldots, m,
    \label{P_i_pat}\vspace{-0.2cm}
\end{equation}
where \vspace{-0.3cm}
\begin{subequations}\label{eq:both-message}
    \begin{align}
    \mu_{i-1\to i}^\star(x_i)&:= \underset{x_1,\ldots, x_{i-1} \in \mathbb{R}^{d}}{\min}  \sum_{j=1}^{i-1}\phi_j(x_j) + \sum_{j=1}^{i-1} \psi_{j, j+1}(x_j, x_{j+1}),\label{eq:left-message} \\
     \mu_{i+1\to i}^\star(x_i)&:= \underset{x_{i+1},\ldots, x_{m} \in \mathbb{R}^{d}}{\min}  \sum_{j=i+1}^{m}\phi_j(x_j) + \sum_{j=i+1}^{m-1} \psi_{j, j+1}(x_j, x_{j+1}), \label{eq:right-message}
\end{align}\end{subequations}
with boundary conditions \(\mu_{0\to 1}^\star(\cdot)\equiv 0\) and \(\mu_{m+1\to m}^\star(\cdot)\equiv 0\). 

The functions $\mu_{i-1\to i}^\star$ and  $\mu_{i+1\to i}^\star$   are  the optimal costs of the left and right subchains of node $i$,   conditioned on $x_i$ through the boundary factors  $\psi_{i-1,i}$ and $\psi_{i,i+1}$, respectively. 
 It is assumed that $\mu_{i-1\to i}^\star(\cdot)$ and  $\mu_{i+1\to i}^\star(\cdot)$  are sent to node $i$ from its neighbors $i-1$ and $i+1$, respectively; hence the name  ``messages''.

  Given the pairwise model in (\ref{eq:left-message})-(\ref{eq:right-message}), the messages    
  factorizes as:   
\begin{subequations}
    \begin{align}
    \mu_{i-1\to i}^\star(x_i)&= \underset{x_{i-1} \in \mathbb{R}^{d}}{\min}   \phi_{i-1}(x_{i-1}) + \psi_{i-1, i}(x_{i-1}, x_{i})+  \mu_{i-2\to i-1}^\star(x_{i-1}),\label{eq:left-message_2} \\
    \mu_{i+1\to i}^\star(x_i)&= \underset{x_{i+1} \in \mathbb{R}^{d}}{\min}   \phi_{i+1}(x_{i+1}) + \psi_{i, i+1}(x_{i}, x_{i+1})+  \mu_{i+2\to i+1}^\star(x_{i+1}). \label{eq:right-message_2}
\end{align}\end{subequations}
This recursive  decomposition suggests a simple iterative procedure for the computation of these functions over the path graph, using only neighboring   exchanges:   given the iteration index $\nu=0,1,2,\ldots$ and arbitrary initialization of   all messages (with $\mu_{0\to 1}^{0}\!\equiv\!0$ and $\mu_{m+1\to m}^{0}\!\equiv\!0$),   
at iteration $\nu +1$, perform a {\it forward} pass 
\vspace{-0.1cm}
\[
\mu_{i\to i+1}^{\nu+1}(x_{i+1})
=\min_{x_i}\Big\{\phi_i(x_i)+\psi_{i,i+1}(x_i,x_{i+1})+\mu_{i-1\to i}^{\nu}(x_i)\Big\},\quad i=1,\ldots,m-1,
\]
and a {\it backward} pass: for $i=m-1,\ldots,1$, 
\[
\mu_{i+1\to i}^{\nu+1}(x_i)
=\min_{x_{i+1}}\Big\{\phi_{i+1}(x_{i+1})+\psi_{i,i+1}(x_i,x_{i+1})+\mu_{i+2\to i+1}^{\nu}(x_{i+1})\Big\}. 
\]
The algorithm above reaches the fixed point   \eqref{eq:left-message_2}-\eqref{eq:right-message_2}  in {at most $m-1$} iterations (each composed of one forward pass plus one backward pass), each communicating $2(m-1)$ messages total.  Finally each node $i$ compute  $x_i^\star$ via (\ref{P_i_pat}).

 The above idea and convergence guarantees extend to graph-structured problems~\eqref{P} whose $\mathcal G$ is an \emph{acyclic} factor graph; 
 see, e.g.,~\cite{KschischangFreyLoeliger2001}.  
This is the clean regime in which message-passing-based methods  are both exact and well understood.\smallskip 

\noindent {\bf Message passing on loopy graphs.}  On graphs with cycles, convergence guarantees exist only in
{\it restricted regimes}, mostly for \emph{pairwise} models (not directly applicable to (\ref{prob_hyper})).  
{\bf (i)}  {\it Scaled diagonal dominance:} 
The  most general convergence results for min-sum message passing methods are for   \emph{pairwise-separable convex}
objectives (as in \eqref{P}), satisfying  the so-called \emph{scaled diagonal dominance} condition, which enforces that local curvature
dominates pairwise couplings; under this assumption, (a)synchronous min-sum converges,
and explicit convergence  rate bounds are derived~\cite{moallemi2010convergence,zhang2021convergence}.
However,  the dominance requirement significantly
restricts the class of problems  that can be handled. {\bf (ii)}{\it  Quadratic objectives (Gaussian).}
For strictly convex quadratics, \emph{correctness} and \emph{convergence} decouple: if loopy Gaussian Belief Propagation/min-sum converges, it yields the exact minimizer on graphs of arbitrary topology~\cite{weiss2001correctness}.
However,  convergence itself requires additional structure such as    \emph{walk-summability}/\emph{convex decomposability} (and compatible initializations), and not every strongly convex quadratic  function satisfies these conditions~\cite{malioutov2006walksums,Moallemi-VanRoy2009-quadratic}.
Graph-cover analyses explain failure modes and motivate reweighted/parameterized variants; yet,  convergence   for  \emph{arbitrary} strongly convex quadratic  is not established beyond empirical evidence  
\cite{ruozzi2013message}.
{\bf (iii)}{\it  Higher-order (hypergraph) couplings.}
Min-sum   extends to higher-order factor graphs (as in (\ref{prob_hyper})) and remains exact on acyclic graphs~\cite{KschischangFreyLoeliger2001}, but for
\emph{loopy}   nonlinear programs, convergence  theory remains far less developed--beyond dominance-type assumptions for convex factors~\cite{moallemi2010convergence}, topology-/arity-explicit rate characterizations are essentially missing.
  \smallskip 

\noindent \textbf{Pervasive bottlenecks.}
Across (i)–-(iii), a first obstacle is \emph{practical implementability}: messages are \emph{functional} objects in continuous domains, and even for vector-valued quadratics parametrizations can require exchanging \emph{dense} second-order information (matrices)~\cite{moallemi2010convergence,Moallemi-VanRoy2009-quadratic,ruozzi2013message}. 
 This issue is exacerbated  over hypergraphs: factor-to-variable updates entail multi-variable partial minimizations over  hyperedges, so both computation and communication  grow rapidly with the hyperedge size~\cite{moallemi2010convergence}. 
Second, {\it rate interpretability}:   when   bounds exist, they are typically stated through global dominance/spectral constants that entangle topology and problem parameters, so clean scalability laws in graph features (degree, overlap, diameter, separators, etc.) remain unknown in full generality~\cite{Moallemi-VanRoy2009-quadratic,zhang2021convergence}. 

 
\vspace{-0.3cm}

\subsection{Main contributions}
 \vspace{-0.1cm}

The main contributions of the paper are as follows. \smallskip 

\noindent $\bullet$ \textbf{MP-Jacobi: A message-passing-based decomposition.}  We introduce a new decomposition principle for graph-structured nonlinear programs in the form \eqref{P}, built \emph{rigorously} on the machinery of message passing.
Starting from a new graph-compliant fixed-point characterization induced by a partition of the  graph into \textit{tree clusters}, we decouple interactions into:
{\bf (i)}{\it  intra-cluster} couplings, captured by one min--sum sweep on the selected intra-cluster trees, and
{\bf(ii)}{\it  inter-cluster} couplings, handled \textit{Jacobi-style} using out-of-cluster neighbors' most recent variables.
The resulting \textit{Message-Passing Jacobi} (MP-Jacobi) method is a \textit{single-loop}, fully decentralized scheme that, per iteration, performs one (damped) local Jacobi minimization together with one step of intra-cluster message updates.
This yields (to our knowledge) the first {\it provably convergent} message-passing-based decomposition for continuous optimization on \emph{loopy graphs} that is graph-compliant and implementable with purely local, single-hop communication. From the Jacobi viewpoint, it also resolves a long-standing obstacle: realizing a genuine block-Jacobi decomposition without multi-hop replication or consensus-style global coordination. \smallskip

     \noindent$\bullet$ \textbf{Novel convergence analysis  and topology-explicit rates. }
By casting MP-Jacobi as a delayed block-Jacobi method on a partition-induced operator, we establish \textit{linear} (resp. {\it sublinear}) convergence    for strongly convex (resp. merely convex or nonconvex) objectives. Our theory provides (to our knowledge) the \textit{first} convergence guarantees for a \textit{message-passing-based} algorithm on \emph{loopy graphs}, which   do \emph{not} rely on  structural properties   of the loss function such as  diagonal dominance or walk-summability, but only on stepsize tuning. 
Such unrestricted convergence is ensured by the   Jacobi correction  that stabilizes cross-cluster interactions while allowing message passing to run on tree components only.
Moreover, unlike existing message-passing analyses where   rates are buried in global spectral/dominance constants, our   contraction factor is    explicit in terms of (i) local and coupling regularity/curvature parameters and (ii) \textit{topological features of the chosen partition} (e.g., intra-cluster trees and boundary interfaces), thereby offering concrete guidance for partition design and scalability with the network size.\smallskip 

\noindent $\bullet$ \textbf{Surrogation.}
    To overcome the main bottleneck of classical message passing in optimization--high per-iteration computation/communication due to functional messages--
    we develop \textit{surrogate MP-Jacobi}, a variant of MP-Jacobi, based on surrogation.
   It  preserves the same decentralized architecture and convergence guarantees while enabling lightweight message parametrizations (at the level of vectors/ low-dimensional summaries) and offering explicit cost--rate trade-offs.
    We provide practical surrogate constructions that cover common smooth/convex structures and substantially reduce per-iteration complexity, yet preserving fast convergence.\smallskip 
    
\noindent$\bullet$ \textbf{Hypergraphs.}
    We extend the framework to hypergraph models~\eqref{prob_hyper}, by deriving a decomposition method employing hypergraph message-passing {\it within} hypergraph {\it factor graph tree} clusters and Jacobi-like updates over {\it inter}-cluster hyperedge factors.  
    For  cycle-rich  hypergraphs with heavy overlaps, we propose a \textit{surrogate hyperedge-splitting} strategy that removes intra-cluster loops so that tree-based message updates remain applicable, while retaining 
    fast convergence provably.\smallskip 

  \noindent$\bullet$   \textbf{Empirical validation and practical gains.}
    Extensive experiments on graph and hypergraph instances corroborate the theory, quantify the impact of clustering and surrogate design, and demonstrate substantial gains over   gradient-type baselines and decentralized consensus methods, highlighting MP-Jacobi as a scalable primitive for distributed optimization with localized couplings.

 \vspace{-0.4cm}

\subsection{Motivating applications}\label{sec:app}
 \vspace{-0.2cm}
   
\noindent \textbf{(i) Laplacian Linear systems.} Graph-Laplacian systems arise across scientific computing and data science
(e.g., electrical flows \cite{nash1959random}, random-walk quantities such as hitting times, and Markov-chain stationarity \cite{nk2012lx}). 
A standard formulation is  
\vspace{-0.2cm}
$$   \min_{\mathbf{x} \in \mathbb{R}^m} \quad  -\sum_{i=1}^m b_i x_i + \frac{1}{2}\sum_{i=1}^m \sum_{j=1}^m L_{ij}x_i x_j, \vspace{-0.2cm}
$$  
where $L=[L_{ij}]_{i,j=1}^m$ is the graph Laplacian. This is an instance of \eqref{eq:neigh_P}.  

\noindent \textbf{(ii) Signal processing on graphs} \cite{wang2016trend, romero2016kernel} extends traditional signal processing techniques to graph-structured data. A signal on a graph is a function that assigns a value to each node on a graph, representing data. The graph  structure   encodes relations among the data that are represented through edges. A key task is  recovering a graph signal $z$ from a noisy observation $y = z + n,$ where $n \sim \mathcal{N}(0,\Sigma),$ under the smoothness prior that neighboring nodes should have similar values. A canonical estimator solves the following neighborhood-scoped problem 
\cite{isufi2024graph}:\vspace{-0.1cm}
 $$    \min_{\mathbf{x} \in \mathbb{R}^{md}} \quad \sum_{i=1}^m d_i \|y_i - x_i\|^2 + \frac{1}{2\mu}\sum_{(i,j) \in \mathcal{E}}w_{ij}\|x_i - x_j\|^2.
 $$

\noindent \textbf{(iii) Personalized and multi-task learning:} each agent owns a local private dataset and aims to learn a personalized model according to their own learning objective. Neighboring agents have similar objectives and therefore, agents benefit from interacting with their neighbors. This is captured by the problem \cite{dinh2022new, wan2025multitask}:\vspace{-0.1cm}
$$ \min_{\mathbf{x} \in \mathbb{R}^{md}} \quad  \sum_{i=1}^m d_{i}\mathcal{L}_i(x_i) + \frac{1}{2\mu}\sum_{(i,j) \in \mathcal{E}} w_{ij}\|x_i-x_j\|^2,\vspace{-0.1cm}
$$  where $\mathcal{L}_i:\mathbb{R}^{d} \to \mathbb{R}$ denotes agent $i's$ private loss, and $\mu > 0$ is a trade-off parameter. The above formulation also arises in multi-task learning with soft coupling among tasks \cite{ruder2017overview}, where adjacency in the graph now represents tasks that share similarities and thus should inform one another during training. 

\noindent \textbf{(iv) Distributed optimization.} 
 In the classical decentralized   optimization setting,  agents  solve 
$ \min_{x \in \mathbb{R}^{d}} \quad \sum_{i=1}^m f_i(x)$,
 with each  agent $i$   having access   to $f_i$ only and   communicating with its immediate neighbors. 
 Two notorious algorithms  are  
 the DGD-CTA (Decentralized Gradient Descent - Combine Then Adapt) \cite{yuan2016convergence} and the DGD-ATC (DGD-Adapt Then Combine) \cite{chen2012diffusion};  they   can be interpreted  as  gradient descent applied to lifted, graph-regularized problems 
 \begin{equation}
 \min_{\mathbf{x} \in \mathbb{R}^{md}} \quad \sum_{i=1}^m f_i(x_i) + \frac{1}{2\gamma}\|\mathbf{x}\|^2_{\mathbf{I}_{md}-\mathbf{W}\kron\mathbf{I}_d},
 \label{eq:CTA}
 \end{equation}
 and\vspace{-.2cm}
 \begin{equation}
     \min_{\mathbf{x} \in \mathbb{R}^{md}} \quad \sum_{i=1}^m f_i\left( \sum_{j \in \mathcal{N}_i} w_{ij} x_j \right) + \frac{1}{2\gamma}\|\mathbf{x}\|^2_{\mathbf{I}_{md}-\mathbf{W}^2\kron\mathbf{I}_d}.
     \label{eq:ATC}
 \end{equation}

\noindent \textbf{(v) MAP (maximum a posteriori) on  graphical models:} Let $\bx$ be latent variables and $\by$ measurements, with posterior
\[
p(\bx\mid\by)\ \propto\ \prod_{\omega\in\cE}\pi_\omega(x_\omega)\prod_{\omega\in\cE}p_\omega(y_\omega\mid x_\omega),
\]
where $\omega$ indexes a factor scope,   $x_{\omega}$ is the subset of latent variables associated with   $\omega$, and $y_{\omega}$ are the measurements associated with   $x_{\omega}.$   The MAP   estimator is obtained by minimizing the negative log-posterior
$\min_{\bx}\,-\log p(\bx\mid\by)$, which fits~\eqref{prob_hyper} and underpins many applications, 
  such as robotics, speech processing, bio-informatics, finance and computer vision. 
  
  \textbf{(a)} {\it Robot localization and motion planning} (e.g., ~\cite{cadena2017past}).
The variables $x_1,\ldots,x_m\in\R^d$ represent a discrete-time trajectory of robot poses (and possibly landmark states).
The robot acquires a set of $|\cE|$ measurements, each involving a subset of states $\mathbf{x}$:   $y_{\omega} = h_{\omega}(x_{\omega}) + \varepsilon_{\omega}$, $\varepsilon_{\omega} \sim \mathcal{N}(0,\Sigma_{\omega})$, with known measurement model   $h_{\omega}$. Assuming conditional independence across factors given $x_\omega$, the MAP is obtained  by setting 
$
p_\omega(y_\omega\mid x_\omega)\ \propto\ \exp\!\big(-\tfrac12\|h_\omega(x_\omega)-y_\omega\|^2_{\Sigma_\omega^{-1}}\big),
$
while $\pi_\omega(x_\omega)$ encodes priors/regularizers (e.g., motion priors, landmark priors), and can be omitted to obtain
a maximum-likelihood (ML) estimator. The resulting MAP/ML problem is a core computational primitive in  simultaneous localization and mapping. 
  
  \textbf{(b) { \it Hidden Markov Models (HMMs)}} 
  Consider a HMM $(X,Y)$ with hidden states $\{X_i\}$ and observations $\{Y_i\}$, where   $X_{i+1}$ depends only
on $X_i$ and the observation model links $Y_i$ to $X_i$. Given observations $(y_1,\ldots,y_m)$, the goal is to recover the most
likely hidden sequence $(x_1,\ldots,x_m)$. The induced dependence structure is a line graph, with factors
 $\pi_1(x_1)=\mu(x_1)$, $\pi_{(i,i+1)}(x_i,x_{i+1})=q_{(i+1,i)}(x_{i+1}\mid x_i),
$
where $\mu$ is the initial-state density and $q_{(i+1,i)}$ the transition density. The likelihood factors as
$p_\omega(y_\omega\mid x_\omega)=\prod_{i\in\omega}q_i'(y_i\mid x_i)$, where $q_i'$ is the emission density. MAP then reduces to
a structured instance of~\eqref{prob_hyper} (pairwise on a chain).

  
  \textbf{(c) {\it Image denoising/inpainting via Fields of Experts (FoE)}}~\cite{roth2009fields,wang2014efficient}. Pixels (or patches) in an image $\bx$  are treated as nodes in a graph, and a neighborhood system induces high-order cliques: each neighborhood/patch
$\omega$ defines a maximal clique $x_\omega$ and hence a hyperedge. FoE specifies a    high-order Markov random field prior through the product of $L$ expert factors $\pi_{\omega}(x_{\omega}) = \prod_{i=1}^L \phi(J_i^{\top}x_{\omega},\alpha_i)$ where $\{J_\ell\}$ is a learned filter bank and $\{\alpha_\ell\}$ are learned parameters; e.g.,~\cite{wang2014efficient} uses
$\phi(J_\ell^\top x_\omega,\alpha_\ell)=(1+\tfrac12(J_\ell^\top x_\omega)^2)^{-\alpha_\ell}$. 
Given a noisy image $\by$,
 the goal is to find a faithful  denoised image $\mathbf{x}$. For this, set $\prod_{\omega \in \varepsilon}p_{\omega}(y_{\omega}|x_{\omega}) \propto \exp\left(-\frac{\lambda}{\sigma^2}\|\mathbf{x} - \mathbf{y}\|^2 \right),$ so the MAP estimate balances fidelity to $\by$ and the FoE prior, yielding a hypergraph-structured instance of~\eqref{prob_hyper}.

\vspace{-0.7cm}

\subsection{Notation} \vspace{-0.2cm}
The power set of a set $\mathcal{S}$  is denoted by $2^{\mathcal{S}}$. Given an undirected hypergraph \(\cG = (\cV,\cE)\) (assumed throughout the paper having no selfloops), with node-set \(\cV = \{1,2,\ldots,m\}\) and hyperedge-set  \(\cE \subseteq 2^\cV\), the  {hyperedge} neighborhood of agent \(i\) is defined as $\cN_i {:=} \{\, \omega \in \cE \mid i \in \omega \,\}.$  For any  $\mathcal S \subseteq \cV$, we denote its complement by ${\overline{\mathcal{S}}} := \cV \setminus \mathcal S$.  In particular, when $|\omega|=2$, for any $\omega\in\cE$, $\cG$  reduces to the pairwise graph--the neighborhood of agent \(i\) reads  $\cN_i {=} \{\, j \in \cV \mid (i,j) \in \cE \,\}. $   The diameter of  {$\mathcal G$}   is denoted by  {$\mathrm{d}_{\cG}$}. For any integer $s>0$, we write $[s]:=\{1,\ldots, s\}$.

Given \(\bx\in\R^{md}\), \(\cC\subseteq\cV\), 
we write \(\bx_{\cC}\in\R^{|\cC|d}\) for the subvector formed by the blocks of \(\bx\) indexed by \(\cC\); 
for given $\bx=[x_1,\ldots ]$ and  $[\texttt{d}_1, \ldots ]$ of suitable dimensions (clear from the context), with $x_i\in \mathbb{R}^d$, $\texttt{d}_i\in \mathbb{N}$,   and  given $\nu\in \mathbb{N}$, we use the shorthand     $x_{{\cC}}^{\,\nu-\bd}:=(x_i^{\,\nu-\texttt{d}_i})_{i\in\cC}$. {For any set of indices $\mathcal C\subseteq [m]$, define a selection matrix   $U_{\cC}$ that extracts the $d$-dimensional blocks in $\bx$ indexed   by $\cC$: $U_{\cC}^\top \bx=\bx_{\cC}$. 
The orthogonal projector onto the coordinates indexed by $\cC$ is then $P_{\cC}:=U_{\cC}U_{\cC}^\top$.} For $\Phi:\R^{md}\to\R$ and   $\cC\subseteq\cV$, we denote by $\nabla_{\cC}\Phi(\bx)\in\R^{|\cC|d}$ the gradient of $\Phi$ with respect to $\bx_{\cC}$. For index sets $\cA,\cB\subseteq\cV$,  
$
\nabla^{2}_{\cA,\cB}\Phi(\bx)\in\R^{|\cA|d\times|\cB|d}
$
is  the corresponding block Hessian, whose $(i,j)$-th ($d\times d$) block is 
$\nabla_{j}(\nabla_{i}\Phi(\bx))^\top$,    $i\in\cA$ and $j\in\cB$. We say $\Phi\in C^k$ if $\Phi$ is $k$-times differentiable. If, in addition, its $k$-th derivative is (globally) Lipschitz, we write $\Phi\in LC^k$.
Given two functions \(f(x,y)\) and \(g(x,y)\), we say that \(f\) and \(g\) are equivalent with respect to \(x\), denoted $f(x,y) \overset{x}{\sim} g(x,y),$ if the difference \(f(x,y) - g(x,y)\) is independent of \(x\). When the variable is clear from context, we simply write \(f\sim g\). The set of $d\times d$ positive semidefinite (resp. definite) matrices is denoted by $\mathbb{S}_+^d$ (resp.  $\mathbb{S}_{++}^d$).

\vspace{-0.4cm}
\section{Decomposition via Message Passing: Pairwise Problem~\eqref{P}}
\label{sec:alg}\vspace{-0.2cm}
This section focuses on the design of decomposition algorithms for the pairwise formulation \eqref{P}.
Our approach exploits a key property of message-passing algorithms (see Sec.~\ref{sec:message-passing-primer}): on loopless graphs, they converge in a finite number of steps. This motivates the following graph partition and assumption.

 \begin{definition}[condensed graph] 
 \label{def:condensed_graph}
 Given  $\cG = (\cV,\cE)$ and $p\in [m]$, let   \(\cC_1,\ldots, \cC_p\) be a    partition of  \(\cV\),  with associated intra-cluster edge-sets $\cE_1,\ldots, \cE_p$, where  \begin{equation}\label{eq:nonoverlap}
     \cE_r:=\{(i,j)\in \mathcal E\,|\, i,j\in \mathcal C_r\}, \quad  r\in [p];
 \end{equation}  this results in the subgraphs $\mathcal G_r=(\cC_r,\cE_r)$, $r\in [p]$.

The condensed graph relative to    \(\{\cG_r\}_{r=1}^{{p}}\) is  defined as   \(\cG_\cC := (\cV_\cC,\cE_\cC)\), where\vspace{-.1cm}
\begin{itemize} 
    \item    \(\cV_\cC := \{\texttt{c}_1,\ldots,\texttt{c}_p\}\) is the set of supernodes, with  $\texttt{c}_r$   associated with    $\cC_r$; and 
    \item $\cE_\cC\subseteq \cV_\cC\times \cV_\cC$ is the set of superedges:     \((\texttt{c}_r,\texttt{c}_s) \in \cE_\cC\) iff   \((i,j) \in \cE\), for some   \(i\in \cC_r\) and \(j\in \cC_s\),  with $i\neq j$.\vspace{-.1cm}
\end{itemize}
 For each $i\in \cC_r$, define 
   $$\cNin_i:= \cN_i\cap \cC_r,\quad \cNout_i:=\cN_i\setminus\cC_r,\quad \text{and}\quad\cN_{\cC_r}:=\cup_{i\in\cC_r}\cNout_i;$$ $\cNin_i$  (resp. $\cNout_i$) is   the neighborhood of $i\in\cC_r$  within  (resp. outside) $\cG_r$       and    \(\cN_{\cC_r}\) is  the external neighborhood of $\cC_r$. Finally,   
    $D_r:={\rm diam}(\cG_r)$ is   the diameter of $\cG_r$,  with $D:=\max_{r\in[p]}D_r$;   and  
  $\cB_r:=\{i\in\cC_r\,:\,  |\cNin_i|=1\}$ is   set  of leaves of $\cG_r$.
\end{definition}

\begin{assumption}\label{asm:on_the_partion} Each subgraph \(\mathcal G_r\) in \(\cG_\cC\)  
is a tree.  Singleton clusters ($\cE_r=\emptyset$) are   regarded as trees.  
\end{assumption}
Notice that we posit tree-partitions that are {\it edge-maximal}: each cluster contains  all edges  between its internal nodes.

Equipped with the tree partition  $\{\mathcal{G}_r\}_{r=1}^p$  (Assumption~\ref{asm:on_the_partion}), we decompose any   solution \(\mathbf{x}^\star\) of \eqref{P}  over the subgraphs  according to the   fixed-point inclusion:  
\begin{equation}
    \label{eq:general_fp}
    x_i^\star\in\;\argmin_{x_i}
\phi_i(x_i)
    +  \mu^{\star}_{\cNin_i\to i}(x_i) + \mu^{\star}_{\cNout_i\to i}(x_i),\quad i\in \mathcal{C}_r,\,\,r\in[p],\vspace{-0.3cm}
\end{equation}
where 
\begin{align}\label{eq:message_neig_agent}
&\mu^{\star}_{\cNin_i\to i}(x_i)
:= \nonumber \\ &\min_{x_{\cC_r\setminus\{i\}}}
\Biggl(
\sum_{j\in\cC_r\setminus\{i\}}\phi_j(x_j)
+ \sum_{(j,k)\in\cE_r}\psi_{jk}(x_j,x_k)
+ \sum_{j\in\cC_r\setminus\{i\}}\sum_{k\in\cNout_j}
\psi_{jk}(x_j,x_k^{\star})
\Biggr)
\end{align}
and \vspace{-0.2cm}
\begin{equation}  \mu^{\star}_{\cNout_i\to i}(x_i):=\sum_{k\in\cNout_i}\psi_{ik}(x_i,x_k^\star) 
\end{equation}  
represent,  respectively, the optimal cost contributions of the intra-cluster and inter-cluster neighbors of node $i\in\cC_r$--see Fig.~\ref{fig:fp_thm}(a). They constitute the only ``summaries'' that node $i$ needs in order to solve Problem~\eqref{P} locally via \eqref{eq:general_fp}--we term them as {\it messages}, because it is information to be routed to agent $i$. 
Exploiting the pairwise  structure in~\eqref{eq:message_neig_agent}, we show next that    the  intra-cluster message $\mu^{\star}_{\cNin_i\to i}$ can be obtained recursively from pairwise messages along the tree. 
 
For every two nodes  $i$, $j$ in $\mathcal G_r$ connected by an edge,   $(i,j)\in\cE_r$, define the   pairwise message from $j$ to $i$ as 
\begin{equation}
\label{eq:ideal_message_new}
    \mu_{j\rightarrow i}^\star(x_i)
    = \min_{x_j} \Biggl\{
    \psi_{ij}(x_i, x_j) + \phi_j(x_j)
    + \sum_{\substack{k\in \cNin_j \setminus \{i\}}}
      \mu_{k\rightarrow j}^\star(x_j)
    + \sum_{\substack{k \in \cNout_j}}
      \psi_{jk}(x_j, x_k^{\star})
    \Biggr\}.
\end{equation} 
Note that the recursion is well-defined because $\cNin_j\setminus \{i\}=\emptyset$  for every  leaf  node $j$   of $\mathcal G_r$ and $i\in \mathcal C_r$.  
In fact, for such nodes,  \eqref{eq:ideal_message_new} reduces to  \begin{equation}
\label{eq:message_leaves}
    \mu_{j\rightarrow i}^\star(x_i)
    := \min_{x_j} \Biggl\{
    \psi_{ij}(x_i, x_j) + \phi_j(x_j)
    + \sum_{\substack{k \in \cNout_j}}
    \psi_{jk}(x_j, x_k^{\star})
    \Biggr\},\quad j\in\cB_r,\,\,i\in \mathcal{C}_r. 
\end{equation}  
Therefore,   every pairwise  messages along the tree $\cG_r$ can be expressed recursively in terms of the incoming messages routed from the leaves. Fig.~\ref{fig:fp_thm}(b) illustrates the message-passing construction on a tree cluster.  In particular,   the  intra-cluster message $\mu^{\star}_{\cNin_i\to i}$ can be then represented as the aggregate of the pairwise messages from the neighbors of node $i$ within $\mathcal G_r$, i.e.,   \begin{equation}\label{eq:aggregate_message_final}
\mu_{\cNin_i\to i}^\star(x_i)
= \sum_{\substack{j\in\cNin_i}}\mu_{j\to i}^\star(x_i).
\end{equation}

Substituting (\ref{eq:aggregate_message_final}) in (\ref{eq:general_fp}), we   obtain the final decomposition of a solution of  \eqref{P} compliant with the graph structure:   
\begin{equation}
\label{eq:fixpoint2_new}
x_i^{\star}\in\argmin_{x_i}\Biggl\{
\phi_i(x_i)
+ \sum_{\substack{j\in\cNin_i}} \mu_{j\rightarrow i}^\star(x_i)
+ \sum_{\substack{j \in \cNout_i}}
  \psi_{ij}(x_i, x_j^{\star})\Biggr\},\quad i\in \mathcal{C}_r,\,\,r\in[p].
\end{equation}

\begin{figure*}[t]
  \centering
  \setlength{\tabcolsep}{0pt}
  \newcommand{\twosubfigsep}{1.5em}  
  \resizebox{\textwidth}{!}{%
    \begin{tabular}{@{}c@{\hspace{\twosubfigsep}}c@{}}
      \includegraphics[width=0.549\textwidth]{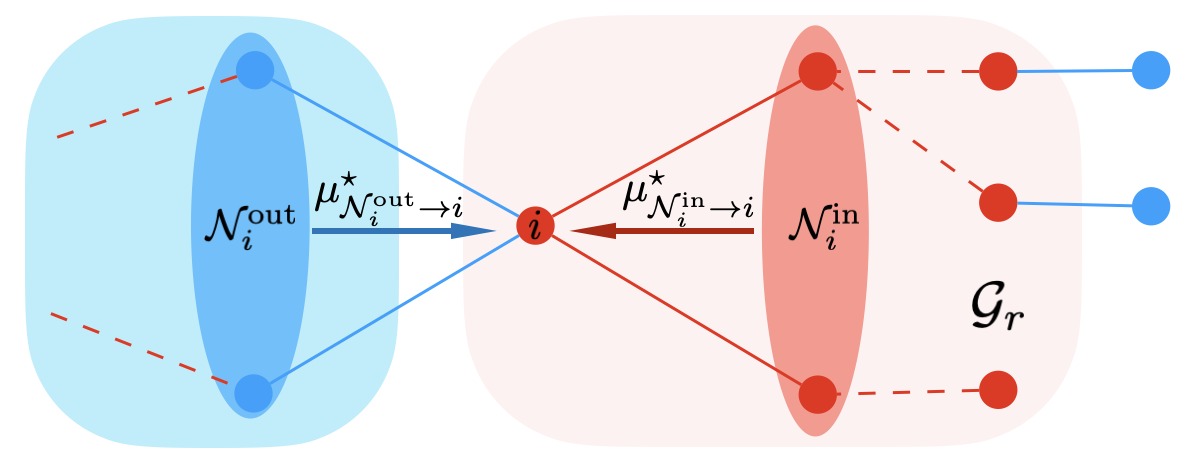} &
      \includegraphics[width=0.451\textwidth]{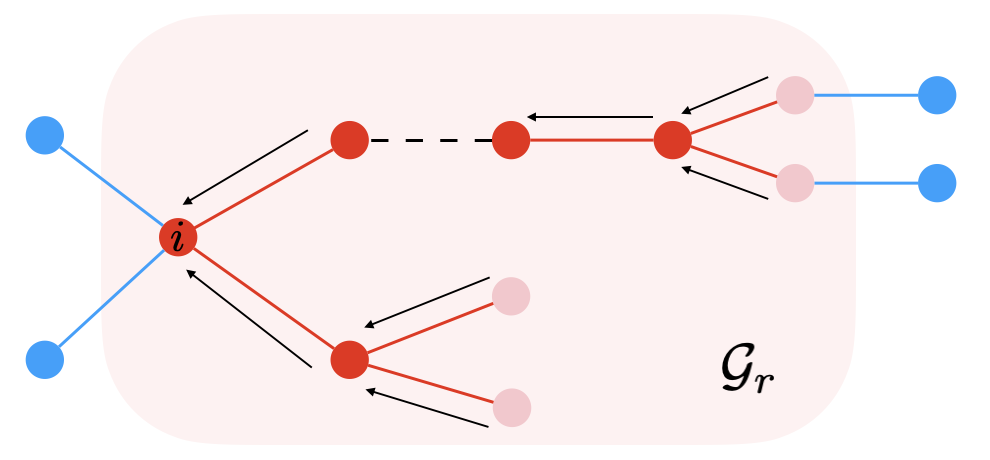} \\
      \footnotesize (a) intra- and inter-cluster messages &
      \footnotesize (b) message propagation from leaves
    \end{tabular}%
  }

  \caption{{\bf (a)} Intra-cluster and inter-cluster messages at node $i$:
  $\mu^{\star}_{\cNin_i\to i}$ collects contributions from neighbors within $\cG_r$
  (red nodes) while $\mu^{\star}_{\cNout_i\to i}$ collects those from neighbors outside
  the cluster (blue nodes). {\bf (b)} Message propagation within $\cG_r$:
  messages are initialized at the leaves (light red nodes) and propagated along the tree
  according to the    recursion～\eqref{eq:ideal_message_new}.}
  \label{fig:fp_thm}
  \vspace{-0.1cm}
\end{figure*}

The proposed decentralized algorithm is obtained by viewing \eqref{eq:fixpoint2_new} as a fixed-point system in the messages ${\mu_{j\to i}^\star}$ and variables ${x_i^\star}$, and iterating on these relations; the resulting scheme is summarized in Algorithm~\ref{alg:main}. To control convergence, we further incorporate the over-relaxation step~\eqref{gossip_update:b} in the update of the agents’ block-variables. The method combines intra-cluster message passing with inter-cluster block-Jacobi updates, in a fully decentralized fashion.\medskip 

\begin{algorithm}[H]
{
 \scriptsize
\caption{\underline{M}essage \underline{P}assing-\underline{J}acobi ({MP-Jacobi})
}
\label{alg:main}

{\bf Initialization:}  $x_i^{0}\in \mathbb{R}^d$, for all $i\in\cV$; 
initial message $\mu_{i\to j}^{0}(\cdot)$ arbitrarily chosen  (e.g., $\mu_{i\to j}^{0}\equiv 0$), for all $(i,j)\in\cE_{r}$ and $r\in [p]$.\\

\For{$\nu=0,1,2,\ldots$ }{
\AgentFor{$i\in\cV$}{\label{line:agent-loop}
\medskip 

\texttt{Performs the following updates:}
\begin{subequations}\label{gossip_update}
\begin{align}
\hat x_i^{\nu+1}&\in \argmin_{x_i}\Big\{\phi_i(x_i)
+ \sum_{j\in\cNin_i}\mu_{j\to i}^{\nu}(x_i)
+ \sum_{k\in\cNout_i}\psi_{ik}(x_i,x_k^{\nu})\Big\},
\label{gossip_update:a}\\
x_i^{\nu+1}&=x_i^{\nu}+\tau_r^\nu\big(\hat x_i^{\nu+1}-x_i^{\nu}\big),
\label{gossip_update:b}  \\\nonumber \\
\hspace{-.4cm}\mu_{i\to j}^{\nu+1}(x_j)&=
\min_{x_i}\Big\{
\phi_i(x_i)+\psi_{j i}(x_j,x_i)
+\!\!\sum_{k\in\cNin_i\setminus\{j\}}\!\!\mu_{k\to i}^{\nu}(x_i)
+\!\!\sum_{k\in\cNout_i}\!\!\psi_{ik}(x_i,x_k^{\nu})
\Big\}, \forall j\in   \cNin_i;
\label{message_update}
\end{align}
\end{subequations}
 
\texttt{Sends out}  $x_i^{\nu+1}$ to all $j\in\cNout_i$  and $\mu_{i\to j}^{\nu+1}(x_j)$  to all $j\in \cNin_i$.
}}}

\end{algorithm}
\medskip 
 
 Fig.~\ref{fig:clustered-trees} summarizes the key principle. The original graph is partitioned into tree subgraphs (clusters) $\mathcal{G}_{r}=(\cC_r,\cE_r)$, inducing the condensed graph $\cG_{\cC}$ with one supernode (possibly a singleton) per cluster.
   The coupling among agents \emph{within} each (non-singleton) cluster $\cC_r$ (i.e., inside each non-singleton supernode of $\cG_{\mathcal C}$) is handled via a min-sum-type message-passing scheme over the edges $\cE_r$; this corresponds to the ``intra-cluster'' interaction term $\sum_{j\in\cNin_i}\mu_{j\to i}^{\nu}(x_i)$ in agent $i$'s subproblem~\eqref{gossip_update:a}. In contrast, the coupling \emph{across} clusters is handled in a Jacobi-like fashion: in the local minimization step~\eqref{gossip_update:a}, each agent $i$ accounts for the cross-terms $\psi_{ij}$ from its inter-cluster neighbors $j\in\cNout_i$ (the singleton supernodes in $\mathcal G_{\cC}$ connected to $i$) via the contribution $\sum_{k\in\cNout_i}\psi_{ik}(x_i,x_k^{\nu})$.

In a nutshell, message passing accounts for all agents' interactions within each tree cluster, while block-Jacobi updates account for the interactions between clusters through the boundary nodes only.   The rationale for this design is that, on a tree, min-sum message passing is known to converge in a finite number of iterations  (e.g., \cite{wainwright2005map,even2015analysis})--on the order of the tree diameter--provided all other quantities are kept fixed. In our algorithm, however, we do {\it not} run an inner message-passing routine to convergence at each outer iteration: only a single forward-backward sweep of messages is performed per round, and these sweeps are interleaved with the block-Jacobi updates. Hence, the overall procedure is a single-loop decentralized algorithm relying solely on single-hop communications. 
 
Notice that when all clusters are singletons ($\cE_r = \emptyset$), no messages are exchanged and Algorithm~\ref{alg:main} becomes  a damped Jacobi method. At the opposite extreme, when the entire graph is  a   tree   (i.e., $\cE_r = \cE$), the scheme (with $\tau_r^\nu\equiv1$) reduces to standard min-sum message passing on a tree.  \vspace{-0.3cm}

\begin{figure}[t]
    \centering
    \includegraphics[width=1\linewidth]{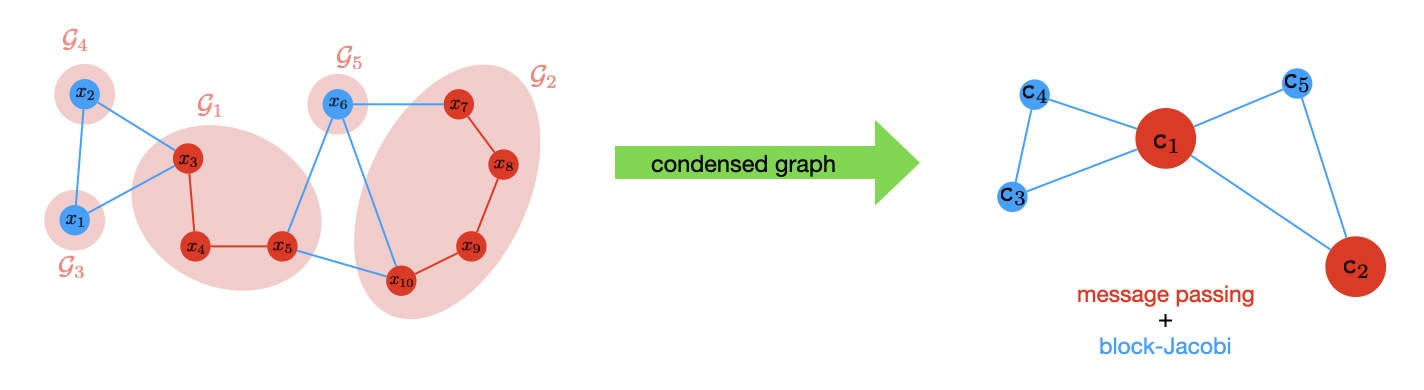}\vspace{-.3cm}
    \caption{Bird’s-eye view  of  Algorithm~\ref{alg:main}. The graph is decomposed into tree clusters   $(\cC_i)_{i=1}^5$. Intra-cluster interactions are handled via  message passing  within the  nonsingleton supernodes $\texttt{c}_1$  and $\texttt{c}_2$)    whereas inter-cluster interactions are handled by Jacobi-type updates at boundary  (singleton) supernodes  $\texttt{c}_3$  and $\texttt{c}_4$, and $\texttt{c}_5$. At the the condensed graph level, the iterations approximate a block-Jacobi.}
    \label{fig:clustered-trees} \vspace{-.1cm}
\end{figure}
\vspace{-0.1cm}
\section{Convergence analysis of Algorithm~\ref{alg:main}}
\label{subsec:global_convergence_pairwise}\vspace{-0.2cm}

In this section, we establish convergence of Algorithm~\ref{alg:main},    under the following   assumptions on the graph $\mathcal G$ and Problem~\eqref{P}.

 \begin{assumption}
 \label{asm:graph}
 The graph \(\cG = (\cV,\cE)\) is undirected and connected. Nodes can only communicate with their immediate neighbors $\cN_i$'s.  
 \end{assumption}
 
\begin{assumption}
\label{asm:nonverlap} Given the partition  $(\cG_r)_{r=1}^p$, each $\mathcal{C}_r$ has no external neighbors that connect to more than one node in $\mathcal{C}_r$: \vspace{-0.2cm}
\begin{equation}
|\cN_k\cap\cC_r|\leq1, \qquad\forall\;k\in\overline{\cC}_r, \;\forall\;r\in[p].
\label{eq:nonoverlap2}
\end{equation}
\end{assumption}
\vspace{-.2cm}
Condition~\eqref{eq:nonoverlap2}  enforces single-node contacts between clusters:   any external node can connect to a cluster through at most one gateway node.  While not necessary,   this assumption is posited  to simplify the notation in some  derivations. 


For each $r\in[p]$, let
$P_r:=U_{\cC_r}U_{\cC_r}^\top$ be the orthogonal projector onto the coordinates in $\cC_r$.
Moreover, define
$P_{\partial r}:=U_{\cN_{\cC_r}}U_{\cN_{\cC_r}}^\top$ as the projector onto the external neighborhood
coordinates $\cN_{\cC_r}$. 
\begin{assumption}
\label{assumption:scvx_smoothness}
Given the partition  $(\cG_r)_{r=1}^p$, the following hold:
    \begin{enumerate}
\item[(i)] $\Phi\in LC^1$ is lower bounded and $\mu$-strongly convex (with $\mu\ge 0$); 
for each $r\in[p]$,  $x_{\cC_r}\mapsto \Phi(x_{\cC_r},z_{\overline{\cC}_r})$ is $\mu_r$-strongly convex
uniformly w.r.t. $z_{\overline{\cC}_r}$,  with  $\mu_r\geq 0$.   
 \item[(ii)] For each $r\in[p]$, there exist constants $L_r>0$ and $L_{\partial r}>0$ such that:
\begin{equation}\label{eq:proj_smoothness}
\|P_r\nabla\Phi(\bx)-P_r\nabla\Phi(\by)\|
\;\le\;
L_r\,\|P_r(\bx-\by)\|,
\quad\forall \bx,\by\ \text{s.t.}\ (I-P_r)\bx=(I-P_r)\by;
\end{equation}
\begin{equation}\label{eq:proj_cross_lip}
  \|P_r\nabla\Phi(\bx)-P_r\nabla\Phi(\by)\|
\;\le\;
L_{{\partial r}}\,\|P_{\partial r}(\bx-\by)\|,
\quad\forall \bx,\by\ \text{s.t.}\ P_r \bx=P_r \by.
\end{equation}
\end{enumerate}When $\mu>0$, we define the global condition number  {$\kappa:=(\max_{r\in [p]} L_r)/\mu$}. When $\mu=0$, the problem admits a solution.
\end{assumption}

Notice that the existence of  $L_r$ follows directly from the  block-Lipschitz continuity of $\nabla\Phi$, whereas the definition of   ${L_{\partial_r}}$ additionally  exploits the graph-induced sparsity. More specifically, for  fixed $P_r \bx$,  $P_r \nabla\Phi(\bx)$ depends only on the blocks of $(I-P_r)\bx$ (i.e., $x_{\overline{\cC}_r}$)  indexed by $\cN_{\cC_r}$ (i.e.,  $P_{\partial r}\bx$). Then, for any $\bx,\by$ with $P_r\bx=P_r\by$, 
\[
P_r\qty(\nabla\Phi(\bx)
-\nabla\Phi(\by))
=P_r\qty(\nabla\Phi\bigl(P_r\bx+P_{\partial r}\bx\bigr)
-\nabla\Phi\bigl(P_r\by+P_{\partial r}\by\bigr)),
\]
which, by the  smoothness of $\Phi$, proves the existence of $L_{\partial r}$. 

Next we study convergence of Algorithm~\ref{alg:main}; we organize the    analysis   in two steps: (i) first, we establish the equivalence between 
the algorithm   and   a (damped) block-Jacobi method with delayed inter-cluster information; then, (ii) we proceed proving  convergence of such a Jacobi method.    \vspace{-0.3cm}

\subsection{Step 1: Algorithm~\ref{alg:main}  as a   block-Jacobi method with delays}
\label{subsec:Step1}

This reformulation builds on      eliminating in~\eqref{gossip_update:a} the messages and making explicit the underlying intra-cluster minimization.  
Specifically,  substitute  \eqref{message_update}, i.e., \begin{equation}
\label{eq:msg-reindexed}
\mu_{j\to i}^{\nu}(x_i)
=\min_{x_j}\Big\{\phi_j(x_j)+\psi_{ij}(x_i,x_j)
+\!\!\sum_{k\in\cNin_j\setminus\{i\}}\!\!\mu_{k\to j}^{\,\nu-1}(x_j)
+\!\!\sum_{k\in\cNout_j}\!\!\psi_{jk}(x_j,x_k^{\,\nu-1})\Big\}.
\end{equation}   into~\eqref{gossip_update:a}. For each $j\in\cNin_i$, this introduces the new variable $x_j$ together with the inner messages $\mu_{k\to j}^{\nu-1}$, ${k\in\cNin_j\setminus{\{i\}}}$.

 We  reapply \eqref{eq:msg-reindexed} to these new messages and repeat the substitution recursively along  the tree $\cG_r$. Each application of \eqref{eq:msg-reindexed} moves one hop farther from $i$ inside $\cG_r$: it adds one local function $\phi_j$, one pairwise factor $\psi_{jk}$, and replaces the expanded message by messages incoming from the next layer of neighbors plus the  boundary terms. When the recursion hits a boundary edge $(j,k)$,   $j\in\cC_r$ and $(j,k)\notin\cE_r$, the expansion stops along that branch and produces the    factor  $\psi_{jk}\!\big(x_j,x_k^{\,\nu-\texttt{d}}\big)$, where $\texttt{d}$ is the   depth of the recursion. Since $\cG_r$ is a tree, every intra-cluster node and edge is reached at most once, so the recursion terminates after finitely many substitutions and aggregates all intra-cluster factors exactly once.   
  Therefore,   one can write
\begin{align*} 
&\sum_{j\in\cNin_i}\mu_{j\to i}^{\nu}(x_i)\nonumber\\
&=\min_{x_{\cC_r\setminus\{i\}}}
\Bigg\{
\sum_{j\in\cC_r}\phi_j(x_j)
+\sum_{(j,k)\in\cE_r}\psi_{jk}(x_j,x_k)
+\sum_{\substack{j\in\cC_r,\,k\in\cNout_j}}\psi_{jk}\big(x_j,x_k^{\,\nu-d(i,j)}\big)
\Bigg\},
\end{align*}
where $d(i,j)\in \mathbb{N}$ is the length of the unique path between $i$ and $j$ in $\cG_r$.  

Using the above expression 
in ~\eqref{gossip_update:a}  we obtain  the following. 
\begin{proposition}
\label{prop:Jacobi_delay_refo}
Under Assumptions~\ref{asm:on_the_partion} and~\ref{asm:graph}, Algorithm~\ref{alg:main} can be rewritten in the equivalent form: for any $i\in\cC_r$ and $r\in [p]$,   
\begin{subequations}
    \begin{align}
 &  x_i^{\nu+1} =x_i^{\nu}+\tau_r^\nu \big(\hat x_i^{\nu+1}-x_i^{\nu}\big),\label{eq:delay_reformulation-cvx-comb}\\
   & \hat x_i^{\,\nu+1}  \!\!
\in \argmin_{x_i}\min_{x_{\cC_r\setminus\{i\}}}
\Bigg\{
\sum_{j\in\cC_r}\ \!\!\!\phi_j(x_j) 
+\!\!\sum_{(j,k)\in\cE_r}\!\!\!\!\psi_{jk}(x_j,x_k) 
+\!\!\!\!\sum_{\substack{j\in\cC_r, k\in\cNout_j}}\!\!\!\!\!\psi_{jk}\big(x_j,\,x_k^{\,\nu-d(i,j)}\big)\label{eq:delay_reformulation}
\Bigg\}.
\end{align}\end{subequations} 
If, in addition, Assumption~\ref{asm:nonverlap} holds,  \eqref{eq:delay_reformulation}  reduces to the following  block-Jacobi update with delays:
\begin{equation}
\label{eq:coordinate_min_delay}
\hat x_i^{\nu+1}
\in\argmin_{x_i}\ \min_{x_{\cC_r\setminus\{i\}}}\,
\Phi\bigl(x_{\cC_r},\,x_{\overline{\cC}_r}^{\,\nu-\bd_i}\bigr),
\end{equation}
where $\bd_i\in\mathbb{N}^{|\overline{\cC}_r|}$ is defined as\vspace{-0.2cm}
\[
(\bd_i)_k :=
\begin{cases}
d(i,j_k), & k\in \cN_{\cC_r},\\
0, & k\in \overline{\cC}_r\setminus\cN_{\cC_r},
\end{cases}
\]
and, for $k\in\cN_{\cC_r}$, $j_k\in\cB_r$ is the unique node such that $(j_k,k)\in\cE$.
\end{proposition}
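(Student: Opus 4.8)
The plan is to prove both claims by a purely algebraic (min-sum) unrolling; no convexity or smoothness is needed, since this is an identity between optimization problems. The over-relaxation line \eqref{eq:delay_reformulation-cvx-comb} is verbatim \eqref{gossip_update:b}, so all the content lies in rewriting $\hat x_i^{\nu+1}$ from \eqref{gossip_update:a}. First I would fix $r\in[p]$ and $i\in\cC_r$ and root the tree $\cG_r$ at $i$; for $a\neq i$ let $p(a)$ denote its parent toward $i$, let $T_a\subseteq\cC_r$ be the vertex set of the subtree below $a$, and let $d(\cdot,\cdot)$ be tree distance in $\cG_r$.

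The crux is a lemma identifying each message with an exact subtree partial minimization carrying explicit delays, proved by induction on $\mathrm{height}(T_a)$: for every $a\neq i$ and every $t\ge 1+\mathrm{height}(T_a)$,
\[
\mu_{a\to p(a)}^t(x_{p(a)})=\min_{(x_b)_{b\in T_a}}\Big\{\psi_{p(a)a}(x_{p(a)},x_a)+\sum_{b\in T_a}\phi_b(x_b)+\!\!\sum_{\substack{(b,c)\in\cE_r\\ b,c\in T_a}}\!\!\psi_{bc}(x_b,x_c)+\sum_{b\in T_a}\sum_{c\in\cNout_b}\psi_{bc}\big(x_b,x_c^{\,t-1-d(a,b)}\big)\Big\}.
\]
The base case ($a$ a leaf, so $\cNin_a\setminus\{p(a)\}=\emptyset$) is read off directly from \eqref{eq:msg-reindexed} and matches \eqref{eq:message_leaves}. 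For the inductive step I substitute the hypothesis for each child message $\mu_{c\to a}^{\,t-1}$ into \eqref{eq:msg-reindexed}; because the child subtrees $\{T_c\}$ are vertex-disjoint (the tree has no cycles), their inner minimizations act on disjoint variables, commute with $\min_{x_a}$, and merge into a single $\min_{(x_b)_{b\in T_a}}$. The delay bookkeeping closes because the hypothesis gives delay $(t-1)-1-d(c,b)$ at $b\in T_c$ and $d(a,b)=1+d(c,b)$, so this equals $t-1-d(a,b)$, while node $a$'s own boundary carries delay $t-1=t-1-d(a,a)$ and each edge $\psi_{ca}$ becomes an intra-subtree edge of $T_a$.

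Granting the lemma, Part 1 is immediate: applying it with $t=\nu$ and $p(j)=i$ to every summand of $\sum_{j\in\cNin_i}\mu_{j\to i}^{\nu}$ in \eqref{gossip_update:a}, and using that $\{T_j\}_{j\in\cNin_i}$ partitions $\cC_r\setminus\{i\}$, the minimizations again merge into $\min_{x_{\cC_r\setminus\{i\}}}$, while the leftover $\phi_i(x_i)+\sum_{k\in\cNout_i}\psi_{ik}(x_i,x_k^\nu)$ supplies $\phi_i$ and the $i$-boundary factor at delay $\nu=\nu-d(i,i)$; the substitution $d(i,b)=1+d(j,b)$ converts each delay $\nu-1-d(j,b)$ into $\nu-d(i,b)$, giving exactly \eqref{eq:delay_reformulation} after taking $\argmin_{x_i}$. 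For Part 2 I would invoke Assumption~\ref{asm:nonverlap}: $|\cN_k\cap\cC_r|\le1$ forces each external $k\in\cN_{\cC_r}$ to have a unique intra-cluster gateway $j_k\in\cC_r$, so $x_k$ enters the boundary sum with the single well-defined delay $d(i,j_k)$. Setting $(\bd_i)_k=d(i,j_k)$ on $\cN_{\cC_r}$ and $0$ elsewhere, the boundary sum collapses to $\sum_{k\in\cN_{\cC_r}}\psi_{j_k k}(x_{j_k},x_k^{\nu-(\bd_i)_k})$; adjoining the $\Phi$-terms supported entirely on $\overline{\cC}_r$ (constants in $x_{\cC_r}$, hence argmin-irrelevant) turns the objective into $\Phi(x_{\cC_r},x_{\overline{\cC}_r}^{\nu-\bd_i})$, which is \eqref{eq:coordinate_min_delay}.

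I expect the main obstacle to be the delay accounting in the induction, together with one genuine subtlety I would flag explicitly: the lemma, and hence the clean reformulation, requires the message index to exceed the subtree height, i.e. $\nu\ge\mathrm{ecc}_{\cG_r}(i)\ (\le D)$. For smaller $\nu$ the recursion bottoms out at the arbitrary initial messages $\mu^0$ rather than at genuine delayed iterates, so the stated equivalence holds only after a transient of at most $D$ iterations; I would therefore either state the equivalence for $\nu\ge D$ or carry the residual $\mu^0$-dependent terms and note that this finite transient is immaterial to the asymptotic convergence analysis that follows. I would also note that the gateway $j_k$ need not be a leaf of $\cG_r$ in general, so it is the tree distance $d(i,j_k)$, not leaf membership, that the argument genuinely uses.
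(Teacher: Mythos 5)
Your proof is correct and follows essentially the same route as the paper's: the same induction over subtrees rooted at $i$ (the paper indexes the lemma by depth $d(i,j)$ rather than by subtree height, but the delay bookkeeping $t-1-d(a,b)=\nu-d(i,b)$ is identical), the same merging of disjoint child-subtree minimizations at each internal node and at the root, and the same use of Assumption~\ref{asm:nonverlap} to collapse the boundary sum into $\Phi(x_{\cC_r},x_{\overline{\cC}_r}^{\,\nu-\bd_i})$. Your flag about the transient for $\nu<\mathrm{ecc}_{\cG_r}(i)$ -- where the unrolling bottoms out at the arbitrary initial messages $\mu^{0}$ rather than at genuine delayed iterates -- is a legitimate point of care that the paper's own proof glosses over; it is absorbed only implicitly via the initialization convention $\bx^{-D}=\cdots=\bx^{0}$ invoked later in the convergence analysis.
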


\begin{proof} See Appendix~\ref{app:proof_prop_Jacobi-equivalence}.  \qed
\end{proof}

   The  distance $d(i,j)$ of  $j\in\cC_r$  from the updating index $i$ along the tree $\cG_r$ satisfies 
$d(i,j)\le\mathrm{diam}(\cG_r)\le|\cC_r|-1$. Clearly, if $|\cC_r|=1$ then  $d(i,j)=0$. 


  The equivalence of the algorithm updates  \eqref{eq:delay_reformulation} with (\ref{eq:coordinate_min_delay}) (under  Assumption~\ref{asm:nonverlap}) along with the postulated  lower boundedness of $\Phi$ (Assumption~\ref{assumption:scvx_smoothness}(i)) readily implies the existence of a minimizer for the subproblems (\ref{eq:coordinate_min_delay}).  


\begin{proposition}\label{prop:well_defined}
Suppose Assumptions~\ref{asm:on_the_partion},~\ref{asm:nonverlap}, and~\ref{assumption:scvx_smoothness} hold. 
Then Algorithm~\ref{alg:main} is well defined: at every iteration $\nu$, all message-update subproblems~\eqref{message_update} and variable-update subproblems~\eqref{gossip_update} admit minimizers. 
If, in addition, $\Phi$ is strongly convex, these minimizers are unique.
\end{proposition}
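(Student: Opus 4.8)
The plan is to reduce every subproblem arising in Algorithm~\ref{alg:main} to a \emph{partial minimization of a restriction (slice) of $\Phi$}, and then to prove a single attainment lemma: for every coordinate block $\cC\subseteq\cV$ and every fixed $z_{\overline{\cC}}$, the slice $x_{\cC}\mapsto\Phi(x_{\cC},z_{\overline{\cC}})$ attains its infimum, and does so at a unique point when $\Phi$ is strongly convex. For the variable update this reduction is already available: Proposition~\ref{prop:Jacobi_delay_refo} identifies \eqref{gossip_update:a} (aggregated over the cluster) with the block minimization \eqref{eq:coordinate_min_delay}, i.e. the minimization of $\Phi(x_{\cC_r},x_{\overline{\cC}_r}^{\,\nu-\bd_i})$ over $x_{\cC_r}$. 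Hence existence (resp. uniqueness) of $\hat x_i^{\nu+1}$, taken as the $i$-block of a minimizer of this slice, follows from the lemma, and the relaxed iterate $x_i^{\nu+1}$ is then well defined through the convex combination \eqref{gossip_update:b}.

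The crux of the argument is the attainment lemma in the merely convex regime $\mu=\mu_r=0$. When $\mu_r>0$ the slice is $\mu_r$-strongly convex by Assumption~\ref{assumption:scvx_smoothness}(i) (in the strongly convex case it inherits modulus $\mu>0$ directly from $\Phi$), hence coercive, so a unique minimizer exists and the claim is immediate. When $\mu_r=0$, lower-boundedness of $\Phi$ alone does \emph{not} force attainment (witness $t\mapsto e^{-t}$), so I would instead invoke the standing assumption that \eqref{prob_hyper} admits a solution, i.e. $\Phi$ attains its global minimum, together with the recession-function characterization of minimizer attainment for closed proper convex functions. Writing $f^\infty$ for the recession function of a convex $f$, a finite-valued convex function attains its minimum only if every recession direction $d$ with $f^\infty(d)\le 0$ is a direction of constancy. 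If a slice $\varphi:=\Phi(\cdot,z_{\overline{\cC}})$ failed to attain its infimum, there would exist a non-constancy direction $d$ with $\varphi^\infty(d)\le 0$; boundedness below of $\Phi$ rules out $\varphi^\infty(d)<0$, so $\varphi^\infty(d)=0$. Lifting to the full space, the direction $(d,0)$ satisfies $\Phi^\infty(d,0)=\varphi^\infty(d)=0$ and is still not a constancy direction of $\Phi$ (constancy along $(d,0)$ would force constancy of $\varphi$ along $d$). By the attainment characterization this contradicts the fact that $\Phi$ attains its global minimum, so every slice attains its infimum; strict convexity of the slice yields uniqueness in the strongly convex case. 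This slice/recession step is the one genuinely delicate point, since the naive appeal to lower-boundedness in the text is insufficient without the global solvability hypothesis.

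It remains to treat the message subproblems. Unrolling the leaf-to-root recursion \eqref{eq:message_leaves}--\eqref{eq:ideal_message_new} exactly as in the proof of Proposition~\ref{prop:Jacobi_delay_refo}, the update \eqref{message_update} defining $\mu_{i\to j}^{\nu+1}$ coincides, up to an additive term independent of $x_j$, with a partial minimization of a restriction of $\Phi$ over the block of variables in the subtree of $\cG_r$ hanging off $i$ away from $j$, with $x_j$ and all boundary blocks held fixed at their current (delayed) values. Proceeding by induction on $\cG_r$ from the leaves $\cB_r$, where \eqref{message_update} reduces to the plain slice minimization \eqref{eq:message_leaves}, the attainment lemma shows each message subproblem admits a minimizer, so all messages are well defined; under strong convexity each such slice is strictly convex in the minimized block, giving a unique minimizer at every node. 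Combining the two parts establishes that all variable- and message-update subproblems in Algorithm~\ref{alg:main} are solvable at every iteration, with unique solutions whenever $\Phi$ is strongly convex.
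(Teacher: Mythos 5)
You follow the same route the paper sketches—reduce every subproblem, via the unrolling behind Proposition~\ref{prop:Jacobi_delay_refo}, to a partial minimization of a slice $x_{\cC}\mapsto\Phi(x_{\cC},z_{\overline{\cC}})$ (over a cluster for~\eqref{gossip_update:a}, over a subtree for~\eqref{message_update})—and you are right that the paper's one-line justification is too quick: lower boundedness of $\Phi$ does not by itself yield attainment of the slice infima, and you correctly handle the strongly convex case, where each slice inherits a positive modulus (also on sub-blocks) and existence/uniqueness are immediate. The problem is your patch for the merely convex case. The recession-function characterization you invoke is used in the wrong direction: it is true that if every $d$ with $f^\infty(d)\le 0$ is a constancy direction then $f$ attains its infimum, but the converse—which is what your final contradiction needs, namely that $\Phi$ attaining its global minimum forces every $d$ with $\Phi^\infty(d)=0$ to be a constancy direction—is false. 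Already $f(x,y)=\max\{0,-x\}$ attains its minimum while $(1,0)$ satisfies $f^\infty(1,0)=0$ and is not a constancy direction. So producing a non-constancy direction $(d,0)$ with $\Phi^\infty(d,0)=0$ contradicts nothing.

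The gap cannot be repaired under the stated hypotheses, because the statement itself fails when $\mu_r=0$. Take $m=2$, $d=1$, $\phi_1=\phi_2=0$, and $\psi_{12}(x_1,x_2)=\tfrac12\,\dist\big((x_1,x_2),C\big)^2$ with $C=\{(u,v)\in\R^2:\ v\ge e^{-u}\}$ closed and convex. Then $\Phi=\tfrac12\dist(\cdot,C)^2$ is convex, belongs to $LC^1$ (its gradient is $1$-Lipschitz), is bounded below, and attains its global minimum on $C$, so Assumptions~\ref{asm:on_the_partion},~\ref{asm:nonverlap}, and~\ref{assumption:scvx_smoothness} hold with $\mu=\mu_r=0$; yet the slice $x_1\mapsto\Phi(x_1,0)$ is strictly positive with infimum $0$ approached as $x_1\to+\infty$, so the corresponding variable update~\eqref{gossip_update:a} has no minimizer. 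Hence any correct proof must add a hypothesis—$\mu_r>0$ for all $r$ (as in Theorem~\ref{thm:convergence_scvx}), or coercivity/attainment of the block slices—and in that regime your argument (and the paper's) goes through trivially, since each cluster slice is $\mu_r$-strongly convex and strong convexity is preserved when restricting to the subtree blocks appearing in the message updates. Your treatment of the message subproblems and of uniqueness under strong convexity is otherwise correct.
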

 
 We emphasize that Assumption \ref{asm:nonverlap} is used only to enable the compact notation in \eqref{eq:coordinate_min_delay} and to streamline the convergence analysis. When it fails, the surrogation technique in Sec.~\ref{sec:surrogate_pairwise}   preserves well-posedness and convergence.\vspace{-0.1cm}

\subsection{Step 2: Convergence analysis of block-Jacobi method with bounded delays}
\label{subsec:convergence_exact}\vspace{-0.2cm}

Now, we establish   convergence for the delayed block-Jacobi scheme in~\eqref{eq:coordinate_min_delay}. 

 Given $\hat x_i^{\nu+1}$, the minimizer produced by agent $i$ at iteration $\nu$  satisfying   \eqref{eq:coordinate_min_delay},   
 define the assembled vector $\hat \bx^{\nu+1}:=[ (\hat x_1^{\nu+1})^{\top},\ldots,  (\hat x_m^{\nu+1})^\top]^\top$. 
For each cluster, set $\hat x_{\mathcal C_r}^{\nu+1}:=(\hat x_i^{\nu+1})_{i\in\mathcal C_r}$.   
 Notice that $\hat x_{\mathcal C_r}^{\nu+1}$ is an assembled vector: its blocks may correspond
to different delay patterns $\bd_i$. 
  Using the projector  operator $P_r:=U_{\mathcal C_r} U_{\mathcal C_r}^\top$, the update of the algorithm can be written in vector form as:  
\begin{equation}
\bx^{\nu+1}
= \bx^\nu + \sum_{r\in [p]} \tau_r^\nu\, P_r(\hat \bx^{\nu+1}-\bx^\nu). 
\label{eq:step_decom}
\end{equation}
We also introduce  the virtual   \emph{non-delayed} block updates: for each $r\in [p]$,  let 
\begin{equation}
\label{eq:non-delay-BJac}
\bar{x}_{\cC_r}^{\nu+1}\in\argmin_{x_{\cC_r}}\Phi(x_{\cC_r},x_{\overline{\cC}_r}^\nu), \quad \text{and}\quad \bar x_i^{\nu+1}=\big[\bar{x}_{\cC_r}^{\nu+1}\big]_i,\,\,\forall i\in \cC_r;
\end{equation}
and the assembled vector $\bar \bx^{\nu+1}:=[ (\bar x_1^{\nu+1})^{\top},\ldots,  (\bar x_m^{\nu+1})^\top]^\top$. Notice that since the clusters form a partition, each $x_i^{\nu+1}$ above is uniquely defined.

The first result is  the descent of $\Phi$ along the iterates $\{\bx^\nu\}$, summarized next.

\begin{lemma}
\label{le:scvx_descent_contraction}
Under Assumption~\ref{assumption:scvx_smoothness} and any stepsize choice satisfying $\tau_r^\nu\geq0$ and  $\sum_{r=1}^p\tau_r^\nu\leq 1$, the following holds:  
\begin{equation}
\label{eq:scvx_descent_contraction}
\begin{aligned}
\Phi(\bx^{\nu+1})\leq\Phi(\bx^\nu)+\sum_{r\in[p]}\tau_r^\nu\qty[-\frac{\norm{P_r  \nabla\Phi(\bx^\nu)}^2}{2L_r}+\frac{L_r}{2}\big\|P_r(\hat \bx^{\nu+1}-\bar \bx^{\nu+1})\big\|^2].
\end{aligned}
\end{equation}  
\end{lemma}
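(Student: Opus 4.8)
The plan is to decouple the simultaneous (Jacobi) multi-block update into $p$ independent single-block steps, and then invoke block-smoothness to obtain a per-block descent estimate. First I would introduce, for each $r\in[p]$, the single-block update $\bz_r := \bx^\nu + P_r(\hat\bx^{\nu+1}-\bx^\nu)$, which coincides with $\hat\bx^{\nu+1}$ on the coordinates $\cC_r$ and with $\bx^\nu$ elsewhere. Since the projectors $\{P_r\}$ act on the disjoint coordinate blocks of a partition, the vector update \eqref{eq:step_decom} rewrites as $\bx^{\nu+1}=(1-\sum_{r}\tau_r^\nu)\bx^\nu+\sum_{r}\tau_r^\nu\bz_r$, a genuine convex combination because $\tau_r^\nu\ge0$ and $\sum_r\tau_r^\nu\le1$. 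Invoking convexity of $\Phi$ (Assumption~\ref{assumption:scvx_smoothness}(i)) via Jensen's inequality then yields $\Phi(\bx^{\nu+1})\le\Phi(\bx^\nu)+\sum_r\tau_r^\nu\big(\Phi(\bz_r)-\Phi(\bx^\nu)\big)$, reducing the problem to bounding each single-block decrement $\Phi(\bz_r)-\Phi(\bx^\nu)$.

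The second ingredient is a block-restricted descent lemma extracted from \eqref{eq:proj_smoothness}: for any $\bx,\by$ agreeing outside $\cC_r$ (i.e.\ $(I-P_r)\bx=(I-P_r)\by$), integrating $\nabla\Phi$ along the segment and using that the displacement lies in the block-$r$ subspace gives $\Phi(\by)\le\Phi(\bx)+\inprod{P_r\nabla\Phi(\bx)}{\by-\bx}+\tfrac{L_r}{2}\norm{\by-\bx}^2$. I would apply this twice. Applied at $\bx^\nu$ and minimized over the block-$r$ displacement, it shows that the virtual non-delayed block minimizer $\bar\bz_r:=\bx^\nu+P_r(\bar\bx^{\nu+1}-\bx^\nu)$ from \eqref{eq:non-delay-BJac} satisfies the sufficient-decrease bound $\Phi(\bar\bz_r)\le\Phi(\bx^\nu)-\tfrac{1}{2L_r}\norm{P_r\nabla\Phi(\bx^\nu)}^2$.

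The third step quantifies the price of the delays. Since $\bar\bz_r$ minimizes $\Phi$ over the block-$r$ affine subspace through $\bx^\nu$, first-order optimality gives $P_r\nabla\Phi(\bar\bz_r)=0$. Applying the block descent lemma a second time, now with base point $\bar\bz_r$ and target $\bz_r$ (both agreeing with $\bx^\nu$ outside $\cC_r$), the linear term vanishes and leaves $\Phi(\bz_r)\le\Phi(\bar\bz_r)+\tfrac{L_r}{2}\norm{\bz_r-\bar\bz_r}^2$, where $\bz_r-\bar\bz_r=P_r(\hat\bx^{\nu+1}-\bar\bx^{\nu+1})$. Combining the last two displays bounds $\Phi(\bz_r)-\Phi(\bx^\nu)$ by $-\tfrac{1}{2L_r}\norm{P_r\nabla\Phi(\bx^\nu)}^2+\tfrac{L_r}{2}\norm{P_r(\hat\bx^{\nu+1}-\bar\bx^{\nu+1})}^2$, and substituting this into the Jensen bound of the first paragraph produces exactly \eqref{eq:scvx_descent_contraction}.

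I expect the main conceptual obstacle to be the decoupling in the first step: the Jacobi update perturbs all blocks at once, so one cannot directly invoke a single-block descent lemma, while a naive global descent lemma would only yield a global Lipschitz constant rather than the block-wise $L_r$. The convex-combination reformulation, which crucially exploits both $\sum_r\tau_r^\nu\le1$ and the disjointness of the $P_r$, is precisely what makes the per-block constants appear. A secondary point to handle with care is that $\hat\bx^{\nu+1}$ is an \emph{assembled} vector whose blocks stem from different delay patterns $\bd_i$; this causes no difficulty here because the entire effect of the delays is absorbed into the single deviation term $\norm{P_r(\hat\bx^{\nu+1}-\bar\bx^{\nu+1})}^2$, whose finer structure is only needed later when this term is estimated against the iterates.
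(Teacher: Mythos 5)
Your proposal is correct and follows essentially the same route as the paper's proof: the convex-combination rewriting of the update plus Jensen's inequality, the block-restricted descent lemma derived from \eqref{eq:proj_smoothness}, the vanishing inner product $\inprod{P_r\nabla\Phi(\bar\bz_r)}{P_r(\hat\bx^{\nu+1}-\bar\bx^{\nu+1})}=0$ from block optimality of $\bar\bx^{\nu+1}$, and the sufficient-decrease bound $\Phi(\bar\bz_r)\le\Phi(\bx^\nu)-\tfrac{1}{2L_r}\norm{P_r\nabla\Phi(\bx^\nu)}^2$ are all exactly the ingredients the paper uses, merely presented in a different order.
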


\begin{proof} 
For each $r\in [p]$, $x_{\cC_r}\mapsto \Phi(x_{\cC_r},x_{\bar \cC_r}^\nu)$ is $L_r$-smooth (Assumption~\ref{assumption:scvx_smoothness}(ii)). Therefore
\begin{equation}
\label{eq:basic}
\begin{aligned}
 &\Phi\big(\bx^\nu+P_r (\hat \bx^{\nu+1}-\bx^\nu)\big) 
 \leq  \Phi\big(\bx^\nu+P_r (\bar \bx^{\nu+1}-\bx^\nu)\big)\\
&\quad +\underbrace{\inner{\nabla\Phi\big(\bx^\nu+P_r(\bar \bx^{\nu+1}-\bx^\nu)\big),P_r (\hat \bx^{\nu+1}-\bar \bx^{\nu+1})}}_{=0}+\frac{L_r}{2}\big\|P_r(\hat \bx^{\nu+1}-\bar \bx^{\nu+1})\big\|^2,\\
&\leq  \Phi\big(\bx^\nu\big)-\frac{\norm{P_r \nabla\Phi(\bx^\nu)}^2}{2L_r}+\frac{L_r}{2}\big\|P_r(\hat \bx^{\nu+1}-\bar \bx^{\nu+1})\big\|^2,
\end{aligned}
\end{equation}
where the zero inner-product comes from  $ P_r \nabla\Phi\big(\bx^\nu+P_r(\bar \bx^{\nu+1}-\bx^\nu)\big)=0$,  
due to  
\begin{equation}\label{eq:block-optimality}
\Phi\big(\bx^\nu+P_r\big(\bar \bx^{\nu+1}-\bx^\nu\big)\big)=\min_{x_{\cC_r}}\Phi(x_{\cC_r},x_{\overline{\cC}_r}^\nu).
\end{equation}

 Next, rewrite the update \eqref{eq:step_decom} as\begin{equation*}
\bx^{\nu+1}
=\Big(1-\sum_{r=1}^p\tau_r^\nu\Big)\bx^\nu
+\sum_{r=1}^p\tau_r^\nu\Big(\bx^\nu+P_r(\hat\bx^{\nu+1}-\bx^\nu)\Big).
\end{equation*}
Invoking the Jensens' inequality (and subtracting $\Phi(\bx^\nu)$ from both sides), yields 
\begin{equation}\label{eq:descent_sum_noz}
\Phi(\bx^{\nu+1})-\Phi(\bx^\nu)
\le
\sum_{r=1}^p\tau_r^\nu\Big(\Phi\big(\bx^\nu+P_r(\hat\bx^{\nu+1}-\bx^\nu)\big)-\Phi(\bx^\nu)\Big).
\end{equation}
Finally, applying   (\ref{eq:basic}) to each term of the sum on the RHS of   \eqref{eq:descent_sum_noz}  gives (\ref{eq:scvx_descent_contraction}). \hfill $\square$\end{proof}

 Lemma~\ref{le:scvx_descent_contraction} provides a descent estimate for $\Phi$ driven by the projected gradient norm,
up to the delay-induced discrepancy $\|P_r(\bar \bx^{\nu+1}-\hat \bx^{\nu+1})\|^2$.
For the subsequent contraction analysis, we also need a complementary
 {sufficient decrease} inequality that produces a negative quadratic term in the
actual block displacement $\|P_r (\bx^{\nu+1}-\ \bx^\nu)\|^2$.
This is obtained next exploting strong convexity of $\Phi$.  
\begin{lemma}
\label{le:scvx_descent}
Under Assumption~\ref{assumption:scvx_smoothness} and any stepsize choice satisfying $\tau_r^\nu\geq0$ and  $\sum_{r=1}^p\tau_r^\nu\leq 1$, the following holds:  
\begin{equation}
\label{eq:scvx_descent}
\begin{aligned}
\Phi(\bx^{\nu+1})\leq\Phi(\bx^\nu)+\sum_{r\in[p]}\tau_r^\nu\qty[-\frac{\mu_r}{4}\norm{P_r\big(\hat \bx^{\nu+1}-\bx^\nu\big)}^2+\frac{L_r+\mu_r}{2}\norm{P_r\big(\bar \bx^{\nu+1}-\hat \bx^{\nu+1}\big)}^2].
\end{aligned}
\end{equation}
\end{lemma}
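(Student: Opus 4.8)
The plan is to mirror the structure of the proof of Lemma~\ref{le:scvx_descent_contraction}, but to replace the smoothness-only bound on each block contribution with a two-sided argument that combines strong convexity (to extract the negative displacement term) with smoothness (to absorb the delay-induced discrepancy). Fix $r\in[p]$ and consider the block section $f_r(x_{\cC_r}):=\Phi(x_{\cC_r},x_{\overline{\cC}_r}^{\nu})$, which by Assumption~\ref{assumption:scvx_smoothness} is $\mu_r$-strongly convex and $L_r$-smooth. Recall from \eqref{eq:non-delay-BJac} that the non-delayed update $\bar x_{\cC_r}^{\nu+1}$ minimizes $f_r$, so that $P_r\nabla\Phi(\bx^\nu+P_r(\bar\bx^{\nu+1}-\bx^\nu))=0$, as already established in \eqref{eq:block-optimality}.

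First I would apply strong convexity of $f_r$ at its minimizer $\bar x_{\cC_r}^{\nu+1}$, compared against the current iterate $x_{\cC_r}^\nu$: since the gradient at the minimizer vanishes, this gives $f_r(\bar x_{\cC_r}^{\nu+1})\le f_r(x_{\cC_r}^\nu)-\tfrac{\mu_r}{2}\norm{P_r(\bar\bx^{\nu+1}-\bx^\nu)}^2$. Next I would apply the descent lemma to compare the assembled delayed point $\hat x_{\cC_r}^{\nu+1}$ with the minimizer, again using the vanishing gradient, to obtain $f_r(\hat x_{\cC_r}^{\nu+1})\le f_r(\bar x_{\cC_r}^{\nu+1})+\tfrac{L_r}{2}\norm{P_r(\hat\bx^{\nu+1}-\bar\bx^{\nu+1})}^2$. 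Chaining these two inequalities and rewriting $f_r(\hat x_{\cC_r}^{\nu+1})=\Phi(\bx^\nu+P_r(\hat\bx^{\nu+1}-\bx^\nu))$ and $f_r(x_{\cC_r}^\nu)=\Phi(\bx^\nu)$ yields a per-block bound carrying a negative term in the \emph{non-delayed} displacement $\norm{P_r(\bar\bx^{\nu+1}-\bx^\nu)}^2$ together with a positive discrepancy term.

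The key manipulation is to convert the negative non-delayed displacement into the \emph{actual} displacement $\norm{P_r(\hat\bx^{\nu+1}-\bx^\nu)}^2$ appearing in the statement. For this I would split $P_r(\hat\bx^{\nu+1}-\bx^\nu)=P_r(\hat\bx^{\nu+1}-\bar\bx^{\nu+1})+P_r(\bar\bx^{\nu+1}-\bx^\nu)$ and invoke $\norm{a+b}^2\le 2\norm{a}^2+2\norm{b}^2$, which rearranges to the lower bound $\norm{P_r(\bar\bx^{\nu+1}-\bx^\nu)}^2\ge\tfrac12\norm{P_r(\hat\bx^{\nu+1}-\bx^\nu)}^2-\norm{P_r(\hat\bx^{\nu+1}-\bar\bx^{\nu+1})}^2$. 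Feeding this into the negative term trades a factor of two (producing $-\tfrac{\mu_r}{4}$) and contributes an extra $\tfrac{\mu_r}{2}\norm{P_r(\hat\bx^{\nu+1}-\bar\bx^{\nu+1})}^2$, so the positive coefficient becomes $\tfrac{L_r+\mu_r}{2}$, exactly matching the claim at the per-block level.

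Finally, I would aggregate across clusters exactly as in the proof of Lemma~\ref{le:scvx_descent_contraction}: rewriting the update \eqref{eq:step_decom} as a convex combination of $\bx^\nu$ and the block-updated points $\bx^\nu+P_r(\hat\bx^{\nu+1}-\bx^\nu)$ (using $\tau_r^\nu\ge0$ and $\sum_r\tau_r^\nu\le1$), Jensen's inequality gives $\Phi(\bx^{\nu+1})-\Phi(\bx^\nu)\le\sum_r\tau_r^\nu\big(\Phi(\bx^\nu+P_r(\hat\bx^{\nu+1}-\bx^\nu))-\Phi(\bx^\nu)\big)$, and substituting the per-block bound yields \eqref{eq:scvx_descent}. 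I do not anticipate a genuine obstacle here---the argument is routine convex analysis. The only point demanding care is conceptual rather than technical: one must \emph{not} attempt to use optimality of $\hat x_{\cC_r}^{\nu+1}$ itself, since it is an assembled vector mixing several delay patterns $\bd_i$ and is therefore not a minimizer of $f_r$; the whole argument must pivot on the genuine block minimizer $\bar x_{\cC_r}^{\nu+1}$, with the gap between the two controlled solely through the discrepancy term $\norm{P_r(\hat\bx^{\nu+1}-\bar\bx^{\nu+1})}$.
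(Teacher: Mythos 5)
Your proposal is correct and follows essentially the same route as the paper's proof: strong convexity at the non-delayed block minimizer $\bar x_{\cC_r}^{\nu+1}$, the smoothness bound of \eqref{eq:basic} to pass from $\bar\bx^{\nu+1}$ to the assembled delayed point $\hat\bx^{\nu+1}$, the norm-splitting inequality $-\norm{a}^2\le -\tfrac12\norm{a+b}^2+\norm{b}^2$ to convert to the actual displacement (yielding $-\tfrac{\mu_r}{4}$ and $\tfrac{L_r+\mu_r}{2}$), and aggregation via Jensen on \eqref{eq:descent_sum_noz}. Your closing remark about not invoking optimality of $\hat x_{\cC_r}^{\nu+1}$ itself correctly identifies the one conceptual subtlety.
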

\begin{proof} 
Invoking \eqref{eq:block-optimality} and   strong convexity of $\Phi$, yields
\[
\Phi\qty(\bx^\nu+P_r\qty(\bar\bx^{\nu+1}-\bx^\nu))\leq\Phi(\bx^\nu)-\frac{\mu_r}{2}\norm{P_r(\bar\bx^{\nu+1}-\bx^\nu)}^2.
\]
Using \eqref{eq:basic}, we have
\begin{equation}
\label{eq:basic3}
\begin{aligned}
&\Phi\qty(\bx^\nu+P_r\qty(\hat\bx^{\nu+1}-\bx^\nu))\leq\Phi(\bx^\nu)-\frac{\mu_r}{2}\norm{P_r\qty(\bar\bx^{\nu+1}-\bx^\nu)}^2+\frac{L_r}{2}\norm{P_r(\bar\bx^{\nu+1}-\hat\bx^{\nu+1})}^2.
\end{aligned}
\end{equation}
Note that 
\begin{equation*}
-\norm{P_r(\bar\bx^{\nu+1}-\bx^\nu)}^2\leq-\frac{1}{2}\norm{P_r(\hat\bx^{\nu+1}-\bx^\nu)}^2+\norm{P_r(\bar\bx^{\nu+1}-\hat\bx^{\nu+1})}^2.
\end{equation*}
Grouping together, we have 
\[
\begin{aligned}
&\Phi\qty(\bx^\nu+P_r\qty(\hat\bx^{\nu+1}-\bx^\nu))\leq\Phi(\bx^\nu)-\frac{\mu_r}{4}\norm{P_r(\hat\bx^{\nu+1}-\bx^\nu)}^2+\frac{L_r+\mu_r}{2}\norm{P_r(\bar\bx^{\nu+1}-\hat\bx^{\nu+1})}^2.
\end{aligned}
\]
Applying above inequality to  each term of the sum on the RHS of   \eqref{eq:descent_sum_noz}  gives \eqref{eq:scvx_descent}. 
\qed
\end{proof}
Note that both Lemma~\ref{le:scvx_descent_contraction} and Lemma \ref{le:scvx_descent} contains the delay-induced discrepancy term $\norm{\bar x_{\cC_r}^{\nu+1}-\hat x_{\cC_r}^{\nu+1}}^2$, which is bounded  by the accumulated boundary variations over a window of length at most $D_r$. We have the following lemma.
\begin{lemma}\label{le:iterates_gap} In the setting of Lemma~\ref{le:scvx_descent_contraction} and Lemma \ref{le:scvx_descent}, assume    {$\mu_r>0$ for all $r\in [p]$}. Then,  the following holds:
 for any  $r\in [p]$,
\begin{equation}\label{eq:out-cluster-bound}
\big\|P_r(\hat \bx^{\nu+1}-\bar \bx^{\nu+1})\big\|^2
\le \frac{L_{\partial r}^2\,|\cC_r|D_r}{\mu_r^2}\,
\sum_{\ell=\nu-D_r}^{\nu-1}\big\|P_{\partial r}(\bx^{\ell+1}-\bx^{\ell})\big\|^2.
\end{equation} 
\end{lemma}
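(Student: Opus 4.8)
The plan is to bound the per-cluster discrepancy $\|P_r(\hat\bx^{\nu+1}-\bar\bx^{\nu+1})\|^2$ by splitting it across nodes and controlling each block through the sensitivity of the strongly convex cluster subproblem to its out-of-cluster data. First I would record that, since the clusters partition $\cV$, the projected norm decomposes as $\|P_r(\hat\bx^{\nu+1}-\bar\bx^{\nu+1})\|^2=\sum_{i\in\cC_r}\|\hat x_i^{\nu+1}-\bar x_i^{\nu+1}\|^2$. For each $i\in\cC_r$, introduce the delayed full-cluster minimizer $\xcrinup\in\argmin_{x_{\cC_r}}\Phi(x_{\cC_r},x_{\overline{\cC}_r}^{\,\nu-\bd_i})$ (so that $\hat x_i^{\nu+1}$ is its $i$-th block), and recall $\bar x_{\cC_r}^{\nu+1}\in\argmin_{x_{\cC_r}}\Phi(x_{\cC_r},x_{\overline{\cC}_r}^{\nu})$. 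Since a single block of a vector has norm at most that of the whole vector, $\|\hat x_i^{\nu+1}-\bar x_i^{\nu+1}\|^2\le\|\xcrinup-\bar x_{\cC_r}^{\nu+1}\|^2$, so it suffices to bound the full-cluster error.

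Second, the core is a minimizer-sensitivity estimate driven by $\mu_r$-strong convexity (Assumption~\ref{assumption:scvx_smoothness}(i), using $\mu_r>0$). Both $\xcrinup$ and $\bar x_{\cC_r}^{\nu+1}$ are unconstrained stationary points, hence $\nabla_{\cC_r}\Phi(\xcrinup,x_{\overline{\cC}_r}^{\,\nu-\bd_i})=0$ and $\nabla_{\cC_r}\Phi(\bar x_{\cC_r}^{\nu+1},x_{\overline{\cC}_r}^{\nu})=0$. Applying strong convexity of $x_{\cC_r}\mapsto\Phi(x_{\cC_r},x_{\overline{\cC}_r}^{\nu})$ at these two points and substituting the first stationarity relation gives $\mu_r\|\xcrinup-\bar x_{\cC_r}^{\nu+1}\|\le\|\nabla_{\cC_r}\Phi(\xcrinup,x_{\overline{\cC}_r}^{\nu})-\nabla_{\cC_r}\Phi(\xcrinup,x_{\overline{\cC}_r}^{\,\nu-\bd_i})\|$. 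Because the two configurations inside this gradient difference carry the \emph{same} in-cluster block $\xcrinup$ (so $P_r$ agrees), the cross-Lipschitz bound~\eqref{eq:proj_cross_lip} applies and yields $\le L_{\partial r}\,\|P_{\partial r}(x_{\overline{\cC}_r}^{\nu}-x_{\overline{\cC}_r}^{\,\nu-\bd_i})\|$; by the sparsity structure of $P_{\partial r}$ this equals $L_{\partial r}\big(\sum_{k\in\cN_{\cC_r}}\|x_k^{\nu}-x_k^{\nu-(\bd_i)_k}\|^2\big)^{1/2}$.

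Third, I would convert each individual delay gap into accumulated single-step increments. Writing $x_k^{\nu}-x_k^{\nu-(\bd_i)_k}=\sum_{\ell=\nu-(\bd_i)_k}^{\nu-1}(x_k^{\ell+1}-x_k^{\ell})$ and applying Cauchy--Schwarz gives $\|x_k^{\nu}-x_k^{\nu-(\bd_i)_k}\|^2\le(\bd_i)_k\sum_{\ell=\nu-(\bd_i)_k}^{\nu-1}\|x_k^{\ell+1}-x_k^{\ell}\|^2$. Since $(\bd_i)_k=d(i,j_k)\le D_r$ for every $k\in\cN_{\cC_r}$, I may enlarge each window to the common range $[\nu-D_r,\nu-1]$ (only adding nonnegative terms) and bound the prefactor by $D_r$, obtaining $\sum_{k\in\cN_{\cC_r}}\|x_k^{\nu}-x_k^{\nu-(\bd_i)_k}\|^2\le D_r\sum_{\ell=\nu-D_r}^{\nu-1}\|P_{\partial r}(\bx^{\ell+1}-\bx^{\ell})\|^2$, where I recollect $\sum_{k\in\cN_{\cC_r}}\|x_k^{\ell+1}-x_k^{\ell}\|^2=\|P_{\partial r}(\bx^{\ell+1}-\bx^{\ell})\|^2$. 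Chaining the three estimates yields $\|\hat x_i^{\nu+1}-\bar x_i^{\nu+1}\|^2\le(L_{\partial r}^2 D_r/\mu_r^2)\sum_{\ell=\nu-D_r}^{\nu-1}\|P_{\partial r}(\bx^{\ell+1}-\bx^{\ell})\|^2$ uniformly in $i$; summing over the $|\cC_r|$ nodes of the cluster produces exactly~\eqref{eq:out-cluster-bound}.

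As for the difficulties, the only genuinely delicate point is bookkeeping the node-dependent delay pattern $\bd_i$: because each block of the assembled vector $\hat\bx^{\nu+1}$ originates from a \emph{different} minimization, one cannot treat $\hat x_{\cC_r}^{\nu+1}$ as a single minimizer and must argue node-by-node, which is precisely what introduces the $|\cC_r|$ factor upon summation. The other nontrivial choice is to invoke the \emph{cross}-Lipschitz constant $L_{\partial r}$ rather than the in-cluster $L_r$; this is legitimate exactly because the gradient difference is evaluated at a fixed in-cluster block, and graph sparsity confines the perturbation to the external-neighborhood coordinates $\cN_{\cC_r}$, so $P_{\partial r}$ captures the entire variation. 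Everything else (telescoping, Cauchy--Schwarz, window enlargement) is routine.
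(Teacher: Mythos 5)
Your proposal is correct and follows essentially the same route as the paper: decompose blockwise, introduce the per-node delayed full-cluster minimizer $\hat x_{\cC_r,i}^{\nu+1}$, derive $\|\hat x_{\cC_r,i}^{\nu+1}-\bar x_{\cC_r}^{\nu+1}\|\le (L_{\partial r}/\mu_r)\|x_{\cN_{\cC_r}}^{\nu-\bd_i}-x_{\cN_{\cC_r}}^{\nu}\|$ from stationarity, $\mu_r$-strong convexity, and the cross-Lipschitz bound~\eqref{eq:proj_cross_lip}, then telescope with Cauchy--Schwarz and sum over the $|\cC_r|$ nodes. The only (immaterial) difference is that you anchor the strong-convexity/cross-Lipschitz step at the non-delayed objective and the delayed minimizer, whereas the paper splits the zero optimality residual into in-block and cross-block changes anchored at $\bar x_{\cC_r}^{\nu+1}$; both yield the identical constant.
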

\begin{proof} 
Define the auxiliary variable  
\[
\hat x_{\cC_r,i}^{\nu+1}:=\argmin_{x_i}\min_{x_{\cC_r\setminus i}}\Phi\qty(x_{\cC_r},x_{-\cC_r}^{\nu-\bd_i}),\text{ with $\hat x_i^{\nu+1}=\qty[ \hat x_{\cC_r,i}^{\nu+1}]_i$.}
\]
By optimality the condition, 
\[
\begin{aligned}
0
&=\nabla_{\cC_r}\Phi\big(\hat x_{\cC_r,i}^{\nu+1},x_{\overline{\cC}_r}^{\nu-\bd_i}\big)
 -\nabla_{\cC_r}\Phi\big(\bar x_{\cC_r}^{\nu+1},x_{\overline{\cC}_r}^{\nu}\big)\\
&=\underbrace{\nabla_{\cC_r}\Phi\big(\hat x_{\cC_r,i}^{\nu+1},x_{\overline{\cC}_r}^{\nu-\bd_i}\big)
 -\nabla_{\cC_r}\Phi\big(\bar x_{\cC_r}^{\nu+1},x_{\overline{\cC}_r}^{\nu-\bd_i}\big)}_{\text{in-block change}}
 +\underbrace{\nabla_{\cC_r}\Phi\big(\bar x_{\cC_r}^{\nu+1},x_{\overline{\cC}_r}^{\nu-\bd_i}\big)
 -\nabla_{\cC_r}\Phi\big(\bar x_{\cC_r}^{\nu+1},x_{\overline{\cC}_r}^{\nu}\big)}_{\text{cross-block change}}.
\end{aligned}
\]
Taking inner product with $\hat x_{\cC_r,i}^{\nu+1}-\bar x_{\cC_r}^{\nu+1}$, using $\mu_r$-strong convexity in $x_{\cC_r}$ and the cross-Lipschitz bound of
Assumption~\ref{assumption:scvx_smoothness}(ii), we obtain
\[
\begin{aligned}
\mu_r\norm{\hat x_{\cC_r,i}^{\nu+1}-\bar x_{\cC_r}^{\nu+1}}^2\leq&-\inner{\nabla_{\cC_r}\Phi(\bar x_{\cC_r}^{\nu+1},x_{\overline{\cC}_r}^{\nu-\bd_i})-\nabla_{\cC_r}\Phi(\bar x_{\cC_r}^{\nu+1},x_{\overline{\cC}_r}^{\nu}),\hat x_{\cC_r,i}^{\nu+1}-\bar x_{\cC_r}^{\nu+1}}\\
\leq&\norm{\nabla_{\cC_r}\Phi(\bar x_{\cC_r}^{\nu+1},x_{\overline{\cC}_r}^{\nu-\bd_i})-\nabla_{\cC_r}\Phi(\bar x_{\cC_r}^{\nu+1},x_{\overline{\cC}_r}^{\nu})}\norm{\hat x_{\cC_r,i}^{\nu+1}-\bar x_{\cC_r}^{\nu+1}}\\
\leq& L_{\partial r}\norm{x_{\cN_{\cC_r}}^{\nu-\bd_i}-x_{\cN_{\cC_r}}^{\nu}}\norm{\hat x_{\cC_r,i}^{\nu+1}-\bar x_{\cC_r}^{\nu+1}},
\end{aligned}
\]
which proves $\|\hat x_{\cC_r,i}^{\nu+1}-\bar x_{\cC_r,i}^{\nu+1}\|\le \frac{L_{\partial r}}{\mu_r}\,\big\|x_{\cN_{\cC_r}}^{\,\nu-\bd_i}-x_{\cN_{\cC_r}}^{\,\nu}\big\|$. Then, we have 
\[
\begin{aligned}
   &\norm{\hat x_{\cC_r}^{\nu+1}-\bar x_{\cC_r}^{\nu+1}}^2=\sum_{i\in\cC_r}\norm{\hat x_i^{\nu+1}-\bar x_i^{\nu+1}}^2\leq\sum_{i\in\cC_r}\norm{\hat x_{\cC_r,i}^{\nu+1}-\bar x_{\cC_r}^{\nu+1}}^2\\
   \leq&\sum_{i\in\cC_r}\qty(\frac{L_{\partial r}}{\mu_r})^2\norm{x_{\cN_{\cC_r}}^{\nu-\bd_i}-x_{\cN_{\cC_r}}^{\nu}}^2\leq\frac{L_{\partial r}^2|\cC_r|D_r}{\mu_r^2}\sum_{\ell=\nu-D_r}^{\nu-1}\norm{x_{\cN_{\cC_r}}^{\ell+1}-x_{\cN_{\cC_r}}^{\ell}}^2.
\end{aligned}
\]
This completes the proof. \qed
\end{proof}

Notice that when $|\cC_r|=1$,   $D_r=0$, and the right-hand side of (\ref{eq:out-cluster-bound}) equals $0$.

  Lemma~\ref{le:iterates_gap} shows that, for each cluster, the delay discrepancy is controlled only by variations of the iterates on the {\it external neighborhood}  $\cN_{\cC_r}$  of that cluster (equivalently, through the projector $P_{\partial r}$ onto $\cN_{\cC_r}$). 
Hence, delay accumulation acts only on the coordinates in
$\bigcup_{r\in[p]:\,|\cC_r|>1} \cN_{\cC_r}$.
To make this explicit, we introduce a minimal cover of this set by clusters.
Let $\cJ\subseteq [p]$ be the  minimal index set of clusters whose union covers all external neighbors, i.e.,        
\begin{equation}\label{eq:def_J}
\bigcup_{r\in [p]:|\cC_r|>1} \cN_{\cC_r}\ \subseteq\ \bigcup_{r\in \cJ} \cC_r .
\end{equation}
We call $\{\cC_r\}_{r\in\cJ}$ the {\it external-neighborhood clusters}.
In sparse graphs, this set is typically much smaller than the full partition  ($\lvert\cJ\rvert\ll p$ and $\sum_{r\in\cJ}|\cC_r|\ll m$), so the delay accumulation affects only a small subset of coordinates.  

Using the above   results, we can now    state the first convergence result. 
\begin{theorem}
\label{thm:convergence_scvx}
Suppose Assumption  \ref{asm:on_the_partion}, \ref{asm:graph}, and \ref{assumption:scvx_smoothness} hold,  {with $p>1$ and $\mu_r>0,$ for all $r\in [p]$}. Let $\{\bx^\nu\}$ be the iterates generated by the method \eqref{eq:delay_reformulation-cvx-comb} and  \eqref{eq:coordinate_min_delay}. Then, 
\[
\Phi(\bx^{\nu})-\Phi^{\star}\leq c\rho^\nu,\text{ for any $\nu\in\mathbb{N}$,}
\]
where $c\in (0,\infty)$ is a universal constant, and $\rho\in (0,1)$ is defined as follows. 
\begin{enumerate}
    \item[\textbf{(i)}] {\bf (heterogeneous stepsizes):} Under $\sum_{r\in[p]}\tau_r^\nu=1$, $\tau_r^\nu>0$;  and 
    \begin{equation}   \label{eq:delay_lemma_condition_1}
    2D+1\ \le\
    \min\left\{
    \frac{2\max\limits_{r\in[p],\nu\ge 1}\frac{L_r}{\tau_r^\nu}}{\mu},
\frac{\min_{r\in\cJ,\nu\geq1}\frac{\mu_r}{8\tau_r^\nu}}
    {\max_{\nu\geq1}{A^\nu_{\mathcal J}}}
    \right\},
    \end{equation}
   the rate is given by  \vspace{-0.1cm}\begin{equation}\label{eq:rate-varying-tau}
       \rho=1-\frac{\mu}{2}\min_{r\in[p],\nu\geq1}\frac{\tau_r^\nu}{L_r}.\end{equation}  Here $$ A^\nu_{\mathcal J}  : =  \; \max_{\,i\in \cup_{j\in\cJ}\cC_j}\ \sum_{r:\,|\cC_r|>1,\ i\in\cN_{\cC_r}}(\tau_r^\nu A_r),\,\, \text{with }\,\, A_r\ :=\;\frac{(2L_r+\mu_r)L_{\partial r}^2\,|\cC_r|\,D_r}{4\mu_r^2}.$$
    \item[\textbf{(ii)}]  {\bf (uniform stepsizes) :} if  $\tau_r^\nu\equiv\tau$ and
     \begin{equation}   \label{eq:delay_lemma_condition_2}
    \!\!\!\!\tau\ \le\
    \min\left\{
    \frac{1}{p}, \frac{2\kappa}{2D+1},
    \sqrt{\frac{\min_{r\in\cJ}\mu_r}{ 8(2D+1) A_{\mathcal J}}}  \right\},
    \end{equation}
    then 
    \begin{equation}\label{eq:rate-constant-step}
    \rho=1-\frac{\tau}{2 \kappa}.
     \end{equation}
    Here
    \begin{equation}\label{eq:def_A} A_{\cJ} :=\max_{\,i\in \cup_{j\in\cJ}\cC_j}\ \sum_{r:\,|\cC_r|>1,\ i\in\cN_{\cC_r}} A_r. \end{equation}
   
\end{enumerate}
\end{theorem}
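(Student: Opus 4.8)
The plan is to reduce the iteration to a single \emph{delayed descent recursion} on the gap $\Phi(\bx^\nu)-\Phi^\star$ and then cancel the delay terms with a Lyapunov potential that augments the gap by a short, weighted window of past \emph{boundary} increments. Throughout write $\Delta_i^\ell:=\norm{x_i^{\ell+1}-x_i^\ell}^2$, let $r(i)$ be the cluster containing node $i$, and set $S:=\bigcup_{r:|\cC_r|>1}\cN_{\cC_r}$, the coordinates on which delays act by Lemma~\ref{le:iterates_gap}. I adopt the convention $\bx^\ell:=\bx^0$ for $\ell\le 0$, so windowed sums are well defined for small $\nu$.

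First I would \emph{merge the two descent estimates}. Averaging Lemma~\ref{le:scvx_descent_contraction} and Lemma~\ref{le:scvx_descent} yields, per cluster, the gradient term $-\tfrac{\tau_r^\nu}{4L_r}\norm{P_r\nabla\Phi(\bx^\nu)}^2$, the displacement term $-\tfrac{\tau_r^\nu\mu_r}{8}\norm{P_r(\hat\bx^{\nu+1}-\bx^\nu)}^2$, and a discrepancy term whose coefficient is exactly $\tfrac{2L_r+\mu_r}{4}$. Since the clusters partition $\cV$, $P_r(\bx^{\nu+1}-\bx^\nu)=\tau_r^\nu P_r(\hat\bx^{\nu+1}-\bx^\nu)$, turning the displacement term into $-\tfrac{\mu_r}{8\tau_r^\nu}\norm{P_r(\bx^{\nu+1}-\bx^\nu)}^2$. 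Applying the Polyak--\L{}ojasiewicz inequality (a consequence of $\mu$-strong convexity) to the summed gradient terms produces the contraction seed $-\bar a_\nu(\Phi(\bx^\nu)-\Phi^\star)$ with $\bar a_\nu=\tfrac{\mu}{2}\min_{r}\tfrac{\tau_r^\nu}{L_r}$, while Lemma~\ref{le:iterates_gap} rewrites the discrepancy as $\sum_r\tau_r^\nu A_r\sum_{\ell=\nu-D_r}^{\nu-1}\norm{P_{\partial r}(\bx^{\ell+1}-\bx^\ell)}^2$ with $A_r$ exactly as in the statement. Regrouping the per-cluster windows node-wise, extending each to the common horizon $D$, and discarding the negative displacement terms for $i\notin S$, I reach the master recursion
\begin{equation}
\Phi(\bx^{\nu+1})-\Phi^\star\ \le\ (1-\bar a_\nu)\big(\Phi(\bx^\nu)-\Phi^\star\big)-\sum_{i\in S}\tfrac{\mu_{r(i)}}{8\tau_{r(i)}^\nu}\,\Delta_i^\nu+\sum_{i\in S}A_i^\nu\sum_{\ell=\nu-D}^{\nu-1}\Delta_i^\ell,
\end{equation}
where $A_i^\nu:=\sum_{r:|\cC_r|>1,\,i\in\cN_{\cC_r}}\tau_r^\nu A_r$. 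Since $S\subseteq\bigcup_{j\in\cJ}\cC_j$ and, by minimality of the cover $\cJ$, $\{r(i):i\in S\}=\cJ$, we get $\max_{i\in S}A_i^\nu\le A^\nu_{\cJ}$ and $\min_{i\in S}\tfrac{\mu_{r(i)}}{8\tau_{r(i)}^\nu}=\min_{r\in\cJ}\tfrac{\mu_r}{8\tau_r^\nu}$.

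Second, I would introduce the potential
\begin{equation}
V^\nu:=\big(\Phi(\bx^\nu)-\Phi^\star\big)+c_0\sum_{i\in S}\sum_{t=1}^{D}(D-t+1)\,\Delta_i^{\nu-t},
\end{equation}
so that $V^0=\Phi(\bx^0)-\Phi^\star$. A shift-and-telescope computation of $V^{\nu+1}-\rho V^\nu$ with $\rho:=1-\bar a$, $\bar a:=\min_\nu\bar a_\nu=\tfrac{\mu}{2}\min_{r,\nu}\tfrac{\tau_r^\nu}{L_r}$, collects the coefficient of each $\Delta_i^{\nu-t}$ and of the gap; using the master recursion, $V^{\nu+1}\le\rho V^\nu$ reduces to: (a)~$1-\bar a_\nu\le\rho$, automatic since $\bar a_\nu\ge\bar a$; (b)~the newest-increment bound $c_0 D\le\min_{r\in\cJ,\nu}\tfrac{\mu_r}{8\tau_r^\nu}=:\bar b$; and (c)~the interior bound $\bar A_{\cJ}\le c_0(1-\bar a D)$ with $\bar A_{\cJ}:=\max_\nu A^\nu_{\cJ}$ (the $t=D$ boundary case being weaker). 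A feasible $c_0$ exists iff $\tfrac{\bar A_{\cJ}D}{1-\bar a D}\le\bar b$, and the two terms of~\eqref{eq:delay_lemma_condition_1} supply exactly this: the first, $2D+1\le 2\max_{r,\nu}(L_r/\tau_r^\nu)/\mu=\bar a^{-1}$, forces $\bar a D\le D/(2D+1)<\tfrac12$, hence $1-\bar a D>\tfrac12$; the second, $2D+1\le\bar b/\bar A_{\cJ}$, then gives $\tfrac{\bar A_{\cJ}D}{1-\bar a D}<2\bar A_{\cJ}D\le\bar b$. Choosing such $c_0$ yields $V^{\nu+1}\le\rho V^\nu$ for all $\nu$, whence $\Phi(\bx^\nu)-\Phi^\star\le V^\nu\le\rho^\nu V^0$, i.e. the claim with $c=\Phi(\bx^0)-\Phi^\star$ and $\rho$ as in~\eqref{eq:rate-varying-tau}. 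Part~(ii) is the specialization $\tau_r^\nu\equiv\tau$: then $\bar a=\tfrac{\tau}{2\kappa}$ (giving~\eqref{eq:rate-constant-step}), $\bar A_{\cJ}=\tau A_{\cJ}$, $\bar b=\min_{r\in\cJ}\tfrac{\mu_r}{8\tau}$, and the three bounds in~\eqref{eq:delay_lemma_condition_2} read, respectively, $\sum_r\tau_r^\nu=p\tau\le1$ (required by the descent lemmas), $\bar a(2D+1)\le1$, and $\bar A_{\cJ}(2D+1)\le\bar b$.

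The main obstacle is the potential design and its feasibility. The delay bound of Lemma~\ref{le:iterates_gap} is indexed by clusters through the \emph{external} projector $P_{\partial r}$, whereas the usable negative term is indexed by clusters through the \emph{internal} projector $P_r$; reconciling them forces the node-wise regrouping and the minimal cover $\cJ$, which is precisely what makes $A^\nu_{\cJ}$ and $\bar b$ the right aggregate quantities. The more delicate point is that a clean geometric decay of $V$ is possible only when the window weights satisfy the telescoping inequalities (a)--(c) \emph{simultaneously}: feasibility couples the rate $\bar a$ to the horizon $D$ through the factor $1-\bar a D$, and keeping it bounded away from zero is exactly the first condition in~\eqref{eq:delay_lemma_condition_1}. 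Verifying that the two stepsize conditions are not merely sufficient for each inequality separately but jointly guarantee a single admissible weight $c_0$ is the crux of the argument.
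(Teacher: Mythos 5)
Your proposal is correct and follows essentially the same route as the paper: average the two descent lemmas to get the $-\tfrac{\tau_r^\nu}{4L_r}\norm{P_r\nabla\Phi}^2$ and $-\tfrac{\mu_r}{8\tau_r^\nu}\norm{P_r(\bx^{\nu+1}-\bx^\nu)}^2$ terms plus a discrepancy with coefficient $\tfrac{2L_r+\mu_r}{4}$, control the discrepancy via Lemma~\ref{le:iterates_gap}, regroup node-wise over the minimal cover $\cJ$ to obtain $A_{\cJ}^\nu$, and apply the PL inequality. The only difference is that the paper closes the argument by citing a standard delay-window inequality, whereas you prove that step explicitly with the triangular-weight potential $V^\nu$ and verify feasibility of $c_0$ under~\eqref{eq:delay_lemma_condition_1}--\eqref{eq:delay_lemma_condition_2}; your feasibility computation checks out.
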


\begin{proof}
Fix $\nu\in\mathbb{N}$. For any index set $S\subseteq[m]$, write
$\Delta_S^\nu:=\sum_{\ell=\nu-D}^{\nu-1}\|x_S^{\ell+1}-x_S^\ell\|^2$. By \eqref{eq:scvx_descent_contraction}, \eqref{eq:scvx_descent}, and Lemma~\ref{le:iterates_gap},
\begin{equation*}
\begin{aligned}
\Phi(\bx^{\nu+1})
\;\le\;&
\Phi(\bx^\nu)
-\sum_{r\in[p]}\frac{\tau_r^\nu}{4L_r}\,\|\nabla_{\cC_r}\Phi(\bx^\nu)\|^2
-\sum_{r\in[p]}\frac{\mu_r}{8\tau_r^\nu}\,\|x_{\cC_r}^{\nu+1}-x_{\cC_r}^{\nu}\|^2+\sum_{r\in[p]}\tau_r^\nu A_r
\Delta_{\cN_{\cC_r}}^\nu.
\end{aligned}
\end{equation*}
Discarding the vanishing terms with $|\cC_r|=1$ and using the disjointness of $\{\cC_j\}_{j\in\cJ}$, 
\[
\sum_{r:\,|\cC_r|>1}(\tau_r^\nu A_r)\,\Delta_{\cN_{\cC_r}}^\nu
=
\sum_{\ell=\nu-D}^{\nu-1}\ \sum_{i\in \cup_{j\in\cJ}\cC_j}
\Big(\sum_{r:\,|\cC_r|>1,\ i\in\cN_{\cC_r}}(\tau_r^\nu A_r)\Big)\,
\|x_i^{\ell+1}-x_i^\ell\|^2.
\]
By  definition of $ A_{\mathcal J}^\nu$, we have
\begin{equation*}
\sum_{r:\,|\cC_r|>1}(\tau_r^\nu A_r)\,\Delta_{\cN_{\cC_r}}^\nu
\ \le\ 
 A_{\mathcal J}^\nu\sum_{\ell=\nu-D}^{\nu-1}\ \sum_{i\in \cup_{j\in\cJ}\cC_j}\|x_i^{\ell+1}-x_i^\ell\|^2
\ =\ 
 A_{\mathcal J}^\nu\sum_{j\in\cJ}\Delta_{\cC_j}^\nu.
\end{equation*}
Finally, by strong convexity,
$\|\nabla\Phi(\bx^\nu)\|^2\ge 2\mu\big(\Phi(\bx^\nu)-\Phi^\star\big)$, and thus
\begin{equation*}
\begin{aligned}
\Phi(\bx^{\nu+1})-\Phi^\star
\;\le\;&
\Big(1-\frac{\mu}{2}\min_{r\in[p],\,\nu\ge 1}\frac{\tau_r^\nu}{L_r}\Big)\big(\Phi(\bx^\nu)-\Phi^\star\big)
-\sum_{r\in\cJ}\frac{\mu_r}{8\tau_r^\nu}\,\|x_{\cC_r}^{\nu+1}-x_{\cC_r}^{\nu}\|^2\\
&\quad+ A_{\mathcal J}^\nu\sum_{r\in\cJ}\Delta_{\cC_r}^\nu.
\end{aligned}
\end{equation*}
Invoking the standard delay-window inequality (e.g. \cite[Lemma 5]{feyzmahdavian2023asynchronous}) yields linear convergence, subject to  
\eqref{eq:delay_lemma_condition_1}. Using  heterogeneous  (resp. uniform) stepsize values along with \eqref{eq:delay_lemma_condition_1} yields the linear convergence rates as given in \eqref{eq:rate-varying-tau} (resp. \eqref{eq:delay_lemma_condition_2}). \qed 

\end{proof}

\begin{remark}
    When $\cG$ is a tree, $p=1$ (i.e., message passing on the entire $\cG$), and $\tau_r^\nu\equiv1$, the iterates terminate in finitely many (diameter of $\cG$) rounds, thus recovering the classical finite-time property of min-sum/message passing on trees.  
\end{remark} 
\begin{remark}\label{rem:scvx_without_max_nonoverlap}
The linear rate established in the theorem does not hinge on edge-maximality or  Assumption~\ref{asm:nonverlap_hyper} per se. 
It is suffices that  each cluster ``aggregate surrogate''--obtained  collecting all terms involving $x_{\cC_r}$--is strongly convex in $x_{\cC_r}$   
(see Assumption~\ref{assumption:surrogation} in  Sec.~\ref{sec:surrogate_pairwise} for the formal statement). 
That said, dropping maximality/ non-overlap may jeopardize well-posedness of the algorithmic subproblems (existence of minimizers). 
Both issues are resolved by the surrogate construction in Sec.~\ref{sec:surrogate_pairwise}, which enforces strong convexity and well-posed local updates.
\vspace{-0.4cm}
\end{remark}
  
\subsection{Discussion}
\label{subsec:discussion_rate}\vspace{-0.2cm}
 Theorem~\ref{thm:convergence_scvx} establishes global \emph{linear} convergence of  
\(\{\Phi(\bx^{\nu})\}\),
with a contraction factor \(\rho\in(0,1)\), under heterogeneous stepsizes (subject to (\ref{eq:delay_lemma_condition_1})) or homogeneous ones (satisfying \eqref{eq:delay_lemma_condition_2}). For the sake of conciseness, next we comment only the latter case; for the former, we only recall that, under   $\tau_r^\nu=1/p$, the contraction factor \eqref{eq:delay_lemma_condition_1} becomes \(\rho=1-(1/(2\kappa p))\), matching that  of the \textit{centralized} $p$-block Jacobi. 

  
The rate expression (\ref{eq:rate-constant-step}), under 
\eqref{eq:delay_lemma_condition_2},   reads\vspace{-.2cm}
\begin{equation}
\label{eq:rate_expression}
\rho\;=\;1-\frac{1}{2\kappa}\cdot
\min\!\left\{
\underbrace{\frac{1}{p}}_{({\rm I})},\;
\underbrace{\frac{2\kappa}{2D+1}}_{({\rm II})},\;
\underbrace{\sqrt{\frac{\min_{r\in\cJ}\mu_r}{\,8(2D+1)\,A_{\cJ}\,}}}_{({\texttt{III}})}
\right\}.
\end{equation}
Here, \texttt{(I)} captures the cluster-level effect--typical of Jacobi-type schemes with \(p\) playing
the role of the number of blocks;
 \texttt{(II)} reflects the {\it intra}-cluster geometry via the delay/diameter
parameter \(D\), along with the loss landscape (uniformly across all partitions) through the condition number
\(\kappa\);   and \texttt{(III)} captures the {\it inter}-cluster coupling  through \(A_{\cJ}\), which aggregates the
external interactions \emph{only} over the block-coordinates belonging to the external-neighborhood clusters
\(\{\cC_r\}_{r\in\cJ}\) (i.e., the subset where delay accumulation can occur).  

Eq.~\eqref{eq:rate_expression} shows  a  trade-off: Term \texttt{(I)} increases as \(p\) decreases, whereas terms \texttt{(II)}--\texttt{(III)} raise as
\(D\) decreases. 
For a fixed graph, these quantities are coupled by the   partition $\{\cG_r\}_{r\in [p]}$: coarser clustering (smaller \(p\)) typically
increase the diameters \(D\) and may also increase the   coefficient \(A_{\cJ}\), while finer
partitions (larger \(p\)) reduce \(D\) but can enlarge \(p\). 
 Table~\ref{tab:three_regimes}   suggests the following partition-design principles.
If \texttt{(I)} is active, the bound is \emph{block-count limited} and one should reduce the number of clusters $p$
(coarser clustering), while controlling the induced growth of $D$ and $A_{\cJ}$.
If \texttt{(II)} is active, the bound is \emph{delay/diameter limited} and the priority is to keep clusters shallow
(small $D$); in this regime the contraction is essentially $\kappa$-insensitive.
If \texttt{(III)} is active, the bound is \emph{external-coupling limited} and one should reduce $A_{\cJ}$ by placing
cuts on weak inter-cluster couplings (small $L_{\partial r}$). 

\begin{table}[t!]
\centering
\setlength{\tabcolsep}{6pt}
\renewcommand{\arraystretch}{1.2}
\begin{tabular}{@{}p{0.46\linewidth}p{0.44\linewidth}@{}}
\toprule
{\bf Active term (interpretation)} &  {\bf Margin $1-\rho$} \\
\midrule
\texttt{(I)} (block-count limited) 
& $\displaystyle 1-\rho=\frac{1}{4\kappa p}$ \vspace{0.2cm}\\ 
\texttt{(II)} (delay/diameter limited)
& $\displaystyle 1-\rho=\frac{1}{2D+1}$ \vspace{0.2cm}\\ 
\texttt{(III)} (external-coupling limited)
& $\displaystyle
1-\rho=\frac{1}{2\kappa}\sqrt{\frac{\min_{r\in\cJ}\mu_r}{8(2D+1)A_{\cJ}}}$ \\
\bottomrule 
\end{tabular}
\vspace{-0.15cm}
\caption{Operating regimes of~\eqref{eq:rate_expression}. Each tuning knob improves the bound only while its associated term is active; otherwise the rate saturates.}
\label{tab:three_regimes}\vspace{-0.1cm}
\end{table}
\noindent Figures~\ref{fig:rate_plot_p}--\ref{fig:rate_plot_kappa} empirically illustrate these mechanisms: 
Fig.~\ref{fig:rate_plot_p} shows that, when \texttt{(I)} is active, reducing $p$ yields faster convergence (until
saturation); Fig.~\ref{fig:rate_plot_Ar} isolates the effect of $A_{\cJ}$ and confirms the monotone improvement
predicted by the \texttt{(III)}-regime; and Fig.~\ref{fig:rate_plot_kappa} highlights that, once \texttt{(II)} is
active, the observed iteration count becomes nearly insensitive to $\kappa$.

\begin{figure*}[h]
   \vspace{-0.6cm}
    \begin{center}
        \begin{tabular}{@{}cc@{}}
                \raisebox{-\height}{\includegraphics[width=0.5\linewidth]{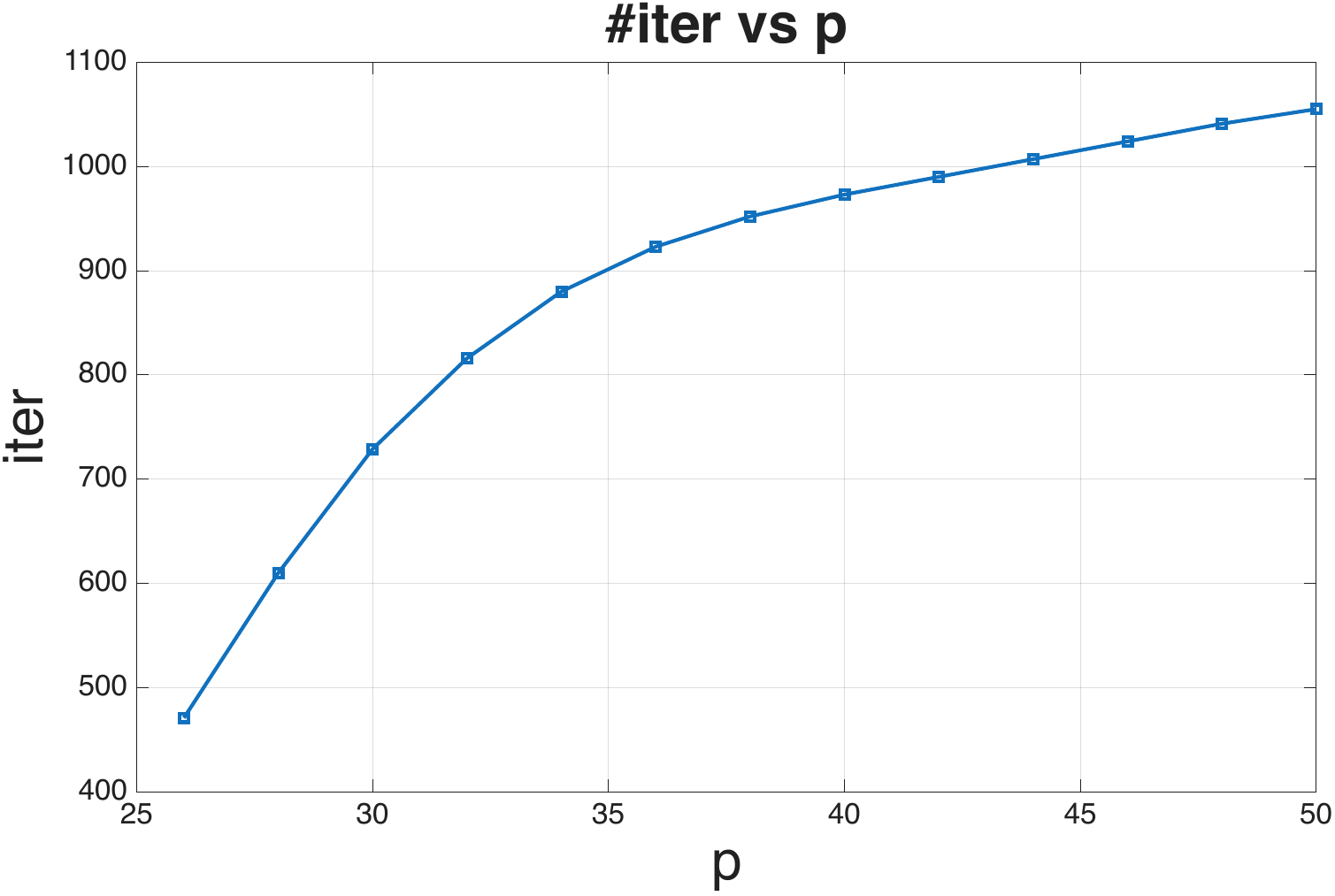}} &
                \raisebox{-\height}{\includegraphics[width=0.45\linewidth]{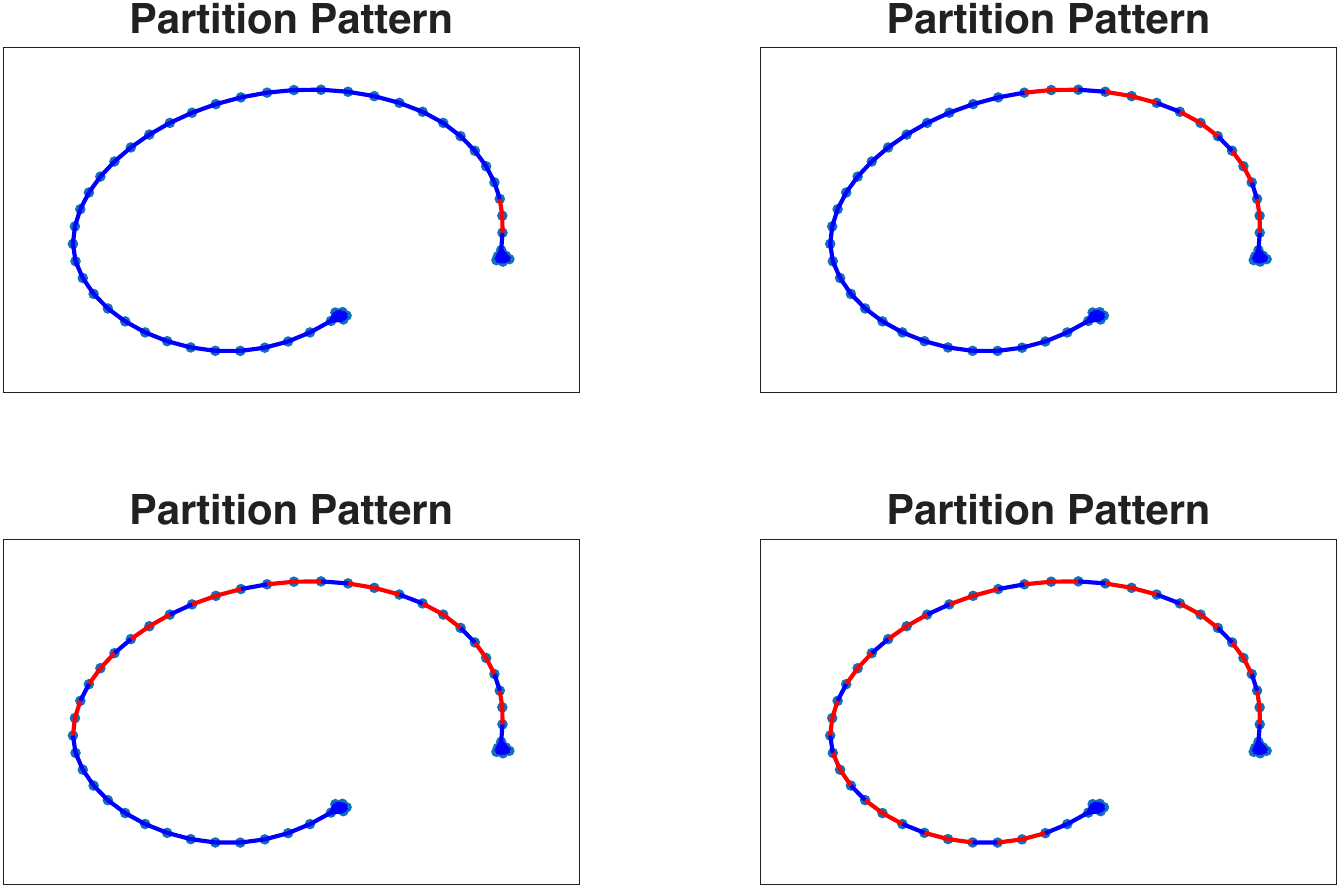}}\\[0.2em]
                \footnotesize{(a) \#iter to $\varepsilon=10^{-6}$ solution vs $p$} &  \footnotesize{(b) Partitions:  different $p$, all with  $D=2$.}       
        \end{tabular}
        \caption{Graph with two cliques (degree 6) connected by a long path (length 42). All partitions (different $p$) share the same $D$, $A_{\cJ}$, $(\mu_r)_{r\in\cJ}$ and $\kappa$.}
        \label{fig:rate_plot_p}
    \end{center}\vspace{-0.4cm}
\end{figure*}
\begin{figure*}[h]
  \vspace{-0.3cm}  \begin{center}
        \setlength{\tabcolsep}{0.0pt}  
        \scalebox{1}{\begin{tabular}{ccc}
                \includegraphics[width=0.33\linewidth]{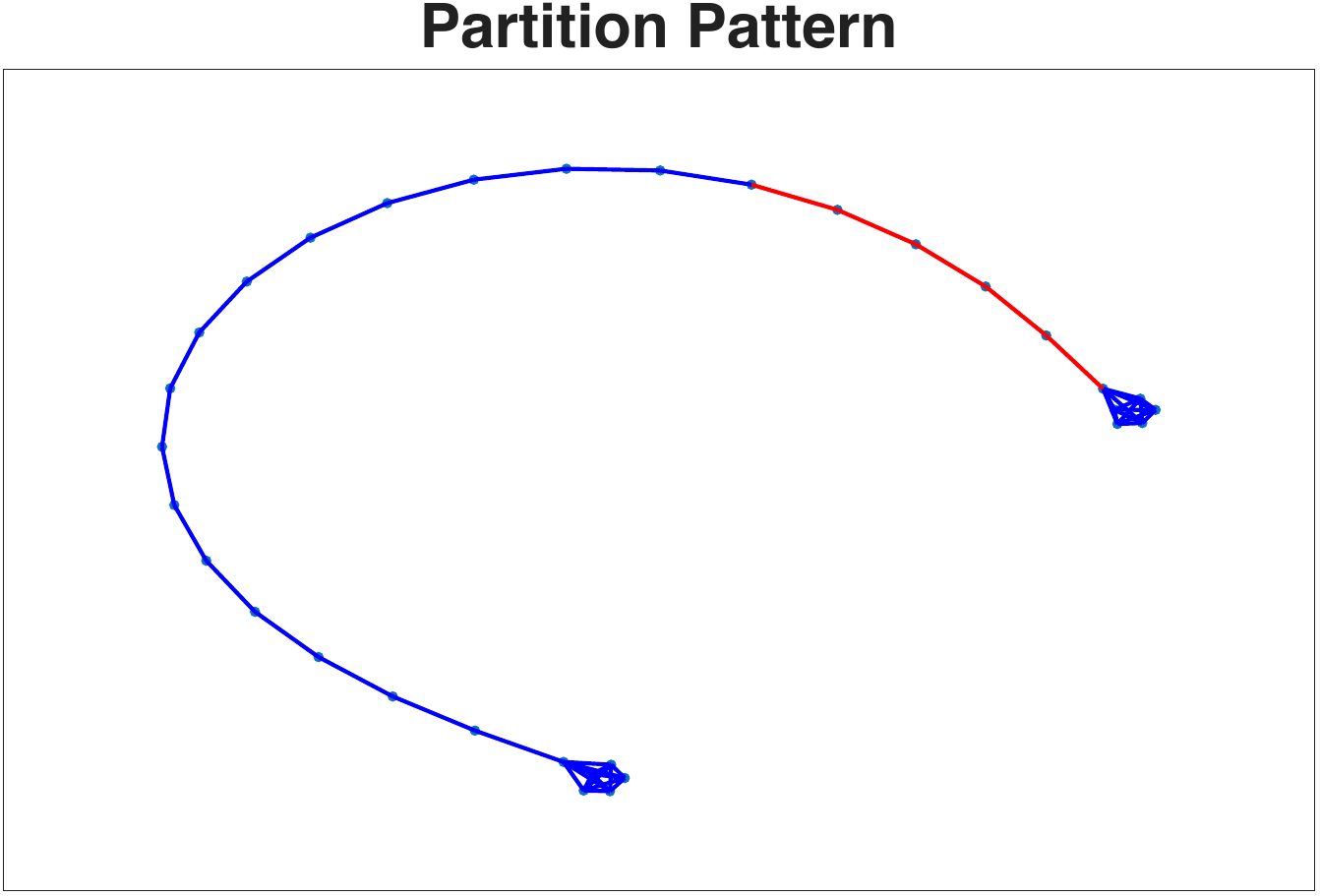}&
                \includegraphics[width=0.33\linewidth]{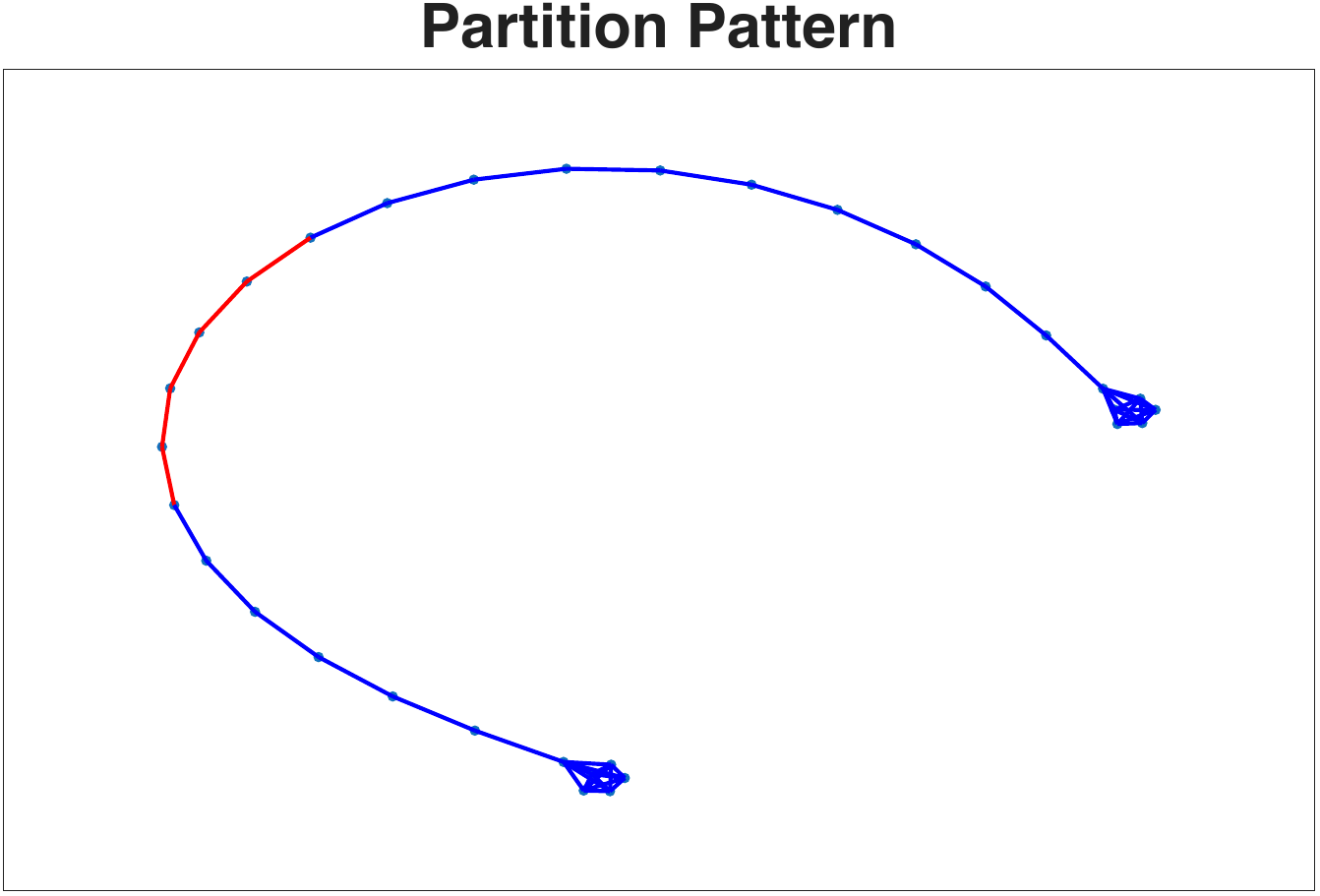}&
                \includegraphics[width=0.33\linewidth]{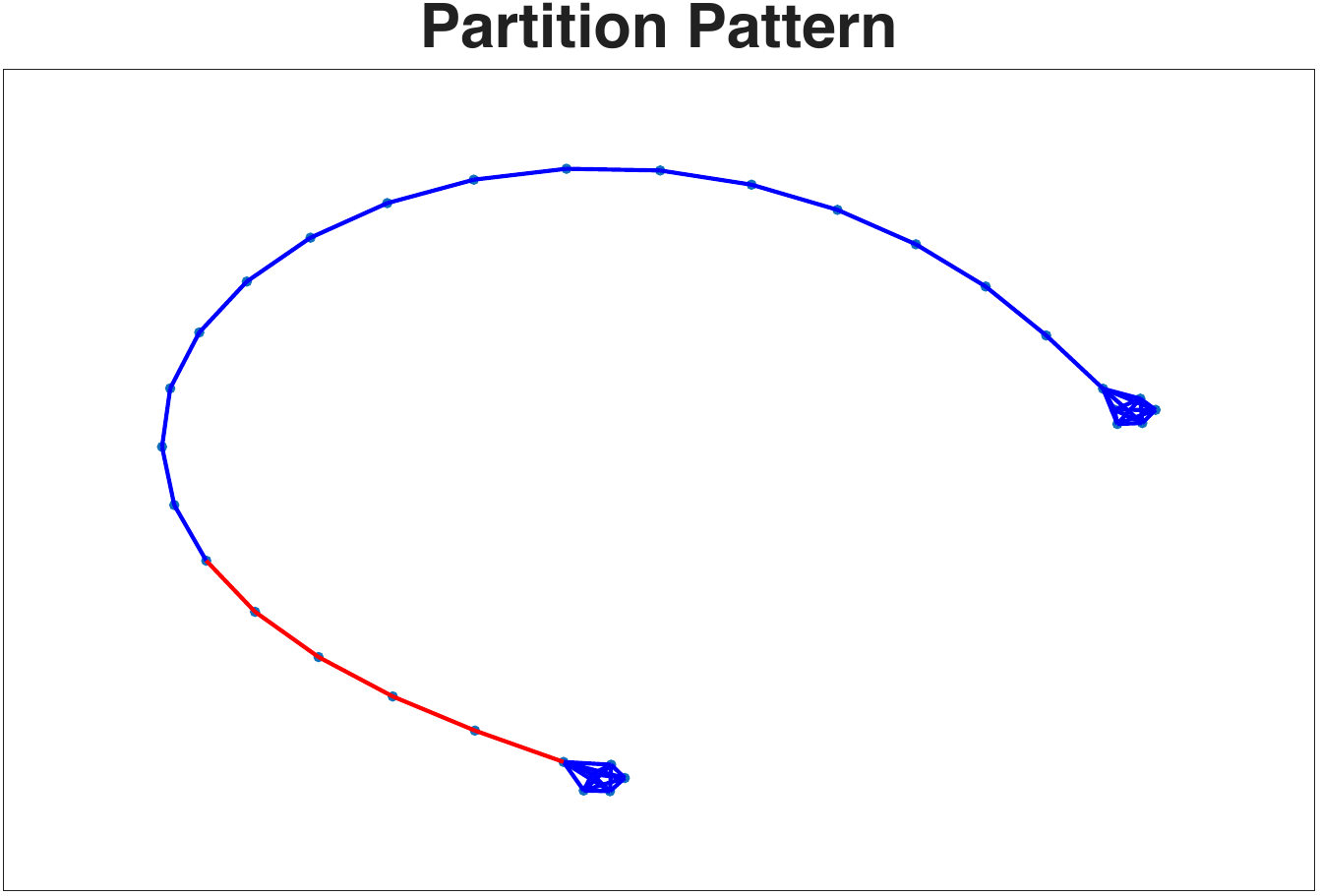}\\
                \multicolumn{1}{c}{\footnotesize{(a) Large $A_{\cJ}$. \#iter=705.}} &  \multicolumn{1}{c}{\footnotesize{(b) Small $A_{\cJ}$. \#iter=539.}}&
                \multicolumn{1}{c}{\footnotesize{(c) Small $A_{\cJ}$. \#iter=555.}}       
        \end{tabular}}
        \caption{Graph with   two cliques connected by a long path ({length =20});   $A_{\cJ}$ varies, with constant $p$, $D$, $(\mu_r)_{r\in\cJ}$ and $\kappa$.}
        \label{fig:rate_plot_Ar}
    \end{center}
     \vspace{-0.1cm}
\end{figure*}
\begin{figure*}[t]
\centering

\begin{subfigure}[t]{0.33\textwidth}
  \centering
  \includegraphics[width=\linewidth,height=4.2cm,keepaspectratio]{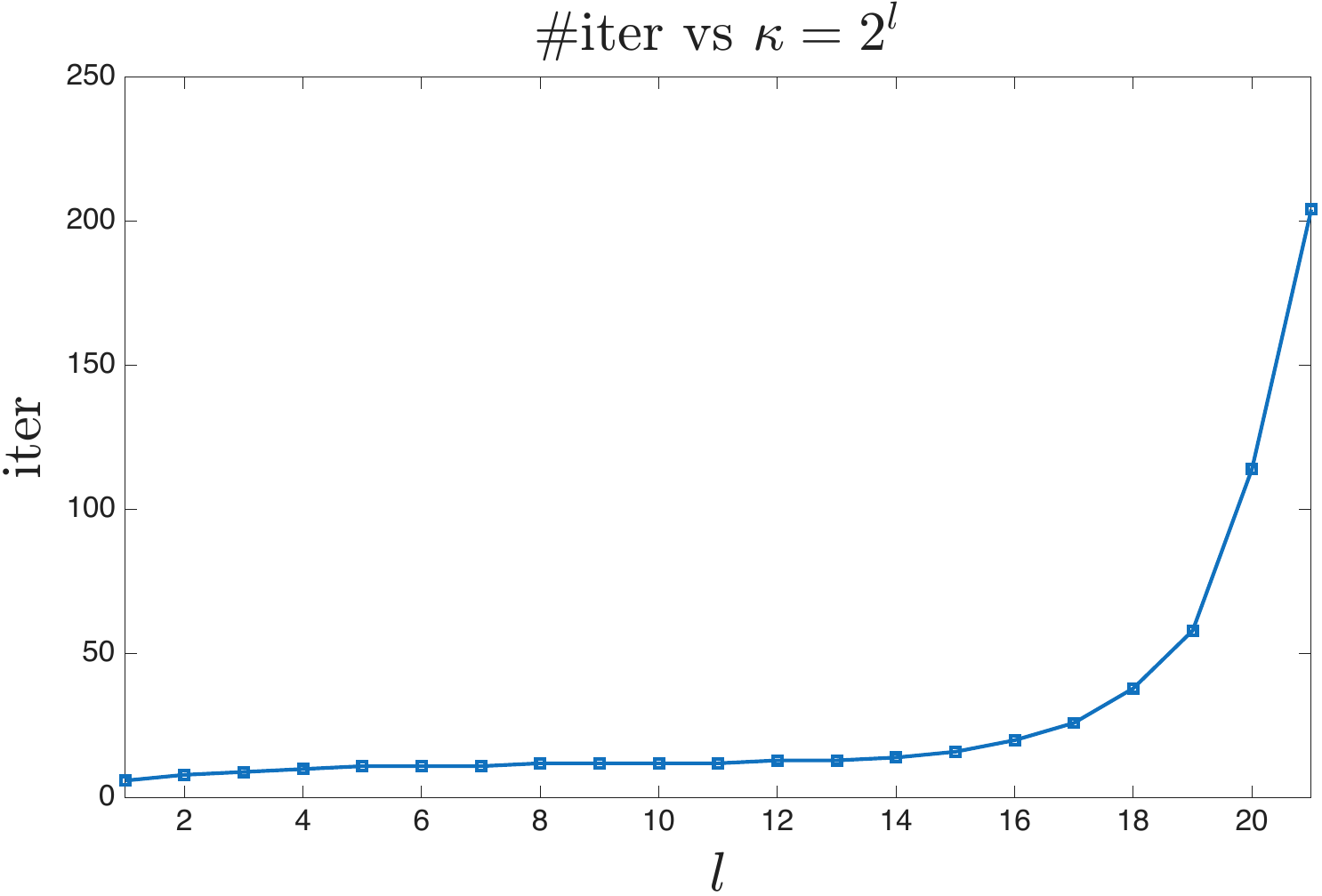}
  \caption{\#iter vs $\kappa$}
\end{subfigure}\hfill
\begin{subfigure}[t]{0.33\textwidth}
  \centering
  \includegraphics[width=\linewidth,height=4.2cm,keepaspectratio]{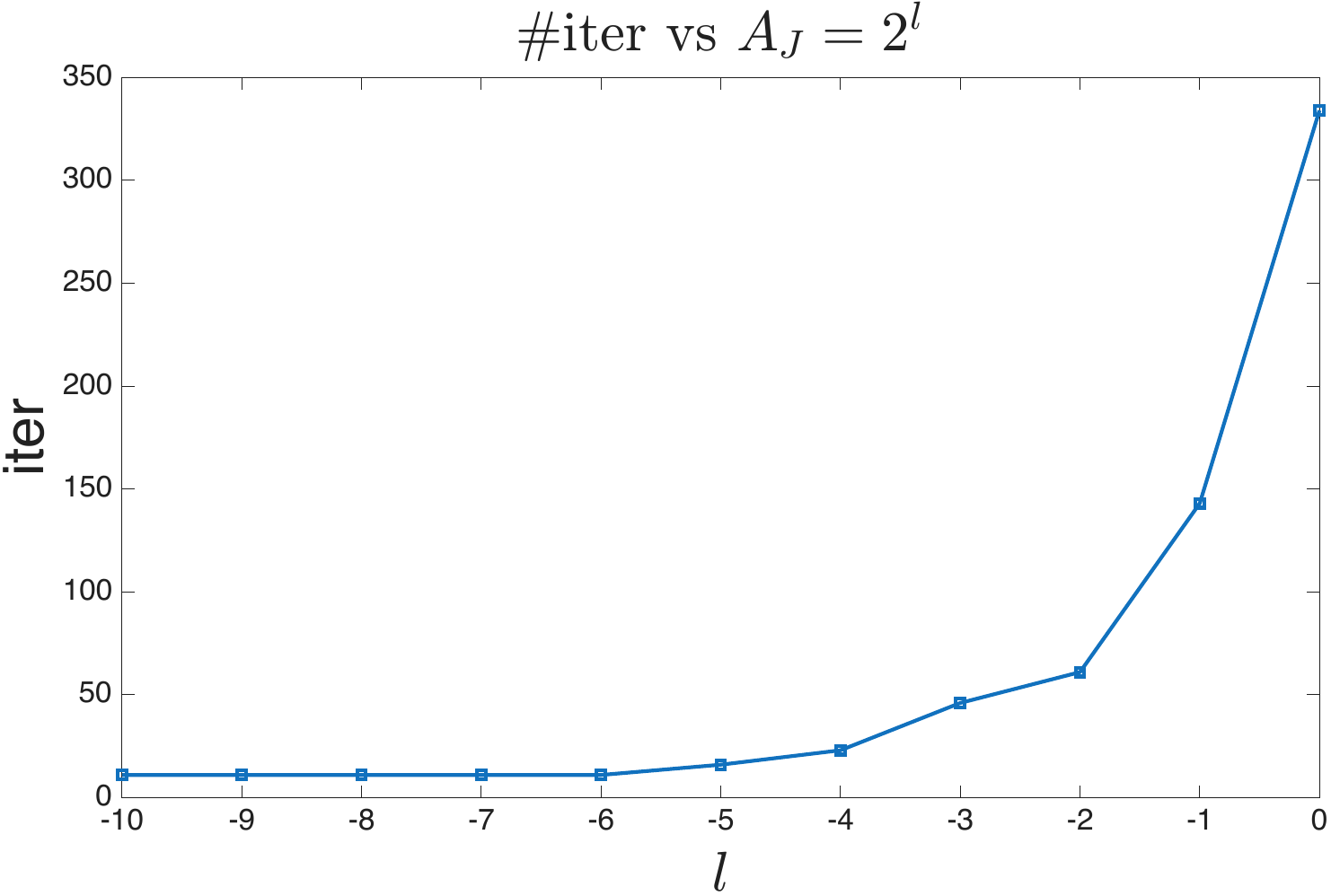}
  \caption{\#iter vs $A_{\cJ}$}
\end{subfigure}\hfill
\begin{subfigure}[t]{0.30\textwidth}
  \centering
  \raisebox{0.2cm}{%
    \includegraphics[width=\linewidth,height=4.2cm,keepaspectratio]{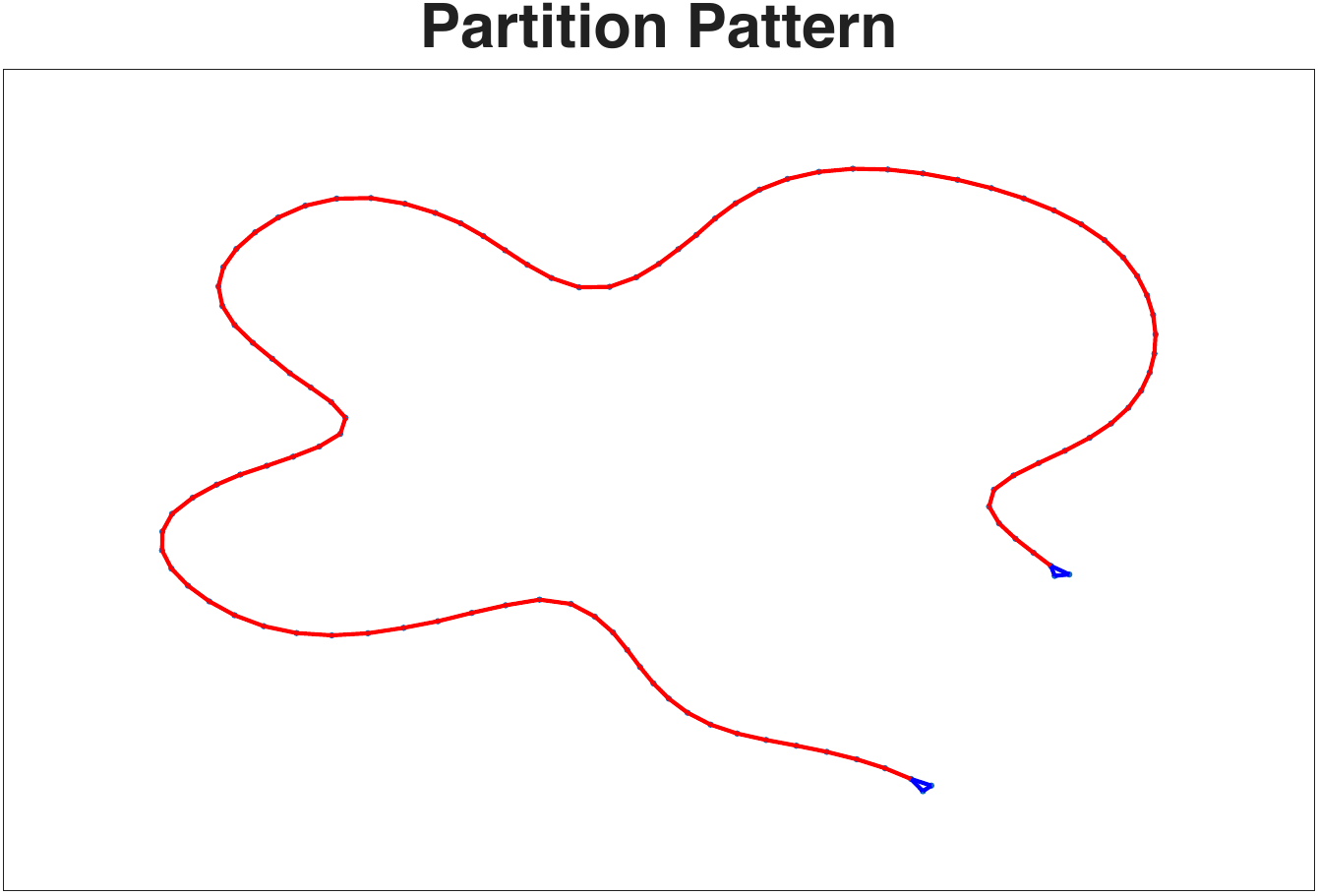}%
  }
  \caption{Graph partition}
\end{subfigure}
\caption{Graph as in panel (c). In (a), $\kappa$ varies while keeping all the other parameters fixed, whereas in (b) only $A_{\cJ}$ changes.}
\label{fig:rate_plot_kappa}\vspace{-0.3cm}
\end{figure*}

\vspace{-0.3cm}

\subsection{Optimal rate scaling for ring and 2D graphs}\vspace{-0.2cm}
\label{subsubsec:partition_guidance}
 We specialize here the general rate expression~\eqref{eq:rate_expression} to two representative graph families
(rings and 2D grids) and to simple parametric partition patterns. Our goal is not to characterize \emph{all}
possible partitions, but to show how ~\eqref{eq:rate_expression}  guides the design and leads to explicit \emph{rate scalings}
in terms of the network size. \vspace{-0.2cm}
\subsubsection{Ring graphs}\vspace{-0.2cm}
 Let $\cG$ be a ring. We study two partition strategies, namely:   $(\mathcal P_1)$  \textit{one path plus singletons}, and $(\mathcal P_2)$ \textit{equal length paths}. \smallskip

\noindent \textbf{ $(\mathcal P_1)$    one path plus singletons:} Fix $D\in\{1,\ldots,m-2\}$ and define
one non-singleton cluster $\cC_1=\{s,s+1,\ldots,s+D\}$ (mod $m$) and all remaining nodes as singletons.  
For   $D\le m-2$, one has $p=m-D$, $|\cC_1|=D+1$,
$
\cN_{\cC_1}=\{s-1,\ s+D+1\}$,     and 
$\cJ=\{s-1,\ s+D+1\}$; hence  $A_\cJ$ in \eqref{eq:def_A} reads
$$A_{\cJ}=A_1
=\frac{(2L_1+\mu_1)L_{\partial 1}^2\,(D+1)D}{4\mu_1^2}.
$$
Substituting the expression of $A_{\cJ}$ in term \texttt{(III)} in \eqref{eq:rate_expression}, we obtain \[
\texttt{(III)}(D)=\sqrt{\frac{ \min_{j\in \cJ}\mu_{j}}{8(2D+1)A_{\cJ}(D)}}
=\frac{c_1}{\sqrt{(2D+1)D(D+1)}}=\Theta(D^{-3/2}),
\]
where $c_1:=\sqrt{(\min_{j\in \cJ}\mu_{j})/2}\cdot\frac{\mu_1}{L_{\partial 1}\sqrt{2L_1+\mu_1}}$ is independent of $D$.

We proceed to minimize \eqref{eq:rate_expression} with respect to $D\in \{1,m-2\}$, for large $m$. To choose $D$, note that $\texttt{(I)}(D)=1/(m-D)$ increases with $D$, while $\texttt{(II)}(D)=2\kappa/(2D+1)$
and $\texttt{(III)}(D)$ decrease with $D$. Hence the maximizer of $\min\{\texttt{(I)},\texttt{(II)},$ $\texttt{(III)}\}$
is obtained by equating $\texttt{(I)}$ with the smallest decreasing term.
For large $m$, taking $D=\Theta(m)$ makes $\texttt{(III)}(D)=\Theta(m^{-3/2})\ll \texttt{(I)}(D)=\Theta(m^{-1})$,
so $\texttt{(III)}$ becomes the bottleneck; thus the relevant balance is $\texttt{ (I)}\asymp\texttt{(III)}$. Solving $1/(m-D)\asymp D^{-3/2}$ yields  the asymptotic optimizer
\[
D^\star=\Theta(m^{2/3}),\qquad p^\star=m-D^\star=m-\Theta(m^{2/3}),
\] and the resulting rate scaling
\[
1-\rho(D^\star)=\Theta\!\left(\frac{1}{\kappa m}\right),
\qquad
(1-\rho(D^\star))^{-1}=\Theta(\kappa m).
\]
Notice that the resulting contraction scaling exhibits the same Jacobi-type dependence on the number of
blocks, since $p=m-D\approx m$. Indeed, ${\mathcal  P}_1$ partitions the ring into one
main cluster and many singletons, so the condensed graph remains nearly as large as the original one,
and the rate is therefore bottlenecked by term~\texttt{(I)}$=1/p$. Despite $|\cJ|=2$ localizes delay
accumulation, this affects only term~\texttt{(III)} and cannot overcome the global limitation imposed by
$p\approx m$. 
Motivated by this observation, we next analyze a balanced partition ${\mathcal  P}_2$, which leverages the ring
symmetry to enforce $p\ll m$ while keeping $D$ controlled.  \smallskip 

\noindent \textbf{$(\mathcal P_2)$   equal length paths: } Assume $(D+1)\mid m$ and set $p=m/(D+1)$. Partition the ring into $p$ disjoint contiguous paths (clusters): for each $r\in [p]$,
\[
\cC_r=\{(r-1)(D+1)+1,\ (r-1)(D+1)+2,\ \ldots,\ r(D+1)\}\quad(\text{indices mod }m),
\]
 so that $|\cC_r|=D+1$ and $D_r=D$ for all $r\in[p]$. 

Fix $r\in[p]$. Since $\cC_r$ is a contiguous path segment on the ring, the only edges leaving $\cC_r$
are the two boundary edges adjacent to its endpoints. 
Therefore,  the  
 neighbor outside $\cC_r$ is
\[
\cN_{\cC_r}=\{(r-1)(D+1),\ r(D+1)+1\}\quad(\text{mod }m).
\]
Observe that $(r-1)(D+1)$ is the last node of cluster $\cC_{r-1}$, and $r(D+1)+1$ is the first node of cluster
$\cC_{r+1}$ (with indices modulo $p$). Therefore, taking the union over $r\in [p]$ yields exactly the set
of all cluster endpoints, namely the nodes of the form $r(D+1)$ and $(r-1)(D+1)+1$: \vspace{-0.1cm}
\[
\bigcup_{r:\,|\cC_r|>1}\cN_{\cC_r}
=
\{\,r(D+1)\,:\ r\in [p]\,\}\ \cup\ \{\, (r-1)(D+1)+1\,:\ r\in [p]\,\}. \vspace{-0.1cm} \]
 Finally, each endpoint belongs to exactly one cluster, so any cover of this
set by clusters must include every $\cC_r$, and the minimal such cover is $\cJ=[p]$.

Let us   derive the explicit  expression of  $A_\cJ$. 
Notice that $i\in\cN_{\cC_r}$ holds iff
$i$ equals one of these two boundary-neighbor nodes (mod $m$). Since  $\{\cN_{\cC_r}\}_{r=1}^p$
are disjoint in the ring equal-length construction,
for every fixed $i$, the   sum  in \eqref{eq:def_A} contains at most one nonzero term, and equals either $A_r$ for a unique $r$
or $0$. Maximizing over $i$   and using $|\cC_r|=D+1$ and $D_r=D$ in $A_r$ yield\vspace{-0.1cm}  \[
A_{\cJ}=\max_{r\in[p]} A_r
=(D+1)D\cdot \max_{r\in[p]}\frac{(2L_r+\mu_r)L_{\partial r}^2}{4\mu_r^2}.\vspace{-0.1cm}
\]
Using the above expression of $A_{\cJ}$, term \texttt{(III)} becomes \[
\texttt{(III)}(D)
=\sqrt{\frac{ \min_{r\in[p]}\mu_r}{8(2D+1)A_{\cJ}(D)}}
=\frac{c_2}{\sqrt{(2D+1)D(D+1)}}=\Theta(D^{-3/2}),
\]
for some constant $c_2>0$ independent of $D$. Moreover  $p=m/(D+1)$ gives ${\rm (I)}(D)=(D+1)/m=\Theta(D/m)$. Balancing $\texttt{(I)}\asymp\texttt{(III)}$ yields
\vspace{-0.1cm}\[
\frac{D}{m}\asymp D^{-3/2}
\quad\Longrightarrow\quad
D^\star=\Theta(m^{2/5}),
\qquad
p^\star=\frac{m}{D^\star+1}=\Theta(m^{3/5}).
\]
At this choice, for fixed $\kappa$,  $\texttt{(II)}(D^\star)=\Theta(\kappa m^{-2/5})\gg \Theta(m^{-3/5})=\texttt{(I)}(D^\star)\asymp\texttt{(III)}(D^\star)$,
so $\texttt{(II)}$ is inactive and $\min\{\texttt{(I)},\texttt{(II)},\texttt{(III)}\}=\Theta(m^{-3/5})$.
The resulting rate reads  
\[
1-\rho(D^\star)=\Theta\!\left(\frac{1}{\kappa}\,m^{-3/5}\right),
\qquad
(1-\rho(D^\star))^{-1}=\Theta(\kappa\,m^{3/5}).
\]
This rate scaling strictly improves over the one-path-plus-singletons partition ${\mathcal P}_1$.  This  comes from exploiting the translational symmetry of the ring
to reduce the number of blocks from $p\approx m$ (under ${\mathcal P}_1$) to $p=\Theta(m^{3/5})$ (under ${\mathcal P}_2$),
without letting the intra-cluster diameter $D$ grow too aggressively; the optimal choice
$D^\star=\Theta(m^{2/5})$ balances the block  limitation \texttt{(I)} against the coupling-driven term \texttt{(III)}.  

Overall, on rings (and more generally on homogeneous circulant topologies), balanced partitions that distribute
cluster diameters evenly are preferable: they leverage symmetry to simultaneously reduce $p$ and keep $D$
controlled, yielding a provably better asymptotic rate. Unbalanced partitions such as ${\mathcal  P}_1$ are mainly
useful when structural or modeling considerations single out a localized strongly-coupled region to be kept
within one cluster, but they do not improve the Jacobi-type scaling inherent to having $p\approx m$. 
 \vspace{-0.2cm}
 
 \subsubsection{2D-grid  graphs}
  Let $\cG$ be a $\sqrt m\times \sqrt m$ grid (assume $m$ is a perfect square) with row-wise indexing.
Fix $D\in\{1,\ldots,\sqrt{m}-1\}$. We consider {\it horizontal path} clusters of length $D$: 
\[
\cC_r=\{(r-1)\sqrt m+1,\ (r-1)\sqrt m+2,\ \ldots,\ (r-1)\sqrt m+(D+1)\},\quad r\in [\sqrt{m}];
\]
  and let all remaining nodes be singletons. The number of clusters is
\[
p=\sqrt{m}+\bigl(m-(D+1)\sqrt{m}\bigr)=m-D\sqrt{m}.
\]
Moreover, $|\cC_r|=D+1$ and $D_r=D$ for all $r\in[n]$ (while singletons have diameter $0$), hence
$D=\max_{r\in[p]}D_r$.

We now compute $\cJ$ and $A_{\cJ}$ for this partition and minimize    \eqref{eq:rate_expression}  over $D$.

Fix a row-cluster $\cC_r$. Since $\cC_r$ lies in columns $1,\ldots,D+1$, its external neighborhood
$\cN_{\cC_r}$ consists of:
(i) the right neighbor of its row-endpoint, that is  $(r-1)\sqrt m+(D+2)$ (when $D\le \sqrt m-2$);
(ii) the vertical neighbors (rows $r\pm1$) of all nodes in $\cC_r$ (when those rows exist). Thus, for $D\le \sqrt m-2$,
\begin{align*}
\cN_{\cC_r}
&=\{(r-1)\sqrt m+(D+2)\}\ \cup\
\{(r-2)\sqrt m+c:\ c=1,\ldots,D+1: r\ge2\}\ \\&\quad \cup\
\{r\sqrt m+c:\ c=1,\ldots,D+1: r\le \sqrt m-1\}.
\end{align*}
Therefore, \vspace{-0.2cm}\begin{equation}\label{eq:grid_union_extnbr_clean_m}
\bigcup_{r:\,|\cC_r|>1}\cN_{\cC_r}
=
 {\bigcup_{r=1}^{\sqrt m} \cC_r} 
\ \cup\
 {\{(r-1)\sqrt m+(D+2):\ r=1,\ldots,\sqrt m\}}.
\end{equation}
(The second set is absent when $D=\sqrt m-1$.)
Any cover of \eqref{eq:grid_union_extnbr_clean_m} by clusters
must include all row-clusters $\cC_1,\ldots,\cC_{\sqrt m}$; and when $D\le \sqrt m-2$, it must also include the $\sqrt m$
singleton clusters at column $D+2$. Hence, for $D\le \sqrt m-2$, 
\begin{equation}\label{eq:grid_J_clean_m}
\cJ=\{1,\ldots,\sqrt m\}\ \cup\ \{\text{singleton clusters } \{(r-1)\sqrt m+(D+2)\}:\ r\in[\sqrt m]\},
\end{equation}
and  $|\cJ|=2\sqrt m\ll p$; whereas for $D=\sqrt m-1$ one has $\cJ=[\sqrt m]$.

The only clusters with $|\cC_r|>1$ are the $\sqrt m$ row-clusters, and for each such $r$,
\[
A_r(D)=\frac{(2L_r+\mu_r)L_{\partial r}^2\,|\cC_r|\,D_r}{4\mu_r^2}
=\frac{(2L_r+\mu_r)L_{\partial r}^2\,(D+1)D}{4\mu_r^2}.
\]
We  derive the expression of  $A_\cJ$ as function of the above quantities.  Fix $i\in\cup_{j\in\cJ}\cC_j$. If $i$ is a node in column $D+2$, then $i\in\cN_{\cC_r}$ for a \emph{unique} row $r$,
and the corresponding sum in    \eqref{eq:def_A} equals $A_r(D)$. If instead $i$ lies in columns $1,\ldots,D+1$, then $i$ can be adjacent
(vertically) to at most two row-clusters (the ones in the rows immediately above and below), so the sum contains
at most two terms. Therefore,
\begin{equation}\label{eq:grid_AJ_bounds_clean_m}
\max_{1\le r\le \sqrt m}A_r(D)\ \le\ A_{\cJ} \ \le\ 2\max_{1\le r\le \sqrt m}A_r(D),
\end{equation}
and in particular $A_{\cJ}=\Theta(D(D+1))$. Consequently,
\[
\texttt{(III)}(D)
=\sqrt{\frac{\min_{r\in\cJ}\mu_r}{8(2D+1)A_{\cJ}}}
=\frac{c_{\rm grid}}{\sqrt{(2D+1)D(D+1)}}=\Theta(D^{-3/2}),
\]
for some constant $c_{\rm grid}>0$ independent of $D$. 

We now maximize $\min\{\texttt{(I)}(D),\texttt{(II)}(D),\texttt{(III)}(D)\}$ over $D\in[\sqrt m-1]$ for large $m$. Recall, $\texttt{(I)}(D)=1/(m-D\sqrt m)$  and $\texttt{(II)}(D)=2\kappa/(2D+1)$.  For fixed $\kappa$ and large $\sqrt m$, the balance \texttt{(I)}$\asymp$\texttt{(II)} would give
$D=\Theta(\sqrt m)$ and thus $\texttt{(II)}=\Theta(\kappa/\sqrt m)$, whereas $\texttt{(III)}=\Theta(m^{-3/4})\ll\kappa/\sqrt m$,
so \texttt{(III)} becomes the bottleneck. Therefore the relevant balance is \texttt{(I)}$\asymp$\texttt{(III)}.

To solve it, write\vspace{-0.2cm}
\[
D=\sqrt m-\Delta,\qquad \Delta\in\{1,\ldots,\sqrt m-1\}.
\]
Then $p= \Delta\sqrt m$, so  
$\texttt{(I)}(D)={1}/({\Delta\sqrt m}).$ 
Moreover, for $D=\sqrt m-\Delta$ with $\Delta=o(\sqrt m)$ one has $\texttt{(III)}(D)=\Theta(D^{-3/2})=\Theta(m^{-3/4})$.
Equating $\texttt{(I)}\asymp\texttt{(III)}$ gives
\[
({\Delta\sqrt m})^{-1}\ \asymp\ m^{-3/4}
\quad\Longrightarrow\quad
\Delta\ \asymp\ m^{1/4}.
\]
Therefore,
\[
D^\star=\sqrt m-\Theta(m^{1/4}),
\qquad
p^\star=m-D^\star\sqrt m
=\Theta(m^{3/4}).
\]
At this choice,
\[
\texttt{(II)}(D^\star)=\Theta\!\left( {\kappa}{  m^{-1/2}}\right)\gg \Theta(m^{-3/4})
=\texttt{(I)}(D^\star)\asymp\texttt{(III)}(D^\star),
\]
so $\texttt{(II)}$ is inactive and $\min\{\texttt{(I)},\texttt{(II)},\texttt{(III)}\}=\Theta(m^{-3/4})$.
Substituting into~\eqref{eq:rate_expression} yields\vspace{-0.2cm}
\[
1-\rho(D^\star)=\Theta\!\left( {\kappa}^{-1}\,m^{-3/4}\right),
\qquad
(1-\rho(D^\star))^{-1}=\Theta(\kappa\,m^{3/4}).
\]

 This results shows that,  for this  {one-directional}
partition family, the exponent $3/4$ in the rate scaling is the unavoidable outcome of the tradeoff between reducing $p$ and the
simultaneous deterioration of \texttt{(III)} driven by the growth of $A_{\cJ}$.
In fact,  the only way to decrease the Jacobi-type term \texttt{(I)}$=1/p$
is to make the row paths as long as possible, since $p=m-D\sqrt m$ shrinks only when $D$ approaches $\sqrt m$.
However, longer paths also enlarge the delay--coupling penalty:  
$A_r\propto |\cC_r|D_r=(D+1)D$, so $A_{\cJ}$ grows like $D(D+1)$ and term \texttt{(III)} decays as
$\Theta(D^{-3/2})$. Balancing these two opposing effects forces $D$ to be ``almost'' $\sqrt m$ but not equal to it,
specifically $D^\star=\sqrt m-\Theta(m^{1/4})$, which leaves a residual block count $p^\star=\Theta(m^{3/4})$
and yields the overall rate scaling $(1-\rho)^{-1}=\Theta(\kappa m^{3/4})$.  \vspace{-0.5cm}

\section{Enhancing computation and communication via surrogation}\vspace{-0.2cm}
\label{sec:surrogate_pairwise}

We introduce a surrogate-based variant of Algorithm~\ref{alg:main} to  reduce both per-iteration computation and communication as well as  ensure well-posed local updates  when Assumption~\ref{asm:nonverlap} does not hold.  Updates in the form~\eqref{gossip_update:a} and~\eqref{message_update} require solving local minimization problems \emph{exactly}, each intra-cluster message $\mu_{j\to i}^\nu(\cdot)$ is a function of the receiver's variable, which can be expensive to compute and transmit. This is particularly acute beyond   quadratic function-messages, which are generally infinite-dimensional objects. Even for quadratic objectives, exact messages are quadratic functions parameterized by matrices that may be dense, hence communication demanding for large block-dimensions. Our approach is to replace the local costs by suitable surrogates: this  ensures the existence of local minimizers,    makes the subproblems cheap to solve, and induces messages with lightweight parameterizations (e.g., affine or structured-quadratic). 

More formally, at iteration $\nu$, each agent $i$ uses reference values
$y_i:=x_i^\nu$, $i\in \cV$   and $y_{\cN_i}:=\{x_j^\nu\}_{j\in\cN_i}
$, and employs   model surrogates    $\tilde\phi_i(\cdot;y_i)$ and $\tilde\psi_{ij}(\cdot,\cdot;\,y_i,y_j)$ around the reference values, replacing $\phi_i(\cdot)$ and $\psi_{ij}(\cdot,\cdot)$, respectively. 
The resulting scheme is summarized in Algorithm~\ref{alg:main1_surrogate}.  \vspace{-0.4cm}

\begin{algorithm}[th]
{
 \scriptsize
\caption{\underline{M}essage \underline{P}assing-\underline{J}acobi ({MP-Jacobi}) with Surrogate}
\label{alg:main1_surrogate}

{\bf Initialization:}  $x_i^{0}\in \mathbb{R}^d$, for all $i\in\cV$; 
initial message $\widetilde \mu_{i\to j}^{0}(\cdot)$ arbitrarily chosen  (e.g., $ {\widetilde \mu_{i\to j}^{0}}\equiv 0$), for all $(i,j)\in\cE_{r}$ and $r\in [p]$.\\

\For{$\nu=0,1,2,\ldots$ }{
\AgentFor{$i\in\cV$}{\label{line:agent-loop_hyper}
\medskip 

\texttt{Performs the following updates:}
\begin{subequations}\label{gossip_update_surrogate}
\begin{align}
\hat x_i^{\nu+1}&\in \argmin_{x_i}\Big\{\widetilde\phi_i(x_i;x_i^\nu)
+ \sum_{j\in\cNin_i}{\widetilde\mu_{j\to i}^{\nu}(x_i)}
+ \sum_{k\in\cNout_i}\widetilde\psi_{ik}(x_i,x_k^{\nu};x_i^\nu,x_k^\nu)\Big\},
\label{gossip_update_surrogate:a}\\
x_i^{\nu+1}&=x_i^{\nu}+\tau_r^\nu\big(\hat x_i^{\nu+1}-x_i^{\nu}\big),
\label{gossip_update_surrogate:b}  \\\nonumber \\
\hspace{-.4cm} {{\widetilde \mu_{i\to j}^{\nu+1}(x_j)}}&=
\min_{x_i}\qty{
\begin{aligned}
&\widetilde\phi_i(x_i;x_i^\nu)+\widetilde\psi_{j i}(x_j,x_i;x_j^\nu,x_i^\nu)
\\
&+\sum_{k\in\cNin_i\setminus\{j\}}\widetilde\mu_{k\to i}^{\nu}(x_i)
+\sum_{k\in\cNout_i}\widetilde\psi_{ik}(x_i,x_k^{\nu};x_i^\nu,x_k^\nu)
\end{aligned}
}, \forall j\in   \cNin_i;
\label{message_update_surrogate}
\end{align}
\end{subequations}

\texttt{Sends out}  $x_i^{\nu+1}$ to all $j\in\cNout_i$  and  {$\widetilde \mu_{i\to j}^{\nu+1}(x_j)$}  to all $j\in \cNin_i$.
}}}
\end{algorithm}
 \vspace{-0.4cm}
\begin{remark}
    While Algorithm~\ref{alg:main1_surrogate} is presented using the same surrogate models in both the variable and message updates, surrogation can be applied selectively. For instance, one may keep the variable update \eqref{gossip_update_surrogate:a} exact while surrogating the message update \eqref{message_update_surrogate} to reduce communication; conversely, surrogating \eqref{gossip_update_surrogate:a} primarily targets local computational savings. The same convergence guarantees can be established   for these mixed exact/surrogate variants, but we omit the details for brevity--some numerical results involving these instances are presented in Sec.~\ref{sec:experiments}.\end{remark}
 \vspace{-0.4cm}

 \subsection{Surrogate regularity conditions}\label{sec:surrogate-regularity}\vspace{-0.1cm}

We request mild  regularity conditions on the surrogates functions  to guarantee convergence of Algorithm~\ref{alg:main1_surrogate}.  
We state these conditions 
directly on the \emph{cluster surrogate objective} induced by the local  models, as formalized next.

For any  cluster $r\in[p]$ and $\bx\in \mathbb{R}^{md}$,   recall the partition $\bx=(x_{\cC_r},x_{\overline{\cC}_r})$, where $x_{\cC_r}$ are the cluster variables and  
$x_{\overline{\cC}_r}$ are the variables outside the cluster.  
Define the cluster-relevant portion of the global objective:
\begin{equation}
\label{eq:Phi_r_def_simplified} 
\Phi_r(\bx)
:=\sum_{i\in\cC_r}\phi_i(x_i)
+\sum_{(i,j)\in\cE_r}\psi_{ij}(x_i,x_j)
+\sum_{i\in\cC_r}\ \sum_{k\in\cNout_i}\psi_{ik}(x_i,x_k).
\end{equation}
Given surrogates $\tilde\phi_i(\cdot;\cdot)$ and $\tilde\psi_{ij}(\cdot,\cdot;\cdot,\cdot)$,
we build an aggregated surrogate for $\Phi_r$ around the \emph{reference points} collected into    \[
\zeta_r:=\big(y_{\cC_r},\,y_{\overline{\cC}_r},\,y_{\cE_r}\big),
\]
where 
$y_{\cC_r}:=\{y_i\}_{i\in\cC_r}$ (cluster nodes),    $y_{\overline{\cC}_r}:=\{y_k\}_{k\in\overline{\cC}_r}$ (outside nodes), and  $y_{\cE_r}$ is the  intra-cluster edge reference stack
\[y_{\cE_r}:=\big(( y_e, y_e^\prime)\big)_{e\in\cE_r}.
\]
 Fixing an arbitrary ordering of $\cE_r$, we  identify each edge-stack
$y_{\cE_r}=((y_e,y_e'))_{e\in\cE_r}$ with its stacked vector in $\R^{2|\cE_r|d}$
(endowed with the Euclidean norm); with a slight abuse of notation, we use the same symbol $y_{\cE_r}$ for these vectors.

The aggregated surrogate around $\zeta_r$ is defined as  
\begin{equation}
\label{eq:agg_surrogate_compact_fix}
\begin{aligned}
\tilde\Phi_r\big(\bx;\zeta_r\big)
:=\;&\sum_{i\in\cC_r}\tilde\phi_i(x_i;y_i)
+\sum_{(i,j)\in\cE_r}\tilde\psi_{ij}\big(x_i,x_j;y_{(i,j)},y_{(i,j)}^\prime\big)\\
&+\sum_{i\in\cC_r}\ \sum_{k\in\cNout_i}\tilde\psi_{ik}\big(x_i,x_k;y_i,y_k\big).
\end{aligned}
\end{equation}
When the ``exact surrogates'' are used ($\tilde\phi_i\equiv\phi_i$ and $\tilde\psi_{ij}\equiv\psi_{ij}$),
one has $\tilde\Phi_r\equiv\Phi_r$.

\noindent \textit{Consistency:} We say that $\zeta_r$ is \emph{consistent with} $\bx$ if all references coincide with the associated
blocks of $\bx$, i.e.,\[y_{\cC_r}=x_{\cC_r},\quad y_{\overline{\cC}_r}=x_{\overline{\cC}_r},\quad \text{and}\quad y_{\cE_r}=x_{\cE_r}:=((x_i,x_j))_{(i,j)\in \cE_r}.\]

We request the following conditions on the aggregate surrogates $\tilde\Phi_r$, $r\in [p]$.
\begin{assumption}[cluster surrogate regularity]
\label{assumption:surrogation}
Each surrogate $\tilde\Phi_r$ in~\eqref{eq:agg_surrogate_compact_fix} satisfies the following conditions: 
\begin{enumerate}
\item[(i)] \texttt{(gradient consistency)} For all $\bx=(x_{\cC_r},x_{\overline{\cC}_r})$ and  all $\zeta_r$ consistent with $\bx$,
\begin{equation}
\label{eq:cluster_touching}
\nabla_{\cC_r}\tilde\Phi_r\big(\bx;\zeta_r\big)
=\nabla_{\cC_r}\Phi_r(\bx).
\end{equation}
\item[(ii)]  For all $\bx=(x_{\cC_r},x_{\overline{\cC}_r})$ and all  reference tuples $\zeta_r$,
\begin{equation}\label{eq:sur_majorization_clusterlevel}
\Phi_r(\bx)
\ \le\
\tilde\Phi_r\big(\bx;\,\zeta_r\big),
\end{equation}
with equality whenever $\zeta_r$ is consistent with $\bx$.
 \item[(iii)] \texttt{(uniform strong convexity and smoothness in $x_{\cC_r}$)} 
There exist    constants $\tilde\mu_r,\tilde L_r\in (0,\infty)$ such that, for all  $x_{\overline{\cC}_r}$ and all  tuple $\zeta_r$,
the mapping
\[
u\ \mapsto\ \tilde\Phi_r\big((u,x_{\overline{\cC}_r});\,\zeta_r\big),\quad u\in \mathbb{R}^{d|\cC_r|},
\]
is $\tilde\mu_r$-strongly convex and has $\tilde L_r$-Lipschitz continuous gradient.  Define the surrogate condition number $\tilde{\kappa}$ as $\tilde{\kappa}:= (\max_{r\in[p]}\tilde L_r)/\mu$.  

\item[(iv)] \texttt{(sensitivity to intra-cluster edge references)}
There exists $\tilde\ell_r\in(0,\infty)$ such that, for all $\bx=(x_{\cC_r},x_{\overline{\cC}_r})$
and all $y_{\cC_r}$, the mapping
\[
v\ \mapsto\ \nabla_{\cC_r}\tilde\Phi_r\big(\bx;\,(y_{\cC_r},x_{\overline{\cC}_r},v)\big),
\qquad v\in\R^{2|\cE_r|d},
\]
is $\tilde\ell_r$-Lipschitz, i.e., for all $v,v^\prime\in\R^{2|\cE_r|d}$,
\[
\Big\|
\nabla_{\cC_r}\tilde\Phi_r\big(\bx;\,(y_{\cC_r},x_{\overline{\cC}_r},v)\big)
-
\nabla_{\cC_r}\tilde\Phi_r\big(\bx;\,(y_{\cC_r},x_{\overline{\cC}_r},v^\prime)\big)
\Big\|
\le \tilde\ell_r\,\|v-v^\prime\|.
\]

\item[(v)] \texttt{(locality through boundary variables)} 
There exists a constant $\tilde L_{\partial r}\in (0,\infty)$ such that,  for all  $x_{\cC_r}$,  $y_{\cC_r}$, and   
$y_{\cE_r}$, the mapping
\[
w\ \mapsto\ \nabla_{\cC_r}\tilde\Phi_r\big((x_{\cC_r},w);\,(y_{\cC_r},w,y_{\cE_r})\big),\quad w\in\R^{|\overline{\cC}_r|d},
\]
is $\tilde L_{\partial r}$-Lipschitz along $P_{\partial r}$,  i.e., 
\begin{align} 
& 
\Big\|
\nabla_{\cC_r}\tilde\Phi_r\big((x_{\cC_r},w);\,(y_{\cC_r},w,y_{\cE_r})\big)
-
\nabla_{\cC_r}\tilde\Phi_r\big((x_{\cC_r},w^\prime);\,(y_{\cC_r},w^\prime,y_{\cE_r})\big)
\Big\|\nonumber \\
&\quad  \le\
\tilde L_{\partial r}\,\big\|P_{\partial r}(w-w^\prime)\big\|\quad \forall  w,w^\prime\in\R^{|\overline{\cC}_r|d}.
\end{align} 
\end{enumerate}
\end{assumption}

 Condition  {\bf (i)} is the cluster-level analogue of first-order consistency: when the reference tuple is
consistent with the current point, the surrogate matches the \emph{cluster gradient} of the original objective.
This is the key property ensuring that any fixed point of the algorithm is a solution for the original problem. Condition~{\bf (ii)}  is a standard majorization requirement: it makes each cluster update a descent step for $\Phi$
(up to the delay terms accounted for in the analysis), and is the main mechanism behind monotonicity/contractivity.   
    Condition  {\bf (iii)} ensures strong-convexity and smoothens  of the cluster subproblems. This guarantees that each cluster subproblem has a unique minimizer, and it provides the curvature/smoothness constants
$(\tilde\mu_r,\tilde L_r)$ affecting the convergence rate.    Conditions \textbf{(iv)}
  controls throughout the constants $\tilde\ell_r$'s how much the \emph{cluster gradients} change when the intra-cluster
\emph{edge-reference stack} is perturbed; it  governs the error induced by using delayed/stale edge references in the surrogate
messages. One can  make $\tilde\ell_r$ small by employing edge surrogates whose gradients depend weakly (Lipschitzly)
on their reference arguments--for instance, quadratic/linearized models where the reference enters only through
a Hessian approximation that is stable along the iterates. Condition~{\bf (v) }quantifies the dependence of the cluster gradient on the \emph{outside block} (and its boundary reference),
measured only through the boundary projector $P_{\partial r}$.
This matches the graph structure locality: in \eqref{eq:agg_surrogate_compact_fix}, the outside block enters $\tilde\Phi_r$
only through cross--cluster terms $\{\tilde\psi_{ik}: i\in\cC_r,\ k\in\cNout_i\}$. Consequently, changing components of $x_{\overline{\cC}_r}$ (and of the
corresponding outside reference $y_{\overline{\cC}_r}$) \emph{outside} $\cN_{\cC_r}$ does not affect
$\tilde\Phi_r$ and hence does not affect $\nabla_{\cC_r}\tilde\Phi_r$. 

\textit{Sufficient local conditions:} A simple   recipe to enforce Assumption~\ref{assumption:surrogation} is   on the {\it local} surrogates. For instance,      \emph{touching/gradient-consistency} and \emph{majorization} properties on 
$\tilde\phi_i$ and $\tilde\psi_{ij}$  lift to~{\bf (i)}--{\bf(ii)}.
Strong convexity and smoothness in~{\bf (iii)} can be ensured by choosing all the cluster surrogates
to include enough curvature in the optimized variables (e.g., via quadratic/proximal regularization in
the node models and/or in the edge surrogates) and smoothness. 
 The sensitivity bounds~{\bf (iv)}--{\bf (v)} are typically obtained by requiring that the
relevant \emph{gradients} of the local edge surrogates depend Lipschitz-continuously on their
\emph{reference arguments}.

 However, enforcing majorization or strong convexity \emph{termwise} can be unnecessarily restrictive, especially when the couplings $\psi_{ij}$ are nonconvex. Assumption~\ref{assumption:surrogation} requires these properties only for the {\it aggregated} cluster surrogates $\tilde\Phi_r$, so that curvature and upper-bounding can be achieved {\it collectively} at the cluster level. This is a key departure from the surrogation conditions   adopted in decomposition-based methods, which are usually imposed   on the individual local models  \cite{facchinei_vi-constrained_2014,facchinei_feasible_2017,scutari_parallel_2017}. We next give examples of local surrogates   that may fail   Assumption~\ref{assumption:surrogation} term-wise, yet   inducing aggregated cluster surrogates that do satisfy Assumption~\ref{assumption:surrogation}.\vspace{-0.2cm}

\vspace{-0.3cm}
\subsection{Some surrogate examples }
\label{subsec:surrogate_example}
\vspace{-0.2cm}
 
The following are practical designs satisfying Assumption~\ref{assumption:surrogation}  and useful to reduce computation and/or  communication costs.         
Throughout, at iteration $\nu$ we employ reference points $y_i=x_i^\nu$ and  $y_{\cE_r}=\big(( x_i^\nu, x_j^\nu)\big)_{(i,j)\in\cE_r}$, for all $i\in \cV$ and $r\in [p]$.

 \smallskip 
 \noindent {\bf (i)}  \textbf{first-order   surrogates:}  Choose, for some $\alpha>0$ and any $\nu\geq 0$,
\begin{subequations}
    \begin{align} 
\tilde\phi_i(x_i;x_i^\nu)
&:= \phi_i(x_i^\nu)+\langle\nabla\phi_i(x_i^\nu),x_i-x_i^\nu\rangle+\frac{1}{2\alpha}\|x_i-x_i^\nu\|^2,
\label{eq:surrogate_FO_phi}\\
\tilde\psi_{ij}(x_i,x_j;x_i^\nu,x_j^\nu)
&:= \psi_{ij}(x_i^\nu,x_j^\nu)
+\langle\nabla_i\psi_{ij}(x_i^\nu,x_j^\nu),x_i-x_i^\nu\rangle\nonumber\\
&\quad +\langle\nabla_j\psi_{ij}(x_i^\nu,x_j^\nu),x_j-x_j^\nu\rangle,
\label{eq:surrogate_FO_psi}
\end{align} 
 for all $i\in \cV$ and $(i,j)\in \cE$.  Using  \eqref{eq:surrogate_FO_phi}--\eqref{eq:surrogate_FO_psi} in
 \eqref{message_update_surrogate} yields:  \begin{equation}
    \label{eq:affine-message}
\tilde\mu_{i\to j}^{\nu+1}(x_j)
\;\sim\;
\Big\langle \nabla_j\psi_{ji}(x_j^\nu,x_i^\nu),\,x_j-x_j^\nu\Big\rangle,
 \end{equation}\end{subequations} 
 Hence each directed message is encoded (up to constants) by a single $d$-vector. 
 
Using (\ref{eq:affine-message}) and \eqref{eq:surrogate_FO_phi}--\eqref{eq:surrogate_FO_psi} in \eqref{gossip_update_surrogate:a}, the   variable subproblem
\eqref{gossip_update_surrogate:a} yields the gradient-delayed closed form:
\[
\hat x_i^{\nu+1}
= x_i^\nu-\alpha\!\left(
\nabla\phi_i(x_i^\nu)
+ \!\!\sum_{j\in\cNin_i}\!\!\nabla_i\psi_{ij}(x_i^{\nu-1},x_j^{\nu-1})
+ \!\!\sum_{k\in\cNout_i}\!\!\nabla_i\psi_{ik}(x_i^\nu,x_k^\nu)
\right).
\] 
To implement this surrogate instance of the algorithm, agents only exchange two $d$-dimensional vectors with their neighbors  $\cNin_i$ and $\cNout_i$ per iterations. 

Note that the above  surrogates satisfy Assumption \ref{assumption:surrogation}; in particular the majorization condition is ensured by choosing (a sufficiently small) $\alpha$.     
 \smallskip 
 
 \noindent {\bf (ii)} \textbf{Quadratic messages and Schur recursion:} Let $\tilde\phi_i(\cdot;x_i^\nu)$   be any strongly convex quadratic function  (e.g.,  as in
\eqref{eq:surrogate_FO_phi} or a second-order model) with {Hessian matrix $Q_i\in \mathbb S_{++}^d$}, $i\in \cV$; and  let $\tilde\psi_{ij}$ be chosen  as the quadratic surrogate 
\begin{subequations}
\begin{equation*}
\begin{aligned}
\widetilde\psi_{ij}\bigl(x_i,x_j;x_i^\nu,x_j^\nu\bigr)
&= \psi_{ij}(x_i^\nu,x_j^\nu)
  + \langle \nabla_i\psi_{ij}(x_i^\nu,x_j^\nu),\,x_i-x_i^\nu\rangle
  + \langle \nabla_j\psi_{ij}(x_i^\nu,x_j^\nu),\,x_j-x_j^\nu\rangle \\
&\quad + \langle M_{ij}(x_i-x_i^\nu),\,x_j-x_j^\nu\rangle
  + \tfrac{1}{2}\|x_i-x_i^\nu\|_{M_i}^2
  + \tfrac{1}{2}\|x_j-x_j^\nu\|_{M_j}^2,
\end{aligned} 
\end{equation*}
with each $M_i\in \mathbb S_+^d$  
and   (possibly structured)  cross matrices  $M_{ij}\in \mathbb S^d$.  
Assume     the messages are initialized as quadratic functions: 
\begin{equation}\label{eq:message_quadratic_init}
\tilde\mu_{i\to j}^{0}(x_j)\;\sim\;\tfrac12\|x_j\|_{H_{i\to j}^{0}}^{2}+\langle h_{i\to j}^{0},x_j\rangle,
\end{equation}
for some $H_{i\to j}^{0}\in \mathbb{S}_+^d$ and $h_{i\to j}^{0}\in \mathbb{R}^d$. Then, for every $\nu\ge 0$,  the updated messages   produced by
\eqref{message_update_surrogate} remain quadratic:   
\begin{equation}\label{eq:message_quadratic_recursion}
\tilde\mu_{i\to j}^{\nu+1}(x_j)\;\sim\;\tfrac12\|x_j\|_{H_{i\to j}^{\nu+1}}^{2}+\langle h_{i\to j}^{\nu+1},x_j\rangle.  
\end{equation}
Moreover the curvature matrices satisfy the Schur-complement recursion   
\[
H^{\nu+1}_{i\to j}
= M_j - M_{ji}\!\left({Q_i}+M_i+\!\!\sum_{k\in\cNin_i\setminus \{j\}}\! H^\nu_{k\to i}\right)^{-1}\! M_{ji}^\top.
\]
(The corresponding recursion for $h_{i\to j}^{\nu+1}$ is explicit as well, but omitted   for brevity). 

Since each
$\tilde\mu_{i\to j}^{\nu+1}(x_j)$ is   specified (up to constants) by the pair
$\big(H_{i\to j}^{\nu+1},\,h_{i\to j}^{\nu+1}\big)$,  communication is efficient whenever  
$H_{i\to j}^{\nu+1}$ admits a compact parametrization. In particular, by selecting  
$\{M_i\}$ and $\{M_{ij}\}$ and initializing $\{H_{i\to j}^{0}\}$ so that the recursion preserves a
\emph{structured} family (e.g., (block-)diagonal, banded, sparse, low-rank$+$diagonal), each message can be
transmitted using $\mathcal{O}(d)$ (or a small multiple thereof) parameters rather than $\mathcal{O}(d^2)$.
\end{subequations} Also, 
 choosing properly   $Q_i$, $M_i$, and  $M_j$ (typical large) ensures    Assumption~\ref{assumption:surrogation} holds.
\smallskip 

\noindent {\bf (iii)} \textbf{Consensus optimization via CTA-formulation:} Consider the consensus optimization problem \eqref{eq:CTA}; it is an instance of \eqref{P}  with\vspace{-0.2cm}$$\phi_i(x_i)=f_i(x_i)+\frac{1-w_{ii}}{2\gamma}\norm{x_i}^2,\,\,   \psi_{ij}(x_i,x_j)=-\frac{1}{\gamma}w_{ij}\inner{x_i,x_j},\,\, \text{and } w_{ij}=[W]_{ij}.$$   Choose a partial linearization surrogate for $\phi_i$ and the exact surrogate for $\psi_{ij}$, i.e., \vspace{-0.2cm}
\begin{equation}
\label{example:partial_linear}
\begin{aligned}
&\tilde\phi_i(x_i;x_i^\nu)
:= f_i(x_i^\nu)+\langle\nabla f_i(x_i^\nu),x_i-x_i^\nu\rangle
+\tfrac12\|x_i-x_i^\nu\|_{Q_i}^2
+\frac{1-w_{ii}}{2\gamma}\|x_i\|^2 ,\\
&\widetilde\psi_{ij}(x_i,x_j;x_i^\nu,x_j^\nu)
 = \psi_{ij}(x_i,x_j)=-\tfrac{1}{\gamma}w_{ij}\langle x_i,x_j\rangle,
\end{aligned}
\end{equation}
 {with $Q_i\in \mathbb{S}_{+}$}. If all the initial messages $\tilde\mu_{i\to j}^{0}(x_j)$ are quadratic as in   \eqref{eq:message_quadratic_init},  all 
$\tilde\mu_{i\to j}^{\nu+1}(x_j)$ will be  quadratic as in \eqref{eq:message_quadratic_recursion}, with curvature recursion $H_{i\to j}^{\nu+1}$ 
given by the following recursion  
\vspace{-0.2cm}
\begin{equation}
H_{i\to j}^{\nu+1}
= -\frac{w_{ij}^2}{\gamma^2}
\left({Q_i+\frac{1-w_{ii}}{\gamma}I_d}+\sum_{k\in\cNin_i\setminus \{j\}} H_{k\to i}^\nu\right)^{-1}.
\label{eq:eg_Hessian_message_update}
\end{equation}
Therefore, if one chooses $Q_i$  to be (block-)diagonal and initializes $\{H_{i\to j}^0\}$ as
(block-)diagonal, then \eqref{eq:eg_Hessian_message_update} preserves (block-)diagonality for all $\nu$,
so each message can be transmitted using $\cO(d)$ parameters (diagonal of $H_{i\to j}^\nu$ plus $h_{i\to j}^\nu$),
rather than $\cO(d^2)$. In the isotropic choice $Q_i\propto  I_d$,   (and scalar initializations $\{H_{i\to j}^0\propto  I_d\}$),
even the curvature reduces to a single scalar per message.

 When $\gamma$ is small (the typical regime), the consensus regularizer $\frac{1-W_{ii}}{2\gamma}\|x_i\|^2$ dominates the curvature of $f_i$, so the partial linearization surrogate $\tilde \phi_i$ in  (\ref{example:partial_linear}) is expected to be  an accurate model of $f_i$. Consequently, the surrogate scheme can retain near-exact convergence behavior
  than the algorithm using no surrogation while substantially reducing communication: with quadratic parametrization, each message is encoded by two vectors (diagonal entries and a linear term) rather than a dense matrix plus a vector (or, for non quadratic $f_i$'s, a full function). Finally, Assumption~\ref{assumption:surrogation} is ensured choosing each $Q_i-q_i I\in \mathbb{S}_+^d$, for sufficiency large   $q_i>0.$   
  \vspace{-0.6cm}

\subsection{Convergence analysis: strongly convex objective }  
\vspace{-0.2cm}


We are ready to present the convergence results of Algorithm \ref{alg:main1_surrogate}; we consider for simplicity only the case of homogeneous stepsize and refer to the  Appendix~\ref{proof_th_surrogate} for the case of heterogeneous stepsize values.

\begin{theorem}[uniform stepsize]
\label{thm:convergence_scvx_surrogate_uniform}
Suppose Assumptions~\ref{asm:on_the_partion}, \ref{asm:graph}, \ref{asm:nonverlap}, \ref{assumption:scvx_smoothness}, and~\ref{assumption:surrogation} hold,
with $\mu>0$ and $\tilde\mu_r>0$, for all $r\in[p]$. Let $\{\bx^\nu\}$ be generated by Algorithm~\ref{alg:main1_surrogate}.   
Choose  $\tau_r^\nu\equiv\tau$ such that \vspace{-0.1cm}
\begin{equation}\label{eq:delay_lemma_condition_2_surrogate_clean4}
\tau\ \le\
\min\left\{
\frac{1}{p},\ 
\frac{2\tilde\kappa}{2D+1},\
\sqrt{\frac{\min_{r\in\cJ\ \cup\ \{s\in[p]:\,|\cC_s|>1\}}\tilde\mu_r}{8(2D+1)\, \qty(A_{\cJ}+\max_{r:\,|\cC_r|>1}\tilde A_r) }}
\right\},
\end{equation}
where   $A_{\cJ}$ is defined as in (\ref{eq:def_A}), with $A_r$ therein now given by 
\[
A_r:=\frac{(2\tilde L_r+\tilde\mu_r)\,\tilde L_{\partial r}^2\,|\cC_r|\,D_r}{4\,\tilde\mu_r^2};\] and 
$$\tilde A_r:=\frac{(2\tilde L_r+\tilde\mu_r)\,\tilde\ell_r^2\,|\cC_r|\,D_r}{4\,\tilde\mu_r^2}\,
\Big(\max_{i\in\cC_r}\deg_{\cG_r}(i)\Big).$$
Then,
\[
\Phi(\bx^{\nu})-\Phi^{\star}\leq c\,\rho^\nu,\qquad \forall\,\nu\in\mathbb{N},
\]
for some universal constant $c\in(0,\infty)$, where
\begin{equation}\label{eq:rate-constant-step-surrogate_clean4}
\rho=1-\frac{\tau}{2\tilde\kappa}.
\end{equation}
\end{theorem}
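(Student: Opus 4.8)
The plan is to mirror the three-step argument behind Theorem~\ref{thm:convergence_scvx}, replacing the exact cluster objective $\Phi_r$ by the aggregate surrogate $\tilde\Phi_r$ of~\eqref{eq:agg_surrogate_compact_fix} and carefully tracking the \emph{extra} delay that surrogation introduces through stale edge references. First I would establish the surrogate analogue of Proposition~\ref{prop:Jacobi_delay_refo}: substituting the surrogate message recursion~\eqref{message_update_surrogate} into the variable update~\eqref{gossip_update_surrogate:a} and telescoping along the tree $\cG_r$ expresses $\hat x_i^{\nu+1}$ as a delayed block-minimization of $\tilde\Phi_r$. The crucial difference from the exact case is that now \emph{two} kinds of arguments are delayed: the out-of-cluster variables $x_{\overline{\cC}_r}^{\,\nu-\bd_i}$ (as before) \emph{and} the intra-cluster edge references $y_{\cE_r}$ carried by the surrogate messages, stale by the recursion depth $d(i,j)\le D_r$. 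Gradient consistency (Assumption~\ref{assumption:surrogation}(i)) guarantees fixed points solve~\eqref{P}, while majorization (Assumption~\ref{assumption:surrogation}(ii)) is what lets each surrogate step decrease the \emph{true} objective $\Phi$.

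Next I would re-derive the descent estimates of Lemmas~\ref{le:scvx_descent_contraction} and~\ref{le:scvx_descent} in the surrogate setting. Using majorization to pass from $\tilde\Phi_r$ down to $\Phi$, the $\tilde L_r$-smoothness and $\tilde\mu_r$-strong convexity of $u\mapsto\tilde\Phi_r((u,x_{\overline{\cC}_r});\zeta_r)$ (Assumption~\ref{assumption:surrogation}(iii)) yield, for the virtual non-delayed surrogate minimizer $\bar\bx^{\nu+1}$, the same two inequalities but with $(L_r,\mu_r)$ replaced by $(\tilde L_r,\tilde\mu_r)$ and with the projected-gradient term equal to $\|P_r\nabla\Phi(\bx^\nu)\|^2$ (here gradient consistency is essential to identify the surrogate's block gradient with that of $\Phi$). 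Jensen's inequality over the convex combination of block updates then transfers these per-cluster bounds to the aggregated update exactly as in~\eqref{eq:descent_sum_noz}, producing (after blending the two estimates) a negative displacement term $-\tfrac{\tilde\mu_r}{8\tau}\|P_r(\bx^{\nu+1}-\bx^\nu)\|^2$ and a discrepancy term with coefficient $\tfrac{2\tilde L_r+\tilde\mu_r}{4}$.

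The main obstacle — and the genuinely new ingredient relative to Theorem~\ref{thm:convergence_scvx} — is bounding the delay discrepancy $\|P_r(\hat\bx^{\nu+1}-\bar\bx^{\nu+1})\|^2$ now that the edge references are also stale. Paralleling the optimality-condition argument of Lemma~\ref{le:iterates_gap}, I would split the gradient gap into a boundary-variable part, controlled by $\tilde L_{\partial r}$ through Assumption~\ref{assumption:surrogation}(v) (giving the external term $A_r$ exactly as before), and an edge-reference part, controlled by $\tilde\ell_r$ through Assumption~\ref{assumption:surrogation}(iv). The edge-reference perturbation is a sum over the $2|\cE_r|$ endpoint blocks of $y_{\cE_r}$; since each node $i\in\cC_r$ appears in $\deg_{\cG_r}(i)$ incident edges, this collapses to the accumulated \emph{intra}-cluster variations $\sum_{\ell=\nu-D_r}^{\nu-1}\|P_r(\bx^{\ell+1}-\bx^\ell)\|^2$ weighted by $\max_{i\in\cC_r}\deg_{\cG_r}(i)$, yielding the new coefficient $\tilde A_r$. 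The key structural consequence is that delay accumulation now hits the coordinates of \emph{every} non-singleton cluster (not only those in $\cup_{r\in\cJ}\cC_r$), which is precisely why the strong-convexity floor in~\eqref{eq:delay_lemma_condition_2_surrogate_clean4} is taken over $\cJ\cup\{s:|\cC_s|>1\}$ and the coupling coefficient is $A_\cJ+\max_{r:|\cC_r|>1}\tilde A_r$.

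Finally I would assemble the pieces as in Theorem~\ref{thm:convergence_scvx}: combining the surrogate descent inequality with the two-source delay bound, discarding the vanishing singleton terms, and reindexing the double sum via the enlarged cover gives
\[
\Phi(\bx^{\nu+1})-\Phi^\star
\le \Big(1-\tfrac{\tau}{2\tilde\kappa}\Big)\big(\Phi(\bx^\nu)-\Phi^\star\big)
-\sum_{r}\tfrac{\tilde\mu_r}{8\tau}\big\|P_r(\bx^{\nu+1}-\bx^\nu)\big\|^2
+\tau\Big(A_\cJ+\!\!\max_{r:|\cC_r|>1}\!\!\tilde A_r\Big)\sum_{r}\Delta_{\cC_r}^\nu,
\]
where I used $\|\nabla\Phi(\bx^\nu)\|^2\ge 2\mu(\Phi(\bx^\nu)-\Phi^\star)$ and $\tilde\kappa=(\max_r\tilde L_r)/\mu$. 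The stepsize bound~\eqref{eq:delay_lemma_condition_2_surrogate_clean4} makes the negative displacement terms dominate the accumulated delay window, so the delay-window inequality~\cite[Lemma 5]{feyzmahdavian2023asynchronous} absorbs the $\Delta_{\cC_r}^\nu$ contributions and yields the linear rate~\eqref{eq:rate-constant-step-surrogate_clean4}. The only delicate bookkeeping is verifying that the enlarged index set $\cJ\cup\{s:|\cC_s|>1\}$ and the combined coefficient $A_\cJ+\max_r\tilde A_r$ are exactly what the window inequality consumes; everything else is a constant-tracking repetition of the exact-case argument.
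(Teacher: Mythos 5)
Your proposal is correct and follows essentially the same route as the paper's proof: the surrogate delayed block-Jacobi reformulation (the paper's Proposition~\ref{prop:equiv_refo_surrogate}), the two descent lemmas transferred to $\Phi$ via majorization and gradient consistency, the two-channel discrepancy bound splitting the optimality-condition gap into a boundary part (via $\tilde L_{\partial r}$) and an edge-reference part (via $\tilde\ell_r$ with the degree factor $\sigma_r$), the enlarged cover $\cJ\cup\{s:|\cC_s|>1\}$, and the delay-window inequality of \cite[Lemma 5]{feyzmahdavian2023asynchronous}. Your constant tracking (the $\tfrac{2\tilde L_r+\tilde\mu_r}{4}$ blend, the $-\tfrac{\tilde\mu_r}{8\tau}$ displacement term, and the combined coefficient $A_\cJ+\max_{r:|\cC_r|>1}\tilde A_r$) matches the paper's.
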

\begin{proof}
    See Appendix~\ref{proof_th_surrogate}.\hfill $\square$
\end{proof}

 The following remarks are in order.  Theorem~\ref{thm:convergence_scvx_surrogate_uniform} parallels Theorem~\ref{thm:convergence_scvx}: linear convergence is guaranteed, with a  contraction factor $\rho$ now   given by (\ref{eq:rate-constant-step-surrogate_clean4}),    depending on the surrogate condition number  $\tilde\kappa$. Relative to the exact case (Theorem~\ref{thm:convergence_scvx}), the stepsize restrictions in
\eqref{eq:delay_lemma_condition_2_surrogate_clean4} retain the same three-way structure as in
\eqref{eq:rate_expression}:
\texttt{(I)} the block-count limitation $1/p$ (Jacobi-type effect),
\texttt{(II)} the delay/diameter limitation $(2D+1)^{-1}$,
and \texttt{(III)} an external-coupling limitation. 

The main differences are as follows. 
\textbf{(i)} {\it larger condition number.}  The constants $(L_r,\mu_r,L_{\partial r})$ are replaced by their surrogate counterparts
$(\tilde L_r,\tilde\mu_r,\tilde L_{\partial r})$ in $A_r$ and in the admissible range of $\tau$.
Since majorization typically inflates curvature ($\tilde L\ge L$), one may have
$\tilde\kappa\ge\kappa$, hence a slower rate for a given stepsize--the expected price for cheaper
local computation/communication. \textbf{(ii)}{\it A new delay channel   through edge-reference sensitivity.} The denominator in the \texttt{(III)}-type bound now involves
$\max\{A_{\cJ},\,\max_{r:\,|\cC_r|>1}\tilde A_r\}$.
The term $A_{\cJ}$ is the same external-neighborhood coupling aggregate as in the exact case,  
whereas $\tilde A_r$ is \emph{new} and quantifies the impact of stale \emph{intra-cluster edge references}.
This effect is governed by the reference-sensitivity constant $\tilde\ell_r$
in Assumption~\ref{assumption:surrogation}.(iv): tighter edge surrogates (smaller $\tilde\ell_r$) reduce
$\tilde A_r$ and enlarge the admissible stepsize region.
 \textbf{(iii)} \emph{Partition design must control two coupling measures.}
As in the exact case, good partitions balance a small number of clusters $p$ against moderate diameters $D_r$.
In the surrogate setting, one must additionally keep both $A_{\cJ}$ and $\max_{r:\,|\cC_r|>1}\tilde A_r$
moderate; the latter favors surrogate designs with limited edge-reference sensitivity and/or small
non-singleton diameters $D_r$. An examples is briefly discussed next. \vspace{-0.2cm}

 \paragraph{Example (interpreting $\tilde\ell_r$).} For quadratic edge surrogates (Sec.~\ref{subsec:surrogate_example}(ii)) and $C^2$ couplings $\psi_{jk}$,
$\tilde\ell_r$ is controlled by   the mismatch between the surrogate cross-curvature $M_{jk}$ and the true
cross-Hessian of $\psi_{jk}$ along the reference points: indeed,  
\[
\nabla_{(u,u^\prime)}\,\nabla_{(x_j,x_k)}\,\widetilde\psi_{jk}\big((x_j,x_k);(u,u^\prime)\big)
=
\nabla^2\psi_{jk}(u,u^\prime)-M_{jk},\quad   (j,k)\in\cE_r.
\]
Hence choosing $M_{jk}\approx \nabla^2\psi_{jk}(x_j^\nu,x_k^\nu)$ (in operator norm, along the iterates)
yields a smaller $\tilde\ell_r$, hence reduces $\tilde A_r$. This leads to   a milder additional delay penalty in the stepsize condition.
\vspace{-0.3cm}

\subsection{Convergence analysis: nonstrongly convex objective }  \vspace{-0.1cm}
  When $\Phi$ is not strongly convex (and possibly nonconvex), we employ surrogation to ensure that every local subproblem is well-defined. For merely convex objectives,  
to derive a bound in terms of the \emph{function value gap} $\Phi(\bx^\nu)-\Phi(\bx^\star)$, we need to relate this gap to a squared-distance term involving $\bx^\nu$ and $\bx^\star\in\argmin_{\bx}\Phi(\bx)$.
This requires a global smoothness property for a   \emph{full aggregated} surrogate objective, so that $\Phi(\bx^\nu)$ can be related to $\Phi(\bx^\star)$ through  a standard descent inequality.

To this end, we first collect the terms in $\Phi$ that are \emph{independent} of the block variable $x_{\cC_r}$. For a given cluster $\cC_r$ and any $\bx\in\R^{md}$, define the remainder
\begin{equation}   
R_r(x_{\overline{\cC}_r}):=\sum_{i\in\overline{\cC}_r}\phi_i(x_i)+\sum_{i,j\in\overline{\cC}_r,(i,j)\in\cE}\psi_{ij}(x_i,x_j).
\end{equation}
Then, for a given $\bx$ and $\zeta_r=(y_{\cC_r},y_{\overline{\cC}_r},y_{\cE_r})$, we introduce the \emph{full aggregated surrogate} associated with $\cC_r$:  
\begin{equation}   \label{def:entire_aggregate_surrogate}
\widetilde\Phi_r^{\rm all}\qty(\bx;\zeta_r):=\widetilde\Phi_r(\bx;\zeta_r)+R_r(x_{\overline{\cC}_r}).
\end{equation}
By construction, $\widetilde\Phi_r^{\rm all}$ consists of   all surrogate terms associated with the  subgraph $\cG_r$ and   all other original objective terms that are independent of  $x_{\cC_r}$.
\begin{assumption}\label{assumption:entire_surroagte}
For each $r\in[p]$, the function $\widetilde\Phi_r^{\rm all}$ is  $\bar L_r$-smooth  with respect to its full argument $\bz=(\bx,\zeta_r)$, i.e., for all $\bz$ and $\bz^\prime$,
\[
\big\|\nabla\widetilde\Phi_r^{\rm all}(\bz)-\nabla\widetilde\Phi_r^{\rm all}(\bz')\big\|
\le \bar L_r\,\|\bz-\bz'\|.
\]
\end{assumption}


\begin{theorem}\label{thm:convergence_cvx}
Suppose Assumptions~\ref{asm:on_the_partion}, \ref{asm:graph}, \ref{asm:nonverlap},
\ref{assumption:scvx_smoothness}, \ref{assumption:surrogation}, and
\ref{assumption:entire_surroagte} hold, with $\mu=0$ and $\tilde\mu>0$.
Let $\{\bx^\nu\}$ be generated by Algorithm~\ref{alg:main1_surrogate} with uniform stepsize
$\tau_r^\nu\equiv\tau$ such that 
\[
\tau \ \le\ \min\left\{
\frac{1}{p},\ 
\sqrt{\frac{\widetilde\mu}{16(D+1)\big(A_\cJ+\max_{r:\,|\cC_r|>1}\tilde A_r\big)}},\ 
\frac{\tilde L_{\min}}{\big(L+(\sigma+1)(D+1)\big)\max_{r\in[p]}|\cC_r|}
\right\},
\]
where $\widetilde\mu=\min_{r\in[p]}\widetilde\mu_r$, $\tilde L_{\min}=\min_{r\in[p]}\tilde L_r$,
$L=\max_{r\in[p]}L_r$, $\sigma=\max_{r\in[p]}\sigma_r$, and $\sigma_r:=\max_{i\in\cC_r}\deg_{\cG_r}(i)$, $A_\cJ$, and $\tilde A_r$ are defined as in
Theorem~\ref{thm:convergence_scvx_surrogate_uniform}.
For any 
\[
\nu\ \ge\ \frac{8\big(\tilde L-\tilde\mu/\max_{r\in[p]}|\cC_r|\big)}{\tilde\mu}
\;+\;
\frac{pK}{A_\cJ+\max_{r:\,|\cC_r|>1}\tilde A_r},
\]
with 
\[
K:=\max_{r\in[p]}\left\{
\frac{\bar L_r(\sigma_r+1)(2D_r+1)}{2}
+\frac{|\cC_r|^2D_r\big(\widetilde L_{\partial r}^2+\sigma_r\tilde\ell_r^2\big)}{2\widetilde\mu_r^2}
\right\},
\]
 it holds that
\[
\Phi(\bx^\nu)-\Phi^\star
\ \le\frac{1}{\nu}\left(
 {\Phi(\bx^0)-\Phi^\star
+\frac{\widetilde L_{\min}}{2\tau\max_{r\in[p]}|\cC_r|}\,\|\bx^0-\bx^\star\|^2}\right).
\]   
\end{theorem}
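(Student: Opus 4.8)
The plan is to treat Algorithm~\ref{alg:main1_surrogate} as a damped block-Jacobi majorization--minimization scheme with bounded delays --- exactly as in the strongly convex analysis behind Theorem~\ref{thm:convergence_scvx_surrogate_uniform} --- and to replace the strong-convexity contraction by a convexity-driven $O(1/\nu)$ telescoping argument. First I would reduce the algorithm to its delayed block-Jacobi-on-surrogates form (the surrogate analogue of Proposition~\ref{prop:Jacobi_delay_refo}, which is where Assumption~\ref{asm:nonverlap} is used), and record the two per-iteration estimates in their surrogate versions. The \emph{descent} inequality, the analogue of Lemma~\ref{le:scvx_descent_contraction}/Lemma~\ref{le:scvx_descent}, follows from the majorization property Assumption~\ref{assumption:surrogation}(ii) and the uniform strong convexity/smoothness of Assumption~\ref{assumption:surrogation}(iii):
\begin{equation*}
\Phi(\bx^{\nu+1})\le \Phi(\bx^\nu)-\sum_{r\in[p]}\frac{\tau}{2\widetilde L_r}\big\|P_r\nabla\Phi(\bx^\nu)\big\|^2+\frac{\tau}{2}\sum_{r\in[p]}\widetilde L_r\big\|P_r(\hat\bx^{\nu+1}-\bar\bx^{\nu+1})\big\|^2 .
\end{equation*}
The \emph{surrogate delay lemma}, the analogue of Lemma~\ref{le:iterates_gap}, bounds the discrepancy $\|P_r(\hat\bx^{\nu+1}-\bar\bx^{\nu+1})\|^2$ over a window of length $D_r$ by boundary variations (through $\widetilde L_{\partial r}$, Assumption~\ref{assumption:surrogation}(v)) \emph{and} by stale intra-cluster edge-reference variations (through $\widetilde\ell_r$, Assumption~\ref{assumption:surrogation}(iv)); the latter is the new channel producing $\widetilde A_r$ alongside $A_{\cJ}$.

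The convexity-specific ingredient is a Fej\'er-type recursion for $r_\nu:=\|\bx^\nu-\bx^\star\|^2$. By Assumption~\ref{assumption:surrogation}(ii) the full aggregated surrogate $\widetilde\Phi_r^{\rm all}(\cdot;\zeta_r^\nu)$ of~\eqref{def:entire_aggregate_surrogate} globally majorizes $\Phi$ and touches it at $\bx^\nu$, is $\widetilde\mu_r$-strongly convex in $x_{\cC_r}$, and --- crucially --- is $\bar L_r$-smooth in the full argument by Assumption~\ref{assumption:entire_surroagte}. The non-delayed block minimizer $\bar x_{\cC_r}^{\nu+1}$ minimizes this majorant over $x_{\cC_r}$; evaluating the strong-convexity inequality at the target block $x_{\cC_r}^\star$, summing over the disjoint clusters, and combining with convexity of $\Phi$ in the form $\Phi(\bx^\nu)-\Phi^\star\le\langle\nabla\Phi(\bx^\nu),\bx^\nu-\bx^\star\rangle$ together with gradient consistency Assumption~\ref{assumption:surrogation}(i) --- which identifies $\nabla_{\cC_r}\widetilde\Phi_r(\bx^\nu;\zeta_r^\nu)=\nabla_{\cC_r}\Phi(\bx^\nu)$ --- I would obtain a recursion
\begin{equation*}
c_0\,r_{\nu+1}\ \le\ c_0\,r_{\nu}-\big(\Phi(\bx^\nu)-\Phi^\star\big)+\text{(windowed delay errors)},\qquad c_0:=\frac{\widetilde L_{\min}}{2\tau\max_{r}|\cC_r|},
\end{equation*}
where $\bar L_r$ controls the cross-block first-order expansions generated when passing from the block-wise optimality conditions to the full-vector distance identity, and the delay errors are again expressed through $A_{\cJ}$ and $\widetilde A_r$ over a window of length $D$.

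It remains to telescope. Summing the distance recursion over $k=0,\dots,\nu-1$ and invoking the delay-window inequality used in the proof of Theorem~\ref{thm:convergence_scvx_surrogate_uniform} (cf.~\cite[Lemma~5]{feyzmahdavian2023asynchronous}) absorbs the accumulated delay errors into the negative decrease terms \emph{provided} $\tau$ obeys the stated bound; the three competing restrictions on $\tau$ are precisely what make term~\texttt{(I)} $(1/p)$, the descent (through $\widetilde L_{\min}$), and the two delay channels $(A_{\cJ}+\max_r\widetilde A_r)$ simultaneously dominated. Using the descent inequality to supply the $g_0$ contribution and the distance recursion to supply $c_0\,r_0$, this yields $\sum_{k=1}^{\nu}\big(\Phi(\bx^k)-\Phi^\star\big)\le g_0+c_0\,r_0$ once the transient $\nu\ge\tfrac{8(\widetilde L-\widetilde\mu/\max_r|\cC_r|)}{\widetilde\mu}+\tfrac{pK}{A_{\cJ}+\max_r\widetilde A_r}$ has passed, with $K$ the per-cluster constant collecting $\bar L_r,\widetilde L_{\partial r},\widetilde\ell_r,D_r$. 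Combining this with the approximate monotonicity of $\{\Phi(\bx^\nu)-\Phi^\star\}$ from the descent inequality (whence $\nu\,g_\nu\le\sum_{k=1}^{\nu}g_k$) then delivers the claimed $O(1/\nu)$ bound.

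The hard part is the joint management of the two delay channels in the \emph{non}-strongly-convex regime. In Theorem~\ref{thm:convergence_scvx_surrogate_uniform} strong convexity of $\Phi$ converts the descent directly into a linear contraction; here $\mu=0$, so one must instead run the distance recursion and show that the windowed sums $\sum_{\ell=\nu-D}^{\nu-1}\|P_{\partial r}(\bx^{\ell+1}-\bx^{\ell})\|^2$ and the analogous edge-reference sums are dominated by the current decrease \emph{uniformly} in $\nu$. This is what forces both the burn-in threshold and the square-root stepsize restriction, and it is the step where Assumption~\ref{assumption:entire_surroagte} is indispensable: without global $\bar L_r$-smoothness of $\widetilde\Phi_r^{\rm all}$ one cannot bound the cross-block terms that arise when the block-wise optimality conditions are assembled into the full distance recursion.
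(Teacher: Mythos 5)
Your proposal is correct and follows essentially the same route as the paper: the surrogate delayed block-Jacobi reformulation, the descent and delay-discrepancy lemmas, a distance recursion obtained from the strongly convex majorizing surrogate evaluated at $\bx^\star$ (with Assumption~\ref{assumption:entire_surroagte} controlling the delayed-reference and cross-block terms), and absorption of the windowed delay errors under the stated stepsize restriction. The only difference is cosmetic: the paper telescopes the $\nu$-weighted Lyapunov function $\nu\big(\Phi(\bx^\nu)-\Phi^\star\big)+\frac{\widetilde L_{\min}}{2\tau\max_{r\in[p]}|\cC_r|}\,\|\bx^\nu-\bx^\star\|^2$ rather than summing the gaps and invoking approximate monotonicity, but the two finishes are equivalent.
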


\begin{proof}
See Appendix \ref{proof_convergence_cvx}. \qed
\end{proof}

\begin{theorem}\label{thm:convergence_ncvx}
Suppose Assumptions~\ref{asm:on_the_partion}, \ref{asm:graph}, \ref{asm:nonverlap},
\ref{assumption:scvx_smoothness}.(ii), and \ref{assumption:surrogation} hold.
Let $\{\bx^\nu\}$ be generated by Algorithm~\ref{alg:main1_surrogate} with uniform stepsize
$\tau_r^\nu\equiv\tau$ satisfying
\[
\tau\leq\min\left\{\frac{1}{p},\ \sqrt{\frac{\widetilde\mu}{8D\big(A_\cJ+\max_{r:\,|\cC_r|>1}\tilde A_r\big)}}\right\}.
\]
Then
\begin{equation}\label{eq:sublinear_rate_cvx_ncvx}
\min_{\ell\in\{D,\dots,\nu+D-1\}}\norm{\nabla\Phi(\bx^\ell)}^2
\ \le\
\frac{4\tilde L\big(\Phi(\bx^0)-\Phi^\star\big)}{\tau\,\nu},
\end{equation}
where $\widetilde\mu$, $\tilde L$, $A_\cJ$, and $\tilde A_r$ are defined as in
Theorem~\ref{thm:convergence_cvx}.
\end{theorem}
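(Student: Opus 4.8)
The plan is to reuse the two-estimate descent machinery behind the strongly convex surrogate result (Theorem~\ref{thm:convergence_scvx_surrogate_uniform}), but instead of converting the projected-gradient decrease into a linear contraction of the value gap, to sum the resulting inequality and exploit only lower boundedness of $\Phi$. First I would establish the surrogate analog of Lemma~\ref{le:scvx_descent_contraction}. Using gradient consistency and majorization (Assumption~\ref{assumption:surrogation}(i)--(ii)), the non-delayed cluster surrogate update $\bar\bx^{\nu+1}$ yields a descent of $\Phi$ driven by $\nabla_{\cC_r}\Phi(\bx^\nu)=\nabla_{\cC_r}\Phi_r(\bx^\nu)$ and smoothed by $\tilde L_r$ (Assumption~\ref{assumption:surrogation}(iii)); combining with the Jensen step over the damped convex combination (as in \eqref{eq:descent_sum_noz}) gives
\[
\Phi(\bx^{\nu+1}) \le \Phi(\bx^\nu) + \sum_{r}\tau\Big[-\tfrac{1}{2\tilde L_r}\norm{\nabla_{\cC_r}\Phi(\bx^\nu)}^2 + \tfrac{\tilde L_r}{2}\norm{P_r(\hat\bx^{\nu+1}-\bar\bx^{\nu+1})}^2\Big].
\]
In parallel, the surrogate strong convexity $\tilde\mu_r$ yields, exactly as in Lemma~\ref{le:scvx_descent}, a sufficient-decrease estimate carrying a negative $\norm{P_r(\hat\bx^{\nu+1}-\bx^\nu)}^2$ term. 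Averaging the two produces a single descent inequality with both a $-\tfrac{\tau}{4\tilde L}\norm{\nabla\Phi(\bx^\nu)}^2$ term (using $\sum_r\norm{\nabla_{\cC_r}\Phi(\bx^\nu)}^2=\norm{\nabla\Phi(\bx^\nu)}^2$ and $\tilde L=\max_r\tilde L_r$) and a negative block-displacement term $-\sum_r\tfrac{\tilde\mu_r}{8\tau}\norm{x_{\cC_r}^{\nu+1}-x_{\cC_r}^\nu}^2$. Crucially, strong convexity of $\Phi$ itself is never needed: the negative quadratic comes entirely from the surrogate, which is why only Assumption~\ref{assumption:scvx_smoothness}.(ii) is required on $\Phi$.

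Next I would control the delay discrepancy $\norm{P_r(\hat\bx^{\nu+1}-\bar\bx^{\nu+1})}^2$ through the surrogate version of Lemma~\ref{le:iterates_gap}. Stale references now enter in two ways — through the out-of-cluster variables (governed by $\tilde L_{\partial r}$, Assumption~\ref{assumption:surrogation}(v)) and through the delayed intra-cluster edge references in the surrogate messages (governed by $\tilde\ell_r$, Assumption~\ref{assumption:surrogation}(iv)). Using the optimality conditions for $\hat\bx^{\nu+1}$ and $\bar\bx^{\nu+1}$ together with $\tilde\mu_r$-strong convexity, the discrepancy is bounded by an accumulated window of length $D_r$ of boundary and edge-reference increments, which aggregates into the two coefficients $A_\cJ$ and $\tilde A_r$ exactly as defined in Theorem~\ref{thm:convergence_scvx_surrogate_uniform} (the factor $\sigma_r$ in $\tilde A_r$ tracking how many edges a node carries within $\cG_r$).

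The crux is then to sum the averaged descent inequality over $\ell=D,\dots,\nu+D-1$ and telescope. The accumulated delay terms form double sums over windows of length $D$; after the standard reindexing that exchanges the order of summation, each displacement $\norm{x^{\ell+1}-x^\ell}^2$ on the relevant coordinates is charged at most $D$ times, so these contributions are bounded by $D\big(A_\cJ+\max_{r:|\cC_r|>1}\tilde A_r\big)\sum_\ell\norm{x^{\ell+1}-x^\ell}^2$. The stepsize threshold $\tau\le\sqrt{\tilde\mu/(8D(A_\cJ+\max\tilde A_r))}$ (together with $\tau\le 1/p$, which validates $\sum_r\tau_r^\nu\le1$ in the Jensen step) is precisely what makes this positive contribution dominated by the negative $\sum_r\tfrac{\tilde\mu_r}{8\tau}\norm{x_{\cC_r}^{\ell+1}-x_{\cC_r}^\ell}^2$ terms, forcing their net cancellation. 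What survives is
\[
\frac{\tau}{4\tilde L}\sum_{\ell=D}^{\nu+D-1}\norm{\nabla\Phi(\bx^\ell)}^2 \ \le\ \Phi(\bx^{D})-\Phi^\star \ \le\ \Phi(\bx^{0})-\Phi^\star,
\]
where the last step uses that over the initial burn-in the pre-initial iterates are held at $\bx^0$, so the delay windows reaching before time $0$ vanish and net descent holds. Dividing by $\nu$ and bounding the minimum by the average yields \eqref{eq:sublinear_rate_cvx_ncvx}, with the factor $4$ coming directly from the $\tfrac14$ weight on the gradient term.

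The main obstacle I expect is the window-absorption bookkeeping: making the double-sum reindexing of the delayed discrepancy terms fully rigorous and verifying that every displacement is indeed charged at most $D$ times, so that the constant $8D(A_\cJ+\max\tilde A_r)$ in the stepsize bound is exactly tight enough to guarantee cancellation against the surrogate sufficient-decrease terms. A secondary subtlety is the clean handling of the initial window $\ell<D$, which is why the statement restricts the minimum to indices $\ell\ge D$, where the full delay window is populated.
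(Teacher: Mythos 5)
Your proposal is correct and follows essentially the same route as the paper: it starts from the same combined one-step descent bound (gradient decrease from surrogate smoothness, sufficient decrease from surrogate strong convexity, and a delay-window penalty weighted by $A_\cJ+\max_{r}\tilde A_r$), and then absorbs the delay terms into the negative displacement terms under the stated stepsize bound before summing and bounding the minimum by the average. The only cosmetic difference is that the paper packages the window absorption into an explicit Lyapunov function $V^\nu$ with linearly weighted window terms, whereas you perform the equivalent double-sum reindexing directly.
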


\begin{proof}
See Appendix \ref{proof_convergence_ncvx}. \qed
\end{proof}

 Unlike classical block-Jacobi and min-sum message passing, which typically rely on strong convexity for convergence, Theorems~\ref{thm:convergence_cvx} and~\ref{thm:convergence_ncvx} show that Algorithm~\ref{alg:main1_surrogate} is well posed and achieves sublinear convergence in both convex and nonconvex settings. This robustness stems from the use of \emph{strongly convex aggregated surrogate} subproblems. In either regime, the stepsize conditions exhibit the same structural drivers: the block-count factor $1/p$, the delay/diameter constraint, and a bound on the strength of the inter-cluster coupling. As a consequence, the trade-offs and partition-guidance principles in Sec.~\ref{subsec:discussion_rate} extend verbatim beyond strong convexity.

\vspace{-0.4cm}

\section{Decomposition via Message Passing:  Hypergraph Problem~\eqref{prob_hyper}}\vspace{-0.2cm}
In this section we extend the proposed framework to the hypergraph formulation~\eqref{prob_hyper}. We begin introducing   basic definitions  used throughout the section.

Given the hypergraph  $\cG=(\cV,\cE)$   underlying     ~\eqref{prob_hyper}, we view each hyperedge $\omega\in\cE$ as a \emph{factor} representing the coupling term $\psi_\omega(x_\omega)$. This naturally leads to the following bipartite representation, based on the (condensed)  factor graph.
\begin{definition}[factor graph]\label{def-factor-graph}
The (hypergraph) factor graph associated with $\cG=(\cV,\cE)$ is the bipartite graph
$\cF(\cG):=(\cV\cup\cE,\ \cA)$ with variable nodes $\cV$, factor nodes $\cE$, and incidence edges
\(
\cA:=\{(i,\omega)\in\cV\times\cE:\ i\in\omega\}.
\) 
For a sub-hypergraph $\cG_r=(\cC_r,\cE_r)$, we denote by $\cF_r:=\cF(\cG_r)$ the induced factor graph, i.e.,
$\cF_r=(\cC_r\cup\cE_r,\ \cA_r)$ with $\cA_r:=\{(i,\omega)\in  \cC_r\times \cE_r\,:\,  i\in\omega\}$.
\end{definition}
We next introduce the hypergraph analogue of the condensed graph (Def.~\ref{def:condensed_graph}).
As in the pairwise setting, we adopt the edge-maximality convention    at the level of clusters:
each cluster contains \emph{all} factors supported entirely on its variables.

\begin{definition}[condensed factor graph]\label{def:condensed_factor_graph}
Given $\cG=(\cV,\cE)$ and $p\in[m]$, let $\cC_1,\ldots,\cC_p$ be a partition of $\cV$, with associated intra-cluster   hyperedge sets
\begin{equation}\label{eq:nonoverlap_hyper}
\cE_r:=\{\omega\in\cE:\ \omega\subseteq \cC_r\},\qquad r\in[p]; 
\end{equation}
this results in the sub-hypergraphs $\cG_r:=(\cC_r,\cE_r)$, with induced factor graphs
$\cF_r:=\cF(\cG_r)$.  
Let  
$\cE_{\rm in}:=\bigcup_{r\in[p]}\cE_r$ and $\cE_{\rm out}:=\cE\setminus \cE_{\rm in}$  denote the \emph{intra-cluster}  and \emph{inter-cluster} factors, respectively.

The \emph{condensed factor graph} relative to $\{\cF_r\}_{r=1}^p$ is the bipartite graph
$\cF_{\cC}:=(\cV_{\cC}\cup \cE_{\rm out},\ \cA_{\cC})$ where
\begin{itemize}
\item $\cV_{\cC}:=\{\texttt{c}_1,\ldots,\texttt{c}_p\}$ is the set of \emph{supernodes}, with $\texttt{c}_r$
associated with $\cC_r$; and
\item $
\cA_{\cC}:=\{(\texttt{c}_r,\omega)\in \cV_{\cC}\times \cE_{\rm out}:\ \omega\cap \cC_r\neq\emptyset\}$ is the set of
incidence edges between supernodes and cross-cluster factors.
\end{itemize}
\end{definition}

\begin{figure*}[t]
    \centering
    \setlength{\tabcolsep}{6pt}
    \renewcommand{\arraystretch}{1.0}
    \begin{tabular}{cc}
        \begin{subfigure}{0.5\linewidth}
            \centering
            \includegraphics[width=\linewidth]{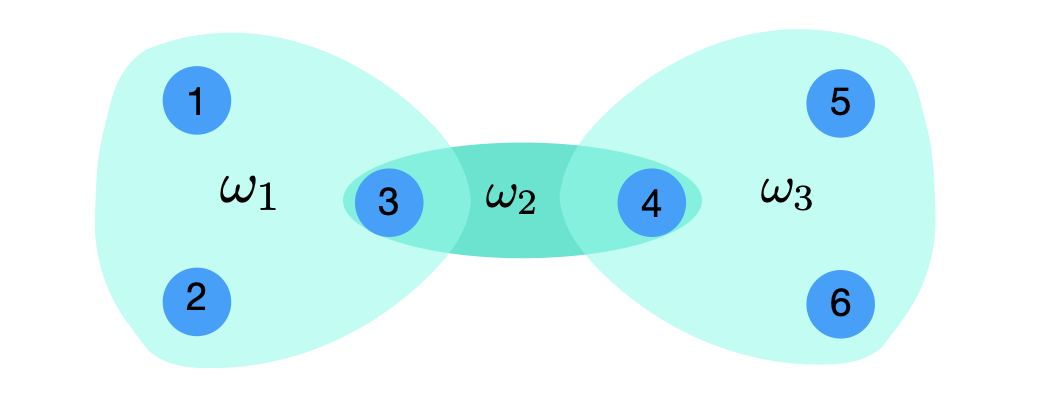}\vspace{-0.3cm}
            \caption{\,Hypergraph $\cG$.}
            \label{fig:sub-a}
        \end{subfigure}
        &
        \begin{subfigure}{0.5\linewidth}
            \centering
            \includegraphics[width=\linewidth]{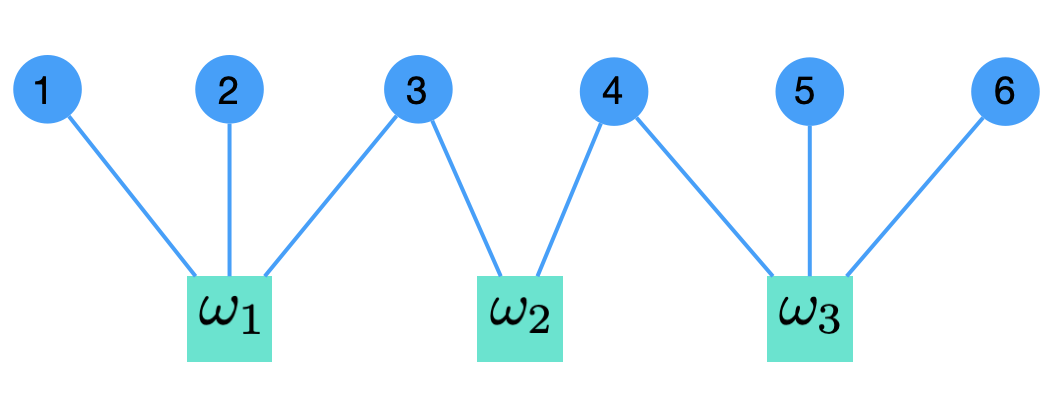}
            \caption{\,Factor graph $\cF(\cG)$.}
            \label{fig:sub-b}
        \end{subfigure}
        \\
        \begin{subfigure}{0.5\linewidth}
            \centering
            \includegraphics[width=\linewidth]{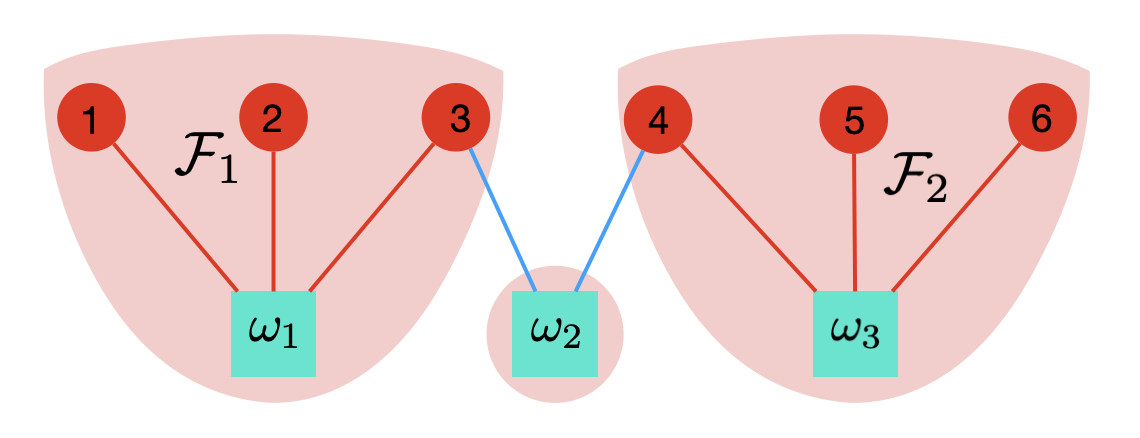}
            \caption{\,Partition of $\cF(\cG)$.}
            \label{fig:sub-c}
        \end{subfigure}
        &
        \begin{subfigure}{0.5\linewidth}
            \centering
            \includegraphics[width=\linewidth]{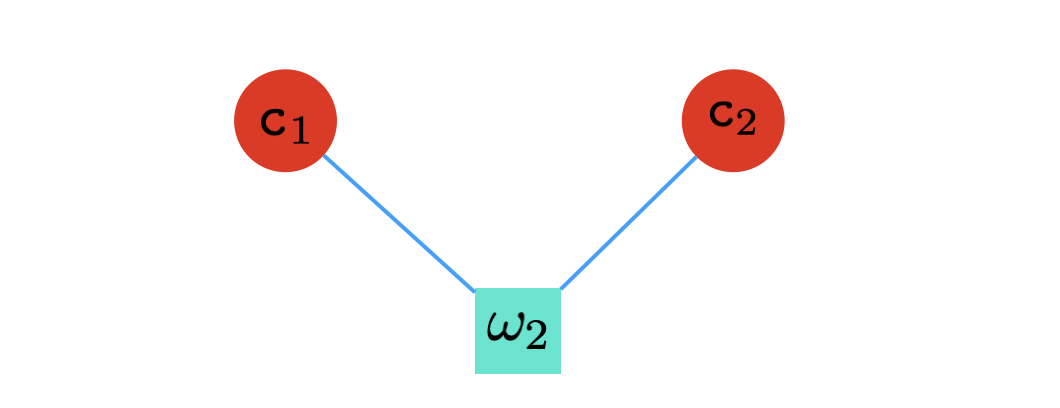}
            \caption{\,Condensed factor graph $\cF_{\cC}$.}
            \label{fig:sub-d}
        \end{subfigure}
    \end{tabular}
    \caption{Hypergraph-to-factor-graph representation and clustering:
   (a)  hypergraph $\cG$;   (b)    bipartite factor graph $\cF(\cG)$.
    (c) example of  partition of $\cF(\cG)$ into   factor trees $\cF_r$;   (d)    condensed factor graph  corresponding to the partition in (c).}
    \label{fig:hyper_factor_graph}\vspace{-0.2cm}
\end{figure*}
\noindent Fig.~\ref{fig:hyper_factor_graph} illustrates the passage from a hypergraph to its factor graph and to a condensed factor graph under some  cluster partitions.

\textit{Neighborhoods \& factor-graph distance:}  Neighbors   are   \emph{incident factors}.   For $i\in\cC_r$, define (to simplify notation, we keep the same symbols as in the pairwise case) 
\[
\cN_i:=\{\omega\in\cE:\ i\in\omega\},\qquad
\cNin_i:=\cN_i\cap\cE_r,\qquad
\cNout_i:=\cN_i\setminus\cNin_i;
\]
and let $\cN_{\cC_r}:=\bigcup_{i\in\cC_r}\cNout_i$. Note that $\cNin_i$ corresponds to intra-cluster factor nodes adjacent to $i$ in  $\cF_r$,
whereas $\cNout_i$ collects the inter-cluster factors incident to $i$ (i.e., factors adjacent to the
supernode $\texttt{c}_r$ in $\cF_{\cC}$ through $i$).  
  When $\cF_r$ is connected, we denote by $D_r:={\rm diam}(\cF_r)$ its diameter, and let $D:=\max_{r\in[p]}D_r$.  A \emph{factor-graph path} from $u$ to $v$ in $\cF(\cG)$ ($u,v$ may be variable or factor nodes) is a finite sequence of
\emph{distinct} nodes $(z_0,\ldots,z_\ell)\subseteq \cV\cup\cE$ such that $z_0=u$, $z_\ell=v$, and $(z_{t-1},z_t)\in\cA$, for all $t=1,\ldots,\ell$.
The \emph{factor-graph distance} is
\[
\dist_{\cF}(u,v):=
\begin{cases}
0, & u=v,\\[1mm]
\min\{\ell\ge 1:\ \exists\ \text{a factor-graph path of length }\ell\text{ from }u\text{ to }v\}, & u\neq v.
\end{cases}
\]
For variable nodes $i,j\in\cV$, define the induced variable-to-variable distance
$
d(i,j):=(1/2)\,\dist_{\cF}(i,j),
$
and for $i\in\cV$ and $\omega\in\cE$, define the variable-to-factor distance
$
d(i,\omega):=(1/2)\big(\dist_{\cF}(i,\omega)-1\big).
$

As for  the pairwise setting, we focus on   hypertree partition clusters.

\begin{definition}[Hypertree]\label{def:hypertree_factor}
A hypergraph $\cG=(\cV,\cE)$ is said to be a \emph{(hyper)tree} if its factor graph $\cF(\cG)$ is connected and acyclic
(equivalently, $\cF(\cG)$ is a tree).
\end{definition}

Notice that the above  notion of a hypertree   coincides with
\emph{Berge-acyclicity} \cite{bretto2013hypergraph}. 
 
\begin{assumption}[tree-clusters]\label{asm:on_the_partion_hyper}
Let $\{\cG_r\}_{r=1}^p$ be a partition satisfying Def. \ref{def:condensed_factor_graph};  every  induced factor graph $\cF_r$ is a tree (Singleton clusters $\cE_r=\emptyset$ allowed).  
\end{assumption}
Assumption~\ref{asm:on_the_partion_hyper} requires that, within each subgraph $\cG_r$, the variable--factor incidence
structure is acyclic--see Fig.~\ref{fig:hyper_factor_graph}.(b). In Sec.~\ref{sec:acyclic factor} we leverage this tree structure  to construct   min-sum message passing within  $\cF_r$ yielding   finite-round  cost-to-go
messages for the cluster subproblem. In Sec.~\ref{subsec:hyperedge_splitting} we will remove
Assumption~\ref{asm:on_the_partion_hyper} and introduce a   hyperedge-splitting
construction to handle cycle-rich hypergraphs where such   tree partitions
may not exist  or yield too large clusters/hyperedges.

 \vspace{-0.3cm}

\subsection{Algorithm design for acyclic factor graph-partitioning}\label{sec:acyclic factor} \vspace{-0.1cm}
Fix a hypertree partition  $\{\mathcal{G}_r=(\cC_r,\cE_r)\}_{r=1}^p$. In analogy with the pairwise case, we separate
inter- and intra-cluster interactions, but now in the  factor-graph  sense.
Specifically, 
we confine 
min-sum message passing to the intra-cluster
factor trees $\cF_r$, so that   couplings  {\it within}   clusters are aggregated into local cost-to-go
messages entering the agents' subproblems.  Couplings \emph{across} clusters are mediated through the {\it condensed factor graph} $\cF_{\cC}$: each inter-cluster factor $\omega\in\cE_{\rm out}$  is  incident to  the supernodes $\texttt{c}_r$ it intersects through the 
incidences $(\texttt{c}_r,\omega)\in\cA_{\cC}$, and its contribution is handled in a Jacobi fashion using the
most recently  out-of-cluster variables.    

More formally,  for any intra-cluster incidence $(i,\omega)\in\cA_r$ (i.e., $\omega\in\cNin_i$), define variable-to-factor and
factor-to-variable messages by
\begin{subequations}\label{eq:minsum_hyper_messages_fp}
\begin{align}
\mu_{i\to \omega}^\star(x_i)
&:=\phi_i(x_i)
+\sum_{\omega'\in\cNin_i\setminus\{\omega\}}\mu_{\omega'\to i}^\star(x_i)
+\sum_{\omega'\in\cNout_i}\psi_{\omega'}\big(x_i,\,x_{\omega'\setminus \{i\}}^{\star}\big),
\label{eq:message_i_to_omega_fp}\\
\mu_{\omega\to i}^\star(x_i)
&:=\min_{x_{\omega\setminus i}}
\Big\{\psi_{\omega}\big(x_i,x_{\omega\setminus \{i\}}\big)
+\sum_{j\in\omega\setminus\{i\}}\mu_{j\to\omega}^\star(x_j)\Big\},
\qquad \omega\in\cNin_i.
\label{eq:message_omega_to_i_fp}
\end{align}
\end{subequations}
Then, 
any  solution \(\mathbf{x}^\star\) of \eqref{P}  satisfies the cluster-wise optimality decomposition: 
\begin{equation}
\label{eq:fixpoint2_hypergraph}
x_i^{\star}\in\argmin_{x_i} \phi_i(x_i) + \underbrace{\sum_{\omega\in \cNin_i} \mu_{\omega\rightarrow i}^\star(x_i)}_{:= \mu^{\star}_{\cNin_i\to i}(x_i)}+\underbrace{\sum_{\omega\in\cNout_i}\psi_{\omega}\big(x_i,x_{\omega\setminus \{i\}}^{\star}\big)}_{:= \mu^{\star}_{\cNout_i\to i}(x_i)},\quad i\in \mathcal{C}_r,\,\,r\in[p].
\end{equation}
Here, $\mu^{\star}_{\cNin_i\to i}$ is the   (factor-to-variable) \emph{cost-to-go} term induced by min-sum
on the tree factor graph $\cF_r$,    summarizing all interactions {\it within} $\cC_r$ as seen
from node $i$. 
In contrast, $\mu^{\star}_{\cNout_i\to i}$ is the \emph{external coupling contribution} collecting all
cross-cluster factors $\psi_{\omega}(x_i,x_{\omega\setminus \{i\}}^{\star})$ incident to $i$.

We now eliminate the explicit variable$\to$factor messages by substituting
\eqref{eq:message_i_to_omega_fp} (with $i$ replaced by $j$) into \eqref{eq:message_omega_to_i_fp}. This gives,
for every $\omega\in\cE_r$ and $i\in\omega$,
\begin{align}\label{eq:fac2var_fp_abs}
 &\mu_{\omega\to i}^{\star}(x_i)
=\min_{x_{\omega\setminus\{i\}}}
\Bigg\{
\psi_{\omega}\big(x_i,\,x_{\omega\setminus\{i\}}\big)
\nonumber \\&\quad + \sum_{j\in\omega\setminus\{i\}}
\underbrace{\Big(
\phi_j(x_j)
+\!\!\sum_{\omega'\in\cNin_j\setminus\{\omega\}}\!\!\mu_{\omega'\to j}^{\star}(x_j)
+\!\!\sum_{\omega'\in\cNout_j}\!\!\psi_{\omega'}\big(x_j,\,x_{\omega'\setminus\{j\}}^{\star}\big)
\Big)}_{\text{aggregate variable-side cost}}
\Bigg\}.
\end{align}
Equation~\eqref{eq:fac2var_fp_abs} is nothing but the factor-to-variable min-sum update in which each neighbor
$j\in\omega\setminus\{i\}$ contributes through its \emph{aggregated variable-side cost}.

The proposed decentralized algorithm--Algorithm~\ref{alg:main_hyper}--is   obtained by iterating the fixed-point system~\eqref{eq:fixpoint2_hypergraph}-\eqref{eq:fac2var_fp_abs}  
in the variables $\{x_i^\star\}$ and intra-cluster messages $\{\mu_{\omega\to i}^\star\}_{(\omega,i)\in\cA_r}$,
using  the  same two design principles as in the
pairwise MP-Jacobi: \textbf{(i)}  the unknown out-of-cluster variables
$x_{\omega\setminus\{i\}}^\star$  are replaced by the most recently available values
$x_{\omega\setminus\{i\}}^{\nu}$ (Jacobi
correction); and   \textbf{(ii)}  the  unknown  fixed-point messages $\mu_{\omega'\to j}^{\star}$ in 
\eqref{eq:fac2var_fp_abs} are replaced by the current iterates   $\mu_{\omega'\to j}^{\nu}$, yielding one min-sum step   within each   intra-cluster factor
tree $\cF_r$. \vspace{-0.3cm}

\begin{algorithm}[th]
{
\scriptsize
\caption{ \underline{H}ypergraph-\underline{M}essage \underline{P}assing-\underline{J}acobi ({H-MP-Jacobi})}
\label{alg:main_hyper}
{\bf Initialization:} $x_i^{0}\in\R^d$ for all $i\in\cV$; initialize $\mu_{\omega\to i}^{0}(\cdot)$ and $\mu_{i\to\omega}^{0}(\cdot)$
(e.g., $\equiv 0$) for all $i\in\omega$ and $\omega\in\cE_r$.

\For{$\nu=0,1,2,\ldots$}{\BlankLine
\tcp*[l]{(1) Jacobi-style variable update (parallel over $i\in\cV$)}
\AgentFor{$i\in\cV$}{\label{line:var-loop-hyper}

\begin{subequations}\label{eq:hyper_jump_updates}
\begin{align}
\hat x_i^{\nu+1}
&\in \argmin_{x_i}\Big\{
\phi_i(x_i)
+ \sum_{\omega\in\cNin_i}\mu_{\omega\to i}^{\nu}(x_i)
+ \sum_{\omega\in\cNout_i}\psi_{\omega}\big(x_i,\,x_{\omega\setminus\{i\} }^{\nu}\big)
\Big\},
\label{eq:hyper_jump_updates:a}\\
x_i^{\nu+1}
&=x_i^{\nu}+\tau_r^\nu\big(\hat x_i^{\nu+1}-x_i^{\nu}\big).
\label{eq:hyper_jump_updates:b}
\end{align}
\end{subequations}

\texttt{Sends} $x_i^{\nu+1}$ to all inter-cluster factors $\omega\in\cNout_i$ (or to their implementation);
} 
\BlankLine
\tcp*[l]{(2) One intra-cluster min-sum (parallel) round on each factor tree $\cF_r$}
 \ForEach{$r\in[p]$}{
\ForEach{$\omega\in\cE_r$}{
\ForEach{$i\in\omega$}{
\begin{align}\label{eq:hyper_jump_factor2var}
 &\mu_{\omega\to i}^{\nu+1}(x_i)
=\min_{x_{\omega\setminus\{i\}}}
\Bigg\{
\psi_{\omega}\big(x_i,\,x_{\omega\setminus\{i\}}\big)
\nonumber \\&\quad + \sum_{j\in\omega\setminus\{i\}}
\underbrace{\Big(
\phi_j(x_j)
+\!\!\sum_{\omega'\in\cNin_j\setminus\{\omega\}}\!\!\mu_{\omega'\to j}^{\nu}(x_j)
+\!\!\sum_{\omega'\in\cNout_j}\!\!\psi_{\omega'}\big(x_j,\,x_{\omega'\setminus\{j\}}^{\nu}\big)
\Big)}_{:=\,\mu_{j\to\omega}^{\nu}(x_j)} 
\Bigg\}.
\end{align}
\texttt{Send} $\mu_{\omega\to i}^{\nu+1}(\cdot)$ to all $i\in\omega$.
}}} 
}}
\end{algorithm}

\paragraph{Practical implementation of Step (2).}  
In standard message passing, factor nodes are  generally not  physical computing entities. To implement Step~(2) of
Algorithm~\ref{alg:main_hyper} in a  single-hop manner, we  
 propose the following two implementations,  both realizing    the same recursion
\eqref{eq:hyper_jump_factor2var}, accommodating different practical settings.  

 $\bullet$ \textbf{option I (no factor processors: hosted factors).}
If factors are \emph{logical} and only agents can compute/communicate, then each intra-cluster factor
$\omega\in\cE_r$ is assigned to a host agent $h(\omega)\in\omega$ (e.g., the smallest index in $\omega$).
Each incident agent $j\in\omega$ locally forms the \emph{variable-side aggregate}
$\mu_{j\to\omega}^{\nu}(\cdot)$ defined   in \eqref{eq:hyper_jump_factor2var}, and sends it
(one hop within the hyperedge scope) to the host $h(\omega)$.
The host emulates the factor computation by evaluating \eqref{eq:hyper_jump_factor2var} to obtain
$\{\mu_{\omega\to i}^{\nu+1}(\cdot)\}_{i\in\omega}$, and then broadcasts these messages back to all
$i\in\omega$.
This avoids any cluster-level coordinator while preserving single-hop communication within each hyperedge.\smallskip 

 $\bullet$ \textbf{option II (factor processors / edge servers).}
Assume instead that each factor $\omega$ is a computational unit (e.g., an ``edge server'') that stores
$\psi_\omega$ and can communicate with all incident agents $j\in\omega$.
Then Step~(2) is executed directly at the factor:
each agent $j\in\omega$ sends to $\omega$ the same local quantity $\mu_{j\to\omega}^{\nu}(\cdot)$.
Upon collecting $\{\mu_{j\to\omega}^{\nu}(\cdot)\}_{j\in\omega}$, the factor evaluates
\eqref{eq:hyper_jump_factor2var} to compute $\{\mu_{\omega\to i}^{\nu+1}(\cdot)\}_{i\in\omega}$ and
broadcasts them back to the incident agents.
In this model, no hosted-factor mechanism is needed: the factor nodes perform the partial minimizations
natively.

When all   $|\omega|=2$ (pairwise case), identifying $\mu_{i\to j}(\cdot)\equiv \mu_{\{i,j\}\to j}(\cdot)$ reduce both implementation models above   to the edge-message updates in Algorithm~\ref{alg:main}.

 We remark that, as in Sec.~\ref{sec:surrogate_pairwise}, surrogate constructions can be incorporated to reduce \emph{both} computation and communication costs. Concretely, one may replace the exact intra-cluster min-sum updates in Step~(2)--i.e., the factor-to-variable recursion~\eqref{eq:hyper_jump_factor2var} (and, in implementations that materialize them, the corresponding variable-to-factor aggregates implicit in~\eqref{eq:hyper_jump_factor2var})--with surrogate message maps that admit \emph{finite-dimensional} parametrizations. In this way, each incidence $(i,\omega)$ exchanges only a lightweight summary (e.g., vectors or structured statistics such as sparse/low-rank matrices) instead of a functional message. When further savings are needed, the same surrogate principle can be applied to the Jacobi subproblem~\eqref{eq:hyper_jump_updates:a}, yielding an inexact/local-model update while preserving the overall decentralized architecture and the cost-rate trade-offs discussed in Sec.~\ref{sec:surrogate_pairwise}.
 
 \vspace{-0.3cm}

 \subsubsection{Convergence Analysis} \vspace{-0.2cm}

We establish convergence of Algorithm \ref{alg:main_hyper} under the following assumptions. 

 \begin{assumption}
 \label{asm:graph_hyper}
 The factor graph $\cF(\cG)$ associated with   $\cG=(\cV,\cE)$ is connected.
Moreover, communications are \emph{incidence-local}: for every   $(i,\omega)$ with $i\in\omega$,
agent $i$ can exchange information with the computing entity implementing factor $\omega$.
  
 \end{assumption}
  
\begin{assumption}
\label{asm:nonverlap_hyper} Given the partition $\{\cG_r=(\cC_r,\cE_r)\}_{r=1}^p$  satisfying Def.~\ref{def:condensed_factor_graph}, 
every cross-cluster factor intersects any cluster in at most one node, i.e.,\vspace{-0.1cm}\begin{equation}\label{eq:nonoverlap2_hyper}
|\omega\cap \cC_r|\le 1,\qquad \forall\,\omega\in\cE_{\rm out},\ \forall\,r\in[p].\vspace{-0.1cm}
\end{equation}
\end{assumption}
Assumption~\ref{asm:nonverlap_hyper} is not essential for convergence, but it simplifies the delayed
block-Jacobi reformulation, which is given below (the proof is omitted). 
\begin{proposition}
\label{prop:Jacobi_delay_refo_hyper}
Postulate  Assumptions~\ref{asm:on_the_partion_hyper} and~\ref{asm:graph_hyper}. Then,  Algorithm~\ref{alg:main_hyper} can be rewritten in the equivalent form: for any $i\in\cC_r$ and $r\in [p]$,  
\begin{subequations}
    \begin{align}
 &  x_i^{\nu+1} =x_i^{\nu}+\tau_r^\nu \big(\hat x_i^{\nu+1}-x_i^{\nu}\big),\label{eq:delay_reformulation-cvx-comb_hyper}\\
   & \hat x_i^{\nu+1}\in\argmin_{x_i}\min_{x_{\cC_r\setminus\{i\}}}\qty{\sum\limits_{i\in\cC_r}\phi_i(x_i)+\sum\limits_{\omega\in\cE_r}\psi_{\omega}(x_\omega)+\sum\limits_{j\in\cC_r,\omega\in\cNout_j}\psi_{\omega}\qty(x_j,x_{\omega\setminus \{j\}}^{\nu-d(i,j)})}.
\label{eq:delay_reformulation_hyper}
\end{align}\end{subequations} 
If, in addition, Assumption~\ref{asm:nonverlap_hyper} holds,  \eqref{eq:delay_reformulation_hyper}  reduces to the following  block-Jacobi update with delays:\vspace{-0.1cm}
\begin{equation}
\label{eq:coordinate_min_delay_hyper}
\hat x_i^{\nu+1}
\in\argmin_{x_i}\ \min_{x_{\cC_r\setminus\{i\}}}\,
\Phi\bigl(x_{\cC_r},\,x_{\overline{\cC}_r}^{\,\nu-\bd_i}\bigr),\vspace{-0.1cm}
\end{equation}
where $\bd_i\in\mathbb{N}^{|\overline{\cC}_r|}$ is defined as \vspace{-0.2cm}
\[
(\bd_i)_k :=
\begin{cases}
d(i,j_k), & k\in \cN_{\cC_r},\\
0, & k\in \overline{\cC}_r\setminus\cN_{\cC_r},
\end{cases}
\]
and   $j_k\in\cB_r$ is the unique node such that there exists a hyperedge
$\omega\in\cNout_{j_k}$ with $k\in\omega \cap   \cN_{\cC_r}$. It holds $\|\bd_i\|_\infty\le \mathrm{diam}(\cF_r)/2$, for all $i\in\cC_r$. 

\end{proposition}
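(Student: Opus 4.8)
The plan is to mirror the proof of Proposition~\ref{prop:Jacobi_delay_refo} in the pairwise case, replacing the edge-message recursion by the factor-graph min-sum recursion and the graph tree $\cG_r$ by the factor tree $\cF_r$. The starting point is the variable update \eqref{eq:hyper_jump_updates:a}, in which the intra-cluster summary $\sum_{\omega\in\cNin_i}\mu_{\omega\to i}^\nu(x_i)$ must be eliminated. Using the reindexed form of the one-sweep recursion \eqref{eq:hyper_jump_factor2var}, each factor-to-variable message used in round $\nu$ satisfies $\mu_{\omega\to i}^\nu(x_i)=\min_{x_{\omega\setminus\{i\}}}\{\psi_\omega(x_i,x_{\omega\setminus\{i\}})+\sum_{j\in\omega\setminus\{i\}}\mu_{j\to\omega}^{\nu-1}(x_j)\}$, where the variable-side aggregate $\mu_{j\to\omega}^{\nu-1}$ carries $\phi_j$, the deeper intra-cluster messages $\{\mu_{\omega'\to j}^{\nu-1}\}_{\omega'\in\cNin_j\setminus\{\omega\}}$, and the boundary factors $\{\psi_{\omega'}(x_j,x_{\omega'\setminus\{j\}}^{\nu-1})\}_{\omega'\in\cNout_j}$. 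Substituting this into \eqref{eq:hyper_jump_updates:a} and then recursively re-expanding every factor-to-variable message that appears is the core of the argument.

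I would carry out the unrolling as a traversal of $\cF_r$ rooted at the variable node $i$: one application of the recursion advances the frontier by a single \emph{variable}$\to$\emph{factor}$\to$\emph{variable} hop, i.e.\ two factor-graph edges, equivalently one unit of the variable-to-variable distance $d(\cdot,\cdot)$, and simultaneously decrements the iteration superscript by one. Because $\cF_r$ is a tree (Assumption~\ref{asm:on_the_partion_hyper}), the subtrees hanging off distinct children of a factor node are \emph{variable-disjoint}; consequently every intra-cluster node of $\cC_r$ and every factor of $\cE_r$ is reached exactly once, the traversal terminates after finitely many substitutions (bounded by the eccentricity of $i$ in $\cF_r$), and the collection of nested partial minimizations — each over a variable-disjoint block — merges, by the standard min-sum/dynamic-programming interchange on trees, into a single joint minimization over $x_{\cC_r\setminus\{i\}}$. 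I would then track delays: a boundary factor $\omega'\in\cNout_j$ attached at an internal node $j$ reached at variable distance $d(i,j)$ enters the expansion with its out-of-cluster arguments frozen at iteration $\nu-d(i,j)$, since each hop back in the message recursion costs exactly one round. Aggregating $\phi_j$ over $j\in\cC_r$, the intra-cluster factors $\psi_\omega$ over $\omega\in\cE_r$, and the delayed boundary terms $\psi_{\omega'}(x_j,x_{\omega'\setminus\{j\}}^{\nu-d(i,j)})$ yields exactly the objective in \eqref{eq:delay_reformulation_hyper}.

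For the reduction to \eqref{eq:coordinate_min_delay_hyper}, I would invoke Assumption~\ref{asm:nonverlap_hyper}. Since every cross-cluster factor meets $\cC_r$ in a single node, for each $\omega\in\cNout_j$ one has $\omega\cap\cC_r=\{j\}$ and hence $\omega\setminus\{j\}\subseteq\overline{\cC}_r$; combined with the edge-maximality convention in Def.~\ref{def:condensed_factor_graph}, this makes the gateway $j_k$ through which an external neighbor $k\in\cN_{\cC_r}$ is reached \emph{unique}, so that the per-factor delays $d(i,j)$ collapse to a single per-coordinate delay vector $\bd_i$ with $(\bd_i)_k=d(i,j_k)$ on $\cN_{\cC_r}$ and $0$ elsewhere. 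The delayed objective then coincides with $\Phi(x_{\cC_r},x_{\overline{\cC}_r}^{\,\nu-\bd_i})$ up to terms independent of $x_{\cC_r}$ (the remainder $R_r$ does not affect the argmin), which is \eqref{eq:coordinate_min_delay_hyper}. The bound $\|\bd_i\|_\infty\le\mathrm{diam}(\cF_r)/2$ is immediate from $d(i,j)=\tfrac12\,\dist_{\cF}(i,j)\le\tfrac12\,\mathrm{diam}(\cF_r)$.

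The main obstacle I anticipate is the bookkeeping created by high-arity factors: unlike the pairwise recursion, where each substitution introduces a single new variable, a factor node of scope $\omega$ injects $|\omega|-1$ new variables at once, each carrying its own aggregate and its own subtree. The crux is therefore to prove rigorously that Berge-acyclicity of $\cF_r$ forces these subtrees to be pairwise variable-disjoint, which is what legitimizes both the ``visited exactly once'' counting and the interchange of the nested partial minimizations into one joint minimization. A secondary, purely combinatorial, difficulty is verifying that Assumption~\ref{asm:nonverlap_hyper} (together with edge-maximality) indeed yields a \emph{unique} gateway $j_k$ per external coordinate, so that the collapse to the single delay vector $\bd_i$ is well defined; this is the hypergraph analogue of the single-contact condition used in the pairwise Proposition~\ref{prop:Jacobi_delay_refo}.
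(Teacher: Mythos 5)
Your proposal is correct and follows exactly the route the paper intends: the paper omits the proof of this proposition, noting only that it is the hypergraph analogue of Proposition~\ref{prop:Jacobi_delay_refo}, and your unrolling of the min-sum recursion along the rooted factor tree $\cF_r$ (one variable$\to$factor$\to$variable hop per iteration decrement, disjoint subtrees merging into a joint minimization, delays given by $d(i,j)=\tfrac12\dist_{\cF}(i,j)$) is precisely the adaptation of the Appendix~A argument. Your flagged concern about the uniqueness of the gateway $j_k$ under Assumption~\ref{asm:nonverlap_hyper} is a legitimate subtlety that the paper itself glosses over by asserting uniqueness in the statement, so your treatment matches the paper's.
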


Under Assumption~\ref{asm:nonverlap_hyper}, \eqref{eq:coordinate_min_delay_hyper} reduces to a block-Jacobi scheme with bounded delays; hence, Theorem~\ref{thm:convergence_scvx} applies. We omit the (essentially identical) details.  
\vspace{-0.3cm}

\subsection{Hyperedge (factor) splitting via surrogation}
\label{subsec:hyperedge_splitting}\vspace{-0.2cm}

In this section we address \emph{cycle-rich} hypergraphs, where many factors overlap on multiple
variable-nodes. If many pairs of distinct hyperedges $\omega,\omega'\in\cE$ satisfy $|\omega\cap\omega'|\ge 2$,
then $\cF(\cG)$ is cycle-rich (with short alternating cycles), which may preclude a nontrivial
partition into multi-factor hypertree clusters; see Fig.~\ref{fig:splitting_hypergph}. 

\begin{figure*}[t]
    \centering
    \setlength{\tabcolsep}{15pt} 
    \begin{tabular}{cc}
        \includegraphics[width=0.45\linewidth]{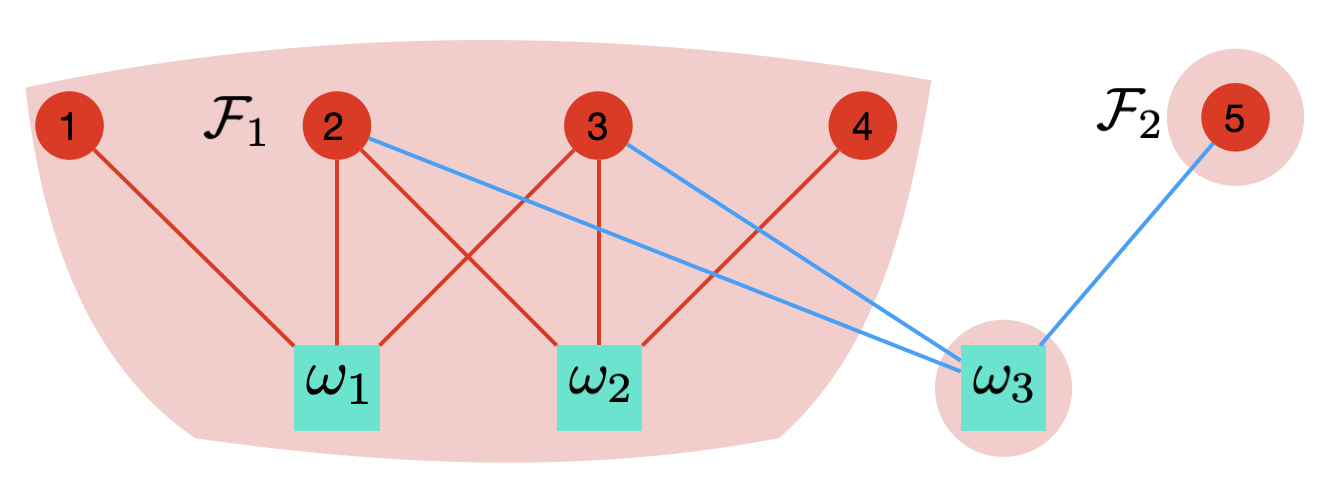} &
        \includegraphics[width=0.45\linewidth]{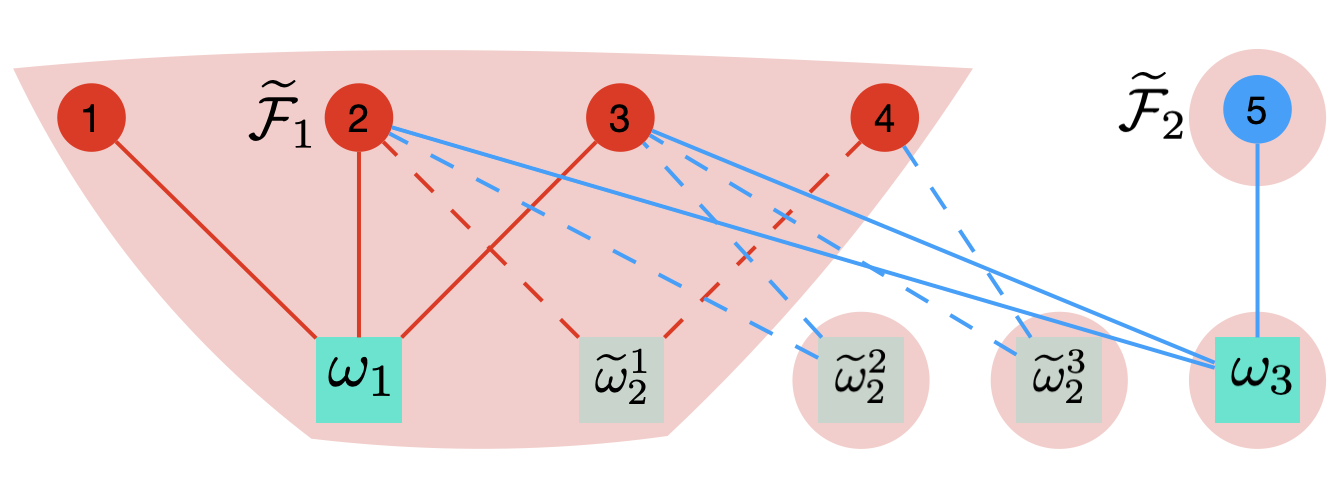} \\
        \footnotesize{(a) Cycle-rich hypergraph.} &
        \footnotesize{(b) Hyperedge splitting.}
    \end{tabular}   \caption{ 
    (a) The cluster $\cC_1=\{1,2,3,4\}$ contains $\omega_1=\{1,2,3\}$ and $\omega_2=\{2,3,4\}$ with overlap  $\omega_1\cap\omega_2=\{2,3\}$. Hence the induced factor graph $\cF_1$ contains a cycle.
    (b) Hyperedge splitting replaces $\omega_2$ with lower-arity component factors
    $\{\widetilde\omega_2^1,\widetilde\omega_2^2,\widetilde\omega_2^3\}$ (light square nodes). Keeping only $\widetilde\omega_2^1$ as an intra-cluster component yields $\widetilde\cE_1=\{\omega_1,\widetilde\omega_2^1\}$, for which the induced factor graph $\widetilde\cF_1$ becomes a tree.}
    \label{fig:splitting_hypergph}\vspace{-0.3cm}
\end{figure*}

We handle these  hypergraphs  via   \emph{hyperedge (factor) splitting} and {\it surrogation}. 
First, we perform a combinatorial splitting of hyperedges, replacing each $\omega\in\cE$ by component supports
$\cS(\omega)\subseteq 2^\omega$ and forming the expanded split set $\widetilde\cE$ (hence the split factor graph
$\cF(\widetilde\cG)$). The split is chosen so that, after selecting intra-cluster split hyperedges
$\widetilde\cE_r$, each induced split factor graph $\widetilde\cF_r=\cF(\cC_r,\widetilde\cE_r)$ is a tree.
Second, we construct \emph{surrogate split factors} consistent with this split incidence structure by replacing
each original heavily-overlapping factor $\psi_\omega(x_\omega)$ with a sum of lower-arity component factors indexed by
$\widetilde\omega\in\cS(\omega)$, where the removed variables $x_{\omega\setminus\widetilde\omega}$ are frozen at
reference values (taken as the latest iterates). This produces tree-structured intra-cluster couplings on which
min-sum terminates in finite time.
  
 More formally, the construction proceeds in the following three steps.  

\noindent $\bullet$ \textbf{Step 1: Combinatorial splitting rule.} Given the hypergraph $\cG=(\cV,\cE)$, a \emph{hyperedge-splitting rule} is  
specified by a set-valued map\vspace{-0.1cm}
\begin{equation}\label{eq:splitting_map}
\cS:\cE\rightrightarrows 2^{\cV},\qquad 
\omega\mapsto \cS(\omega)\subseteq 2^{\omega},\vspace{-0.1cm}
\end{equation}
where each $\widetilde\omega\in\cS(\omega)$ is a \emph{component hyperedge} supported on a subset of $\omega$.
Collecting all components yields the expanded factor set
\begin{equation}\label{eq:expanded_edge_set}
\widetilde\cE := \bigcup_{\omega\in\cE}\cS(\omega),\vspace{-0.1cm}
\end{equation}
and the corresponding \emph{split hypergraph} $\widetilde\cG:=(\cV,\widetilde\cE)$, with factor graph
$\cF(\widetilde\cG)$.

To avoid duplicated component supports, we make the following assumption.  
\begin{assumption}[unique-parent splitting]\label{asm:unique_parent_split}
The splitting map $\cS$ satisfies
\[
\cS(\omega)\cap \cS(\omega')=\emptyset,\qquad \forall\,\omega\neq \omega' \in \cE.
\]
\end{assumption}
Under Assumption~\ref{asm:unique_parent_split}, for every $\widetilde\omega\in\widetilde\cE$ there exists a
unique $\omega\in\cE$ such that $\widetilde\omega\in\cS(\omega)$; we denote this parent by
${\rm par}(\widetilde\omega):=\omega$. 

We remark that Assumption~\ref{asm:unique_parent_split} is not a  restriction: it  ensures solely that each component hyperedge admits a unique parent. The same effect can be obtained
without this assumption, at the cost of heavier notation, by defining the expanded split factor set as a
\emph{disjoint union} (so that each $\widetilde\omega\in\widetilde\cE$ carries its parent label).

\noindent $\bullet$ \textbf{Step 2: Hypertree clustering  of the split factor graph $\cF(\widetilde \cG)$. }
Given a node partition $\cV=\bigcup_{r=1}^p \cC_r$, we select, within each cluster, a subset of split
hyperedges to be treated intra-cluster; all remaining split hyperedges are treated as inter-cluster.
This is captured by the following analogue of Assumption~\ref{asm:on_the_partion_hyper}.

\begin{assumption}[Hypertree clusters  of $\cF(\widetilde \cG)$]\label{asm:on_the_partion_hyper_split}
Given a node partition $\cV=\bigcup_{r=1}^p \cC_r$, choose intra-cluster split hyperedges\vspace{-0.1cm}
\[
\widetilde\cE_r\subseteq\{\widetilde\omega\in\widetilde\cE:\ \widetilde\omega\subseteq \cC_r\},\qquad r\in[p],
\]
and define $\widetilde\cG_r:=(\cC_r,\widetilde\cE_r)$ with induced split factor graph
$\widetilde\cF_r:=\cF(\widetilde\cG_r)$. Each $\widetilde\cF_r$ is a tree (singleton clusters allowed).
\end{assumption}
Unlike Def.~\ref{def:condensed_factor_graph}, intra-cluster maximality is not required: within each cluster
$\cC_r$, one may keep only a subset $\widetilde\cE_r$ of the split components (possibly discarding some
$\widetilde\omega\subseteq\cC_r$) so that the induced split factor graph $\widetilde\cF_r$ is acyclic.
The discarded components are treated as inter-cluster and capture cross-cluster couplings through the
corresponding condensed factor graph (defined as in Def.~\ref{def:condensed_factor_graph}, replacing
$\cE$ by $\widetilde\cE$).

\noindent $\bullet$ \textbf{Step 3: Surrogate split factors (freezing removed coordinates).}
Splitting modifies only the \emph{supports} of factors. To obtain an optimization method for the original
problem~\eqref{prob_hyper}, we associate to each parent factor $\psi_\omega$ a family of \emph{surrogate
component factors} supported on $\{\widetilde\omega\in\cS(\omega)\}$.
The key idea is to freeze the coordinates removed by splitting at reference values.

Formally, for $\widetilde\omega\in\widetilde\cE$, let $\omega:={\rm par}(\widetilde\omega)$ and interpret
$\omega\setminus\widetilde\omega$ as the set of frozen coordinates. For a reference vector
$y_{\omega\setminus\widetilde\omega}$, introduce a component surrogate 
\[
\widetilde\psi_{\widetilde\omega}\big(x_{\widetilde\omega};\, y_{\omega\setminus\widetilde\omega}\big),\vspace{-0.1cm}
\]
which depends only on the active variables $x_{\widetilde\omega}$ and on the frozen reference values.
In the algorithm, $  y_{\omega\setminus\widetilde\omega}$ will be set to latest iterates (Jacobi-style), i.e.,
$  y_{\omega\setminus\widetilde\omega}=x_{\omega\setminus\widetilde\omega}^{\nu}$.

Collecting all components over all parents defines the split-surrogate objective on $\widetilde\cE$: for any given reference vector $\by$, let \vspace{-0.1cm}
\begin{equation}\label{eq:split_surrogate_objective_template}
\widetilde\Phi(\bx;\by)
:=\sum_{i\in\cV}\phi_i(x_i)+\sum_{\widetilde\omega\in\widetilde\cE}
\widetilde\psi_{\widetilde\omega}\Big(x_{\widetilde\omega};\,y_{{\rm par}(\widetilde\omega)\setminus\widetilde\omega}\Big).\vspace{-0.1cm}
\end{equation}



The constructions above decouples the \emph{graph design} (splitting/hypertree selection) from the \emph{analytic design} (choosing $\widetilde\psi_{\widetilde\omega}$). For consistency with~\eqref{prob_hyper}, $\widetilde\Phi(\cdot;\by)$ shall  satisfy the following regularity condition.  
\begin{assumption}
\label{assumption:splitting_regularity}
    [split-surrogate  regularity] For each parent $\omega\in\cE$ and each $i\in\omega$,  choosing references consistently,
$y_{\omega\setminus \widetilde\omega}=x_{\omega\setminus \widetilde\omega}$, yields \vspace{-0.1cm}\[
\sum_{\substack{\widetilde\omega\in\cS(\omega):\\ i\in\widetilde\omega}}
\nabla_{x_i}\widetilde\psi_{\widetilde\omega}\Big(x_{\widetilde\omega};\,x_{\omega\setminus\widetilde\omega}\Big)
=\nabla_{x_i}\psi_{\omega}(x_{\omega}),\quad \forall x_{\omega}.\vspace{-0.1cm}
\]
\end{assumption}
In fact, split-surrogate  regularity   preserves the critical points of $\Phi$ when the reference variables are chosen consistently, which underpins the algorithmic design.

\begin{proposition}\label{prop:splitting_func} Under Assumption~\ref{assumption:splitting_regularity}, any $\bar\bx$ satisfying
$\nabla_x \widetilde\Phi(\bar\bx;\bar\bx)=0$ is a critical point of $\Phi$, i.e.,
$\nabla \Phi(\bar\bx)=0$.
\end{proposition}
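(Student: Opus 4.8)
The plan is to reduce the claim to a node-by-node gradient identity and verify it one coordinate block at a time. Fix an arbitrary agent $i\in\cV$. First I would write out both gradients explicitly. For the original objective, $x_i$ enters only through $\phi_i$ and through the factors $\omega$ with $i\in\omega$, so
\[
\nabla_{x_i}\Phi(\bx)=\nabla\phi_i(x_i)+\sum_{\omega\in\cN_i}\nabla_{x_i}\psi_\omega(x_\omega).
\]
For the split surrogate I would differentiate $\widetilde\Phi(\bx;\by)$ only with respect to its \emph{first} argument $\bx$, holding the reference $\by$ fixed; then $x_i$ appears solely in $\phi_i$ and in the component factors $\widetilde\omega$ with $i\in\widetilde\omega$ (those in which $i$ is an \emph{active}, non-frozen variable), giving
\[
\nabla_{x_i}\widetilde\Phi(\bx;\by)=\nabla\phi_i(x_i)+\sum_{\substack{\widetilde\omega\in\widetilde\cE:\ i\in\widetilde\omega}}\nabla_{x_i}\widetilde\psi_{\widetilde\omega}\big(x_{\widetilde\omega};\,y_{{\rm par}(\widetilde\omega)\setminus\widetilde\omega}\big).
\]

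Next I would evaluate at the stationary point by setting $\bx=\bar\bx$ and $\by=\bar\bx$, so that every frozen reference becomes consistent, $y_{{\rm par}(\widetilde\omega)\setminus\widetilde\omega}=\bar x_{{\rm par}(\widetilde\omega)\setminus\widetilde\omega}$, and the hypothesis $\nabla_x\widetilde\Phi(\bar\bx;\bar\bx)=0$ forces the displayed sum to vanish. To compare it with $\nabla_{x_i}\Phi(\bar\bx)$, I would regroup the sum over active components $\{\widetilde\omega:i\in\widetilde\omega\}$ according to their parent hyperedge. Assumption~\ref{asm:unique_parent_split} is precisely what makes this regrouping clean: each $\widetilde\omega$ has a single parent ${\rm par}(\widetilde\omega)$, and $i\in\widetilde\omega\subseteq\omega$ implies $\omega\in\cN_i$, so the active components containing $i$ partition exactly into the families $\{\widetilde\omega\in\cS(\omega):i\in\widetilde\omega\}$ ranging over $\omega\in\cN_i$, with no double counting. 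Applying Assumption~\ref{assumption:splitting_regularity} to each parent $\omega\in\cN_i$ at $x_\omega=\bar x_\omega$ collapses each inner family-sum to $\nabla_{x_i}\psi_\omega(\bar x_\omega)$, so that $\nabla_{x_i}\widetilde\Phi(\bar\bx;\bar\bx)=\nabla_{x_i}\Phi(\bar\bx)$. Since the left-hand side is zero and $i$ was arbitrary, stacking over all $i\in\cV$ yields $\nabla\Phi(\bar\bx)=0$.

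The one place that requires care---and the main (albeit minor) obstacle---is the distinction between the \emph{active} and \emph{frozen} occurrences of $x_i$ in $\widetilde\Phi$. A given block $x_i$ may also appear as a frozen reference coordinate inside some component $\widetilde\omega'$ with $i\notin\widetilde\omega'$ but $i\in{\rm par}(\widetilde\omega')\setminus\widetilde\omega'$; at the evaluation point these frozen entries numerically equal $\bar x_i$, yet because $\nabla_x$ differentiates only the first argument of $\widetilde\Phi$ (the reference being held fixed before we set $\by=\bx$), they contribute \emph{nothing} to $\nabla_{x_i}\widetilde\Phi(\bar\bx;\bar\bx)$. Keeping the two roles of $x_i$ rigorously separate is exactly what aligns the surrogate gradient with Assumption~\ref{assumption:splitting_regularity}, which is itself stated only for the active gradients $\nabla_{x_i}\widetilde\psi_{\widetilde\omega}$ with $i\in\widetilde\omega$. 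No regularity beyond the gradient-consistency hypothesis is needed: the argument is purely a bookkeeping of which factor components see $x_i$ as an optimization variable.
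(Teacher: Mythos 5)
Your argument is correct and is the natural (and essentially only) proof: the paper states Proposition~\ref{prop:splitting_func} without an explicit proof, and your blockwise computation — expanding $\nabla_{x_i}\widetilde\Phi(\bar\bx;\bar\bx)$, regrouping the active components by their unique parent, and invoking Assumption~\ref{assumption:splitting_regularity} per parent to recover $\nabla_{x_i}\Phi(\bar\bx)$ — is exactly the intended verification. Your remark distinguishing the active occurrences of $x_i$ from its frozen (reference) occurrences, which are held fixed under $\nabla_x$, is the right point of care and is handled correctly.
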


\noindent\textbf{Example:}
We illustrate the splitting/surrogate rule   (satisfying Assumption \ref{assumption:splitting_regularity}) on the following  concrete example. Consider
$\Phi(\bx)=\sum_{i=1}^5\phi_i(x_i)+\psi_{123}(x_1,x_2,x_3)+\psi_{234}(x_2,x_3,x_4)+\psi_{235}(x_2,x_3,x_5)$, whose corresponding factor graph is shown in Fig.~\ref{fig:splitting_hypergph}(a). We split   the factor $\omega_2=\{2,3,4\}$ and keep all other factors unchanged. Consider the following valid splitting choices for $\omega_2$:
\begin{enumerate}
\item[\textbf{(i)}] \emph{Pairwise split:} $\cS(\omega_2)= \{\{2,4\},\{2,3\},\{3,4\}\}$, and split-surrogate given by
\begin{equation}
\label{example:pair_split}
\frac{1}{2}\psi_{234}(x_3,x_4)+\frac{1}{2}\psi_{234}(x_2,y_3^{2},x_4)+\frac{1}{2}\psi_{234}(x_2,x_3,y_4^{3}).
\end{equation}

\item[\textbf{(ii)}] \emph{Two-component split:} $\cS(\omega_2)=\{\{2,3\},\{3,4\}\}$, and   split-surrogate given by
\begin{equation}
\label{example:two_split}
\Big(\psi_{234}(x_2,x_3,y_4^{1})-\frac{1}{2}\psi_{234}(y_2^{1},x_3,y_4^{1})\Big)+\Big(\psi_{234}(y_2^{2},x_3,x_4)-\frac{1}{2}\psi_{234}(y_2^{2},x_3,y_4^{2})\Big).
\end{equation}

\item[\textbf{(iii)}] \emph{Singleton split:} $\cS(\omega_2)=\{\{2\},\{3\},\{4\}\}$,  and split-surrogate given by
\begin{equation}
\label{example:sing_splt}
\psi_{234}(x_2,y_3^{1},y_4^{1})+\psi_{234}(y_2^{2},x_3,y_4^{2})+\psi_{234}(y_2^{3},y_3^{3},x_4).
\end{equation}
\end{enumerate}
Fig.~\ref{fig:splitting_hypergph}(b) depicts the pairwise split in \textbf{(i)}, where
$\cS(\omega_2)=\{\widetilde\omega_2^1,\widetilde\omega_2^2,\widetilde\omega_2^3\}$, with
$\widetilde\omega_2^1=\{2,4\}$, $\widetilde\omega_2^2=\{2,3\}$, and $\widetilde\omega_2^3=\{3,4\}$.


\vspace{-0.3cm}
\subsection{Algorithm design}\vspace{-0.1cm}

 The proposed decentralized algorithm is formally stated in  Algorithm~\ref{alg:main_hyper_split}. It  parallels Algorithm~\ref{alg:main_hyper}, with the original hypergraph replaced by the split hypergraph $\widetilde\cG=(\cV,\widetilde\cE)$ and message passing carried out only on the selected tree components $\{\widetilde\cE_r\}_{r=1}^p$. Compared with Algorithm~\ref{alg:main_hyper}, the additional arguments $x_{{\rm par}(\widetilde\omega)\setminus\widetilde\omega}^{\nu}$ in $\widetilde\psi_{{\rm par}(\widetilde\omega)}$ encode the reference coordinates introduced by splitting and are instantiated by available iterates. When $\cS$ is the identity map, i.e., $\cS(\omega)=\{\omega\}$ for all $\omega\in\cE$ (so $\widetilde\cE=\cE$ and $\widetilde\psi_{{\rm par}(\omega)}\equiv\psi_\omega$), Algorithm~\ref{alg:main_hyper_split} reduces to Algorithm~\ref{alg:main_hyper}.

\begin{algorithm}[th]
{
\scriptsize
\caption{
H-MP-Jacobi with Hyperedge Splitting}
\label{alg:main_hyper_split}
{\bf Initialization:} $x_i^{0}\in\R^d$ for all $i\in\cV$; initialize $\mu_{\widetilde\omega\to i}^{0}(\cdot)$
(e.g., $\equiv 0$) for all $r\in[p]$, $\widetilde\omega\in\widetilde\cE_r$, and $i\in\widetilde\omega$.

\For{$\nu=0,1,2,\ldots$}{\BlankLine
\tcp*[l]{(1) Jacobi-style variable update (parallel over $i\in\cV$)}
\AgentFor{$i\in\cV$}{\label{line:var-loop-hyper-split}

\begin{subequations}\label{eq:hyper_jump_split_updates}
\begin{align}
\hat x_i^{\nu+1}
&\in \argmin_{x_i}\Big\{
\phi_i(x_i)
+ \sum_{\widetilde\omega\in\tildecNin_i}\mu_{\widetilde\omega\to i}^{\nu}(x_i)
+ \sum_{\widetilde\omega\in\tildecNout_i}
\widetilde\psi_{{\rm par}(\widetilde\omega)}
\big(x_i,\,x_{\widetilde\omega\setminus \{i\}}^{\nu};\,x_{{\rm par}(\widetilde\omega)\setminus\widetilde\omega}^{\nu}\big)
\Big\},
\label{eq:hyper_jump_split_updates:a}\\
x_i^{\nu+1}
&=x_i^{\nu}+\tau_r^\nu\big(\hat x_i^{\nu+1}-x_i^{\nu}\big).
\label{eq:hyper_jump_split_updates:b}
\end{align}
\end{subequations}

\texttt{Sends} $x_i^{\nu+1}$ to all inter-cluster components $\widetilde\omega\in\tildecNout_i$
(or to their implementation);
} 

\BlankLine
\tcp*[l]{(2) One intra-cluster min-sum (parallel) round on each split factor tree $\widetilde\cF_r$}
\ForEach{$r\in[p]$}{
\ForEach{$\widetilde\omega\in\widetilde\cE_r$}{
\ForEach{$i\in\widetilde\omega$}{
\begin{align}\label{eq:hyper_jump_split_factor2var}
&\mu_{\widetilde\omega\to i}^{\nu+1}(x_i)
=\min_{x_{\widetilde\omega\setminus\{i\}}}
\left\{\widetilde\psi_{{\rm par}(\widetilde\omega)}
\big(x_i,\,x_{\widetilde\omega\setminus\{i\}};\,x_{{\rm par}(\widetilde\omega)\setminus\widetilde\omega}^{\nu}\big)
\nonumber\right.\\[-1mm]
&\left.\quad + \sum_{j\in\widetilde\omega\setminus\{i\}}
\qty(
\begin{aligned}
&\phi_j(x_j)
+\!\!\sum_{\widetilde\omega'\in\tildecNin_j\setminus\{\widetilde\omega\}}\!\!\mu_{\widetilde\omega'\to j}^{\nu}(x_j)\\
&+\!\!\sum_{\widetilde\omega'\in\tildecNout_j}\!\!
\widetilde\psi_{{\rm par}(\widetilde\omega')}
\big(x_j,\,x_{\widetilde\omega'\setminus\{j\}}^{\nu};\,x_{{\rm par}(\widetilde\omega')\setminus\widetilde\omega'}^{\nu}\big)
\end{aligned}
)
\right\}.
\end{align}
\texttt{Send} $\mu_{\widetilde\omega\to i}^{\nu+1}(\cdot)$ to all $i\in\widetilde\omega$.
}}}
}
}
\end{algorithm}

As   Algorithm~\ref{alg:main_hyper}, Algorithm~\ref{alg:main_hyper_split} 
admits a delayed block-Jacobi representation. 

\begin{proposition}    \label{prop:equiv_alg_hyper_splitting}
   Postulate  Assumptions~\ref{asm:graph_hyper} and ~\ref{asm:on_the_partion_hyper_split}. Then,  Algorithm~\ref{alg:main_hyper_split} can be rewritten in the equivalent form: for any $i\in\cC_r$ and $r\in [p]$,
    \begin{subequations}
    \label{eq:delay_Jacobi_hyper_split}
    \begin{align}
     &  x_i^{\nu+1} =x_i^{\nu}+\tau_r^\nu \big(\hat x_i^{\nu+1}-x_i^{\nu}\big),\label{eq:delay_reformulation-cvx-comb_hyper_splitting}\\
       & \hat x_i^{\nu+1}\in\argmin_{x_i}\min_{x_{\cC_r\setminus\{i\}}}\qty{\begin{aligned}     \;&\sum\limits_{i\in\cC_r}\phi_i(x_i)+\sum\limits_{\widetilde\omega\in\widetilde\cE_{r}}\widetilde\psi_{{\rm par}(\widetilde\omega)}\qty(x_{\widetilde\omega};x_{{\rm par}(\widetilde\omega)\setminus\widetilde\omega}^{\nu- d(i,\widetilde\omega)-1})\\
       &+\sum\limits_{j\in\cC_r,\widetilde\omega\in\tildecNout_j}\widetilde\psi_{{\rm par}(\widetilde\omega)}\qty(x_j,x_{{\rm par}(\widetilde\omega)\setminus\{j\}}^{\nu-d(i,j)};x_{{\rm par}(\widetilde\omega)\setminus \{j\}}^{\nu-d(i,j)})
       \end{aligned}},
    \label{eq:prop_equiv_alg_hyper_splitting}
    \end{align}\end{subequations}
where $d(\cdot,\cdot)$ denotes variable-to-variable or variable-to-factor distance in $\widetilde\cG_r$.
\end{proposition}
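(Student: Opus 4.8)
The plan is to mirror the elimination-by-substitution argument used for the pairwise Proposition~\ref{prop:Jacobi_delay_refo} (and its hypergraph analogue Proposition~\ref{prop:Jacobi_delay_refo_hyper}), now carried out on the \emph{split} factor tree $\widetilde\cF_r=\cF(\widetilde\cG_r)$, whose acyclicity is guaranteed by Assumption~\ref{asm:on_the_partion_hyper_split}. Concretely, I would fix $r\in[p]$ and $i\in\cC_r$, start from the Jacobi variable subproblem \eqref{eq:hyper_jump_split_updates:a}, and eliminate the intra-cluster messages $\{\mu_{\widetilde\omega\to i}^\nu\}_{\widetilde\omega\in\tildecNin_i}$ by substituting the factor-to-variable recursion \eqref{eq:hyper_jump_split_factor2var} re-indexed one message-round earlier (i.e.\ with $\nu+1\mapsto\nu$, $\nu\mapsto\nu-1$), since the variable update of round $\nu$ consumes messages produced at the end of round $\nu-1$.

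Each such substitution exposes, for a factor $\widetilde\omega\in\tildecNin_i$, the split surrogate $\widetilde\psi_{{\rm par}(\widetilde\omega)}(x_i,x_{\widetilde\omega\setminus\{i\}};\,x^{\,\nu-1}_{{\rm par}(\widetilde\omega)\setminus\widetilde\omega})$ together with the variable-to-factor aggregates $\mu_{j\to\widetilde\omega}^{\nu-1}$ of the remaining incident nodes $j\in\widetilde\omega\setminus\{i\}$; by definition each aggregate contributes $\phi_j$, the inter-cluster factors $\{\widetilde\psi_{{\rm par}(\widetilde\omega')}\}_{\widetilde\omega'\in\tildecNout_j}$ evaluated at the current (Jacobi) references, and the next layer of factor-to-variable messages $\{\mu_{\widetilde\omega'\to j}\}_{\widetilde\omega'\in\tildecNin_j\setminus\{\widetilde\omega\}}$. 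I would then apply the same substitution recursively to these deeper messages, moving one incidence-hop farther from $i$ in $\widetilde\cF_r$ at each step. Because $\widetilde\cF_r$ is a tree, the unique-path property ensures that every variable node $j\in\cC_r$ and every factor node $\widetilde\omega\in\widetilde\cE_r$ is visited exactly once, so the recursion terminates in at most $\mathrm{diam}(\widetilde\cF_r)$ steps and produces each intra-cluster split factor $\widetilde\psi_{{\rm par}(\widetilde\omega)}$, each node term $\phi_j$ ($j\in\cC_r$), and each inter-cluster factor exactly once.

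The delicate part is the delay bookkeeping. I would track the iteration index attached to every frozen coordinate as the substitution depth increases: the one-round lag between message production and consumption contributes a constant offset, while each further incidence-hop adds one more round of staleness. This yields delay $\nu-d(i,\widetilde\omega)-1$ for the references of an intra-cluster factor $\widetilde\omega$ and delay $\nu-d(i,j)$ for an inter-cluster factor hanging off node $j$, where $d(\cdot,\cdot)$ is the variable-to-factor / variable-to-variable distance in $\widetilde\cF_r$ from the earlier definitions; these are exactly the indices appearing in \eqref{eq:prop_equiv_alg_hyper_splitting}. Finally I would collapse the nested single-variable partial minimizations $\min_{x_j}$ accumulated along the recursion into the joint block minimization $\min_{x_{\cC_r\setminus\{i\}}}$; this interchange is legitimate precisely because on the tree each coordinate $x_j$, $j\in\cC_r\setminus\{i\}$, is the argument of exactly one minimization, so the minimizations act on disjoint blocks and commute over the additively separable aggregated objective. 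Combining with the unchanged over-relaxation step \eqref{eq:hyper_jump_split_updates:b} gives \eqref{eq:delay_reformulation-cvx-comb_hyper_splitting}.

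I expect the main obstacle to be the delay accounting, specifically the asymmetric offsets (the extra $-1$ for intra-cluster factor references, absent for the inter-cluster hanging factors), which must be reconciled with the parity conventions $d(i,\widetilde\omega)=\tfrac12(\dist_\cF(i,\widetilde\omega)-1)$ and $d(i,j)=\tfrac12\dist_\cF(i,j)$; a clean induction on the recursion depth (equivalently, on factor-graph distance from $i$) is the most robust way to certify these indices. A secondary technical point is verifying that the split-surrogate arguments are threaded correctly throughout the unrolling -- i.e.\ that the frozen coordinate set of $\widetilde\psi_{{\rm par}(\widetilde\omega)}$ remains ${\rm par}(\widetilde\omega)\setminus\widetilde\omega$ at every layer -- which follows from Assumption~\ref{asm:unique_parent_split} (unique parent) together with the incidence-locality of Assumption~\ref{asm:graph_hyper}.
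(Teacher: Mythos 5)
Your proposal is correct and follows essentially the same route the paper uses: the paper omits an explicit proof of Proposition~\ref{prop:equiv_alg_hyper_splitting}, deferring to the subtree-unrolling induction of Appendix~\ref{app:proof_prop_Jacobi-equivalence} (eliminate messages by recursive substitution of the one-round-earlier recursion, exploit acyclicity of $\widetilde\cF_r$ so each node/factor is visited once, then merge the disjoint partial minimizations), which is exactly what you do on the split factor tree. Your delay bookkeeping also checks out: the message consumed at round $\nu$ was produced with round-$(\nu-1)$ data, giving the $-1$ offset on intra-cluster factor references ($\nu-d(i,\widetilde\omega)-1$), while the inter-cluster factors hanging off $j$ inherit exactly $\nu-d(i,j)$, consistent with the parity conventions $d(i,\widetilde\omega)=\tfrac12(\dist_{\cF}(i,\widetilde\omega)-1)$ and $d(i,j)=\tfrac12\dist_{\cF}(i,j)$.
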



Proposition~\ref{prop:equiv_alg_hyper_splitting} enables a convergence analysis of Algorithm~\ref{alg:main_hyper_split} via essentially the same arguments as in Sec.~\ref{sec:surrogate_pairwise}. In particular, within the surrogate-regularity framework of Sec.~\ref{sec:surrogate-regularity}, if the \emph{aggregate objectives} in~\eqref{eq:prop_equiv_alg_hyper_splitting}---the cluster-relevant portion of the split-surrogate objective~\eqref{eq:split_surrogate_objective_template} (playing, for $\widetilde\Phi$, the same role that~\eqref{eq:Phi_r_def_simplified} plays for $\Phi$)---are \emph{uniformly strongly convex} in the cluster variables, uniformly over the frozen arguments, then Algorithm~\ref{alg:main_hyper_split} inherits the same rate guarantees as Algorithm~\ref{alg:main_hyper}: linear convergence for strongly convex $\Phi$, and sublinear rates for merely convex or nonconvex $\Phi$; see Theorem~\ref{thm:convergence_scvx_surrogate_uniform},~\ref{thm:convergence_cvx}, and~\ref{thm:convergence_ncvx}. When the objectives in~\eqref{eq:prop_equiv_alg_hyper_splitting} are nonconvex, the surrogate machinery can be used to ensure that the local minimizations in~\eqref{eq:hyper_jump_split_updates:a} and~\eqref{eq:hyper_jump_split_factor2var} are well posed, and the same convergence rate conclusions (linear for strongly convex $\Phi$, sublinear for merely convex or nonconvex $\Phi$) follow under the standard assumptions of Sec.~\ref{sec:surrogate-regularity}. We omit the technical details.

\vspace{-0.4cm}

\section{Numerical experiments}\label{sec:experiments}
\vspace{-0.1cm}
In this section, we evaluate our algorithms MP-Jacobi and its surrogate variants solving  convex quadratic programs (and decentralized optimization) defined over pairwise graphs (Sec.~\ref{sec:numerical-QP-pairwise} and Sec.~\ref{sec:QP-decentralized}) and hypergraphs (Sec.~\ref{sec:sim-hypergraps}). All simulations are conducted in MATLAB R2025a on a Windows desktop equipped with an 8-core AMD Ryzen 7 5700G (3.80~GHz) and 64~GB of RAM.
\vspace{-0.4cm}
 
\subsection{Convex quadratic programs (pairwise graph)}\label{sec:numerical-QP-pairwise}
\vspace{-0.1cm}

We consider a convex quadratic instance of~\eqref{P} with local and pairwise terms   
$\phi_i(x_i)=\frac{1}{2}\inner{H_{ii}x_i,x_i}+\inner{b_i,x_i}$ and $\psi_{ij}(x_i,x_j)=\inner{H_{ij}x_j,x_i}.$ The global matrix $\mathbf{H}$ is generated as follows: each block $H^{\rm tmp}_{ii}$ and $H^{\rm tmp}_{ij}$ is drawn with i.i.d. Gaussian entries $\mathcal{N}(0,1)$ and then symmetrized via $\bar{\mathbf{H}}^{\rm tmp}=\frac{1}{2}(H^{\rm tmp}+(H^{\rm tmp})^\top)$. We set $\mathbf{H}=\bar{\mathbf{H}}^{\rm tmp}+c\bI$, where the scalar $c$ is chosen so that $\kappa(\mathbf{H})=400$. The vector $\bb$ is generated with i.i.d. Gaussian entries $\mathcal{N}(0,1)$.

We compare MP-Jacobi and surrogate MP-Jacobi against centralized block Jacobi (BJac), standard Jacobi (Jac), and gradient descent (GD), over three different graphs, as showed in Fig.~\ref{fig:QP_compare_all}(c). Intra-cluster edges are shown in red in the same panels.   For the surrogate variant, surrogation is applied only to the message update: $\widetilde\phi_i$ is a first-order linearization plus a proximal term, and $\widetilde\psi_{ij}$ is chosen as in Sec.~\ref{subsec:surrogate_example}(ii), with $M_{ij}=\mathrm{Diag}(H_{ij})$ and $M_i=M_j=0$. Stepsizes are manually tuned and individually for each method, to achieve the best practical convergence.

  Fig.~\ref{fig:QP_compare_all}(a) (resp. Fig.~\ref{fig:QP_compare_all}(b)) plots the optimality gap $\norm{\bx^\nu-\bx^\star}$ versus the iterations $\nu$ (resp. communications).  The proposed MP-Jacobi and surrogate MP-Jacobi converge significantly faster than GD, and are competitive with Jacobi-type baselines. In particular, MP-Jacobi is close to centralized BJac and consistently faster than Jac. Since BJac is not directly implementable in a fully decentralized setting, this highlights that intra-cluster message passing effectively recovers much of the benefit of centralized block coordination. Surrogate MP-Jacobi achieves similar convergence behavior with substantially reduced communication, since incidences exchange compact surrogate summaries of full functional messages.

\begin{figure*}[t!]\vspace{-0.2cm}
    \begin{center}
        \setlength{\tabcolsep}{2pt}  
        \begin{tabular}{ccc}
            \includegraphics[width=0.34\linewidth]{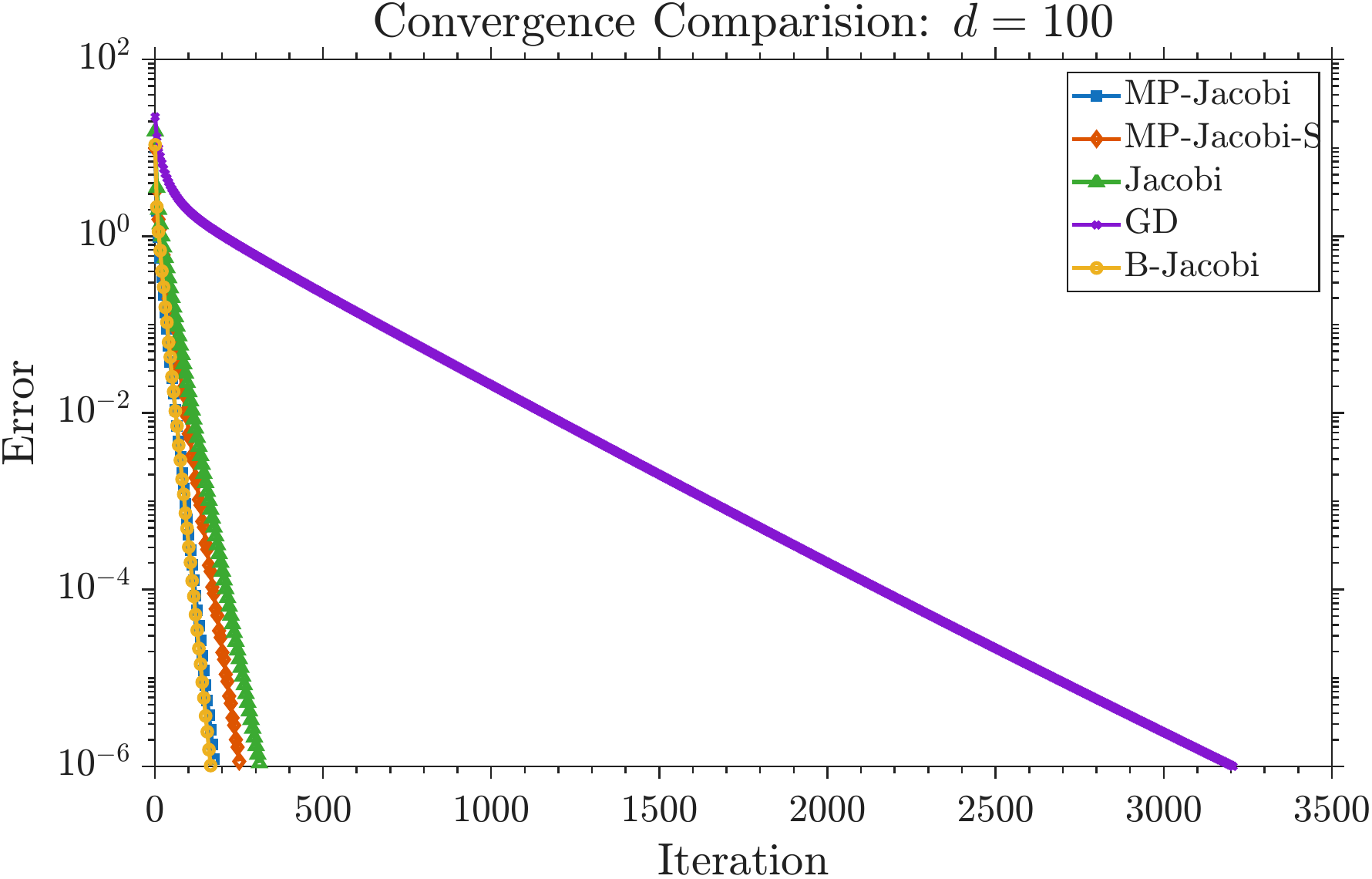}&
            \includegraphics[width=0.34\linewidth]{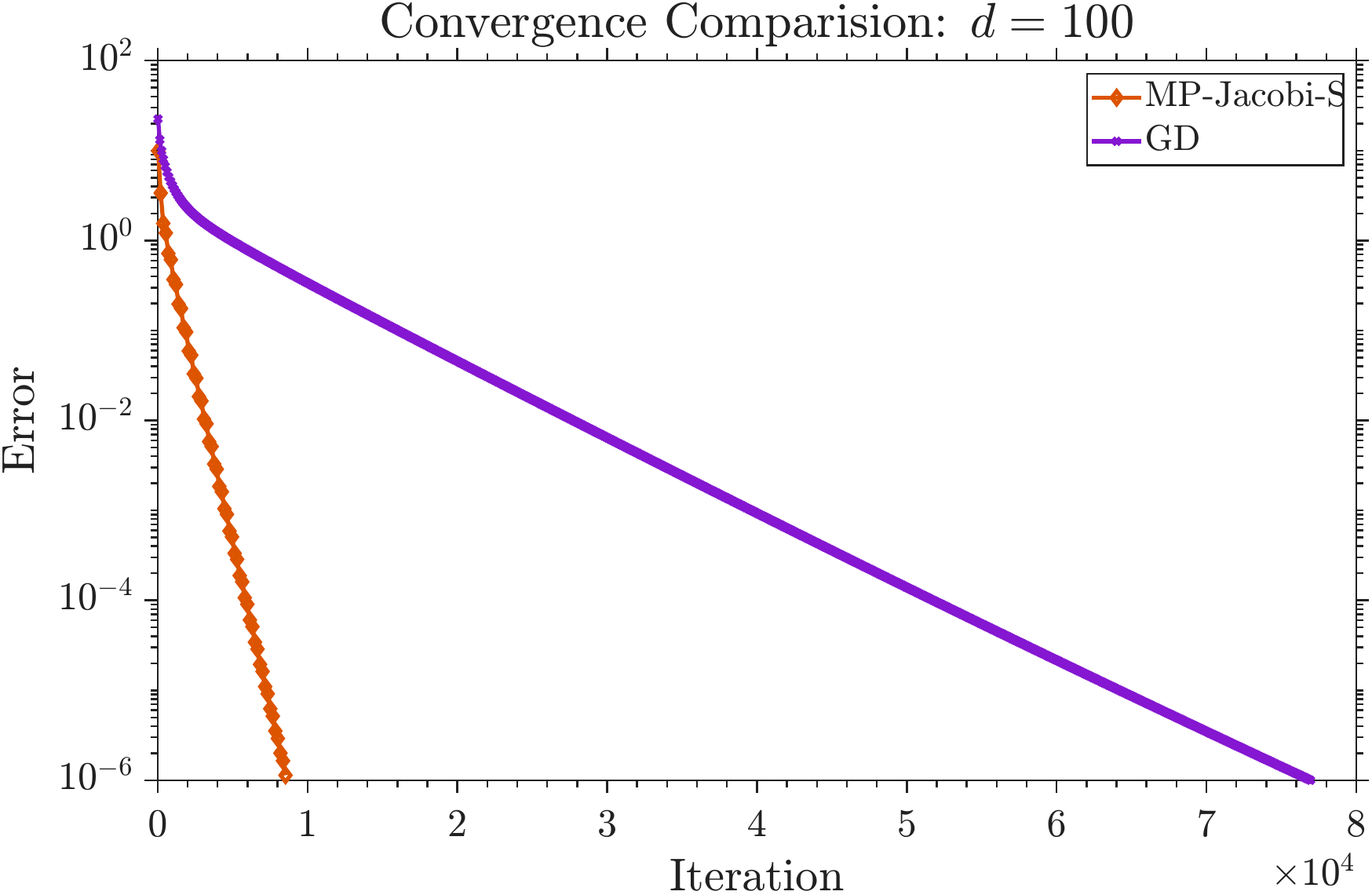}&
            \raisebox{.4cm}{\includegraphics[width=0.28\linewidth]{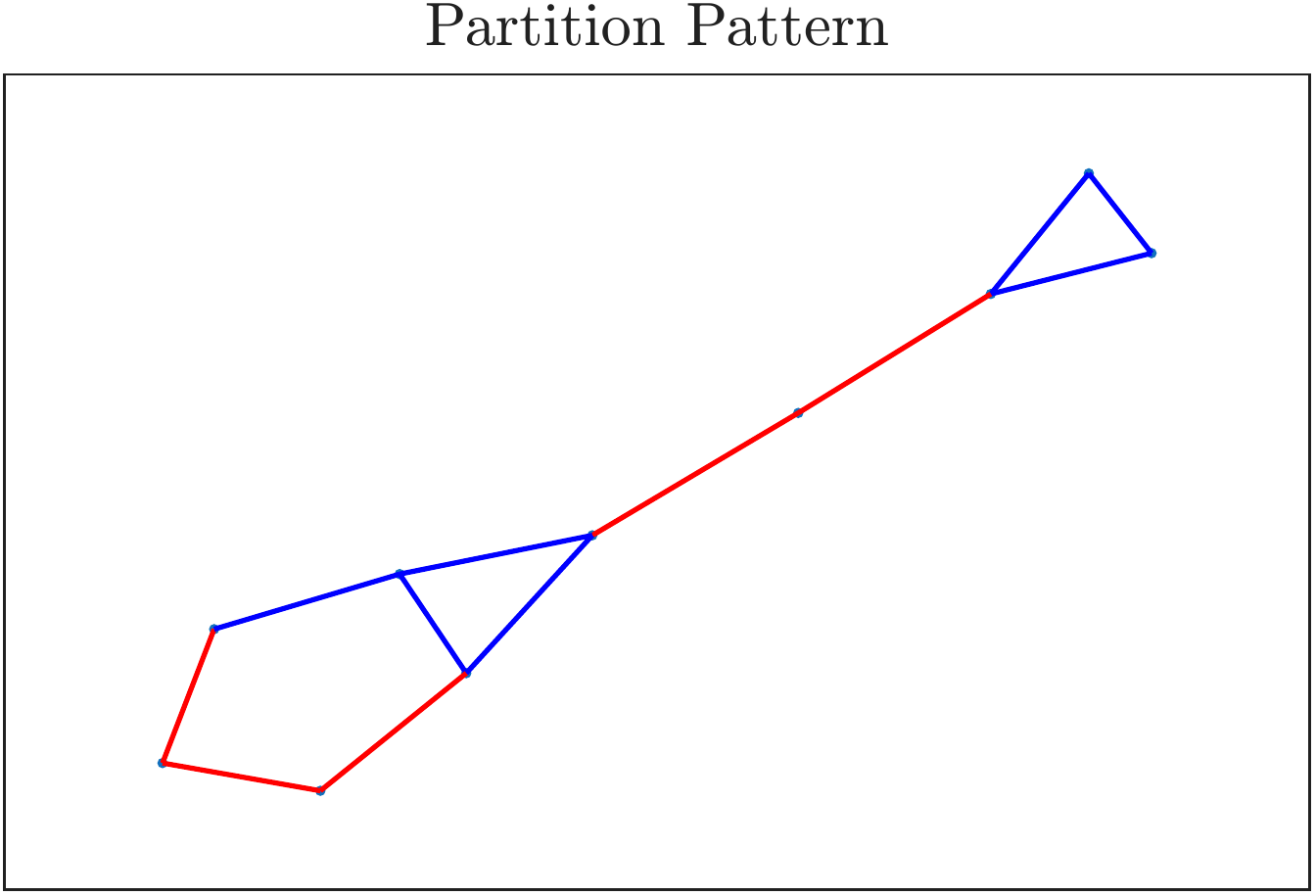}}\\
            \includegraphics[width=0.34\linewidth]{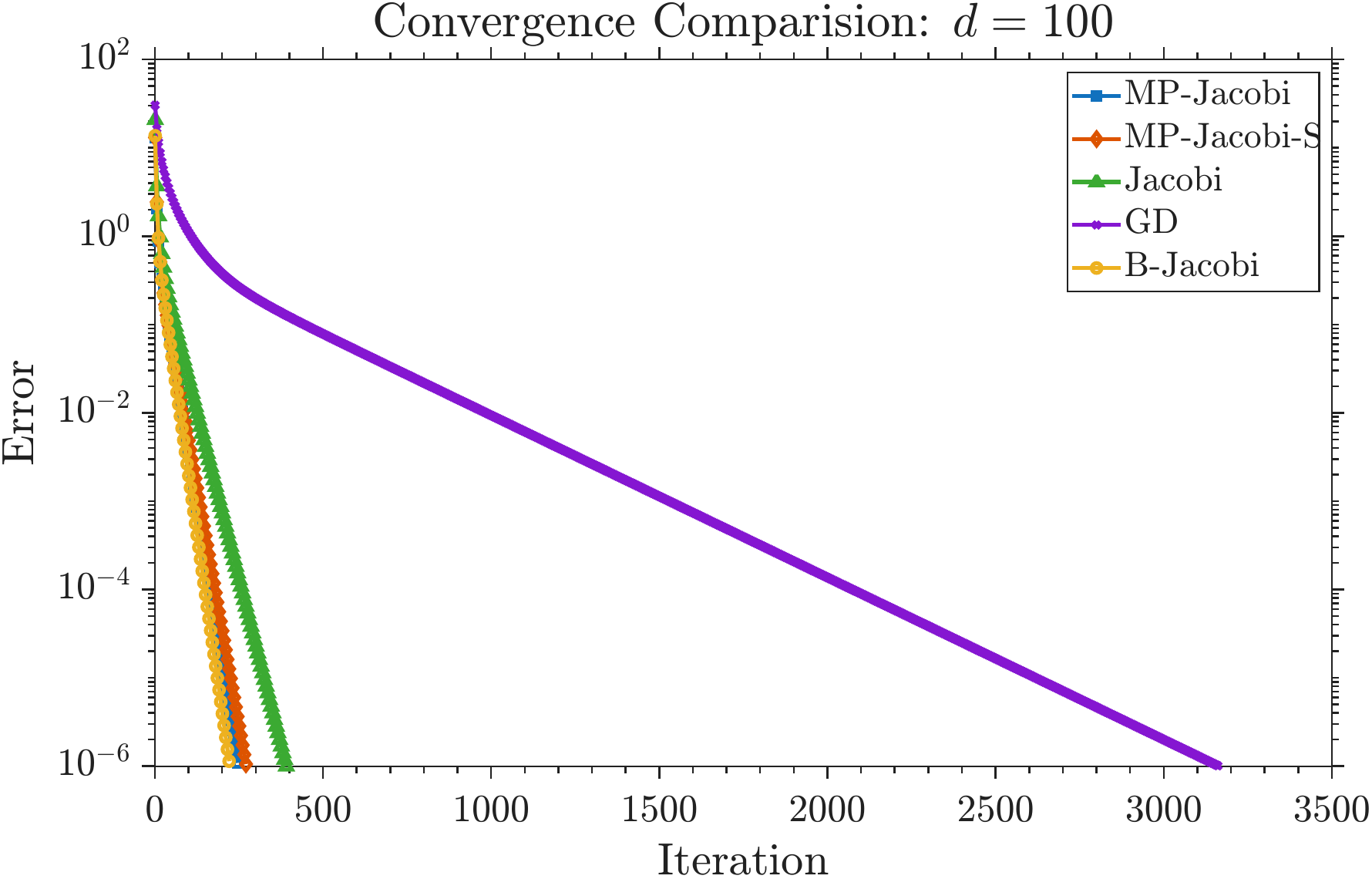}&
            \includegraphics[width=0.34\linewidth]{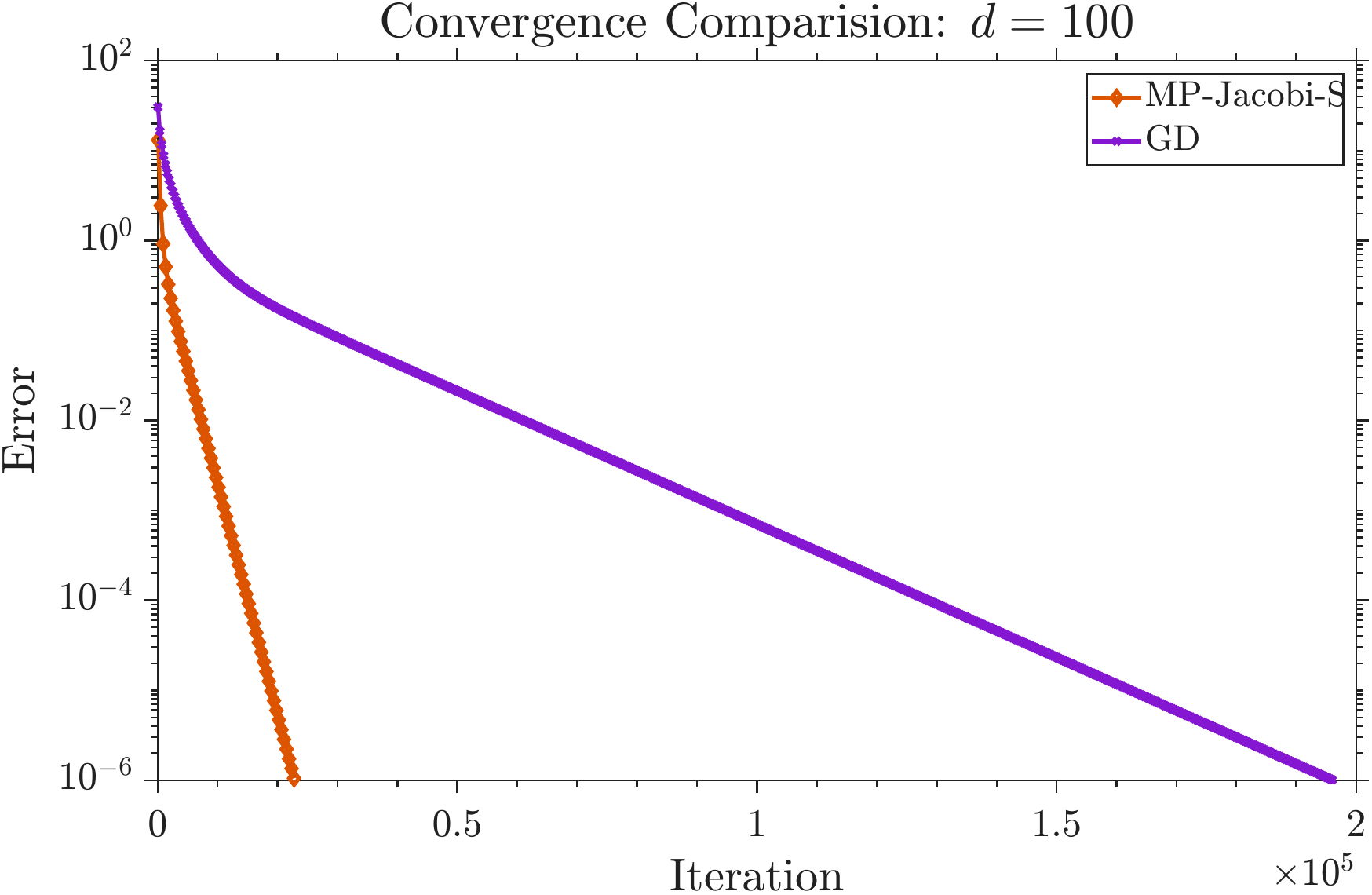}&
            \raisebox{.4cm}{\includegraphics[width=0.28\linewidth]{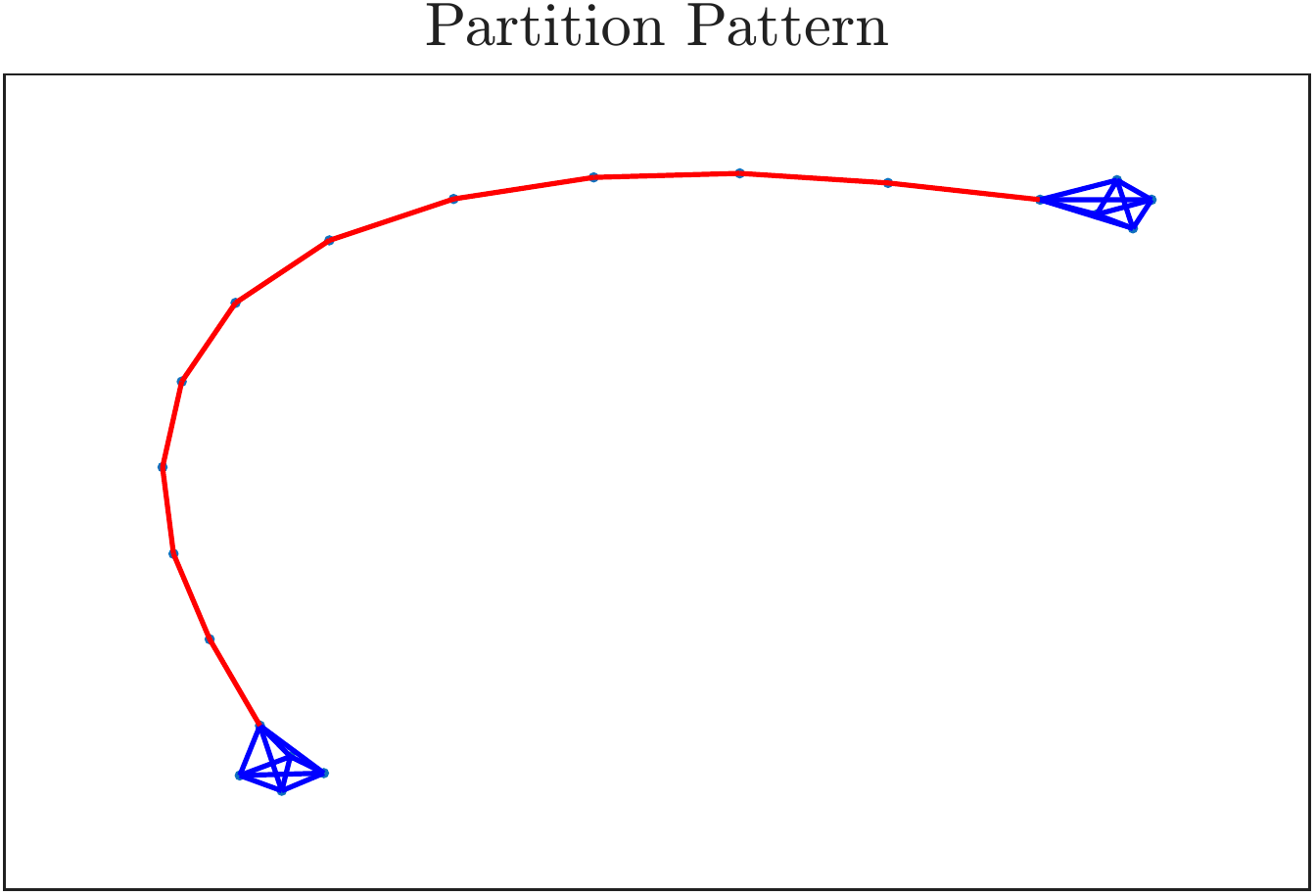}}\\
            \includegraphics[width=0.34\linewidth]{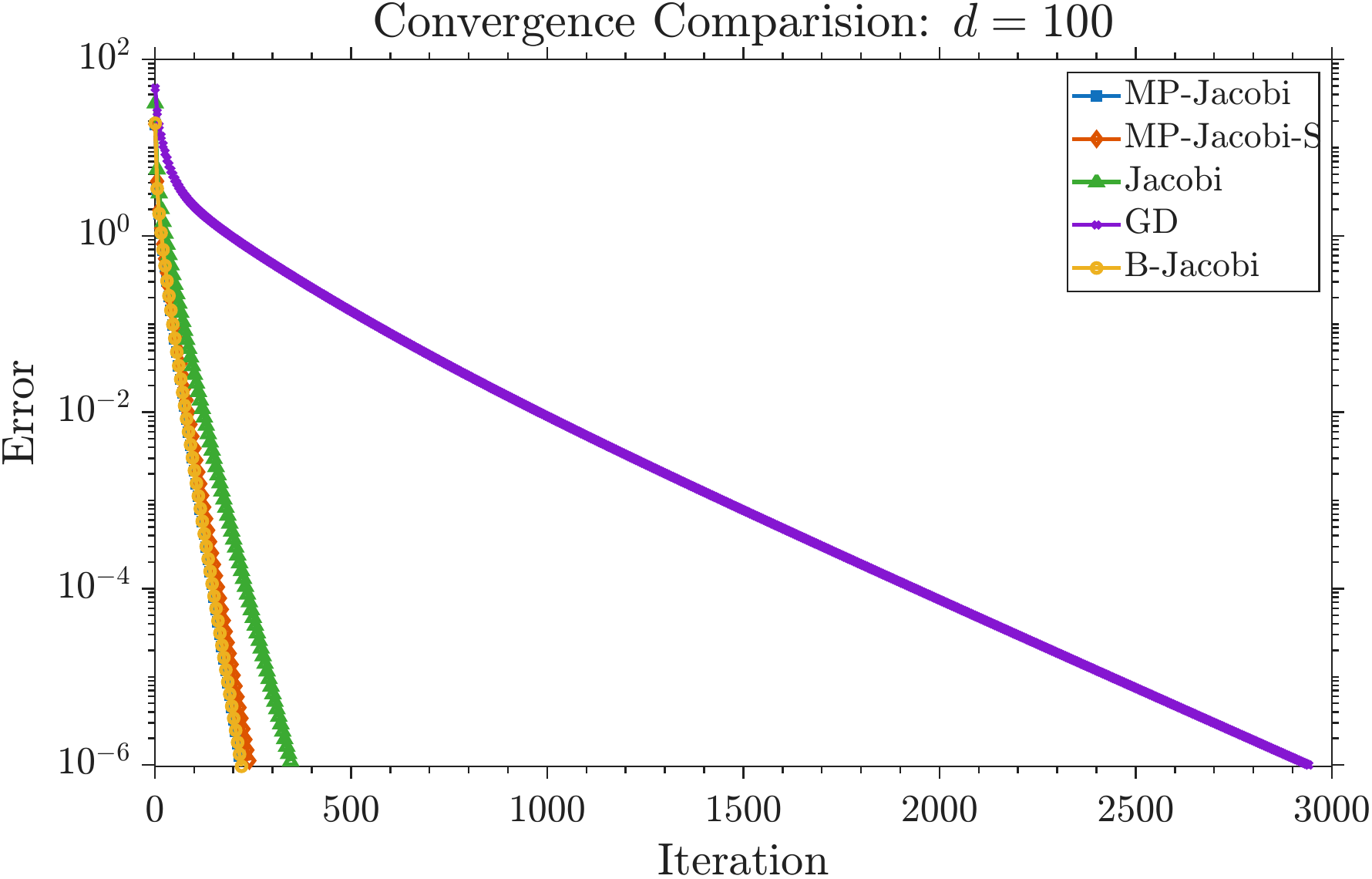}&
            \includegraphics[width=0.34\linewidth]{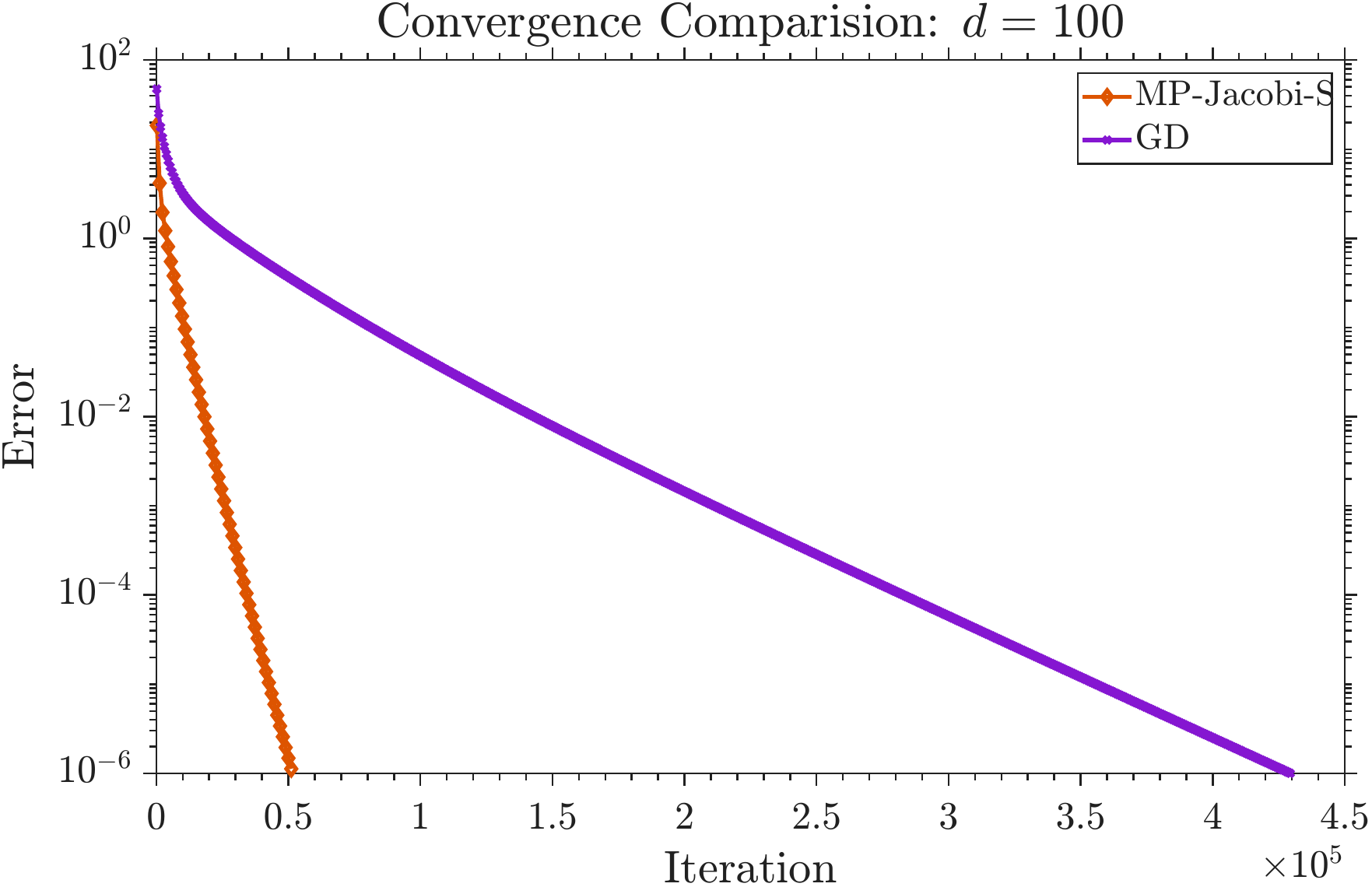}&
            \raisebox{.4cm}{\includegraphics[width=0.28\linewidth]{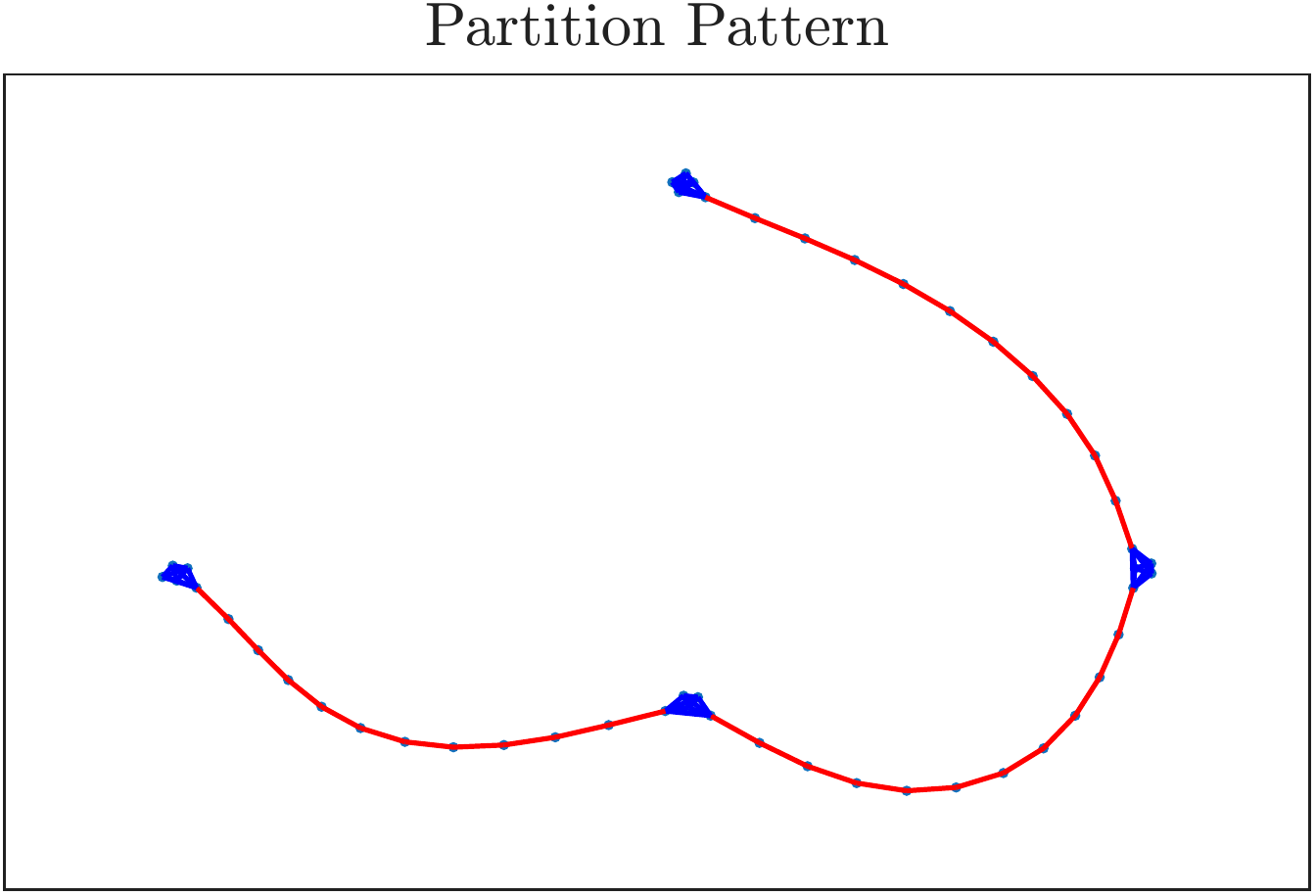}}\\
            \multicolumn{1}{c}{\footnotesize{(a) Iterations}} &  
            \multicolumn{1}{c}{\footnotesize{(b) Communication (\# vectors)}}&
            \multicolumn{1}{c}{\footnotesize{(c) Graph partitions}}                  
        \end{tabular}
    \end{center}\vspace{-0.2cm}
    \caption{Strongly convex quadratic problem:    MP-Jacobi, MP-Jacobi with surrogate (MP-Jacobi-S), Gradient Descent (GD), and Block centralized Jacobi (B-Jacobi).} \label{fig:QP_compare_all}\vspace{-0.4cm}
\end{figure*}

We evaluate scalability on a ring graph.  Let  $m=(D+1)\lceil D^{3/2}\rceil$ and increase   $D$ progressively. We compare two clustering strategies. {\it Partition 1} uses two clusters: one cluster is a long path (tree) of length  $\lceil m^{2/3}\rceil$, and  the other is a single node.   {\it Partition 2} uses $\lceil D^{3/2}\rceil$ clusters, each a path  tree of length $D=\Theta(m^{2/5})$; see the Fig.~\ref{fig:scale_m}(b). Our theory predicts that Partition 2 yields better scalability in 
$m$ than Partition 1, and Fig.~\ref{fig:scale_m} corroborates this prediction numerically.

\begin{figure*}[ht]\vspace{-0.2cm}
    \begin{center}
        \setlength{\tabcolsep}{0.0pt}  
        \scalebox{1}{\begin{tabular}{c@{\hspace{1cm}}c}
                \includegraphics[width=0.5\linewidth]{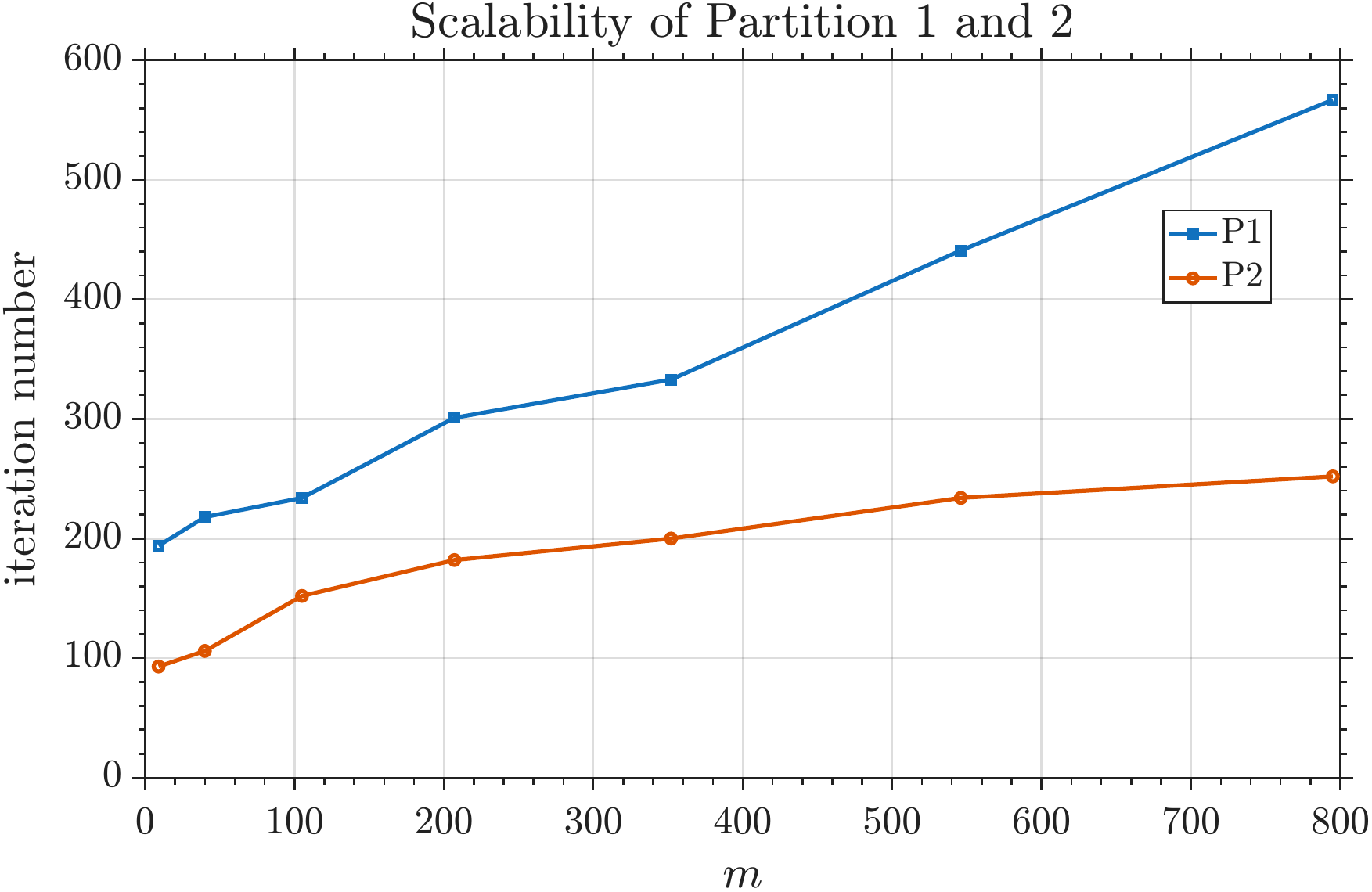} &  
                \raisebox{0.5cm}{\includegraphics[width=0.4\linewidth]{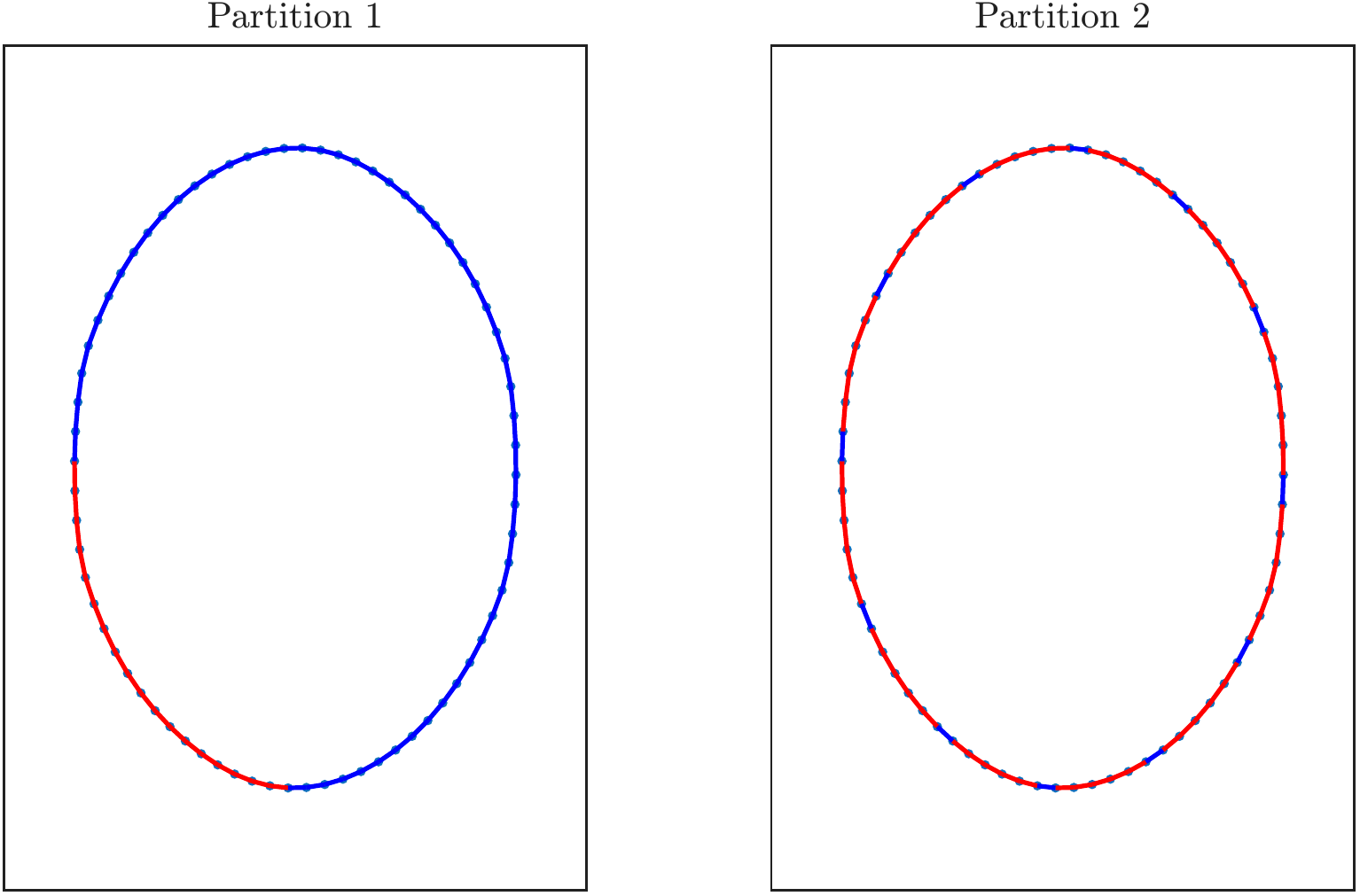}}\\
                \multicolumn{1}{c}{\footnotesize{(a) Scalability in $m$ for the  two partitions in (b)}} & \multicolumn{1}{c}{\footnotesize{(b) Ring graph and partitions (red lines)}} 
        \end{tabular}}
        \caption{Strongly convex quadratic problem: scalability (\# of iterations $\nu$ for $\norm{\bx^\nu-\bx^\star}\leq 10^{-3}$) in   $m$ under different tree-clustering strategies. \emph{Partition~1} uses two clusters: one long tree (a path) and one singleton. \emph{Partition~2} uses $m^{3/5}$ clusters, each a path of length $m^{2/5}-1$ (i.e., $m^{2/5}$ nodes per cluster). }
        \label{fig:scale_m}
    \end{center}\vspace{-0.6cm}
\end{figure*}

We finally remark that  our method is provably convergent on loopy graphs without requiring diagonal dominance.
In contrast, classical min-sum/message passing may fail in this regime;  Fig.~\ref{fig:compare_mp} shows a clear example.\vspace{-0.2cm}

\begin{figure*}[ht]
    \begin{center}
        \setlength{\tabcolsep}{0.0pt}  
        \scalebox{1}{\begin{tabular}{c@{\hspace{1cm}}c}
                \includegraphics[width=0.5\linewidth]{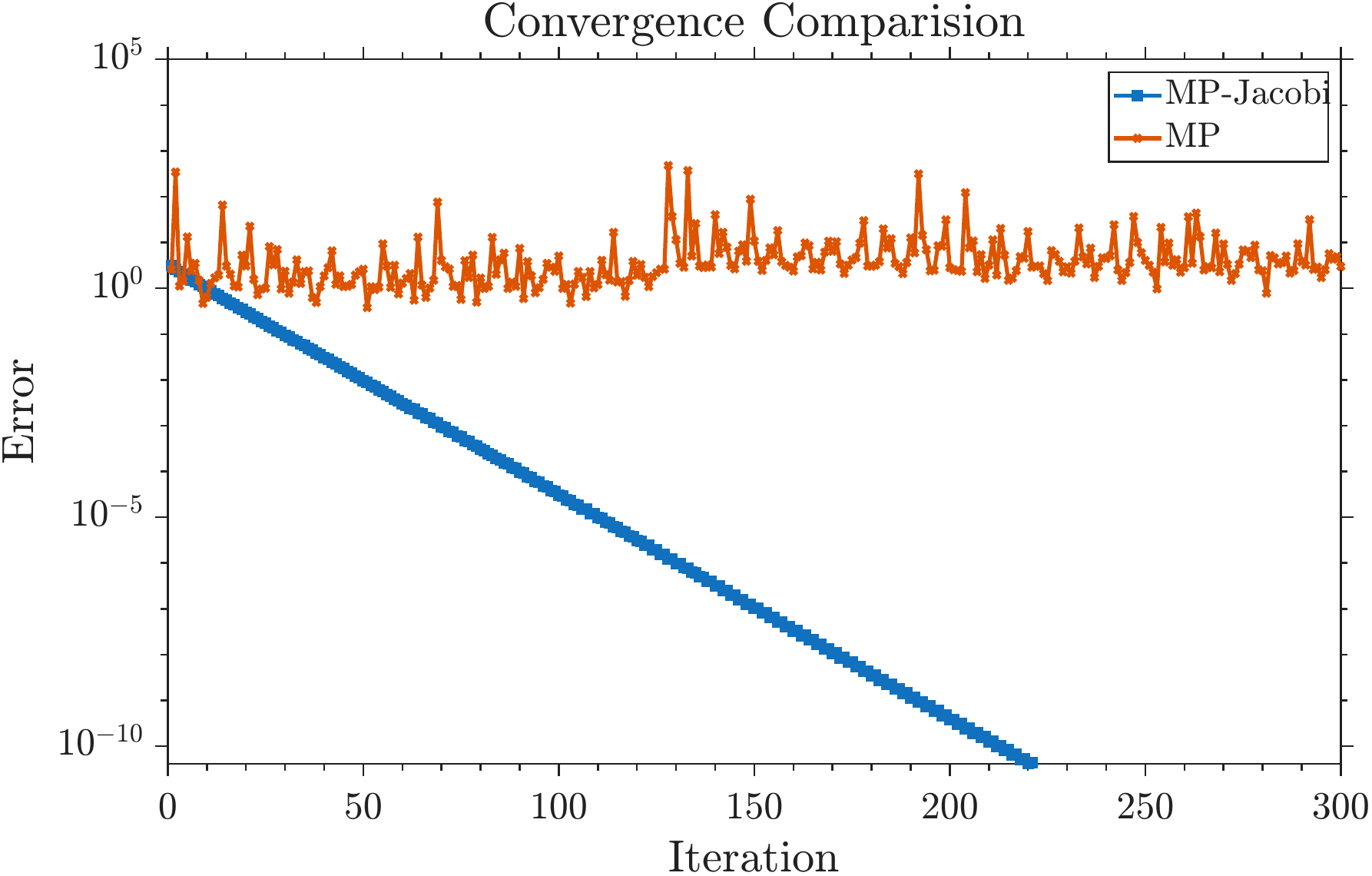} &  
                \raisebox{0.5cm}{\includegraphics[width=0.4\linewidth]{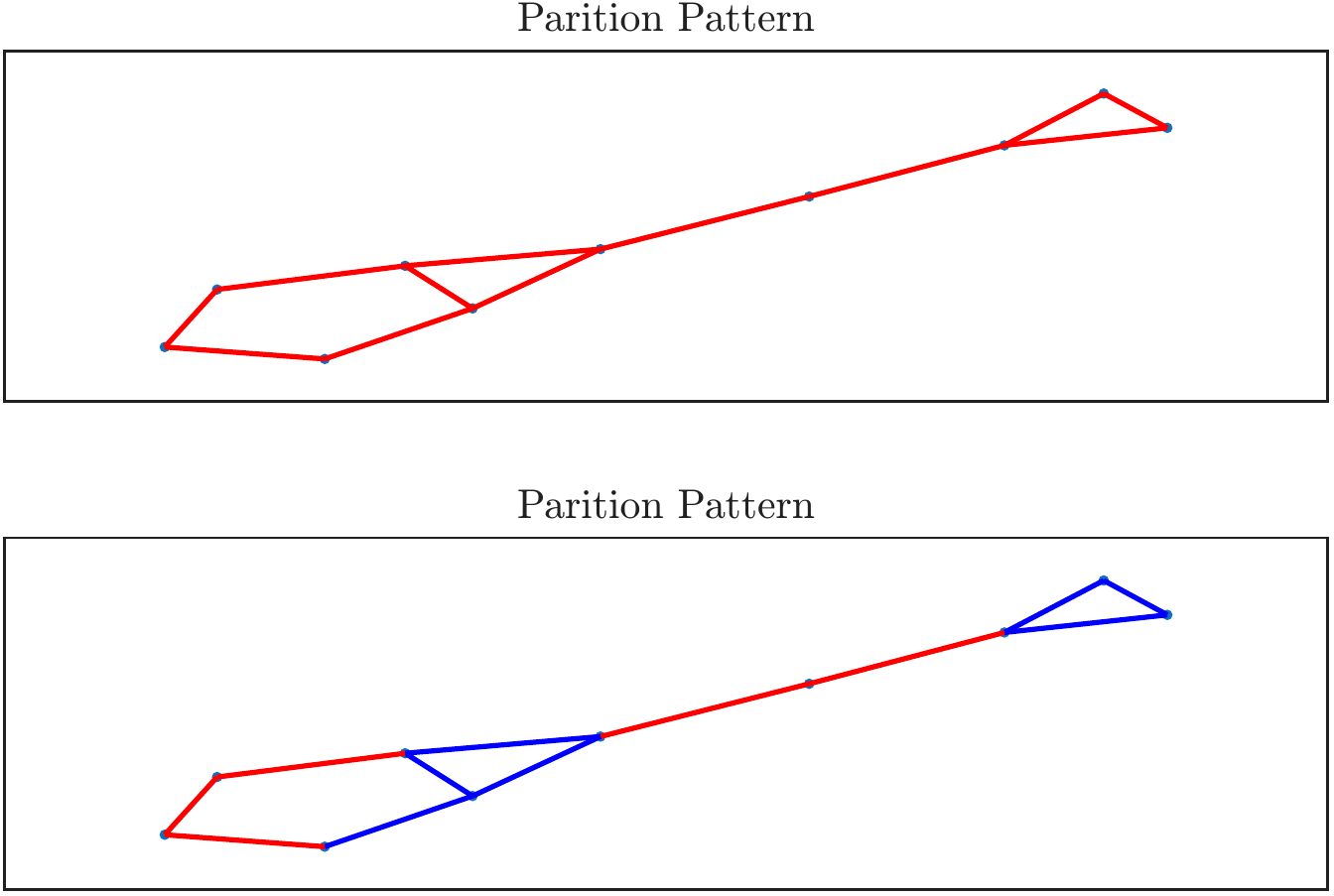}}\\
                \multicolumn{1}{c}{\footnotesize{(a) Scalability of $m$ for two partitions}} & \multicolumn{1}{c}{\footnotesize{(b) Graphs and partitions (in red).}} 
        \end{tabular}}
        \caption{Strongly convex quadratic problem on loopy graphs:  min-sum splitting algorithm (MP) vs. the proposed MP-Jacobi.  The min-sum splitting   fails to converge on loopy graph without diagonal dominance of the objective function.}
        \label{fig:compare_mp}
    \end{center}\vspace{-0.6cm}
\end{figure*}
\vspace{-0.3cm}

\subsection{Decentralized optimization   (pairwise graph)}\label{sec:QP-decentralized}\vspace{-0.2cm}
Consider the augmented formulation \eqref{eq:CTA} associated with DGD-CTA. The objective in~\eqref{eq:CTA} can be written in the pairwise form~\eqref{P} with local and coupling terms  
\begin{equation}
\begin{aligned}
        \phi_i(x_i)= f_i(x_i) + \frac{1-W_{ii}}{2\gamma}\|x_i\|^2,\;\text{ } \psi_{ij}(x_i, x_j)= -\frac{1}{\gamma}x_i^\top W_{ij}x_j. 
\end{aligned}
\label{eq:CTA_prob}
\end{equation}
We simulate quadratic functions   $f_i$'s   with     matrices $Q_{ii}$  generated by the {\tt MATLAB} command: 
$
{\tt B = randn(d,d); Qtmp = B'*B/m;Q\_ii=Qtmp}$ ${\tt +c*eye(d);}$
where  ${\tt c}$ is chosen such that  global cost $\sum_{i=1}^mf_i(x_i)$ has condition number 100. We set  $\gamma=10^{-3}$,   and $W$ is the Metropolis weight matrix. 
 \begin{figure*}[ht]
		\begin{center}
			\setlength{\tabcolsep}{0.0pt}  
			\scalebox{1}{\begin{tabular}{ccc}
					\includegraphics[width=0.33\linewidth]{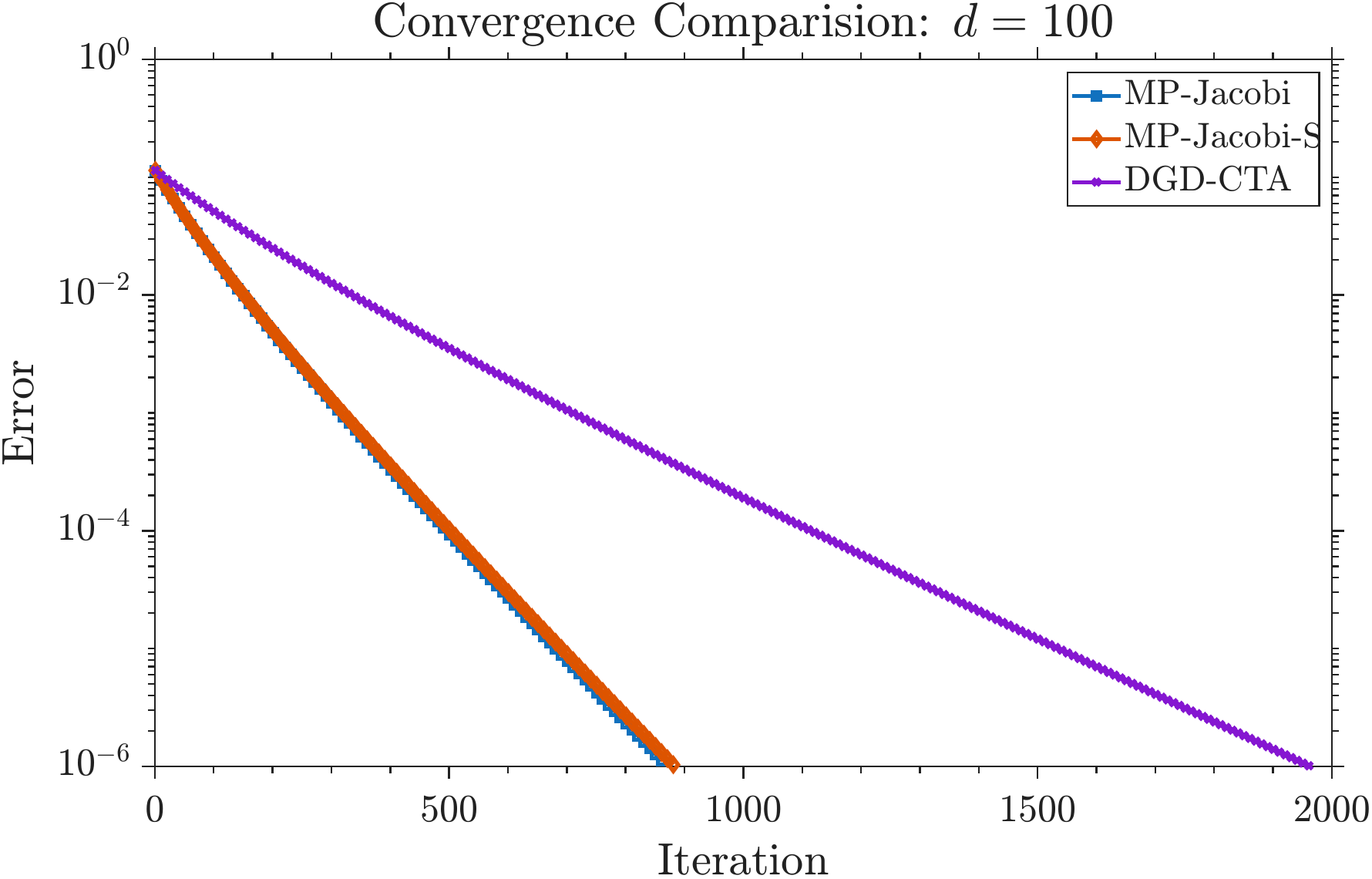}&
					\includegraphics[width=0.33\linewidth]{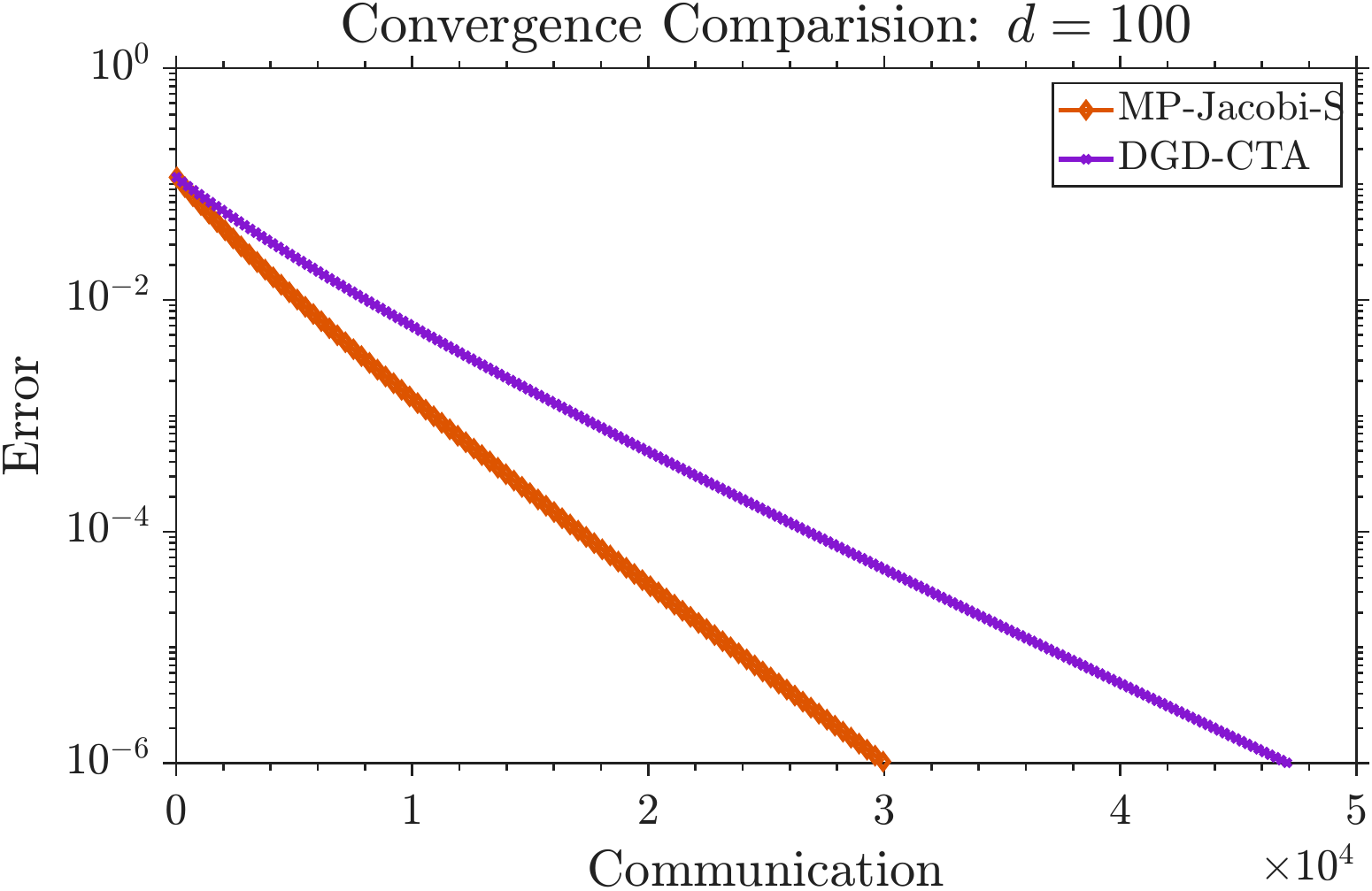}&
					\includegraphics[width=0.33\linewidth]{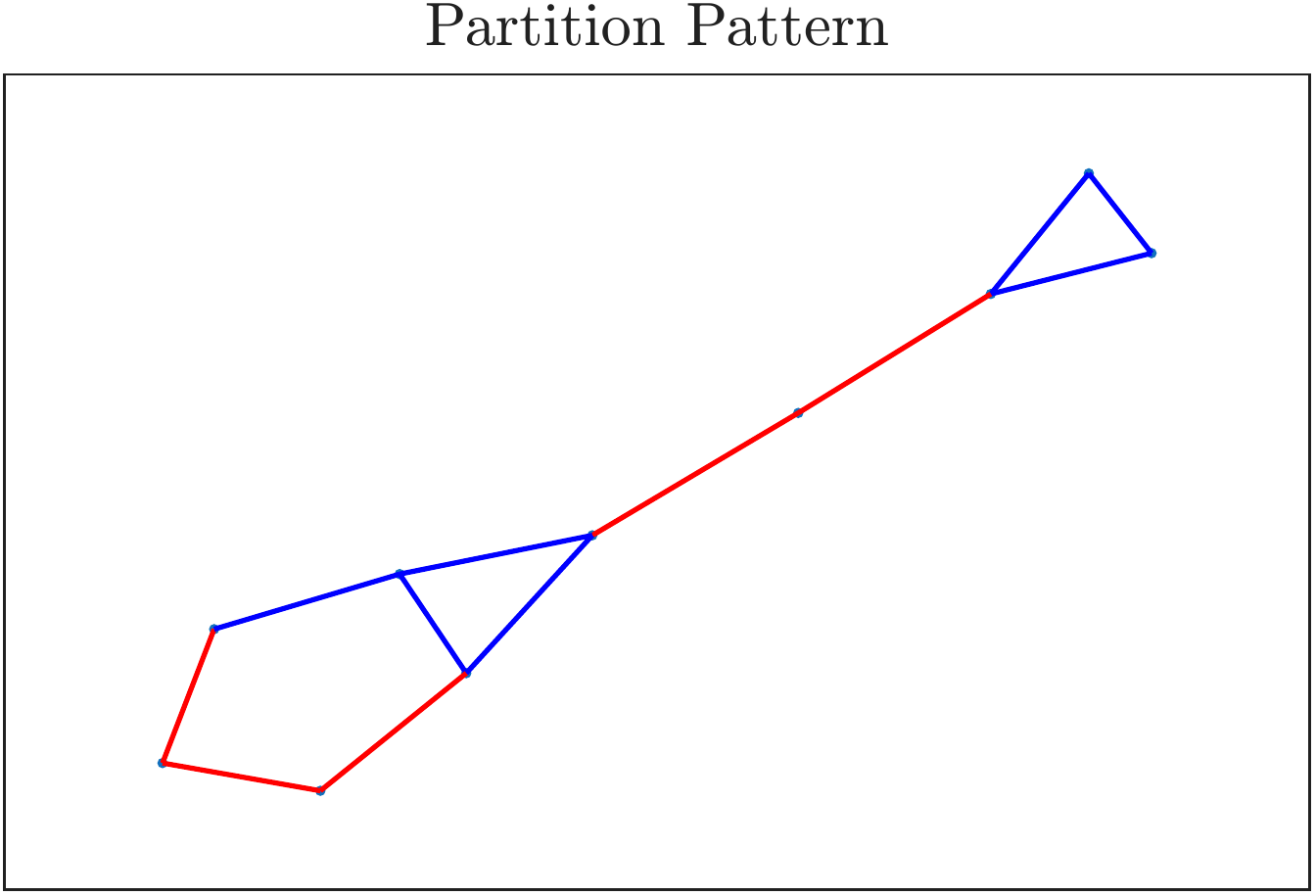}\\
                    \includegraphics[width=0.33\linewidth]{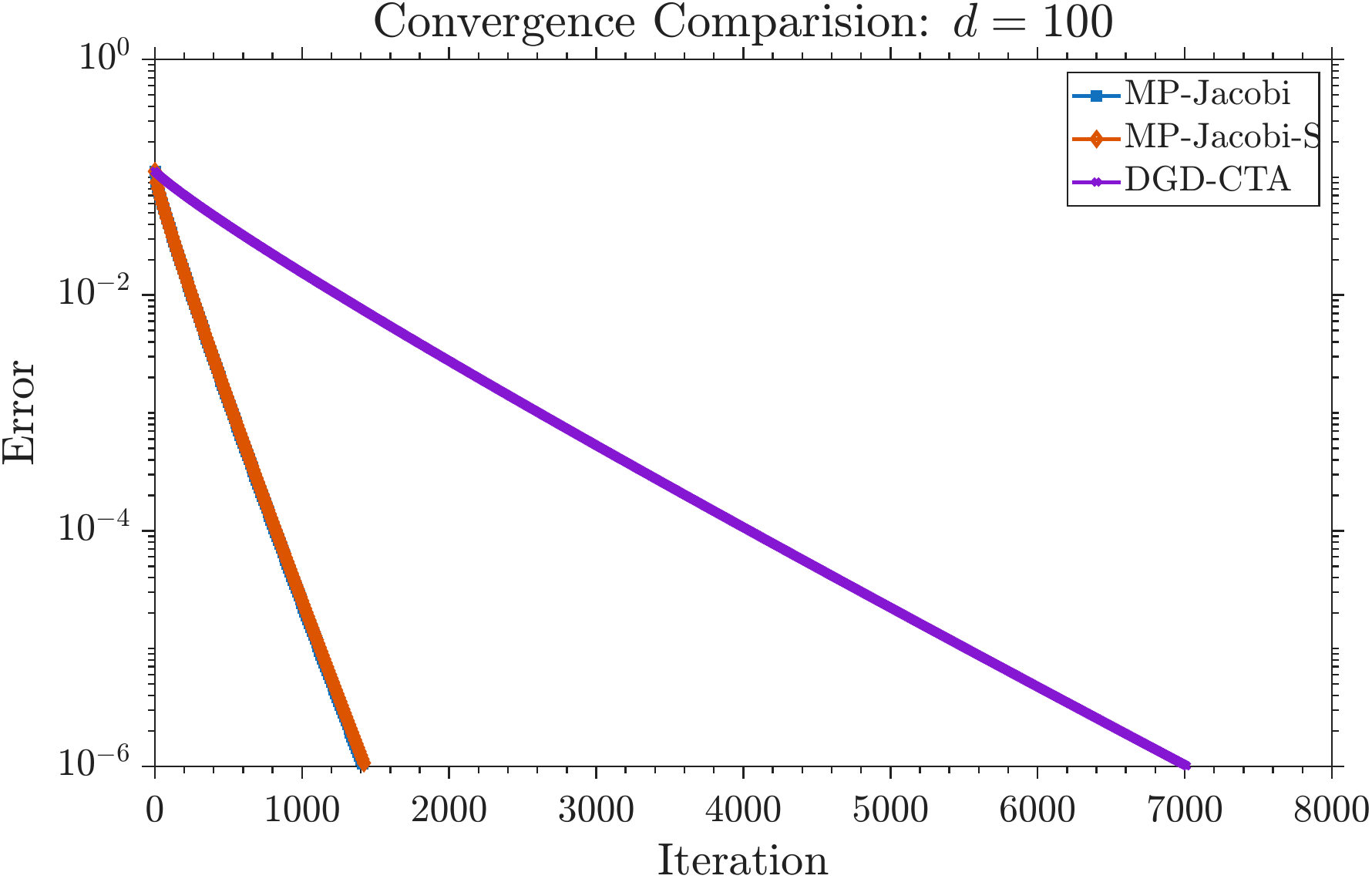}&
					\includegraphics[width=0.33\linewidth]{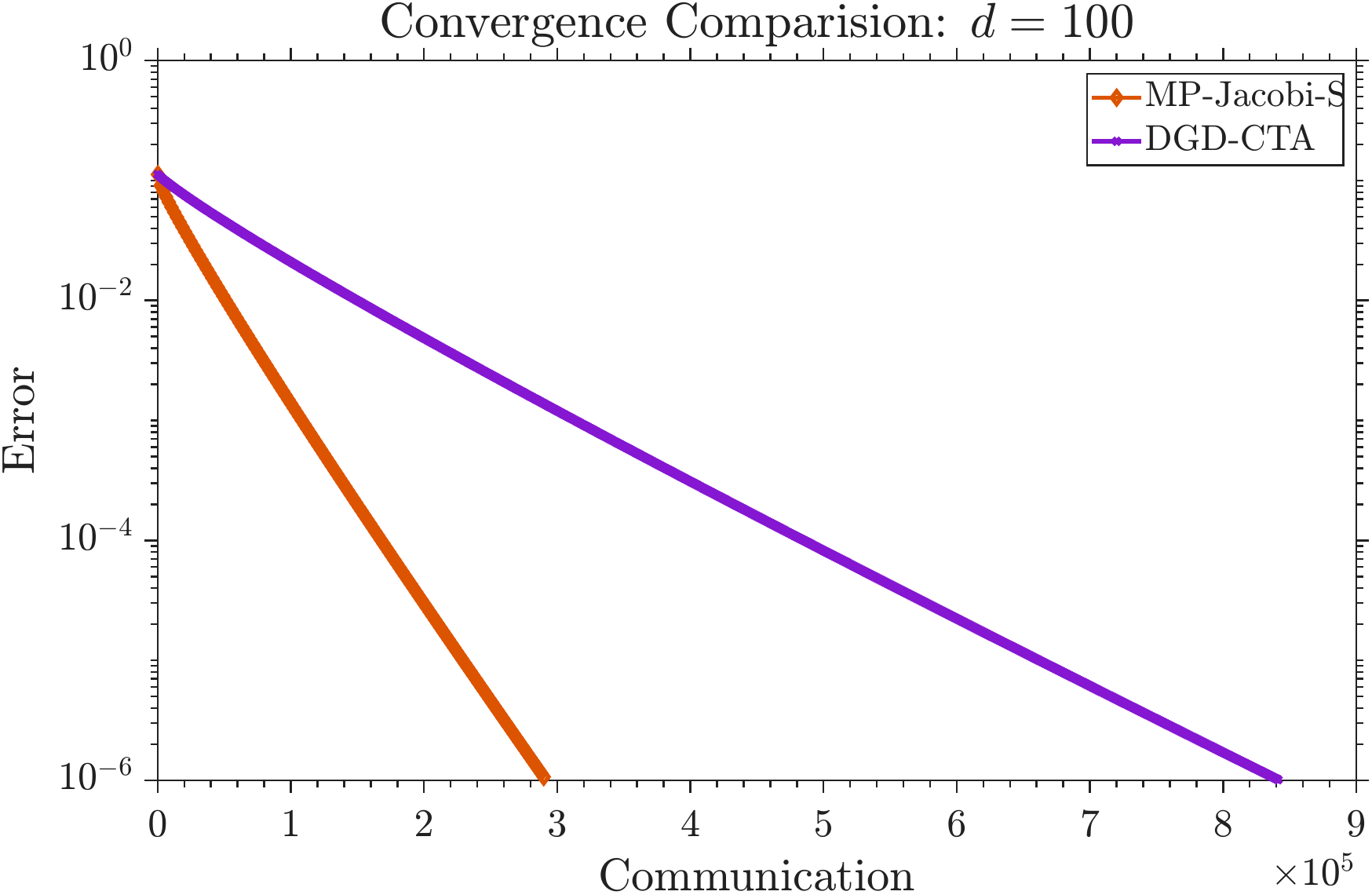}&
					\includegraphics[width=0.33\linewidth]{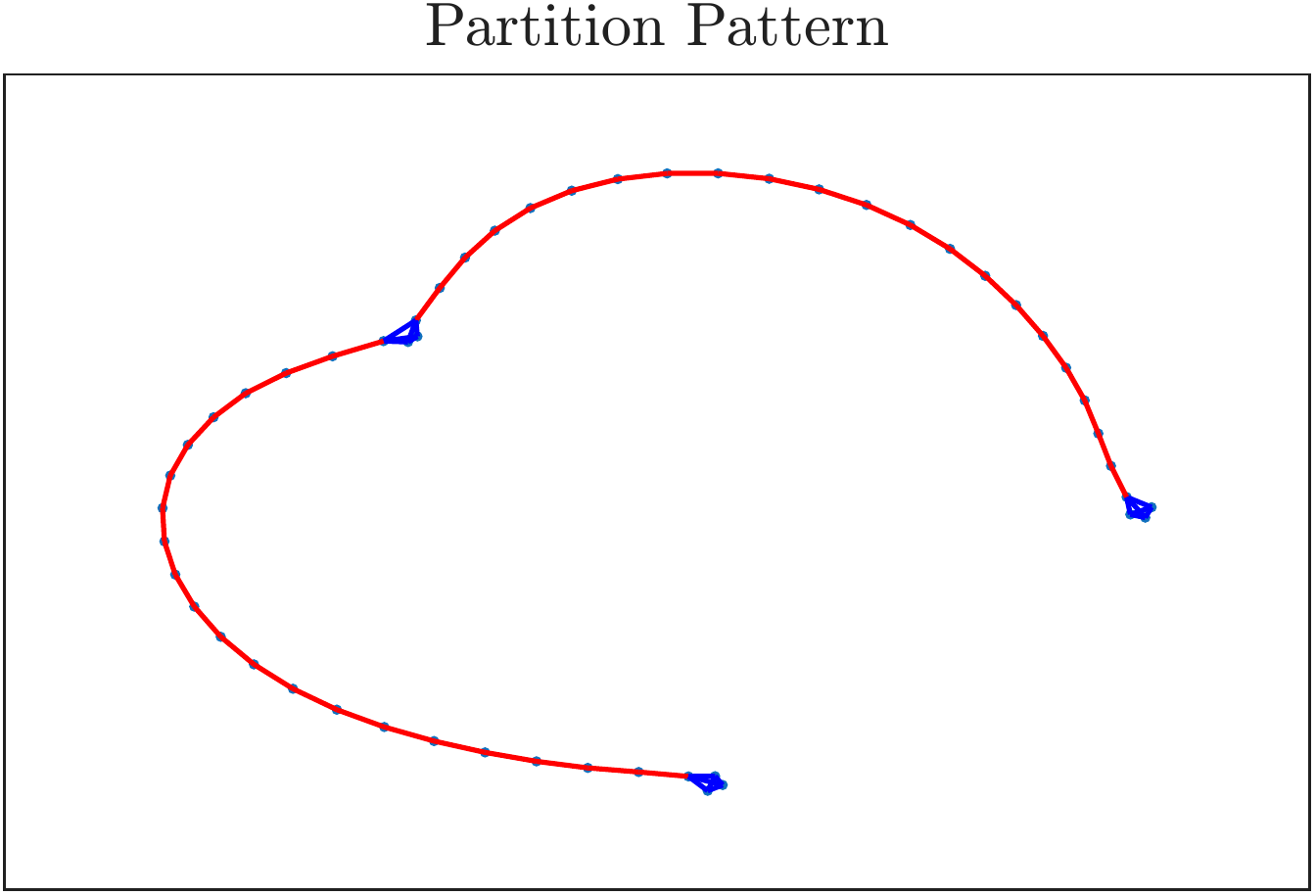}\\
                    \includegraphics[width=0.33\linewidth]{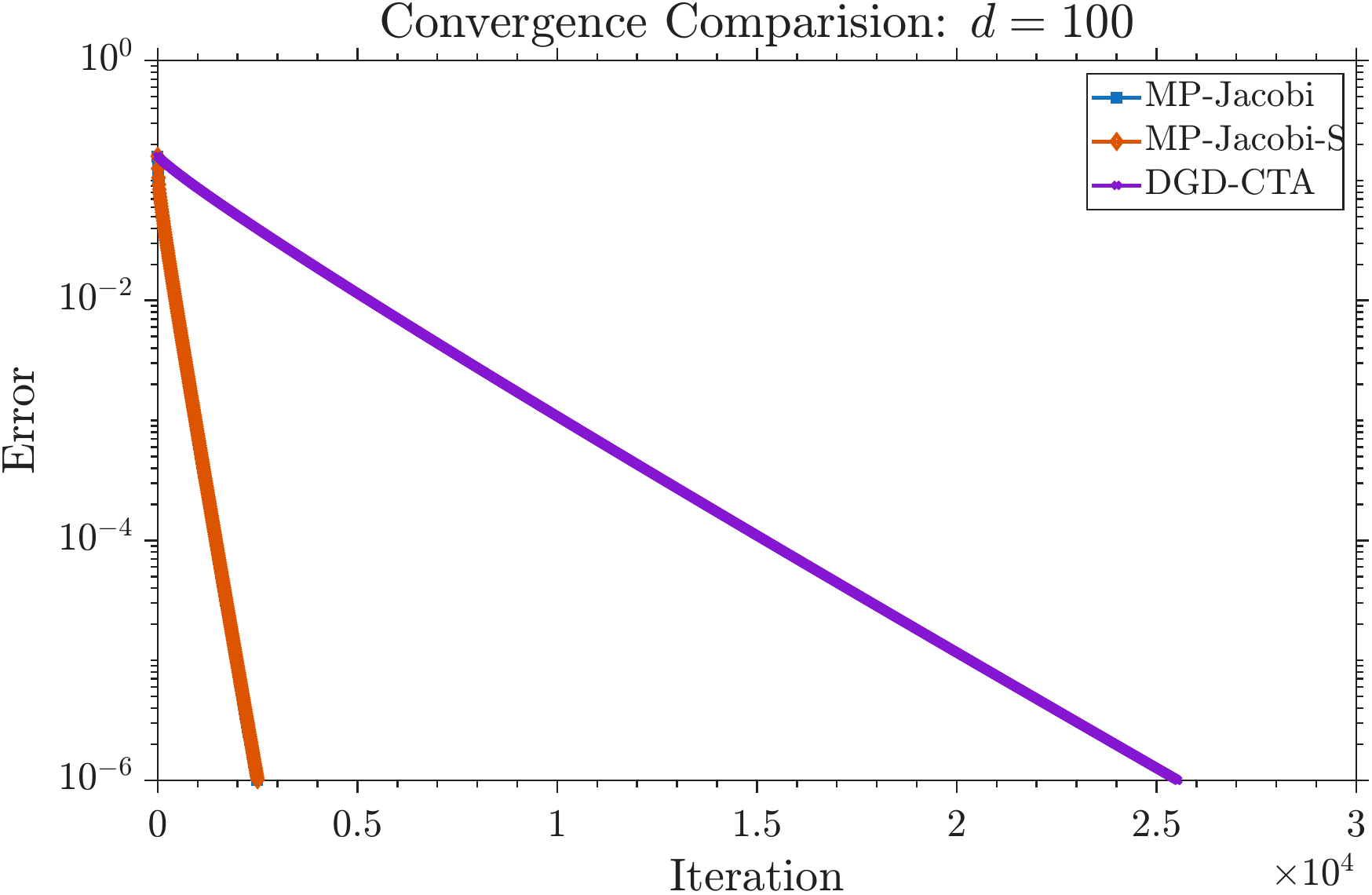}&
					\includegraphics[width=0.33\linewidth]{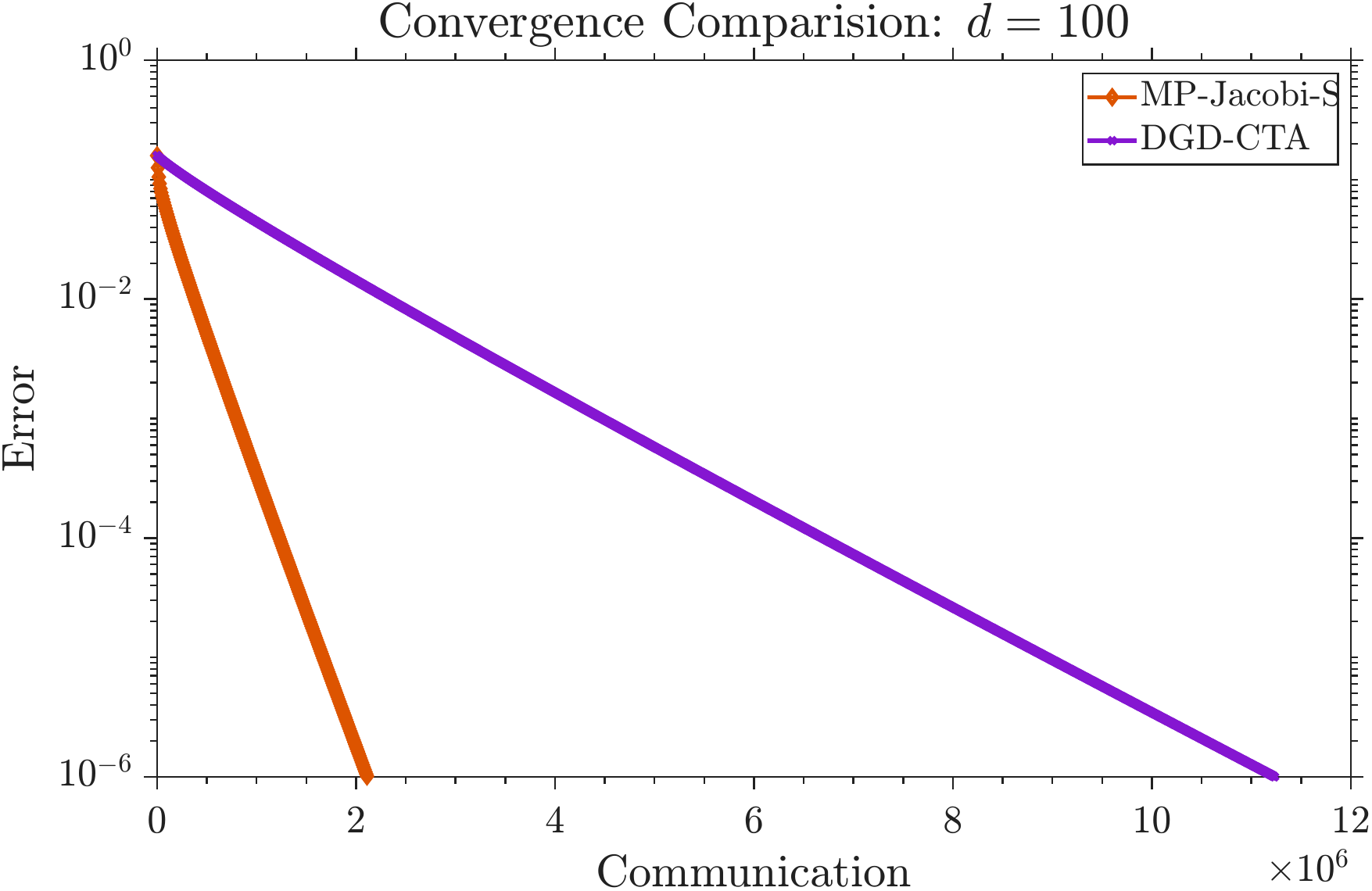}&
					\includegraphics[width=0.33\linewidth]{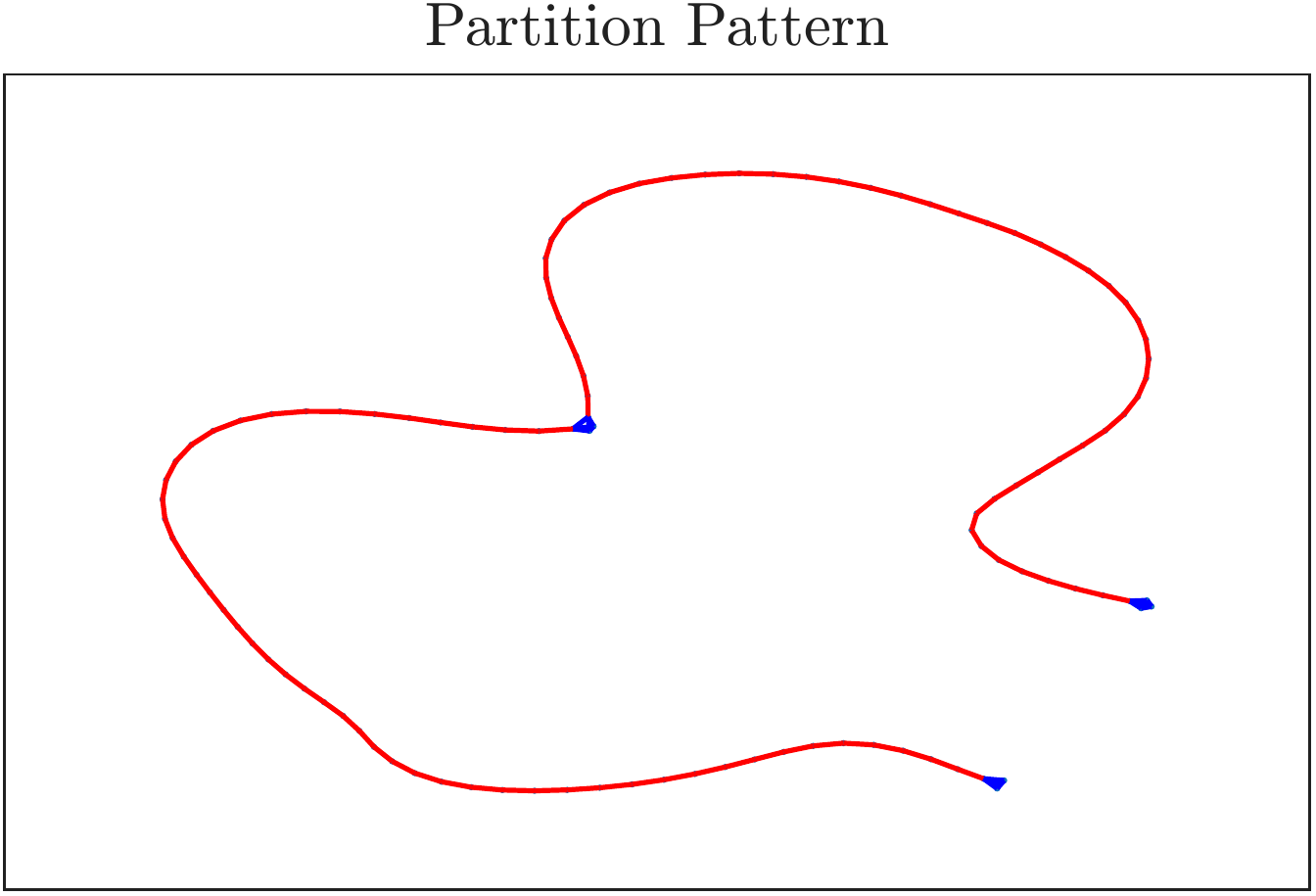}\\
					\multicolumn{1}{c}{\footnotesize{(a) Iterations}} &  \multicolumn{1}{c}{\footnotesize{(b) Communication (\# vectors)}}&
					\multicolumn{1}{c}{\footnotesize{(c) Graph decomposition}}                  
			\end{tabular}}
		\end{center} \vspace{-0.3cm}
        \caption{Decentralized optimization in the CTA form \eqref{eq:CTA}: MP-Jacobi and MP-Jacobi with surrogate vs. DGD-CTA.} \label{fig:CTA_compare_DGD}
		 \vspace{-0.4cm}
	\end{figure*}
    
We compare MP-Jacobi and its surrogate variant (based on the partial linearization in~\eqref{example:partial_linear}) against   DGD-CTA ~\cite{yuan2016convergence}, which is  gradient descent applied to  ~\eqref{eq:CTA} with stepsize $\gamma$.  The results are reported in Fig.~\ref{fig:CTA_compare_DGD}. Panel (a) (resp. (b)) plots  the optimality gap $\norm{\bx^\nu-\bx^\star}$ versus the iterations $\nu$ (resp. communications), while panel (c) reports the graphs and chosen partition patterns. Both MP-Jacobi and surrogate MP-Jacobi are markedly faster than DGD-CTA, with the advantage becoming pronounced on large-scale graphs (large $m$) and topologies with long ``thin'' structures (paths with large diameter). Moreover, the surrogate is essentially as effective as exact MP-Jacobi while significantly reducing per-iteration cost: incidences exchange only vectors (rather than functional messages), yielding substantial savings in communication. In terms of vector transmissions, surrogate MP-Jacobi is also consistently more communication-efficient than DGD-CTA.

To further assess scalability against existing decentralized  schemes, we also consider the original consensus-based decentralized  minimization 
$\min_{x\in\R^d}\;F(x):=\sum_{i=1}^m f_i(x)$, 
and solve it through its augmented CTA formulation~\eqref{eq:CTA}, over an instance of the dumbbell graph (Fig.~\ref{fig:scale_m_dopt}(b)). Here, $F$ is strongly convex, quadratic. We simulate quadratic functions   $f_i$'s   with     matrices $Q_{ii}$  generated by the {\tt MATLAB} command: 
$
{\tt B = randn(d,d); Qtmp = B'*B/m;Q\_ii=Qtmp}$ ${\tt +c*eye(d)}$
with   ${\tt c}$   chosen such that  global cost $\sum_{i=1}^mf_i(x_i)$ has condition number 100. We fix $W$ as the Metropolis matrix and tune   $\gamma$ in~\eqref{eq:CTA} so that the  MP-Jacobi and its surrogate variant, run on this augmented  problem, can achieve the same termination accuracy of  existing   decentralized  methods minimizing directly $F(\bx)$.  As representative of such methods,    we simulated  DGD--CTA, DGD--ATC, SONATA~\cite{sun2022distributed}, DIGing~\cite{nedic2017achieving}, and EXTRA~\cite{shi2015extra}. Let $x^\star$ be the minimizer of~$F(\bx)$. All the algorithms are terminated when    {$\norm{\bx^\nu-\mathbf{1}\otimes x^\star}/\sqrt{m}\leq10^{-3}$}, where $\bx^\nu$ is the stacked vectors of the agents' iterates $x_i^\nu$ produced by the different algorithms. 

 Fig.~\ref{fig:scale_m_dopt}(a) shows that   the iteration counts of DGD-CTA and DGD-ATC grow most rapidly with $m$, indicating poor scalability. In contrast, MP-Jacobi and its surrogate variant exhibit the mildest dependence on $m$ and remain consistently competitive, outperforming all baselines except EXTRA over the {\it full range} of values of $m$. While EXTRA is competitive for small networks,   beyond $m\approx 100$, our methods require fewer iterations, with the advantage increasing   with $m$.

\begin{figure*}[t!]
    \begin{center}
        \setlength{\tabcolsep}{0.0pt}  
        \scalebox{1}{\begin{tabular}{c@{\hspace{1cm}}c}
                \includegraphics[width=0.50\linewidth]{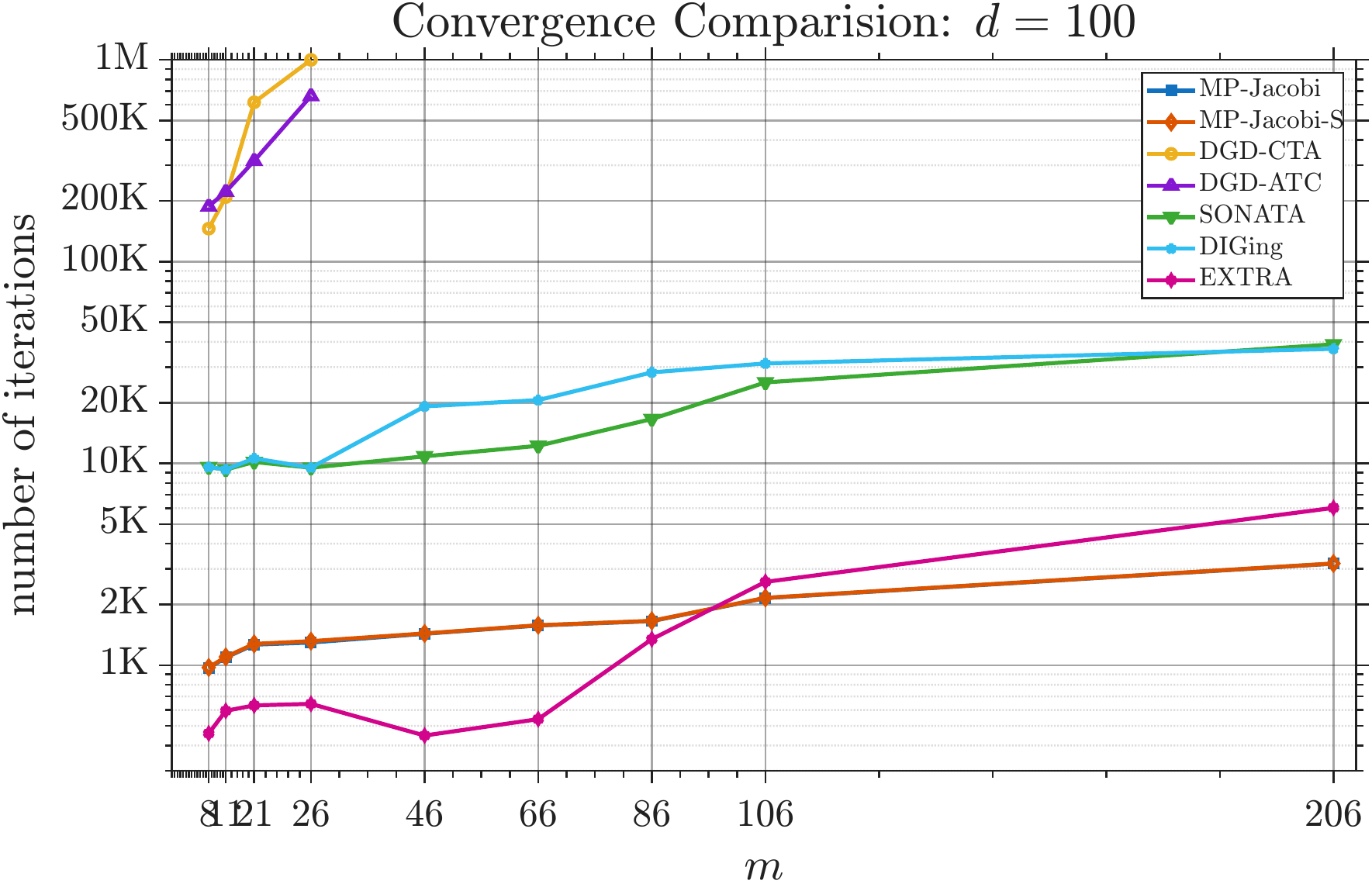} &  
                \raisebox{.45cm}{\includegraphics[width=0.42\linewidth]{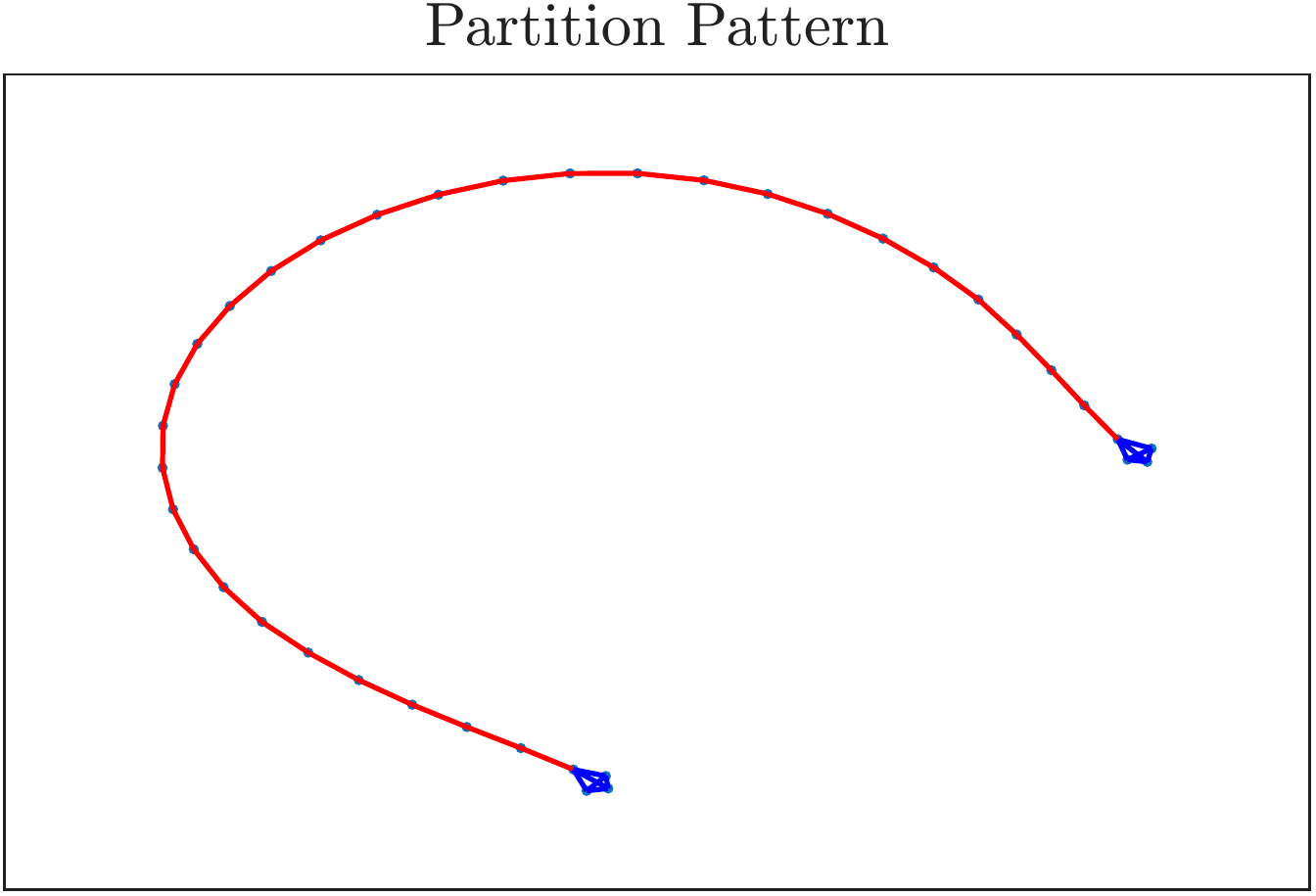}}\\
                \multicolumn{1}{c}{\footnotesize{(a) Scalability of $m$ for two partitions}} & \multicolumn{1}{c}{\footnotesize{(b) Graph partitions.}}  
        \end{tabular}}
        \caption{Decentralized optimization $\min_{x} \in \mathbb{R}^d\sum_{i}f_i(x)$: (a) \# iterations to $10^{-3}$ accuracy vs. $m$; MP-Jacobi vs. a variety of decentralized algorithms. (b) dumbbell graph   and graph partition (in red).}
        \label{fig:scale_m_dopt}\vspace{-0.6cm}
    \end{center}
\end{figure*}

\vspace{-0.4cm}
\subsection{Convex programs on hypergraphs} \label{sec:sim-hypergraps}
\vspace{-0.2cm}
We report three experiments that highlight the behavior of H-MP-Jacobi and its surrogate variant H-MP-Jacobi-S solving strongly convex quadratic programs over the following  hypergraphs:    \textbf{(i)}   hyper rings (no splitting); 
\textbf{(ii)}  loopy hypergraphs (two hyperedge splitting strategies); and 
\textbf{(iii)} ATC-induced hypergraph (splitting). 

 \noindent  \textbf{(i) Hyper rings (no splitting):}
Consider a strongly convex quadratic program on a \emph{hyper} {\it ring}, where adjacent hyperedges overlap on (almost) one nod--see Fig.~\ref{fig:hyperring}(c). We test Algorithm~\ref{alg:main_hyper} (H-MP-Jacobi) and its surrogate variant (H-MP-Jacobi-S). In H-MP-Jacobi-S, we apply a proximal-linear surrogate to the factor-to-variable message update~\eqref{eq:hyper_jump_factor2var}, using the diagonal of the Hessian of the exact message, while keeping the variable update~\eqref{eq:hyper_jump_updates:a} exact. As benchmark, we use the centralized  GD (albeit not implementable on the hypergraph), as   principled GD-type methods tailored to hypergraph couplings are not available. For clustering, we select one long path cluster containing $|\cE|-2$ hyperedges and all nodes incident to them, while assigning every remaining node to a singleton cluster--see Fig.~\ref{fig:hyperring}(c).    Fig.~\ref{fig:hyperring}(a)-(b) reports the results. Both H-MP-Jacobi and H-MP-Jacobi-S converge markedly faster than GD, with the advantage increasing as $|\cE|$ grows. Moreover, H-MP-Jacobi-S closely tracks H-MP-Jacobi, indicating that the surrogate preserves most of the performance while reducing per-iteration cost.

\begin{figure*}[ht]
		 \vspace{-0.3cm}
		\begin{center}
			\setlength{\tabcolsep}{0.0pt}  
			\scalebox{1}{\begin{tabular}{ccc}
					\includegraphics[width=0.33\linewidth]{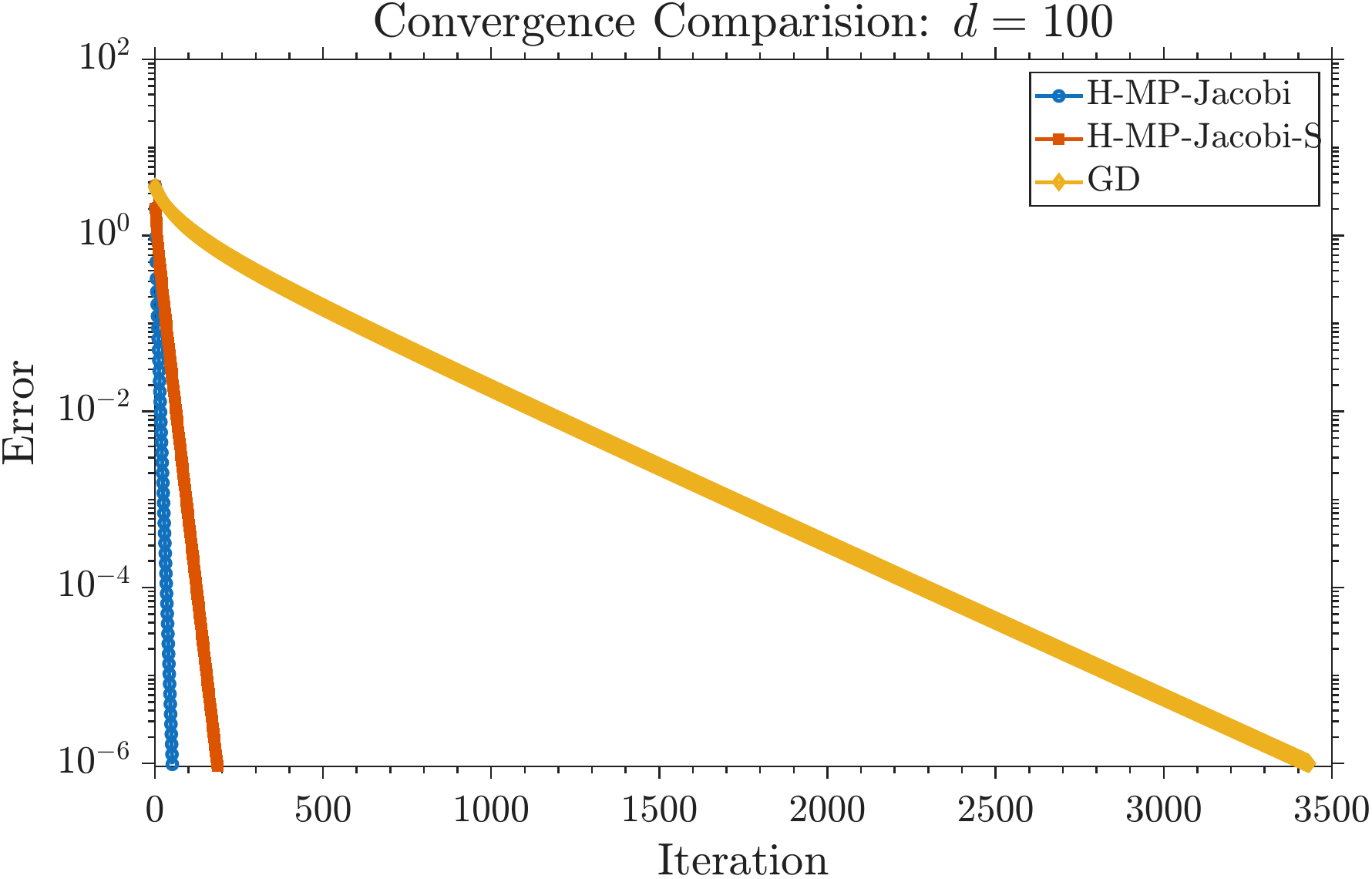}&
					\includegraphics[width=0.33\linewidth]{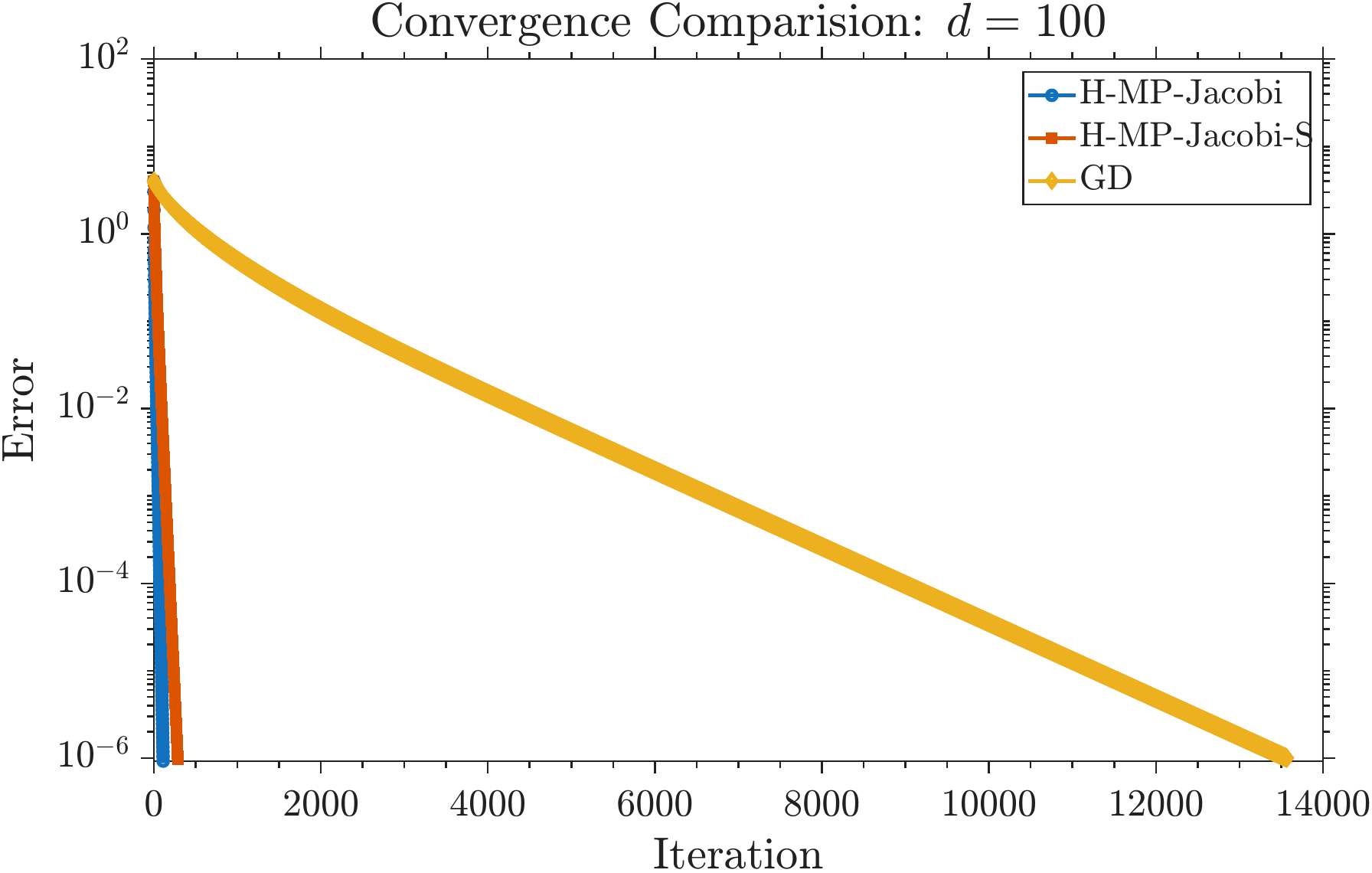}&
					\includegraphics[width=0.33\linewidth]{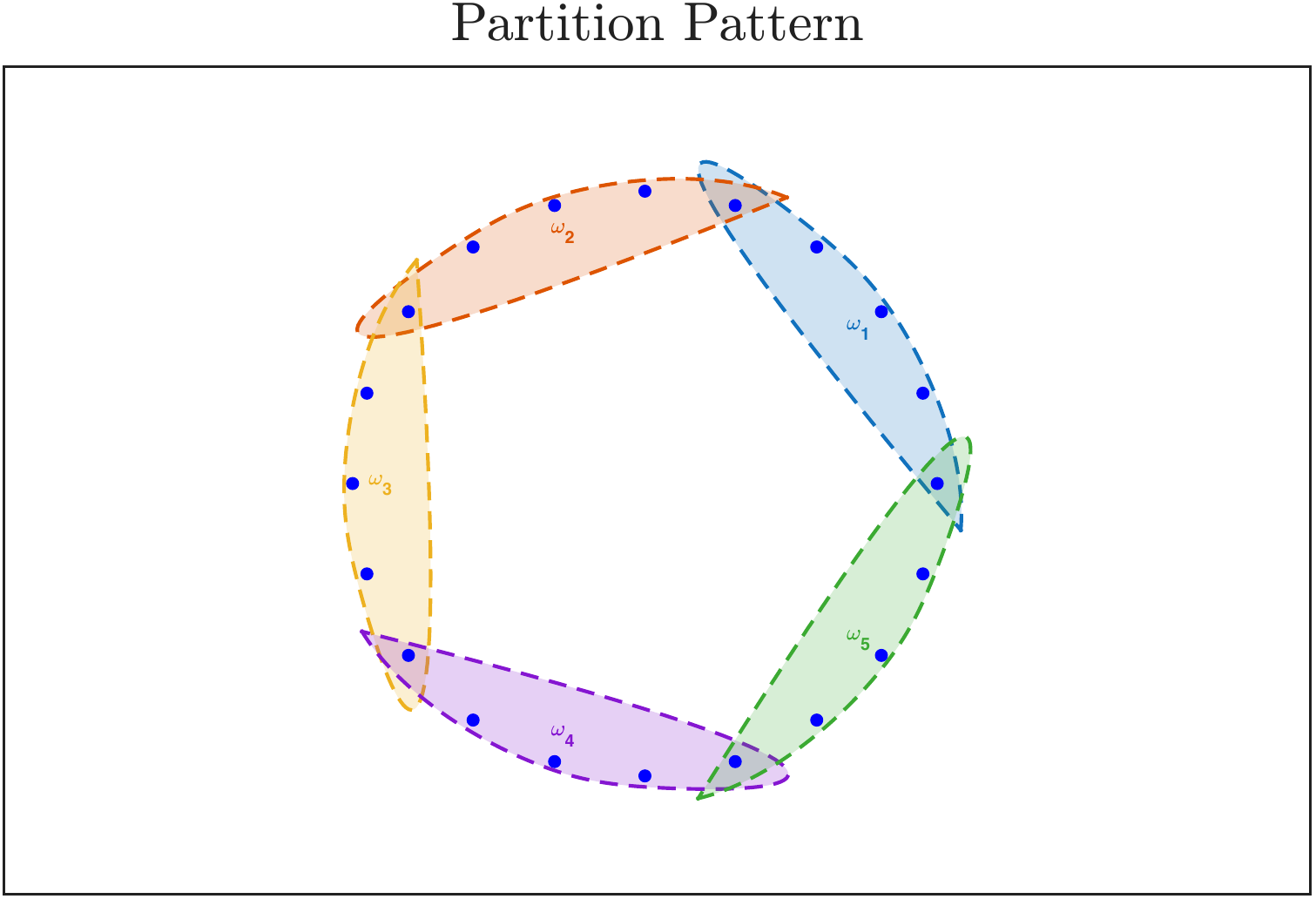}\\
					\multicolumn{1}{c}{\footnotesize{(a) $|\cE|=5|,|\omega|=5$}} &  \multicolumn{1}{c}{\footnotesize{(b) $|\cE|=20|,|\omega|=10$}}&
					\multicolumn{1}{c}{\footnotesize{(c) Hypergraph pattern}}                 
			\end{tabular}}
		\end{center}\vspace{-0.2cm}
        \caption{Strongly convex QP over a hyperring:   H-MP-Jacobi and H-MP-Jacobi-S.} 
        \label{fig:hyperring}
		\vspace{-0.4cm}
	\end{figure*}

\noindent \textbf{(ii)   Loopy hypergraphs (two hyperedge splitting strategies).} We next consider a toy quadratic program on a loopy hypergraph with $\cV=\{1,2,3,4\}$ and $\cE=\{\{1,2,3\},\{2,3,4\}\}$--see Fig.~\ref{fig:hyper_splitting_toy}. Since the factor graph contains a short cycle, we apply the surrogate hyperedge-splitting strategy to restore an acyclic intra-cluster factor structure.  We instantiate two splitting surrogates: the pairwise split~\eqref{example:pair_split} and the singleton split~\eqref{example:sing_splt}, denoted {\it H-MP-Jacobi-S1} and {\it H-MP-Jacobi-S2}, respectively (with stepsizes tuned for each method).  Fig.~\ref{fig:hyper_splitting_toy} shows that H-MP-Jacobi-S2 converges faster than H-MP-Jacobi-S1. This is consistent with the fact that the singleton split introduces fewer frozen coordinates (fewer reference points), thereby retaining more of the original coupling structure; the pairwise split is more aggressive and incurs a larger approximation error. The GD baseline is consistently, significantly slower.

\noindent\textbf{(iii)  ATC-induced hypergraph (splitting).} Finally, we test a QP induced by the DGD-ATC model~\eqref{eq:ATC}. Let $W$ be the gossip matrix over the graph with $\cV=[8]$ and edges $\{\{1,2\},\{1,3\},\{2,3\},\{3,4\},\{4,5\},\{5,6\},\{6,7\},\{6,8\},\{7,8\}\}$, and define hyperedges $\omega_i:=\{j:\,[W^2]_{ij}\neq 0\}$--see Fig.~\ref{fig:hyper_splitting_ATC}(c).  We compare two partitions/surrogates. For H-MP-Jacobi-S1, we use two clusters $\cC_1=\{1,2,3,4\}$ and $\cC_2=\{5,6,7,8\}$, with intra-cluster hyperedges $\omega_1=\{1,2,3,4\}$ and $\omega_2=\{5,6,7,8\}$. For H-MP-Jacobi-S2, we use $\cC_1=\{1,2,3,4,5\}$ and split the associated coupling into $\widetilde\omega_1^1=\{1,2,3\}$ and $\widetilde\omega_1^2=\{3,4,5\}$, while placing all remaining nodes in singleton clusters. 
The results in Fig.~\ref{fig:hyper_splitting_ATC} show that both surrogate designs perform similarly and significantly outperform DGD-ATC (GD). Moreover, H-MP-Jacobi-S1 is slightly faster than H-MP-Jacobi-S2, consistent with the fact that S1 preserves more coupling information.

\begin{figure*}[t!]
    \begin{center}
        \setlength{\tabcolsep}{2pt}  
        \begin{tabular}{@{}c@{\hspace{0.00\linewidth}}c@{\hspace{0.00\linewidth}}c@{}}
            \raisebox{-0.5\height}{\includegraphics[width=0.32\linewidth]{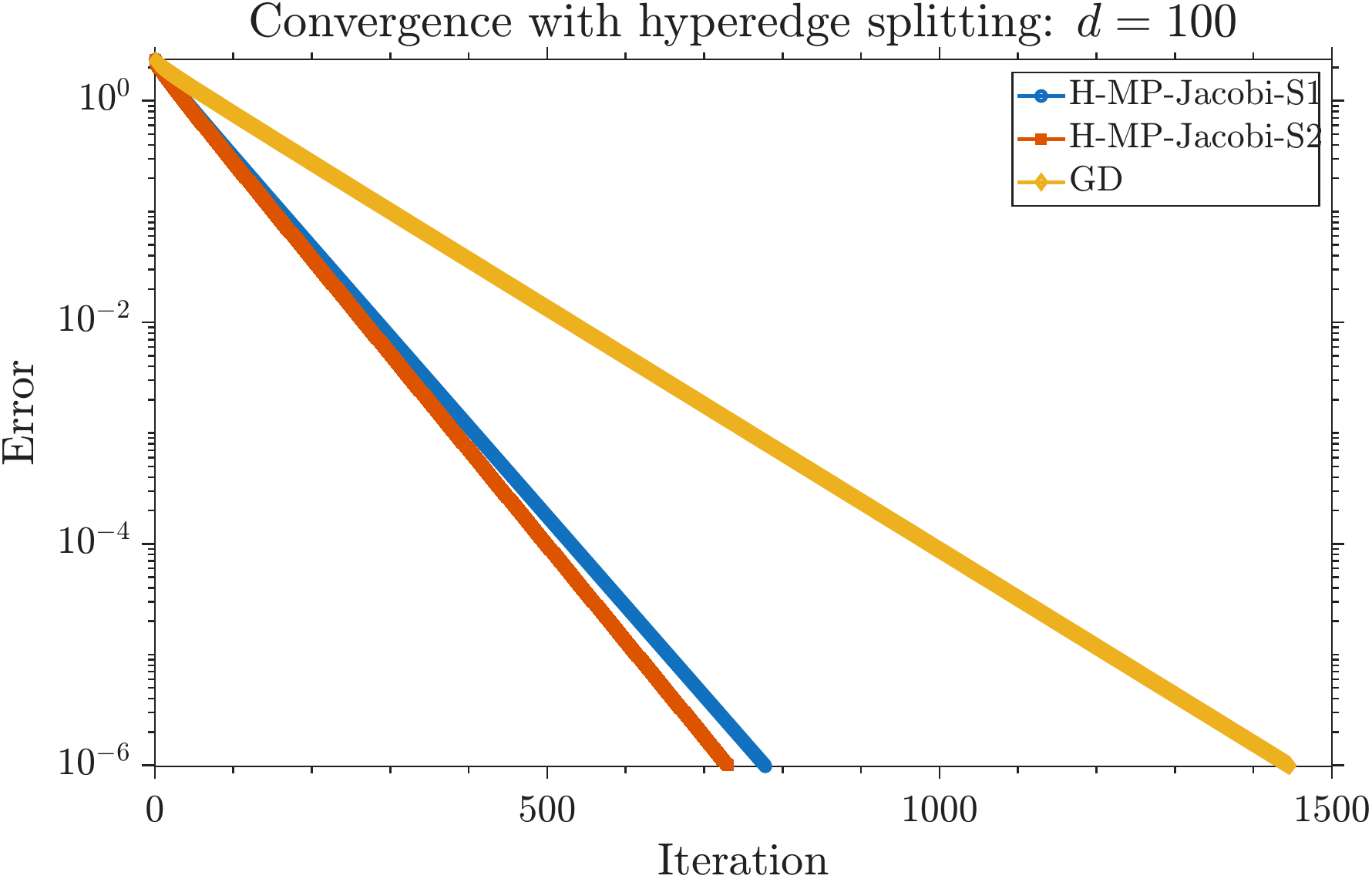}}&
            \raisebox{-0.5\height}{\includegraphics[width=0.32\linewidth]{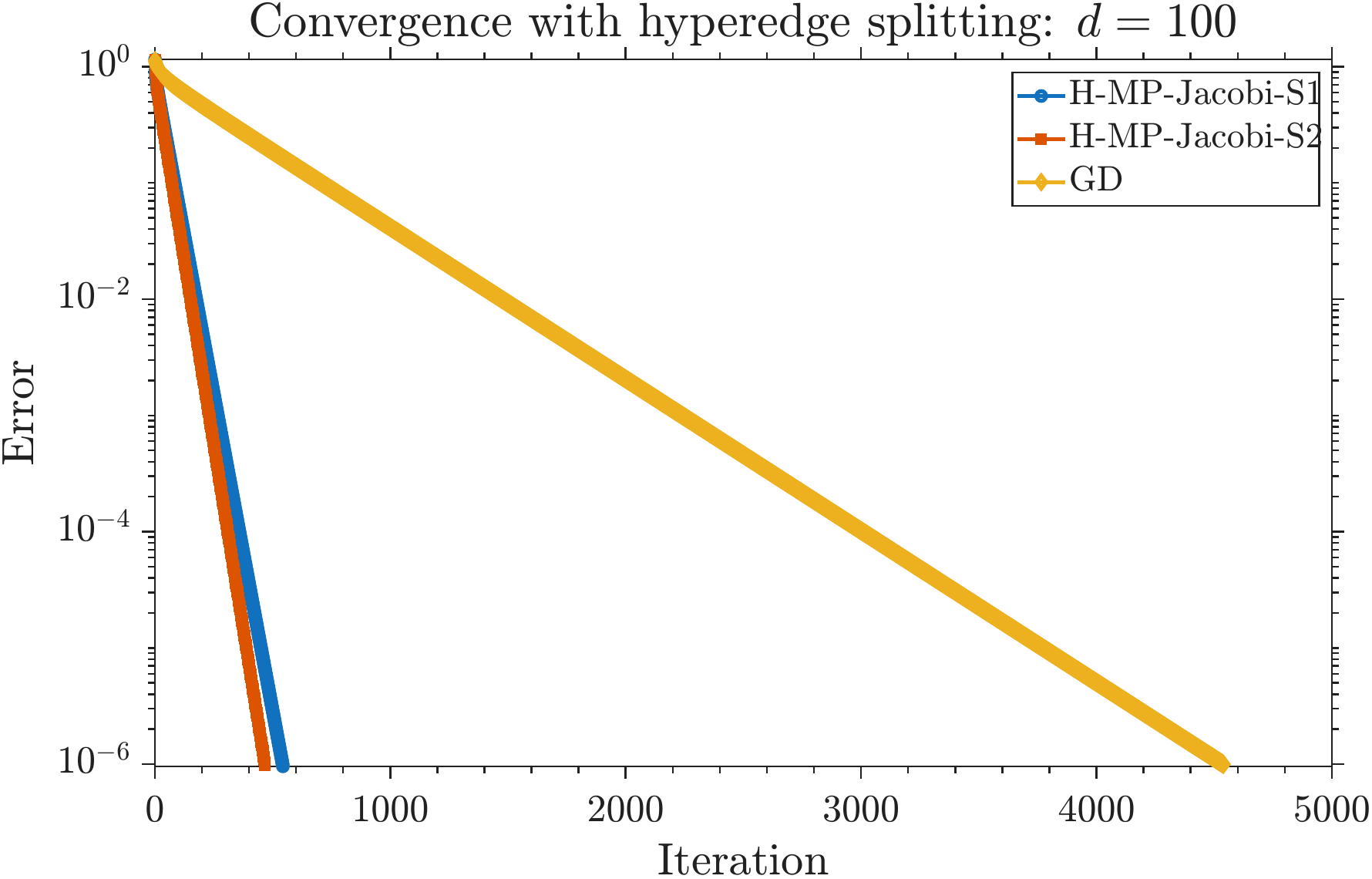}}&
            \raisebox{-0.45\height}{\includegraphics[width=0.35\linewidth]{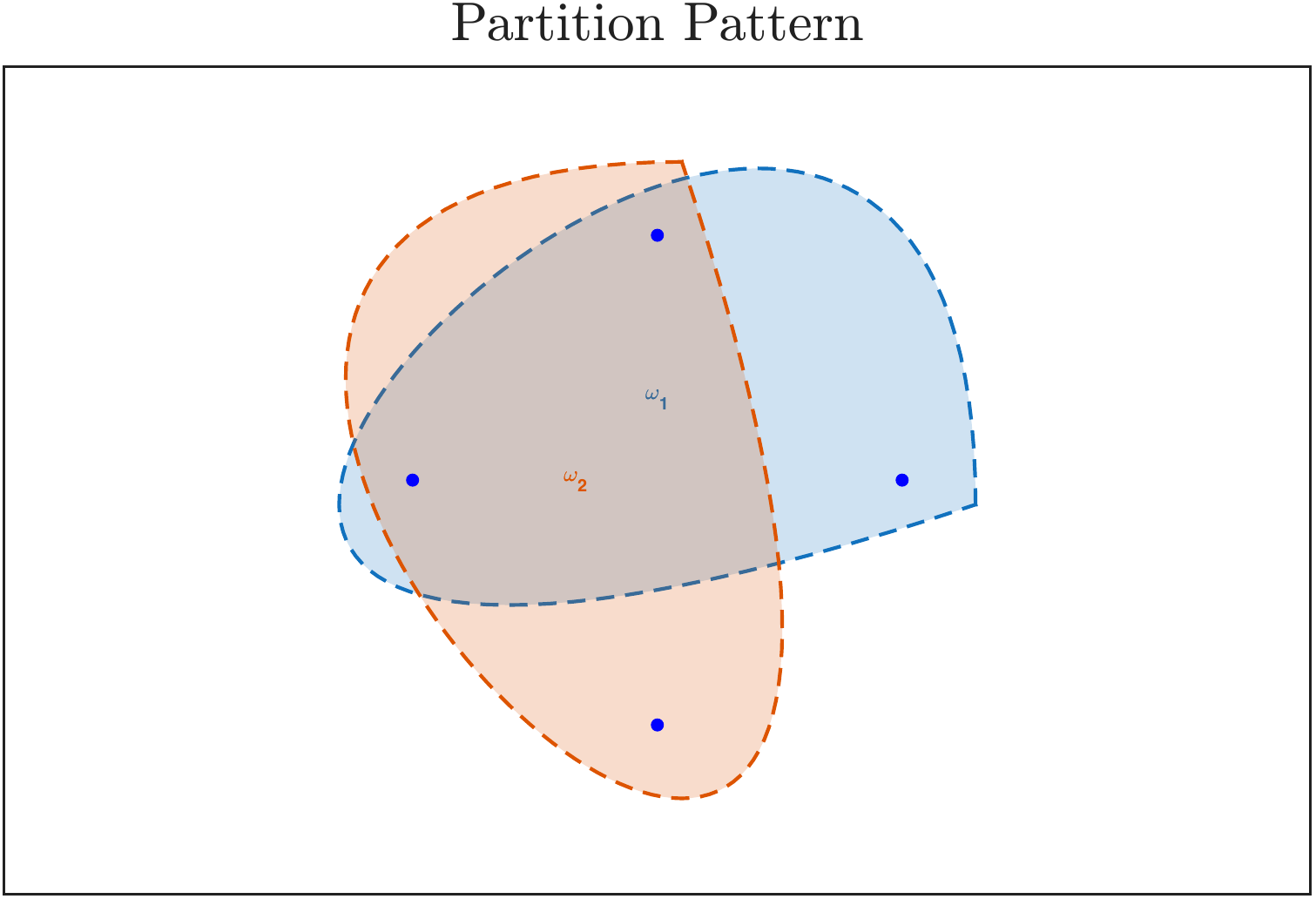}}\\[2pt]
            \footnotesize{(a) $\kappa=100$} &  
            \footnotesize{(b) $\kappa=500$}&
            \footnotesize{(c) Hypergraph pattern}                 
        \end{tabular}
    \end{center}\vspace{-0.2cm}
    \caption{Strongly convex QP: two examples of   surrogate hyperedge-splitting strategies to restore an acyclic intra-cluster factor structure.} 
\label{fig:hyper_splitting_toy}\vspace{-0.3cm}
\end{figure*}

    \begin{figure*}[ht]\vspace{-0.3cm}
		\begin{center}
			\setlength{\tabcolsep}{0.0pt}  
			\scalebox{1}{\begin{tabular}{ccc}       
            \raisebox{-0.5\height}
					{\includegraphics[width=0.33\linewidth]{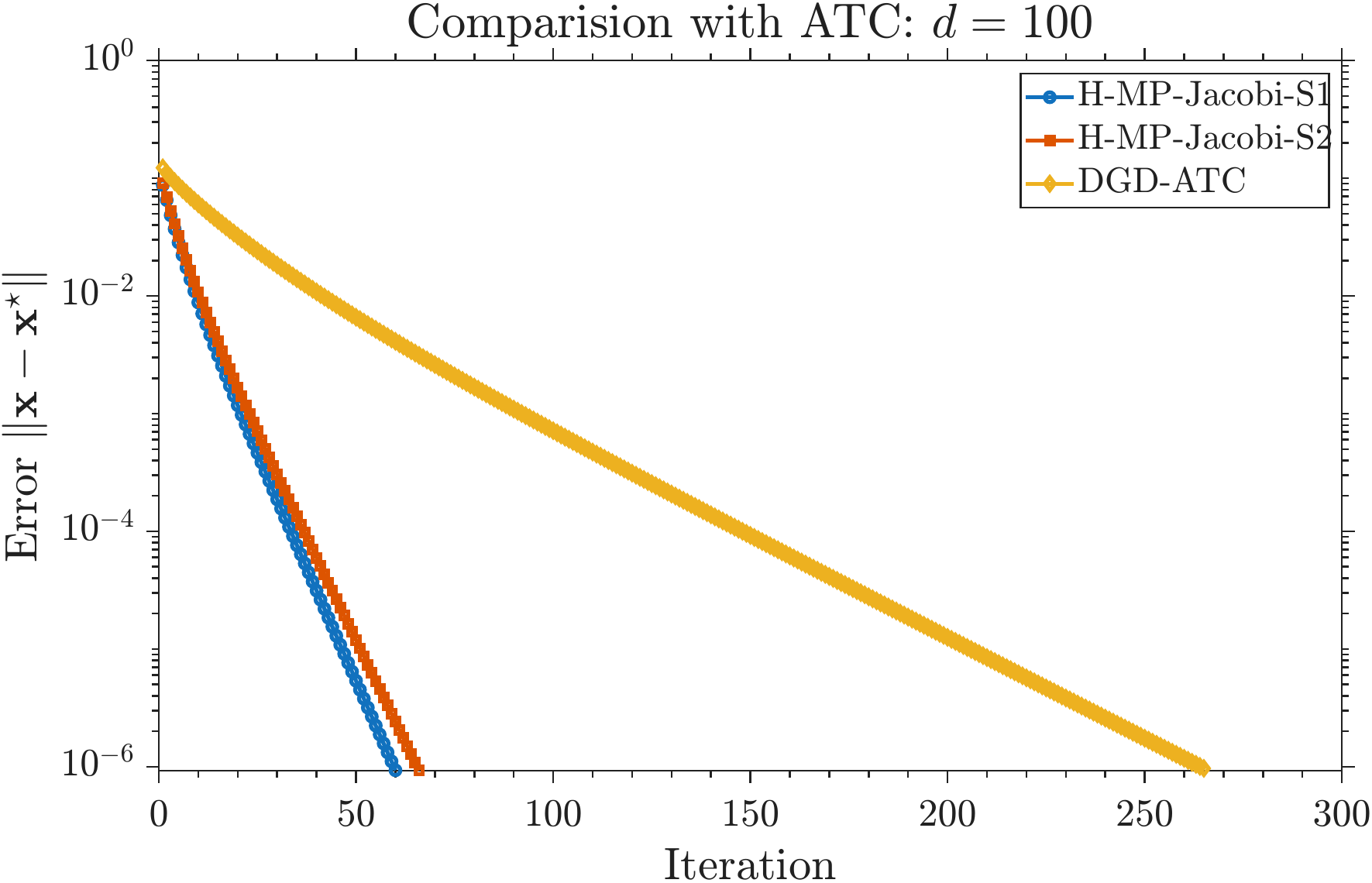}}&
                    \raisebox{-0.5\height}
					{\includegraphics[width=0.33\linewidth]{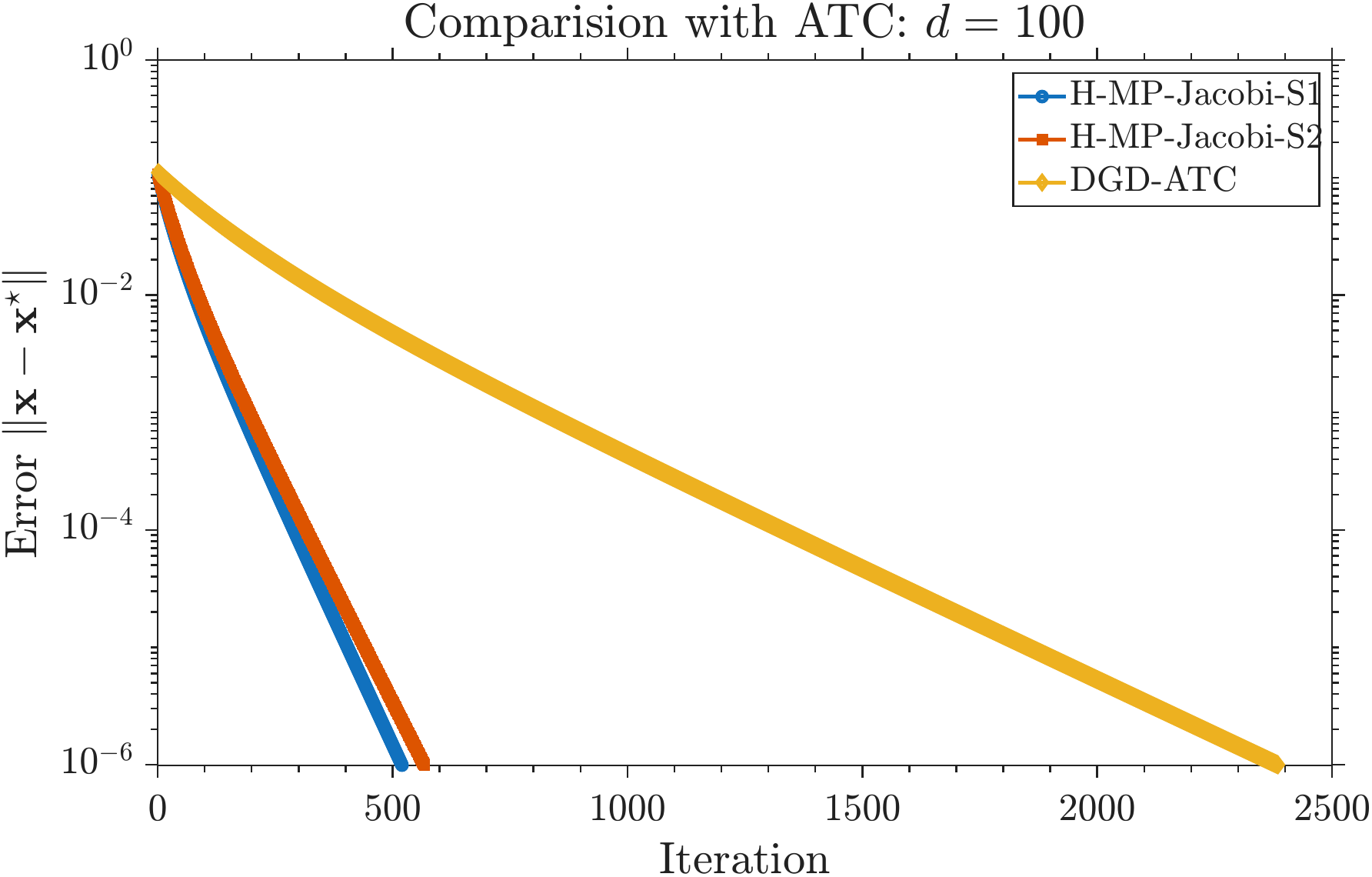}}&
                    \raisebox{-0.45\height}
					{\includegraphics[width=0.33\linewidth]{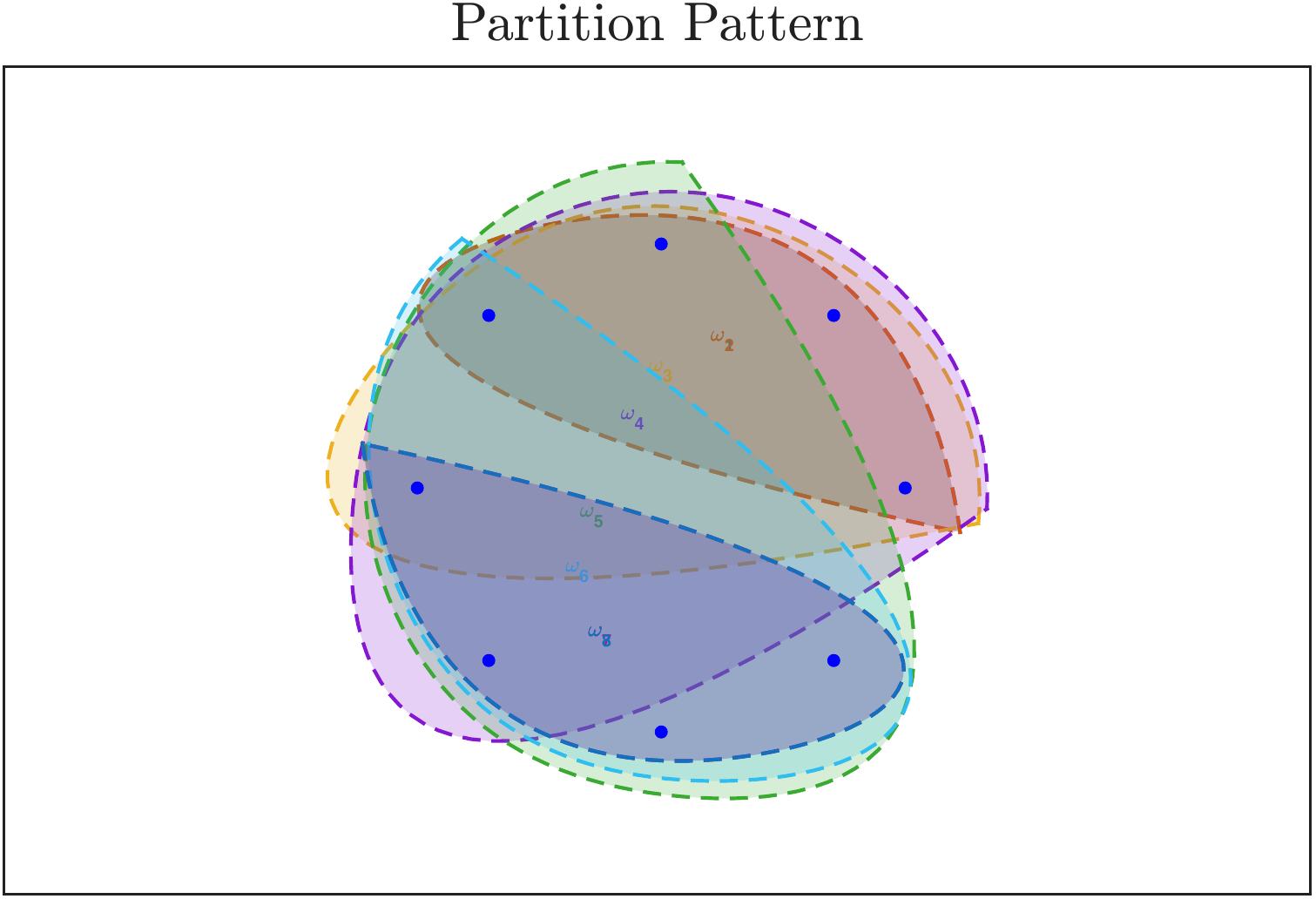}}\\
					\multicolumn{1}{c}{\footnotesize{(a) $\gamma=10^{-3}$}} &  \multicolumn{1}{c}{\footnotesize{(b) $\gamma=10^{-4}$}}&
					\multicolumn{1}{c}{\footnotesize{(c) Hypergraph pattern}}                 
			\end{tabular}}
		\end{center}\vspace{-0.2cm}
        \caption{DGD-ATC problem \eqref{eq:ATC}. Comparison of message passing-gossip algorithm and ATC over a hypergraph induced by ATC. $d=100$.} \label{fig:hyper_splitting_ATC}
		 \vspace{-0.4cm}
	\end{figure*}

\vspace{-0.7cm}

\section{Conclusions}\vspace{-0.2cm}
In this work, we introduced min-sum \emph{message passing} as an algorithmic primitive for decentralized optimization with localized couplings. Building on a fixed-point decomposition induced by a tree partition and its condensed graph, we proposed \emph{MP-Jacobi}: at each round, every agent performs a (damped) Jacobi update using the current intra-cluster messages and the current inter-cluster neighbors’ variables, and then updates one-hop pairwise messages along the intra-cluster tree edges. We establish convergence for (strongly) convex and nonconvex objectives,
with topology- and partition-explicit rates that quantify curvature/coupling
effects and guide clustering and scalability. To reduce computation and communication, we further developed structure-preserving surrogate algorithm and establish the analogous linear convergence for the strongly convex problem. Moreover, we extended the framework from graphs to hypergraphs; for dense  hypergraphs with heavy overlaps, we proposed a hyperedge-splitting strategy that enables the algorithm to remain convergent. Extensive numerical experiments on both graph and hypergraph instances corroborate the theory, demonstrate substantial efficiency gains over decentralized gradient-type baselines, and highlight the potential of MP-Jacobi as a scalable primitive for distributed optimization.

\vspace{-0.4cm}
\appendix 
\section*{Appendix} \vspace{-0.2cm}
\section{Proof of Proposition~\ref{prop:Jacobi_delay_refo}}\label{app:proof_prop_Jacobi-equivalence}
 
\textbf{(a) Proof of \eqref{eq:delay_reformulation}.} 
Fix a cluster $\cC_r$ and a root $i\in\cC_r$. View the tree $\cG_r=(\cC_r,\cE_r)$ as oriented towards $i$. For every $j\in\cC_r\setminus\{i\}$,
let $\mathtt{pa}(j)$ be the parent of $j$ on the path to $i$, let
$\mathtt{sub}(j)$ be the node set of the subtree rooted at $j$, and let
$d(i,j)$ be the hop distance from $i$ to $j$. Recall the reindexed recursion
\begin{equation}
\label{eq:msg-reindexed-proof}
\mu_{j\to i}^{\nu}(x_i)=\min_{x_j}\Big\{\phi_j(x_j)+\psi_{ij}(x_i,x_j)+\!\!\sum_{k\in\cNin_j\setminus\{i\}}\!\!\mu_{k\to j}^{\nu-1}(x_j)+\!\!\sum_{k\in\cNout_j}\!\!\psi_{jk}(x_j,x_k^{\nu-1})\Big\}.
\end{equation}\\
\noindent $\bullet$ \textbf{Step 1 (subtree representation)}: Given $j\in\cC_r\setminus\{i\}$,
we prove by induction on the depth $d(i,j)$, that the message from $j$ to
its parent at the appropriate iteration, $\mu_{j\to \mathtt{pa}(j)}^{\nu-d(i,j)+1}\bigl(x_{\mathtt{pa}(j)}\bigr)$,  can be written  as  the  optimal value of a  local subtree problem, that is   
\begin{equation}
\label{eq:subtree-form}
\mu_{j\to \mathtt{pa}(j)}^{\,\nu-d(i,j)+1}\bigl(x_{\mathtt{pa}(j)}\bigr)
=\min_{x_{\mathtt{sub}(j)}}\qty{
\begin{aligned}
&\sum_{\ell\in\mathtt{sub}(j)}\phi_\ell(x_\ell)
+\psi_{\mathtt{pa}(j),j}(x_{\mathtt{pa}(j)},x_j)
\\
&+\sum_{\substack{(\ell,m)\in\cE_r\\ \ell,m\in\mathtt{sub}(j)}}\psi_{\ell m}(x_\ell,x_m)+\sum_{\substack{\ell\in\mathtt{sub}(j)\\k\in\cNout_\ell}}\psi_{\ell k}\big(x_\ell,x_k^{\,\nu-d(i,\ell)}\big)
\end{aligned}
}.
\end{equation}

\emph{Base case.} If $j$ is a leaf, then $\mathtt{sub}(j)=\{j\}$ and $\cNin_j\setminus\{\mathtt{pa}(j)\}=\emptyset$. Evaluating \eqref{eq:msg-reindexed-proof} at iteration $\nu-d(i,j)+1$ reads
\[
\mu_{j\to \mathtt{pa}(j)}^{\,\nu-d(i,j)+1}(x_{\mathtt{pa}(j)})
=\min_{x_j}\Big\{\phi_j(x_j)+\psi_{\mathtt{pa}(j),j}(x_{\mathtt{pa}(j)},x_j)
+\sum_{k\in\cNout_j}\psi_{jk}\!\big(x_j,x_k^{\,\nu-d(i,j)}\big)\Big\},
\]
which is exactly \eqref{eq:subtree-form} when $\mathtt{sub}(j)=\{j\}$ (no intra-subtree edges).

 \emph{Inductive step.} Let $j$ be any internal node of $\mathcal G_r$.   Apply
\eqref{eq:msg-reindexed-proof} with $i=\mathtt{pa}(j)$ and
$\nu$ replaced by $\nu-d(i,j)+1$:
\begin{align}
&\mu_{j\to \mathtt{pa}(j)}^{\,\nu-d(i,j)+1}(x_{\mathtt{pa}(j)})=\nonumber\\
&=\min_{x_j}\Big\{\phi_j(x_j)+\psi_{\mathtt{pa}(j),j}(x_{\mathtt{pa}(j)},x_j)
+\!\!\!\sum_{h:\ \mathtt{pa}(h)=j}\mu_{h\to j}^{\,\nu-d(i,j)}(x_j)
+\!\!\!\sum_{k\in\cNout_j}\psi_{jk}\big(x_j,x_k^{\nu-d(i,j)}\big)\Big\}.\label{eq:internal-j-expansion}
\end{align}
Every child $h$ of $j$ satisfies $d(i,h)=d(i,j)+1$, hence
$
\mu_{h\to j}^{\,\nu-d(i,j)}(x_j)
=\mu_{h\to j}^{\,\nu-d(i,h)+1}(x_j).
$
Applying the induction  hypothesis \eqref{eq:subtree-form} 
to the child $h$ of $j$ (i.e., replace therein 
$j$   by $h$ and $\mathtt{pa}(h)=j$)  yields \begin{align}\label{eq:subtree-form-child}
&\mu_{h\to j}^{\,\nu-d(i,h)+1}(x_j)
=\nonumber\\&=\min_{x_{\mathtt{sub}(h)}}\qty{
\begin{aligned}
&\sum_{\ell\in\mathtt{sub}(h)}\phi_\ell(x_\ell)+\psi_{j,h}(x_j,x_h)+\sum_{\substack{(\ell,m)\in\cE_r,\\ \ell,m\in\mathtt{sub}(h)}}\psi_{\ell m}(x_\ell,x_m)\\
&+\sum_{\substack{\ell\in\mathtt{sub}(h),k\in\cNout_\ell}}\psi_{\ell k}\big(x_\ell,x_k^{\,\nu-d(i,\ell)}\big)
\end{aligned}
}.
\end{align}

Substituting \eqref{eq:subtree-form-child}  into \eqref{eq:internal-j-expansion} and merging the minimizations over the
pairwise-{\it disjoint} subtrees $\{\mathtt{sub}(h)\}_{h:\,\mathtt{pa}(h)=j}$ (together with $x_j$)
yields  
\begin{align*}
&\mu_{j\to \mathrm{pa}(j)}^{\,\nu-d(i,j)+1}(x_{\mathrm{pa}(j)})
=\nonumber\\
&\min_{x_j,\ \{x_{\mathrm{Sub}(h)}\}_{h:\,\mathrm{pa}(h)=j}}
\qty{
\begin{aligned}
&\phi_j(x_j)+\psi_{\mathtt{pa}(j),j}(x_{\mathtt{pa}(j)},x_j)
+\sum_{h:\,\mathtt{pa}(h)=j}\psi_{j,h}(x_j,x_h)\\
&+\sum_{h:\,\mathtt{pa}(h)=j}\ \sum_{\ell\in\mathtt{sub}(h)}\phi_\ell(x_\ell)
+\sum_{h:\,\mathtt{pa}(h)=j}\!\!\sum_{\substack{(\ell,m)\in\cE_r\\ \ell,m\in\mathtt{sub}(h)}}\!\!\psi_{\ell m}(x_\ell,x_m)\\
&+\sum_{h:\,\mathtt{pa}(h)=j} \sum_{\substack{\ell\in\mathtt{sub}(h)\\k\in\cNout_\ell}}\psi_{\ell k}\!\big(x_\ell,x_k^{\,\nu-d(i,\ell)}\big)
+\sum_{k\in\cNout_j}\psi_{jk}\!\big(x_j,x_k^{\,\nu-d(i,j)}\big)
\end{aligned}
}.
\end{align*}
Now note that $\mathtt{sub}(j)=\{j\}\cup\bigcup_{h:\,\mathtt{pa}(h)=j}\mathtt{sub}(h)$ is a disjoint union, and there are no intra-cluster edges across different child subtrees other than the edges   $(j,h)$. Regrouping  the sums over $\{j\}$ and $\mathtt{sub}(h)$ pieces, we we recover exactly the structure in \eqref{eq:subtree-form} for the subtree rooted at $j$. 
 This completes the inductive step and proves the subtree representation for all $j\neq i$.\\
\noindent $\bullet$ \textbf{Step 2 (sum of messages at the root):} 
The primal subproblem reads
\[
\hat x_i^{\nu+1}\in\argmin_{x_i}\Big\{\phi_i(x_i)+\sum_{j\in\cNin_i}\mu_{j\to i}^{\nu}(x_i)+\sum_{k\in\cNout_i}\psi_{ik}(x_i,x_k^{\nu})\Big\}.
\]
For each neighbor  $j\in\cNin_i$, we have $d(i,j)=1$ and $\mathtt{pa}(j)=i$. Therefore,   \eqref{eq:subtree-form} (with depth $d(i,j)=1$) gives   $\mu_{j\to i}^{\nu}(x_i)$ equal to the   optimal cost value of the subproblem  associated with the subtree $\mathtt{sub}(j)$ w.r.t.   $x_{\mathtt{sub}(j)}$.  
The subtrees $\{\mathtt{sub}(j):j\in\cNin_i\}$ are pairwise disjoint and their union is $\cC_r\setminus\{i\}$. Summing the objectives and merging the minimizations yields
\[
\sum_{j\in\cNin_i}\mu_{j\to i}^{\nu}(x_i)
=\min_{x_{\cC_r\setminus\{i\}}}\qty{
\begin{aligned}
&\;\sum_{j\in\cC_r\setminus\{i\}}\phi_j(x_j)
+\sum_{(j,k)\in\cE_r}\psi_{jk}(x_j,x_k)\\
&\;+\sum_{\substack{j\in\cC_r\setminus\{i\}, k\in\cNout_j}}\!\!\psi_{jk}\!\big(x_j,x_k^{\,\nu-d(i,j)}\big)
+\sum_{j\in\cNin_i}\psi_{ij}(x_i,x_j)
\end{aligned}
}.
\]
Adding the terms $\phi_i(x_i)$ and $\sum_{k\in\cNout_i}\psi_{ik}(x_i,x_k^{\nu})$  completes the joint intra-cluster cost and all boundary terms, now including the contribution $j=i$ (for which $d(i,i)=0$). Therefore
\[
\hat x_i^{\,\nu+1}\in\argmin_{x_i}\min_{x_{\cC_r\setminus\{i\}}}
\Bigg\{
\sum_{j\in\cC_r}\phi_j(x_j)
+\sum_{(j,k)\in\cE_r}\psi_{jk}(x_j,x_k)
+\sum_{\substack{j\in\cC_r,\,k\in\cNout_j}}\psi_{jk}\!\big(x_j,\,x_k^{\,\nu-d(i,j)}\big)
\Bigg\},
\]
which is exactly \eqref{eq:delay_reformulation}. 

\medskip
\noindent\textbf{(b) Proof of \eqref{eq:coordinate_min_delay}.} By maximality~\eqref{eq:nonoverlap}, every edge with both endpoints in
$\cC_r$ belongs to $\cE_r$, so for any fixed external block $z_{\overline{\cC}_r}$
the dependence of $\Phi(x_{\cC_r},z_{\overline{\cC}_r})$ on $x_{\cC_r}$ is
exactly through
\[
\sum_{j\in\cC_r}\phi_j(x_j)
+\sum_{(j,k)\in\cE_r}\psi_{jk}(x_j,x_k)
+\sum_{\substack{j\in\cC_r,\,k\in\overline{\cC}_r}}\psi_{jk}(x_j,z_k),
\]
all remaining terms being independent of $x_{\cC_r}$. Moreover, ~\eqref{eq:nonoverlap2} implies that each external node
$k\in\overline{\cC}_r$ is attached to a unique leaf $j_k\in\cB_r$, so the
boundary term can be rewritten as
\[
\sum_{\substack{j\in\cC_r,\,k\in\cNout_j}}
\psi_{jk}\bigl(x_j,x_k^{\,\nu-d(i,j)}\bigr)
=
\sum_{k\in\overline{\cC}_r}
\psi_{j_k k}\bigl(x_{j_k},x_k^{\,\nu-d(i,j_k)}\bigr),
\]
which depends on the delays only through the vector
$\bd_i := (d(i,j_k))_{k\in\overline{\cC}_r}$ and the block
$x_{\overline{\cC}_r}^{\,\nu-\bd_i}:=\bigl(x_k^{\,\nu-d(i,j_k)}\bigr)_{k\in\overline{\cC}_r}$. Combining the two observations above, we obtain, for some constant
$c_i^\nu$ independent of $x_{\cC_r}$,
\[
\sum_{j\in\cC_r}\phi_j(x_j)
+\sum_{(j,k)\in\cE_r}\psi_{jk}(x_j,x_k)
+\sum_{\substack{j\in\cC_r,\,k\in\cNout_j}}
\psi_{jk}\bigl(x_j,x_k^{\,\nu-d(i,j)}\bigr)
=
\Phi\bigl(x_{\cC_r},x_{\overline{\cC}_r}^{\,\nu-\bd_i}\bigr)-c_i^\nu.
\]
Thus, \eqref{eq:delay_reformulation} is equivalent to  \eqref{eq:coordinate_min_delay}, which completes the proof. \qed
\vspace{-0.3cm}

\section{Proof of Theorem \ref{thm:convergence_scvx_surrogate_uniform}}\label{proof_th_surrogate}
We prove  the theorem in two steps: (i) we cast Algorithm~\ref{alg:main1_surrogate} as a damped surrogate block-Jacobi method with bounded delay; and then  (ii)   establish its convergence.  
\paragraph{\it $\bullet$ Step 1: Algorithm \ref{alg:main1_surrogate} as a surrogate block-Jacobi method with   delays.}

Fix a cluster $\cC_r$ and a root $i\in\cC_r$. We view the tree $\cG_r=(\cC_r,\cE_r)$ as oriented towards $i$ (suppress the index $i$ in the notation), and write $(j,k)$ for the directed edge pointing to $i$ (i.e., $d(i,k)<d(i,j)$). 
Following similar steps as in  Section \ref{subsec:Step1}, we eliminate $\widetilde\mu_{k\to i}^\nu$ by reapplying \eqref{message_update_surrogate} and repeating the substitution recursively along the branches of the tree $\cG_r$. We obtain the following. 

\begin{proposition}
\label{prop:equiv_refo_surrogate}
Under Assumptions~\ref{asm:on_the_partion} and~\ref{asm:graph}, Algorithm~\ref{alg:main1_surrogate} can be rewritten in the equivalent form: for any $i\in\cC_r$ and $r\in [p]$,  
\begin{subequations}
    \begin{align}
 &  x_i^{\nu+1} =x_i^{\nu}+\tau_r^\nu \big(\hat x_i^{\nu+1}-x_i^{\nu}\big),\label{eq:delay_reformulation-cvx-comb_sur}\\
   & \hat x_i^{\nu+1}\in\argmin_{x_i}\min_{x_{\cC_r\setminus\{i\}}}\qty{
\begin{aligned}
&\sum_{j\in\cC_r}\widetilde\phi_j\qty(x_j;x_j^{\nu-d(i,j)})+\sum_{(j,k)\in\cE_r}\widetilde\psi_{jk}\qty(x_j,x_k;x_j^{\nu-d(i,j)},x_k^{\nu-d(i,j)})\\
&+\sum_{\substack{j\in\cC_r,k\in\cNout_j}}\widetilde\psi_{jk}\qty(x_j,x_k^{\nu-d(i,j)};x_j^{\nu-d(i,j)},x_k^{\nu-d(i,j)})
\end{aligned}\label{eq:reformulation_surrogate}
}.
\end{align}\end{subequations}
If, in addition, Assumption~\ref{asm:nonverlap} holds,  \eqref{eq:reformulation_surrogate}  reduces to the following  block-Jacobi update with delays:
\begin{equation}
\label{eq:surrogate_subprob_compact_fix}
\hat x_i^{\nu+1}\in
\argmin_{x_i}\;\min_{x_{\cC_r\setminus\{i\}}}
\ \widetilde\Phi_r\!\Big(\big(x_{\cC_r},\,x_{\overline{\cC}_r}^{\,\nu-\bdelta_i}\big)\,;\,
\big(x_{\cC_r}^{\,\nu-\bd_i},\,x_{\cE_r}^{\,\nu-\bD_i},\,x_{\overline{\cC}_r}^{\,\nu-\bdelta_i}\big)\Big),
\end{equation}
where $\bd_i := (d(i,j))_{j\in\cC_r}$ and 
$\bdelta_i := (\delta_{i,k})_{k\in\overline{\cC}_r}$ with
\[
\delta_{i,k} :=
\begin{cases}
d\big(i,\pi_r(k)\big), & k\in\cN_{\cC_r},\\
0, & k\notin\cN_{\cC_r},
\end{cases}
\]
and $\pi_r(k)\in\cC_r$ denotes the unique internal neighbor of $k$ guaranteed by~\eqref{eq:nonoverlap2}.
The edge-stacked delay vector is  $\bD_i := \big(d(i,j);\,d(i,k)\big)_{(j,k)\in\cE_r}$.  It holds $\max\big\{\|\bd_i\|_\infty,\ $ $\|\bdelta_i\|_\infty,\ \|\bD_i\|_\infty\big\}
\ \le\ D_r$, for all $i\in\cC_r$ and $r\in [p]$.
\end{proposition}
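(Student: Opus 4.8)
The plan is to follow the same two-step argument used for Proposition~\ref{prop:Jacobi_delay_refo}, now carrying the surrogate models in place of the exact terms and, crucially, propagating the reference points through the tree recursion. As in \eqref{eq:msg-reindexed}, I would first reindex the surrogate message update \eqref{message_update_surrogate}: since a message carrying superscript $\nu$ is produced one round earlier using iteration-$(\nu-1)$ data, for a child $j$ with parent $k$ in the tree $\cG_r$ oriented towards the root $i$ one has
\[
\widetilde\mu_{j\to k}^{\nu}(x_k)=\min_{x_j}\Big\{\widetilde\phi_j(x_j;x_j^{\nu-1})+\widetilde\psi_{kj}(x_k,x_j;x_k^{\nu-1},x_j^{\nu-1})+\!\!\sum_{\ell\in\cNin_j\setminus\{k\}}\!\!\widetilde\mu_{\ell\to j}^{\nu-1}(x_j)+\!\!\sum_{\ell\in\cNout_j}\!\!\widetilde\psi_{j\ell}(x_j,x_\ell^{\nu-1};x_j^{\nu-1},x_\ell^{\nu-1})\Big\}.
\]
The only structural difference from the exact recursion is that every surrogate term now carries its own reference arguments, which will pick up a delay equal to the recursion depth at which the term is introduced.

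For \eqref{eq:reformulation_surrogate}, I would establish by induction on the depth $d(i,j)$, exactly as in \eqref{eq:subtree-form}, that the message $\widetilde\mu_{j\to \mathtt{pa}(j)}^{\,\nu-d(i,j)+1}$ equals the optimal value of a local subtree problem over $x_{\mathtt{sub}(j)}$, assembled from the surrogate node terms $\widetilde\phi_\ell(\cdot;x_\ell^{\,\nu-d(i,\ell)})$, the intra-subtree edge surrogates, and the boundary surrogates, each evaluated at the iterate/reference delayed by the depth of the node at which that term enters. The combinatorial core---applying the recursion one hop at a time, invoking the inductive hypothesis on the pairwise-disjoint child subtrees, and merging the nested minimizations into a single joint minimization---is verbatim the exact-case argument; the new bookkeeping is that instantiating a term at depth $t$ freezes its references at $x^{\nu-t}$ and, for a boundary factor $\widetilde\psi_{j\ell}$ with $\ell\in\cNout_j$, additionally freezes the out-of-cluster variable $x_\ell$ at $x_\ell^{\,\nu-t}$ in Jacobi fashion. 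Summing the $|\cNin_i|$ incoming messages at the root and adding the depth-$0$ contributions $\widetilde\phi_i(x_i;x_i^\nu)$ and $\sum_{k\in\cNout_i}\widetilde\psi_{ik}(x_i,x_k^\nu;x_i^\nu,x_k^\nu)$ then assembles all intra-cluster node, intra-cluster edge, and cross-cluster surrogate terms exactly once, which yields \eqref{eq:reformulation_surrogate}.

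Under the edge-maximality convention \eqref{eq:nonoverlap} together with Assumption~\ref{asm:nonverlap}, the three sums in \eqref{eq:reformulation_surrogate} are precisely the three groups of terms defining the aggregated cluster surrogate $\widetilde\Phi_r$ in \eqref{eq:agg_surrogate_compact_fix}. I would therefore read off the delay attached to each reference/argument and collect them into the stacked vectors $\bd_i$ (node references), $\bD_i$ (edge references), and $\bdelta_i$ (out-of-cluster variables and their references), the last using \eqref{eq:nonoverlap2} to identify, for each external $k$, its unique internal gateway $\pi_r(k)$, so that $x_k$ is frozen at delay $d(i,\pi_r(k))$. This reproduces exactly the compact evaluation of $\widetilde\Phi_r$ in \eqref{eq:surrogate_subprob_compact_fix}, with no additive constant since $\widetilde\Phi_r$ contains no terms independent of $x_{\cC_r}$. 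The uniform bound is then immediate: every entry of $\bd_i,\bD_i,\bdelta_i$ is a tree distance $d(i,\cdot)\le\mathrm{diam}(\cG_r)=D_r$. The main obstacle is purely the reference-delay accounting: one must check that the iteration at which each surrogate term is instantiated matches the recursion depth of the node introducing it---in particular that both endpoints of an intra-cluster edge, and the frozen out-of-cluster variable together with its reference in a boundary factor, inherit the delay dictated by the deeper endpoint---and that these delays coincide with what the definitions of $\bd_i$, $\bD_i$, and $\bdelta_i$ record.
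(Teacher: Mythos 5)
Your proposal is correct and follows essentially the same route as the paper: the paper itself only sketches this proof (``following similar steps as in Section~3.1, we eliminate the messages by reapplying~\eqref{message_update_surrogate} recursively along the tree''), and you fill in exactly that argument---the reindexed surrogate message recursion, the depth-induction subtree representation with references frozen at $x^{\nu-t}$ at depth $t$, the merge of disjoint child-subtree minimizations at the root, and the identification with $\widetilde\Phi_r$ under edge-maximality and Assumption~\ref{asm:nonverlap}. Your reference-delay bookkeeping (both endpoints of an intra-cluster edge inheriting the delay of the deeper endpoint, and tree distances bounded by $D_r$) matches what \eqref{eq:reformulation_surrogate} records, so nothing is missing.
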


\vspace{-0.2cm}

\paragraph{\it $\bullet$ Step 2: Convergence   of \eqref{eq:surrogate_subprob_compact_fix}.}  
  For any $r\in[p]$ and $i\in\cC_r$, introduce the virtual   \emph{non-delayed} block update (used only in the analysis):
\[
\bar{x}_{\cC_r}^{\nu+1}\in\argmin_{x_{\cC_r}}
\ \widetilde\Phi_r\!\Big(\big(x_{\cC_r},\,x_{\overline{\cC}_r}^{\,\nu}\big)\,;\,
\big(x_{\cC_r}^{\,\nu},\,x_{\overline{\cC}_r}^{\,\nu},\,x_{\cE_r}^{\,\nu}\big)\Big),
\]

The next two lemmas establish a descent recursion for $\Phi$ along $\{\bx^\nu\}$. Their proofs are the analogous to Lemma \ref{le:scvx_descent_contraction}--\ref{le:scvx_descent} in Section~\ref{subsec:convergence_exact}, with one additional ingredient: we relate the aggregated surrogate to the aggregated objective \eqref{eq:Phi_r_def_simplified} via the upper bound condition in Assumption~\ref{assumption:surrogation}.(ii).

\begin{lemma}
\label{le:scvx_descent_contraction_surrogate}
Under Assumption \ref{assumption:surrogation} and any stepsize choice satisfying $\tau_r^\nu\geq0$ and  $\sum_{r=1}^p\tau_r^\nu\leq 1$, the following holds:  
\begin{equation}
\label{eq:scvx_descent_contraction_surrogate}
\begin{aligned}
\Phi(\bx^{\nu+1})\leq\Phi(\bx^\nu)+\sum_{r\in[p]}\tau_r^\nu\qty[-\frac{\norm{P_r\nabla\Phi(\bx^\nu)}^2}{2\tilde L_r}+\frac{\tilde L_r}{2}\norm{P_r(\hat \bx^{\nu+1}-\bar\bx^{\nu+1})}^2].
\end{aligned}
\end{equation} 
\end{lemma}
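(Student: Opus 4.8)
The plan is to replicate the proof of Lemma~\ref{le:scvx_descent_contraction} verbatim at the structural level, replacing the exact block objective $x_{\cC_r}\mapsto\Phi(x_{\cC_r},x_{\overline{\cC}_r}^\nu)$ by the cluster-surrogate slice, and then transferring the per-cluster descent back to $\Phi$ using parts (i)--(iii) of Assumption~\ref{assumption:surrogation}. Concretely, fix $\nu$ and $r\in[p]$, let $\zeta_r:=(x_{\cC_r}^\nu,x_{\overline{\cC}_r}^\nu,x_{\cE_r}^\nu)$ be the reference that is consistent with $\bx^\nu$, and define $g_r(u):=\tilde\Phi_r\big((u,x_{\overline{\cC}_r}^\nu);\zeta_r\big)$. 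By Assumption~\ref{assumption:surrogation}(iii), $g_r$ is $\tilde L_r$-smooth; the (virtual) non-delayed update obeys $\bar x_{\cC_r}^{\nu+1}=\argmin_u g_r(u)$, hence $\nabla g_r(\bar x_{\cC_r}^{\nu+1})=0$.

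The first step is to produce the surrogate analogue of the basic inequality~\eqref{eq:basic}. From $\tilde L_r$-smoothness of $g_r$ expanded at $\bar x_{\cC_r}^{\nu+1}$ together with the vanishing gradient there,
\[
g_r(\hat x_{\cC_r}^{\nu+1})\ \le\ g_r(\bar x_{\cC_r}^{\nu+1})+\tfrac{\tilde L_r}{2}\,\norm{P_r(\hat\bx^{\nu+1}-\bar\bx^{\nu+1})}^2,
\]
while the standard minimizer-descent bound yields $g_r(\bar x_{\cC_r}^{\nu+1})\le g_r(x_{\cC_r}^\nu)-\frac{1}{2\tilde L_r}\norm{\nabla g_r(x_{\cC_r}^\nu)}^2$. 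Here gradient consistency (Assumption~\ref{assumption:surrogation}(i)) at the consistent reference $\zeta_r$ identifies $\nabla g_r(x_{\cC_r}^\nu)=\nabla_{\cC_r}\Phi(\bx^\nu)$, so that $\norm{\nabla g_r(x_{\cC_r}^\nu)}=\norm{P_r\nabla\Phi(\bx^\nu)}$. It then remains to exchange $g_r$ for $\Phi$: since $\Phi-\Phi_r$ is independent of $x_{\cC_r}$, the two points $(\hat x_{\cC_r}^{\nu+1},x_{\overline{\cC}_r}^\nu)$ and $\bx^\nu$ share the same remainder and $\Phi(\hat x_{\cC_r}^{\nu+1},x_{\overline{\cC}_r}^\nu)-\Phi(\bx^\nu)=\Phi_r(\hat x_{\cC_r}^{\nu+1},x_{\overline{\cC}_r}^\nu)-\Phi_r(\bx^\nu)$. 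Majorization~\eqref{eq:sur_majorization_clusterlevel} bounds $\Phi_r(\hat x_{\cC_r}^{\nu+1},x_{\overline{\cC}_r}^\nu)\le g_r(\hat x_{\cC_r}^{\nu+1})$, and its equality case (consistent reference) gives $\Phi_r(\bx^\nu)=g_r(x_{\cC_r}^\nu)$. Chaining these relations delivers
\[
\Phi\big(\bx^\nu+P_r(\hat\bx^{\nu+1}-\bx^\nu)\big)\ \le\ \Phi(\bx^\nu)-\frac{\norm{P_r\nabla\Phi(\bx^\nu)}^2}{2\tilde L_r}+\frac{\tilde L_r}{2}\,\norm{P_r(\hat\bx^{\nu+1}-\bar\bx^{\nu+1})}^2,
\]
which is the surrogate counterpart of~\eqref{eq:basic}.

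The argument then closes exactly as in Lemma~\ref{le:scvx_descent_contraction}: writing the relaxed update as the convex combination $\bx^{\nu+1}=\big(1-\sum_{r}\tau_r^\nu\big)\bx^\nu+\sum_{r}\tau_r^\nu\big(\bx^\nu+P_r(\hat\bx^{\nu+1}-\bx^\nu)\big)$ and invoking Jensen's inequality (valid by convexity of $\Phi$, which holds in the strongly convex regime of Theorem~\ref{thm:convergence_scvx_surrogate_uniform}) reduces $\Phi(\bx^{\nu+1})-\Phi(\bx^\nu)$ to the weighted sum of per-cluster gaps as in~\eqref{eq:descent_sum_noz}; substituting the surrogate basic inequality into each term gives~\eqref{eq:scvx_descent_contraction_surrogate}. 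The only genuinely new ingredient relative to the exact case is the reference-tuple bookkeeping: one must check that $\zeta_r$ is consistent with $\bx^\nu$ so that gradient consistency and the majorization \emph{equality} both apply at $x_{\cC_r}^\nu$, whereas the majorization \emph{inequality} is invoked at the updated block $\hat x_{\cC_r}^{\nu+1}$. I expect this consistency/majorization sandwiching to be the main (albeit mild) obstacle, the smoothness-descent and Jensen steps being identical to the exact analysis.
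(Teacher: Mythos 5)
Your proposal is correct and follows essentially the same route as the paper: surrogate block-smoothness at the minimizer $\bar x_{\cC_r}^{\nu+1}$, the standard minimizer-descent bound combined with gradient consistency at the consistent reference, the majorization inequality/equality sandwich to transfer back to $\Phi_r$ (and hence to $\Phi$, since the remainder is independent of the cluster block), and finally the convex-combination/Jensen step. The only difference is cosmetic — you apply the majorization per cluster before summing, whereas the paper applies it inside the Jensen step — and your explicit remark that Jensen requires convexity of $\Phi$ is a point the paper leaves implicit.
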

\begin{proof}
We first to bound of the nondelay iterate for the surrogate. Note that 
\begin{equation*}
\begin{aligned}
&\widetilde\Phi_r\!\Big(\big(\hat{x}_{\cC_r}^{\nu+1},\,x_{\overline{\cC}_r}^{\,\nu}\big)\,;\,
\big(x_{\cC_r}^{\,\nu},\,x_{\overline{\cC}_r}^{\,\nu},\,x_{\cE_r}^{\,\nu}\big)\Big)\\
\leq&      \widetilde\Phi_r\!\Big(\big(\bar{x}_{\cC_r}^{\nu+1},\,x_{\overline{\cC}_r}^{\,\nu}\big)\,;\,   \big(x_{\cC_r}^{\,\nu},\,x_{\overline{\cC}_r}^{\,\nu},\,x_{\cE_r}^{\,\nu}\big)\Big)+\underbrace{\inner{\widetilde\Phi_r\!\Big(\big(\bar{x}_{\cC_r}^{\nu+1},\,x_{\overline{\cC}_r}^{\,\nu}\big)\,;\,
\big(x_{\cC_r}^{\,\nu},\,x_{\overline{\cC}_r}^{\,\nu},\,x_{\cE_r}^{\,\nu}\big)\Big),\hat x_{\cC_r}^{\nu+1}-\bar{x}_{\cC_r}^{\nu+1}}}_{=0} \\
    &+\frac{\tilde L_r}{2}\norm{\hat x_{\cC_r}^{\nu+1}-\bar{x}_{\cC_r}^{\nu+1}}^2.
\end{aligned}
\end{equation*}
By definition of $\bar{x}_{\cC_r}^{\nu+1}$ and Assumption \ref{assumption:surrogation}.(i)(iii), we have
\[
\begin{aligned}
&\widetilde\Phi_r\!\Big(\big(\bar{x}_{\cC_r}^{\nu+1},\,x_{\overline{\cC}_r}^{\,\nu}\big)\,;\,   \big(x_{\cC_r}^{\,\nu},\,x_{\overline{\cC}_r}^{\,\nu},\,x_{\cE_r}^{\,\nu}\big)\Big)\leq \widetilde\Phi_r\!\Big(\big(x_{\cC_r}^{\nu},\,x_{\overline{\cC}_r}^{\,\nu}\big)\,;\,   \big(x_{\cC_r}^{\,\nu},\,x_{\overline{\cC}_r}^{\,\nu},\,x_{\cE_r}^{\,\nu}\big)\Big)-\frac{\norm{\nabla_{\cC_r}\Phi(\bx^\nu)}^2}{2\tilde L_r}\\
=&\Phi_r(\bx^\nu)-\frac{\norm{\nabla_{\cC_r}\Phi(\bx^\nu)}^2}{2\tilde L_r}.
\end{aligned}
\]
Combining the above inequalities, we obtain
\begin{equation}    \label{eq:basic1_surrogate}   \widetilde\Phi_r\!\Big(\big(\hat{x}_{\cC_r}^{\nu+1},\,x_{\overline{\cC}_r}^{\,\nu}\big)\,;\,    \big(x_{\cC_r}^{\,\nu},\,x_{\overline{\cC}_r}^{\,\nu},\,x_{\cE_r}^{\,\nu}\big)\Big)\leq\Phi_r(\bx^\nu)-\frac{\norm{\nabla_{\cC_r}\Phi(\bx^\nu)}^2}{2\tilde L_r}+\frac{\tilde L_r}{2}\norm{\hat x_{\cC_r}^{\nu+1}-\bar{x}_{\cC_r}^{\nu+1}}^2.
\end{equation}
By \eqref{eq:descent_sum_noz} and Assumption \ref{assumption:surrogation}.(ii), we have 
\[
\begin{aligned}
&\Phi(\bx^{\nu+1})-\Phi(\bx^\nu)
\leq
\sum_{r=1}^p\tau_r^\nu\Big(\Phi\big(\bx^\nu+P_r(\hat\bx^{\nu+1}-\bx^\nu)\big)-\Phi(\bx^\nu)\Big)\\
\leq&\sum_{r=1}^p\tau_r^\nu\qty(\widetilde\Phi_r\!\Big(\big(\hat{x}_{\cC_r}^{\nu+1},\,x_{\overline{\cC}_r}^{\,\nu}\big)\,;\,    \big(x_{\cC_r}^{\,\nu},\,x_{\overline{\cC}_r}^{\,\nu},\,x_{\cE_r}^{\,\nu}\big)\Big)-\Phi_r(\bx^\nu)).
\end{aligned}
\]
Combining this inequality with \eqref{eq:basic1_surrogate} completes the proof. \qed
\end{proof}

\begin{lemma}
\label{le:scvx_descent_surrogate}
Under Assumption \ref{assumption:surrogation} and any stepsize choice satisfying $\tau_r^\nu\geq0$ and  $\sum_{r=1}^p\tau_r^\nu\leq 1$, the following holds:
\begin{equation}
\label{eq:scvx_descent_surrogate}
\begin{aligned}
\Phi(\bx^{\nu+1})\leq\Phi(\bx^\nu)+\sum_{r\in[p]}\tau_r^\nu\qty[-\frac{\tilde\mu_r}{4}\norm{P_r(\hat \bx^{\nu+1}-\bx^\nu)}^2+\frac{\tilde L_r+\tilde\mu_r}{2}\norm{P_r(\bar{\bx}^{\nu+1}-\hat \bx^{\nu+1})}^2].
\end{aligned}
\end{equation}
\end{lemma}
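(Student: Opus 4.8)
The plan is to mirror the proof of Lemma~\ref{le:scvx_descent} verbatim at the level of the \emph{cluster surrogate} $\widetilde\Phi_r$, replacing the exact objective $\Phi_r$ by its surrogate and then transferring the bound back to $\Phi$ through the majorization property. The starting point is the surrogate non-delayed update $\bar{x}_{\cC_r}^{\nu+1}\in\argmin_{x_{\cC_r}}\widetilde\Phi_r\big((x_{\cC_r},x_{\overline{\cC}_r}^{\nu});(x_{\cC_r}^{\nu},x_{\overline{\cC}_r}^{\nu},x_{\cE_r}^{\nu})\big)$ introduced in Step~2, whose reference tuple $\zeta_r^\nu=(x_{\cC_r}^{\nu},x_{\overline{\cC}_r}^{\nu},x_{\cE_r}^{\nu})$ is \emph{consistent} with $\bx^\nu$. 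Consistency is crucial: by Assumption~\ref{assumption:surrogation}.(i)--(ii) it gives both $\widetilde\Phi_r(\bx^\nu;\zeta_r^\nu)=\Phi_r(\bx^\nu)$ and $\nabla_{\cC_r}\widetilde\Phi_r(\bx^\nu;\zeta_r^\nu)=\nabla_{\cC_r}\Phi_r(\bx^\nu)=\nabla_{\cC_r}\Phi(\bx^\nu)$, the anchoring identities that let the surrogate see the true objective at the current iterate.

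First I would establish the surrogate analogue of \eqref{eq:basic3}. Using $\tilde L_r$-smoothness of $u\mapsto\widetilde\Phi_r((u,x_{\overline{\cC}_r}^{\nu});\zeta_r^\nu)$ (Assumption~\ref{assumption:surrogation}.(iii)) together with the first-order optimality of $\bar x_{\cC_r}^{\nu+1}$ (so the cross inner product vanishes), one gets
\[
\widetilde\Phi_r\big((\hat x_{\cC_r}^{\nu+1},x_{\overline{\cC}_r}^{\nu});\zeta_r^\nu\big)
\le
\widetilde\Phi_r\big((\bar x_{\cC_r}^{\nu+1},x_{\overline{\cC}_r}^{\nu});\zeta_r^\nu\big)
+\tfrac{\tilde L_r}{2}\norm{P_r(\bar\bx^{\nu+1}-\hat\bx^{\nu+1})}^2 .
\]
Next, $\tilde\mu_r$-strong convexity of the same map, optimality of $\bar x_{\cC_r}^{\nu+1}$, and the consistency equality $\widetilde\Phi_r(\bx^\nu;\zeta_r^\nu)=\Phi_r(\bx^\nu)$ yield
$\widetilde\Phi_r\big((\bar x_{\cC_r}^{\nu+1},x_{\overline{\cC}_r}^{\nu});\zeta_r^\nu\big)\le \Phi_r(\bx^\nu)-\tfrac{\tilde\mu_r}{2}\norm{P_r(\bar\bx^{\nu+1}-\bx^\nu)}^2$. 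Combining the two displays gives the surrogate counterpart of \eqref{eq:basic3}. I would then feed this into the same elementary splitting used in Lemma~\ref{le:scvx_descent}, namely $-\norm{P_r(\bar\bx^{\nu+1}-\bx^\nu)}^2\le-\tfrac12\norm{P_r(\hat\bx^{\nu+1}-\bx^\nu)}^2+\norm{P_r(\bar\bx^{\nu+1}-\hat\bx^{\nu+1})}^2$ (Young's inequality applied to $P_r(\bar\bx^{\nu+1}-\bx^\nu)=P_r(\hat\bx^{\nu+1}-\bx^\nu)+P_r(\bar\bx^{\nu+1}-\hat\bx^{\nu+1})$), to obtain the per-cluster estimate
\[
\widetilde\Phi_r\big((\hat x_{\cC_r}^{\nu+1},x_{\overline{\cC}_r}^{\nu});\zeta_r^\nu\big)
\le
\Phi_r(\bx^\nu)
-\tfrac{\tilde\mu_r}{4}\norm{P_r(\hat\bx^{\nu+1}-\bx^\nu)}^2
+\tfrac{\tilde L_r+\tilde\mu_r}{2}\norm{P_r(\bar\bx^{\nu+1}-\hat\bx^{\nu+1})}^2 .
\]

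Finally I would convert this surrogate decrease into an actual decrease of $\Phi$, exactly as in the proof of Lemma~\ref{le:scvx_descent_contraction_surrogate}. Since $\Phi-\Phi_r$ is independent of $x_{\cC_r}$ (by edge-maximality in the definition~\eqref{eq:Phi_r_def_simplified} of $\Phi_r$), the Jensen/convex-combination bound \eqref{eq:descent_sum_noz} gives $\Phi(\bx^{\nu+1})-\Phi(\bx^\nu)\le\sum_r\tau_r^\nu\big(\Phi_r((\hat x_{\cC_r}^{\nu+1},x_{\overline{\cC}_r}^{\nu}))-\Phi_r(\bx^\nu)\big)$, and the majorization inequality \eqref{eq:sur_majorization_clusterlevel} of Assumption~\ref{assumption:surrogation}.(ii) bounds $\Phi_r((\hat x_{\cC_r}^{\nu+1},x_{\overline{\cC}_r}^{\nu}))\le\widetilde\Phi_r\big((\hat x_{\cC_r}^{\nu+1},x_{\overline{\cC}_r}^{\nu});\zeta_r^\nu\big)$. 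Substituting the per-cluster estimate and weighting by $\tau_r^\nu$ yields \eqref{eq:scvx_descent_surrogate}. The only real bookkeeping hazard—the step I expect to need the most care—is keeping track of \emph{where} consistency is invoked: the anchoring identities must be applied at the reference $\zeta_r^\nu$ (consistent with $\bx^\nu$), whereas the majorization inequality is applied at the displaced point $(\hat x_{\cC_r}^{\nu+1},x_{\overline{\cC}_r}^{\nu})$ with the \emph{same} reference $\zeta_r^\nu$ (now inconsistent), which is precisely the regime where the surrogate strictly upper-bounds $\Phi_r$. Once this distinction is respected, the remaining manipulations are identical to the exact case.
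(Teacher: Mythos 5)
Your proposal is correct and follows exactly the route the paper intends: it replicates the three steps of Lemma~\ref{le:scvx_descent} (smoothness bound with vanishing cross term at $\bar x_{\cC_r}^{\nu+1}$, strong-convexity decrease anchored at the consistent reference, Young's splitting) at the level of the cluster surrogate, and then transfers back to $\Phi$ via \eqref{eq:descent_sum_noz} and the majorization condition of Assumption~\ref{assumption:surrogation}(ii), precisely as in Lemma~\ref{le:scvx_descent_contraction_surrogate}. The paper omits these details, but your write-up supplies them faithfully, including the one subtlety the paper glosses over (consistency is used only at $\bx^\nu$, while majorization is invoked at the displaced point with the same, now inconsistent, reference).
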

\begin{proof}
The proof follows the same steps as Lemma~\ref{le:scvx_descent}, with the exact block objective replaced by the
aggregated surrogate and the final bound transferred to $\Phi$ via the upper bound condition in
Assumption~\ref{assumption:surrogation}(ii) (cf. the proof of Lemma~\ref{le:scvx_descent_contraction_surrogate}). We omit the details. \qed
\end{proof}

Next, we bound the discrepancy term $\norm{P_r(\bar\bx^{\nu+1}-\hat \bx^{\nu+1})}^2$.

\begin{lemma}
\label{le:iterates_gap_surrogate}
In the setting of Lemma \ref{le:scvx_descent_contraction_surrogate} and Lemma \ref{le:scvx_descent_surrogate}, the following holds: for any  $r\in [p]$, 
\[
\begin{aligned}
    \norm{P_r(\hat \bx^{\nu+1}-\bar{\bx}^{\nu+1})}^2\leq&\;\frac{2|\cC_r|D_r}{\tilde\mu_r^2}\sum_{\ell=\nu-D_r}^{\nu-1}\qty({\tilde L}_{\partial r}^2\norm{P_{\partial r}(\bx^{\ell+1}-\bx^{\ell})}^2+\sigma_r\tilde\ell_r^2\norm{P_r(\bx^{\ell+1}-\bx^{\ell})}^2).
\end{aligned}
\]
where $\sigma_r=\max_{i\in\cC_r}{\rm deg}_{\cG_r}(i)$.
\end{lemma}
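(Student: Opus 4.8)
The plan is to mirror the proof of Lemma~\ref{le:iterates_gap}, but now tracking the two distinct delay channels present in the surrogate subproblem~\eqref{eq:surrogate_subprob_compact_fix}: the stale \emph{outside} block $x_{\overline{\cC}_r}^{\,\nu-\bdelta_i}$ (which plays simultaneously the role of optimization slot and of outside reference $y_{\overline{\cC}_r}$), and the stale \emph{intra-cluster references} $\big(x_{\cC_r}^{\,\nu-\bd_i},\,x_{\cE_r}^{\,\nu-\bD_i}\big)$ entering the surrogate models. As in the exact case, I would fix a cluster $\cC_r$ and, for each root $i\in\cC_r$, introduce the full-cluster delayed minimizer $\hat x_{\cC_r,i}^{\nu+1}$ (so that $\hat x_i^{\nu+1}=[\hat x_{\cC_r,i}^{\nu+1}]_i$) and the non-delayed minimizer $\bar x_{\cC_r}^{\nu+1}$. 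Writing the first-order optimality conditions $\nabla_{\cC_r}\tilde\Phi_r(\cdot)=0$ for both, and subtracting after inserting the intermediate evaluation of the \emph{delayed-reference} cluster gradient at $\bar x_{\cC_r}^{\nu+1}$, splits $0$ into an in-block gradient change and a combined reference/outside-block change.

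First I would handle the in-block change using uniform strong convexity in $x_{\cC_r}$ (Assumption~\ref{assumption:surrogation}(iii)): pairing with $\hat x_{\cC_r,i}^{\nu+1}-\bar x_{\cC_r}^{\nu+1}$ produces the coercive term $\tilde\mu_r\|\hat x_{\cC_r,i}^{\nu+1}-\bar x_{\cC_r}^{\nu+1}\|^2$. The remaining change---moving all references from their delayed values back to the values at $\nu$---is decomposed by a short telescoping chain over the reference blocks. The outside block is bounded by Assumption~\ref{assumption:surrogation}(v), which is tailored exactly to this subproblem because there the outside variable and the outside reference coincide and move together; this contributes $\tilde L_{\partial r}\,\|x_{\cN_{\cC_r}}^{\,\nu-\bdelta_i}-x_{\cN_{\cC_r}}^{\,\nu}\|=\tilde L_{\partial r}\,\|P_{\partial r}(\cdots)\|$. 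The intra-cluster reference perturbation is bounded through the edge-reference sensitivity $\tilde\ell_r$ of Assumption~\ref{assumption:surrogation}(iv). Collecting these and applying Cauchy--Schwarz gives the per-root estimate $\|\hat x_{\cC_r,i}^{\nu+1}-\bar x_{\cC_r}^{\nu+1}\|\le \tilde\mu_r^{-1}\big(\tilde L_{\partial r}\,\|x_{\cN_{\cC_r}}^{\,\nu-\bdelta_i}-x_{\cN_{\cC_r}}^{\,\nu}\|+\tilde\ell_r\,\|x_{\cE_r}^{\,\nu-\bD_i}-x_{\cE_r}^{\,\nu}\|\big)$; squaring via $(a+b)^2\le 2a^2+2b^2$ produces the factor $2$ in the final constant.

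To finish, I would pass from the gradient gap to the displacement gap and aggregate. Using $\|\hat x_{\cC_r}^{\nu+1}-\bar x_{\cC_r}^{\nu+1}\|^2=\sum_{i\in\cC_r}\|\hat x_i^{\nu+1}-\bar x_i^{\nu+1}\|^2\le\sum_{i\in\cC_r}\|\hat x_{\cC_r,i}^{\nu+1}-\bar x_{\cC_r}^{\nu+1}\|^2$ yields the multiplicative factor $|\cC_r|$. Each delayed difference $\|x_S^{\,\nu-\delta}-x_S^{\,\nu}\|^2$ (with $\delta\le D_r$) is rewritten as a sum of consecutive increments and bounded by Cauchy--Schwarz over a window of length at most $D_r$---legitimate because $\max\{\|\bdelta_i\|_\infty,\|\bd_i\|_\infty,\|\bD_i\|_\infty\}\le D_r$ by Proposition~\ref{prop:equiv_refo_surrogate}---producing the outer factor $D_r$ and the telescoped sum $\sum_{\ell=\nu-D_r}^{\nu-1}$. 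Combining all factors gives the stated bound.

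The main obstacle I anticipate is the bookkeeping of the intra-cluster reference term, specifically the emergence of the max-degree factor $\sigma_r=\max_{i\in\cC_r}\deg_{\cG_r}(i)$. The perturbed reference blocks---both the node references and the endpoint values stacked in $x_{\cE_r}$---are all cluster-node iterates, so their variation must be re-expressed through the cluster projector $P_r$. Since a single node value $x_j$ occupies one slot per incident intra-cluster edge, it appears in up to $\deg_{\cG_r}(j)\le\sigma_r$ of the stacked endpoint coordinates; hence $\|x_{\cE_r}^{\,\nu-\bD_i}-x_{\cE_r}^{\,\nu}\|^2\le\sigma_r\sum_{j\in\cC_r}\|x_j^{\,\nu-\cdot}-x_j^{\,\nu}\|^2$, which after the window expansion is exactly the origin of the $\sigma_r\tilde\ell_r^2\,\|P_r(\cdots)\|^2$ term. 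Getting this multiplicity count right---and confirming that the node-level references are absorbed into the same $\tilde\ell_r$ channel (and the $P_r$ projector) without spawning an uncontrolled $L_r$-type term---is the delicate part; everything else parallels Lemma~\ref{le:iterates_gap} essentially verbatim.
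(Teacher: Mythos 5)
Your proposal is correct and follows essentially the same route as the paper's proof: the same delayed/non-delayed minimizer pair, the same three-way telescoping of the optimality conditions into an in-block change (strong convexity, Assumption~\ref{assumption:surrogation}(iii)), an outside-block change (Assumption~\ref{assumption:surrogation}(v)), and an edge-reference change (Assumption~\ref{assumption:surrogation}(iv)), followed by the same $(a+b)^2\le 2a^2+2b^2$, the $|\cC_r|$-fold aggregation over roots, the $D_r$-window Cauchy--Schwarz, and the multiplicity bound $\|w_{\cE_r}\|^2\le\sigma_r\|w_{\cC_r}\|^2$ that produces the $\sigma_r$ factor. Nothing further is needed.
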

\begin{proof}
Define the \emph{delayed} block update for the aggregated surrogate:
\begin{equation*}
\label{eq:C_ri_surrogate}
\hat x_{\cC_r,i}^{\nu+1}
:=\argmin_{x_{\cC_r}}
\ \widetilde\Phi_r\!\Big(\big(x_{\cC_r},\,x_{\overline{\cC}_r}^{\,\nu-\bdelta_i}\big)\,;\,
\big(x_{\cC_r}^{\,\nu-\bd_i},\,x_{\overline{\cC}_r}^{\,\nu-\bdelta_i},\,x_{\cE_r}^{\,\nu-\bD_i}\big)\Big), \text{ thus }
\hat x_i^{\nu+1}=\big[\hat x_{\cC_r,i}^{\nu+1}\big]_i.
\end{equation*}
By the optimality condition, 
\[
\begin{aligned}
0=&\nabla\widetilde\Phi_r\!\Big(\big(\bar{x}_{\cC_r}^{\nu+1},\,x_{\overline{\cC}_r}^{\,\nu}\big)\,;\,
\big(x_{\cC_r}^{\,\nu},\,x_{\overline{\cC}_r}^{\,\nu},\,x_{\cE_r}^{\,\nu}\big)\Big)-\nabla\widetilde\Phi_r\!\Big(\big(\hat{x}_{\cC_r,i}^{\nu+1},\,x_{\overline{\cC}_r}^{\,\nu-\bdelta_i}\big)\,;\,
\big(x_{\cC_r}^{\,\nu-\bd_i},\,x_{\overline{\cC}_r}^{\,\nu-\bdelta_i},\,x_{\cE_r}^{\,\nu-\bD_i}\big)\Big)\\
=&\nabla\widetilde\Phi_r\!\Big(\big(\bar{x}_{\cC_r}^{\nu+1},\,x_{\overline{\cC}_r}^{\,\nu}\big)\,;\,
\big(x_{\cC_r}^{\,\nu},\,x_{\overline{\cC}_r}^{\,\nu},\,x_{\cE_r}^{\,\nu}\big)\Big)-\nabla\widetilde\Phi_r\!\Big(\big(\bar{x}_{\cC_r}^{\nu+1},\,x_{\overline{\cC}_r}^{\,\nu-\bdelta_i}\big)\,;\,
\big(x_{\cC_r}^{\,\nu},\,x_{\overline{\cC}_r}^{\,\nu-\bdelta_i},\,x_{\cE_r}^{\,\nu}\big)\Big)\\
&+\nabla\widetilde\Phi_r\!\Big(\big(\bar{x}_{\cC_r}^{\nu+1},\,x_{\overline{\cC}_r}^{\,\nu-\bdelta_i}\big)\,;\,
\big(x_{\cC_r}^{\,\nu},\,x_{\overline{\cC}_r}^{\,\nu-\bdelta_i},\,x_{\cE_r}^{\,\nu}\big)\Big)-\nabla\widetilde\Phi_r\!\Big(\big(\bar{x}_{\cC_r}^{\nu+1},\,x_{\overline{\cC}_r}^{\,\nu-\bdelta_i}\big)\,;\,
\big(x_{\cC_r}^{\,\nu},\,x_{\overline{\cC}_r}^{\,\nu-\bdelta_i},\,x_{\cE_r}^{\,\nu-\bD_i}\big)\Big)\\
&+\nabla\widetilde\Phi_r\!\Big(\big(\bar{x}_{\cC_r}^{\nu+1},\,x_{\overline{\cC}_r}^{\,\nu-\bdelta_i}\big)\,;\,
\big(x_{\cC_r}^{\,\nu},\,x_{\overline{\cC}_r}^{\,\nu-\bdelta_i},\,x_{\cE_r}^{\,\nu-\bD_i}\big)\Big)-\nabla\widetilde\Phi_r\!\Big(\big(\hat{x}_{\cC_r,i}^{\nu+1},\,x_{\overline{\cC}_r}^{\,\nu-\bdelta_i}\big)\,;\,
\big(x_{\cC_r}^{\,\nu},\,x_{\overline{\cC}_r}^{\,\nu-\bdelta_i},\,x_{\cE_r}^{\,\nu-\bD_i}\big)\Big).
\end{aligned}
\]
Assumption~\ref{assumption:surrogation} yields
\[
\begin{aligned}
&\tilde\mu_r\norm{\hat x_{\cC_r,i}^{\nu+1}-\bar{x}_{\cC_r}^{\nu+1}}^2\\
\leq&-\inner{\nabla\widetilde\Phi_r\!\Big(\big(\bar{x}_{\cC_r}^{\nu+1},\,x_{\overline{\cC}_r}^{\,\nu}\big)\,;\,
\big(x_{\cC_r}^{\,\nu},\,x_{\overline{\cC}_r}^{\,\nu},\,x_{\cE_r}^{\,\nu}\big)\Big)-\nabla\widetilde\Phi_r\!\Big(\big(\bar{x}_{\cC_r}^{\nu+1},\,x_{\overline{\cC}_r}^{\,\nu-\bdelta_i}\big)\,;\,
\big(x_{\cC_r}^{\,\nu},\,x_{\overline{\cC}_r}^{\,\nu-\bdelta_i},\,x_{\cE_r}^{\,\nu}\big)\Big),\hat x_{\cC_r,i}^{\nu+1}-\bar{x}_{\cC_r}^{\nu+1}}\\
&-\inner{\nabla\widetilde\Phi_r\!\Big(\big(\bar{x}_{\cC_r}^{\nu+1},\,x_{\overline{\cC}_r}^{\,\nu-\bdelta_i}\big)\,;\,
\big(x_{\cC_r}^{\,\nu},\,x_{\overline{\cC}_r}^{\,\nu-\bdelta_i},\,x_{\cE_r}^{\,\nu}\big)\Big)-\nabla\widetilde\Phi_r\!\Big(\big(\bar{x}_{\cC_r}^{\nu+1},\,x_{\overline{\cC}_r}^{\,\nu-\bdelta_i}\big)\,;\,
\big(x_{\cC_r}^{\,\nu},\,x_{\overline{\cC}_r}^{\,\nu-\bdelta_i},\,x_{\cE_r}^{\,\nu-\bD_i}\big)\Big),\hat x_{\cC_r,i}^{\nu+1}-\bar{x}_{\cC_r}^{\nu+1}}.
\end{aligned}
\]
Then,
\begin{align*}
&\mu_r\norm{\hat x_{\cC_r,i}^{\nu+1}-\bar x_{\cC_r}^{\nu+1}}\leq{\tilde L}_{\partial r}\norm{x_{\cN_{\cC_r}}^{\nu-\bdelta_i}-x_{\cN_{\cC_r}}^{\nu}}+\tilde\ell_r\norm{x_{\cE_r}^{\nu-\bD_i}-x_{\cE_r}^{\nu}}.
\end{align*}
Note that, for any $w$, $\norm{w_{\cE_r}}^2\leq\sigma_r\norm{w_{\cC_r}}^2$, and $\norm{\hat x_{\cC_r}^{\nu+1}-\bar x_{\cC_r}^{\nu+1}}^2\leq\sum_{i\in\cC_r}\norm{\hat x_{\cC_r,i}^{\nu+1}-\bar x_{\cC_r}^{\nu+1}}^2$. Hence, 
\[
\begin{aligned}
    \norm{\hat x_{\cC_r}^{\nu+1}-\bar{x}_{\cC_r}^{\nu+1}}^2\leq&\frac{2|\cC_r|D_r}{\tilde\mu_r^2}\sum_{\ell=\nu-D_r}^{\nu-1}\qty({\tilde L}_{\partial r}^2\norm{x_{\cN_{\cC_r}}^{\ell+1}-x_{\cN_{\cC_r}}^{\ell}}^2+\sigma_r\tilde\ell_r^2\norm{x_{\cC_r}^{\ell+1}-x_{\cC_r}^{\ell}}^2).
\end{aligned}
\]
This completes the proof.\qed
\end{proof}


\medskip

 We can now proceed with the proof of Theorem~\ref{thm:convergence_scvx_surrogate_uniform}. 
 
Let $\Delta_{\cS}^\nu:=\sum_{\ell=\nu-D}^{\nu-1}\norm{x_{\cS}^{\ell+1}-x_{\cS}^\ell}^2$ for $\cS\subseteq[p]$, and write $\Delta^\nu:=\Delta^\nu_{[p]}$. By \eqref{eq:scvx_descent_contraction_surrogate}, \eqref{eq:scvx_descent_surrogate}, and Lemma~\ref{le:iterates_gap_surrogate}, we have
\begin{equation}
\label{eq:combined_one_step_bound}
\begin{aligned}
\Phi(\bx^{\nu+1})
\;\le\;&
\Phi(\bx^\nu)
-\sum_{r\in[p]}\frac{\tau}{4\tilde L_r}\,\|P_r\nabla\Phi(\bx^\nu)\|^2
-\sum_{r\in[p]}\frac{\tilde\mu_r}{8\tau}\,\|P_r(\bx^{\nu+1}-\bx^{\nu})\|^2\\
&+\sum_{r\in[p]}\tau A_r
\Delta_{\cN_{\cC_r}}^\nu+\sum_{r\in[p]}\tau \tilde{A}_r
\Delta_{\cC_r}^\nu.
\end{aligned}
\end{equation}
By strong convexity (implying $\|\nabla\Phi(\bx^\nu)\|^2\ge 2\mu\big(\Phi(\bx^\nu)-\Phi^\star\big)$), we have
\begin{equation*}
\begin{aligned}
\Phi(\bx^{\nu+1})-\Phi^\star
\;\le\;&
\Big(1-\frac{\tau}{2\kappa}\Big)\big(\Phi(\bx^\nu)-\Phi^\star\big)
-\sum_{r\in\cJ\cup\{s\in[p]:|\cC_s|>1\} }\frac{\tilde\mu_r}{8\tau}\,\|P_r(\bx^{\nu+1}-\bx^{\nu})\|^2\\
&+\tau A_\cJ\sum_{r\in\cJ}\Delta_{\cC_r}^\nu+\tau\qty(\max_{r:|\cC_r|>1}\tilde A_r)\sum_{r:|\cC_r|>1}\Delta_{\cC_r}^\nu\\
=\;&\Big(1-\frac{\tau}{2\kappa}\Big)\big(\Phi(\bx^\nu)-\Phi^\star\big)
-\sum_{r\in\cJ\cup\{s:|\cC_s|>1\} }\frac{\tilde\mu_r}{8\tau}\,\|P_r(\bx^{\nu+1}-\bx^{\nu})\|^2\\
&+\tau\qty(A_\cJ+\max_{r:|\cC_r|>1}\tilde A_r)\sum_{r\in\cJ\cup\{s:|\cC_s|>1\} }\Delta_{\cC_r}^\nu.
\end{aligned}
\end{equation*}
A standard delay-window inequality (e.g., ~\cite[Lemma~5]{feyzmahdavian2023asynchronous}) yields linear convergence under the following condition   
\begin{equation*}
2D+1\ \le\
\min\left\{
\frac{2\tilde L}{\tau\mu},
\frac{\frac{1}{8\tau}\min\limits_{r\in\cJ\cup\{s:|\cC_s|>1\}}\tilde\mu_r}
{\tau\qty(A_\cJ+\max_{r:|\cC_r|>1}\tilde A_r)}
\right\},
\end{equation*}
with the linear rate   given  by \eqref{eq:rate-constant-step-surrogate_clean4}.
Using uniform stepsize values, $\tau\in(0, {1}/{p}]$, yields \eqref{eq:delay_lemma_condition_2_surrogate_clean4}.  \qed
 \vspace{-0.3cm}

\section{Proof of Theorem \ref{thm:convergence_cvx}}
\label{proof_convergence_cvx}
Let $\Phi^\star:=\min\Phi(\bx)$ and choose any $\bx^\star\in\argmin\Phi(\bx)$. For any $r\in[p]$ and $i\in\cC_r$, by Assumption \ref{assumption:entire_surroagte}, we have
\[
\begin{aligned}
&\widetilde{\Phi}_r^{\rm all}\Big(
(\hat \bx_{\cC_r,i}^{\nu+1},\,\bx_{\overline{\cC}_r}^{\,\nu-\bdelta_i})
\;;\;
(\bx_{\cC_r}^{\,\nu-\bd_i},\,\bx_{\overline{\cC}_r}^{\,\nu-\bdelta_i},\,\bx_{\cE_r}^{\,\nu-\bD_i})
\Big)\\
\le& 
\widetilde{\Phi}_r^{\rm all}\Big(
(\bx_{\cC_r}^{\star},\,\bx_{\overline{\cC}_r}^{\,\nu-\bdelta_i})
\;;\;
(\bx_{\cC_r}^{\,\nu-\bd_i},\,\bx_{\overline{\cC}_r}^{\,\nu-\bdelta_i},\,\bx_{\cE_r}^{\,\nu-\bD_i})
\Big)
-\frac{\widetilde L_r}{2}\,\big\|\hat \bx_{\cC_r,i}^{\nu+1}-\bx_{\cC_r}^{\star}\big\|^2\\
\le&\Phi^*+\bar L_r\norm{\bx_{\overline{\cC}_r}^{\,\nu-\bdelta_i}-x_{\overline{\cC}_r}^\star}^2+\frac{\bar L_r}{2}\norm{\bx_{{\cC}_r}^{\,\nu-\bd_i}-x_{\cC_r}^\star}^2+\frac{\bar L_r}{2}\norm{\bx_{{\cE}_r}^{\,\nu-\bD_i}-x_{\cE_r}^\star}^2-\frac{\widetilde L_r}{2}\,\big\|\hat \bx_{\cC_r,i}^{\nu+1}-\bx_{\cC_r}^{\star}\big\|^2\\
\le&\Phi^\star
+\frac{\bar L_r(\sigma_r+1)(D_r+1)}{2}\,\qty(\big\|\bx^{\nu}-\bx^{\star}\big\|^2
+ \,\Delta^{\nu})
-\frac{\widetilde L_r}{2}\,\big\|\hat \bx_{\cC_r,i}^{\nu+1}-\bx_{\cC_r}^{\star}\big\|^2,
\end{aligned}
\]
where $\hat x_{\cC_r,i}^{\nu+1}$ is defined in \eqref{eq:C_ri_surrogate}. Similarly, we have
\[
\begin{aligned}
&\widetilde{\Phi}_r^{\rm all}\Big(
(\hat \bx_{\cC_r,i}^{\nu+1},\,\bx_{\overline{\cC}_r}^{\,\nu})
\;;\;
(\bx_{\cC_r}^{\,\nu},\,\bx_{\overline{\cC}_r}^{\,\nu},\,\bx_{\cE_r}^{\,\nu})
\Big)\\
\le&
\widetilde{\Phi}_r^{\rm all}\Big(
(\hat \bx_{\cC_r,i}^{\nu+1},\,\bx_{\overline{\cC}_r}^{\,\nu-\bdelta_i})
\;;\;
(\bx_{\cC_r}^{\,\nu-\bd_i},\,\bx_{\overline{\cC}_r}^{\,\nu-\bdelta_i},\,\bx_{\cE_r}^{\,\nu-\bD_i})
\Big)
+\frac{\bar L_r(\sigma_r+1)D_r}{2}\,\Delta^{\nu}.
\end{aligned}
\]
Next, we bound $\norm{\hat \bx_{\cC_r,i}^{\nu+1}-\hat \bx_{\cC_r}^{\nu+1}}^2$. Note that 
\[
\begin{aligned}
\norm{\hat \bx_{\cC_r,i}^{\nu+1}-\hat \bx_{\cC_r}^{\nu+1}}^2=\sum_{j\in\cC_r}\norm{(\hat \bx_{\cC_r,i}^{\nu+1})_j-(\hat \bx_{\cC_r,j}^{\nu+1})_j}^2\leq\sum_{j\in\cC_r}\norm{\hat \bx_{\cC_r,i}^{\nu+1}-\hat \bx_{\cC_r,j}^{\nu+1}}^2.
\end{aligned}
\]
Moreover, $\norm{\hat \bx_{\cC_r,i}^{\nu+1}-\hat \bx_{\cC_r,j}^{\nu+1}}^2$ can be bounded exactly as in Lemma \ref{le:iterates_gap_surrogate}. Therefore, 
\[
\norm{\hat \bx_{\cC_r,i}^{\nu+1}-\hat \bx_{\cC_r}^{\nu+1}}^2\leq\frac{2{\tilde L}_{\partial r}^2|\cC_r|^2D_r}{\tilde\mu_r^2}\Delta_{\cN_{\cC_r}^\nu}+\frac{2\sigma_r\tilde\ell_r^2|\cC_r|^2D_r}{\tilde\mu_r^2}\Delta_{\cC_r}^\nu.
\]
Thus, for any $\beta>0$,
\begin{align*}
&\widetilde{\Phi}_r^{\rm all}\Big(
(\hat \bx_{\cC_r}^{\nu+1},\,\bx_{\overline{\cC}_r}^{\,\nu})
\;;\;
(\bx_{\cC_r}^{\,\nu},\,\bx_{\overline{\cC}_r}^{\,\nu},\,\bx_{\cE_r}^{\,\nu})
\Big)
-\widetilde{\Phi}_r^{\rm all}\Big(
(\hat \bx_{\cC_r,i}^{\nu+1},\,\bx_{\overline{\cC}_r}^{\,\nu})
\;;\;
(\bx_{\cC_r}^{\,\nu},\,\bx_{\overline{\cC}_r}^{\,\nu},\,\bx_{\cE_r}^{\,\nu})
\Big)\\
&\le
\Big\langle \nabla_{\cC_r}{\Phi}(\bx^\nu),\ \hat \bx_{\cC_r}^{\nu+1}-\hat \bx_{\cC_r,i}^{\nu+1}\Big\rangle
+\frac{\widetilde L_r}{2}\,\big\|\hat \bx_{\cC_r}^{\nu+1}-\bx_{\cC_r}^{\nu}\big\|^2
-\frac{\widetilde\mu_r}{2}\,\big\|\hat \bx_{\cC_r,i}^{\nu+1}-\bx_{\cC_r}^{\nu}\big\|^2\\
&\le
\beta\,\big\|\nabla_{\cC_r}{\Phi}(\bx^\nu)\big\|^2
+\frac{K_{2,r}}{4\beta}\Delta^\nu+\frac{\widetilde L_r}{2\tau^2}\,\big\|\bx_{\cC_r}^{\nu+1}-\bx_{\cC_r}^{\nu}\big\|^2
-\frac{\widetilde\mu_r}{2\tau^2}\,\big\|\bx_i^{\nu+1}-\bx_i^{\nu}\big\|^2,
\end{align*}
where $K_{2,r}=\frac{2|\cC_r|^2D_r}{\widetilde\mu_r^2}(\tilde L_{\partial r}^2+\sigma_r\tilde\ell_r^2)$, and the first inequality comes from the fact that, for any $\mu$-strongly convex and $L$-smooth function $f$, 
\[
f(y)-f(z)\ \le\ \langle \nabla f(x),\,y-z\rangle
+\frac{L}{2}\|y-x\|^2-\frac{\mu}{2}\|z-x\|^2.
\]
Combining the above bounds all together yields
\begin{align*}
&\widetilde{\Phi}_r^{\rm all}\Big(
(\hat \bx_{\cC_r}^{\nu+1},\,\bx_{\overline{\cC}_r}^{\,\nu})
\;;\;
(\bx_{\cC_r}^{\,\nu},\,\bx_{\overline{\cC}_r}^{\,\nu},\,\bx_{\cE_r}^{\,\nu})
\Big)-\Phi^\star\\
&\le
\frac{\bar L_r(\sigma_r+1)(D_r+1)}{2}\,\big\|\bx^{\nu}-\bx^{\star}\big\|^2
-\frac{\widetilde L_r}{2\tau^2}\,\big\|\bx_i^{\nu+1}-\bx_i^\star\big\|^2
+ K_{3,r}\,\Delta^\nu
+\beta\,\big\|\nabla_{\cC_r}{\Phi}(\bx^\nu)\big\|^2\\
&\qquad
+\frac{\widetilde L_r}{2\tau^2}\,\big\|\bx_{\cC_r}^{\nu+1}-\bx_{\cC_r}^{\nu}\big\|^2
-\frac{\widetilde\mu_r}{2\tau^2}\,\big\|\bx_i^{\nu+1}-\bx_i^{\nu}\big\|^2,
\end{align*}
where $K_{3,r}=\frac{\bar L_r(\sigma_r+1)(2D_r+1)}{2}+\frac{K_{2,r}}{4\beta}$. Taking $\frac{1}{|\cC_r|}\sum_{i\in \cC_r}$ for both sides yields
\begin{align*}
&\widetilde{\Phi}_r^{\rm all}\Big(
(\hat \bx_{\cC_r}^{\nu+1},\,\bx_{\overline{\cC}_r}^{\,\nu})
\;;\;
(\bx_{\cC_r}^{\,\nu},\,\bx_{\overline{\cC}_r}^{\,\nu},\,\bx_{\cE_r}^{\,\nu})
\Big)-\Phi^\star\\
\le&\;
\frac{\bar L_r(\sigma_r+1)(D_r+1)}{2}\,\big\|\bx^{\nu}-\bx^{\star}\big\|^2
-\frac{\widetilde L_r}{2|\cC_r|\tau^2}\,\big\|\bx_{\cC_r}^{\nu+1}-\bx_{\cC_r}^\star\big\|^2\\
&\;+ K_{3,r}\,\Delta^\nu
+\beta\,\big\|\nabla_{\cC_r}{\Phi}(\bx^\nu)\big\|^2+\frac{1}{2\tau^2}\Big(\widetilde L_r-\frac{\widetilde\mu_r}{|\cC_r|}\Big)
\,\big\|\bx_{\cC_r}^{\nu+1}-\bx_{\cC_r}^{\nu}\big\|^2 .
\end{align*}
Now we relate $\Phi(\bx^{\nu+1})$ to the aggregated surrogates. Note that for $\tau\in[0,1/p]$,
\[
\Phi(\bx^{\nu+1})
\le (1-\tau p)\,\Phi(\bx^\nu)
+\tau\sum_{r\in[p]}\Phi(P_r(\hat\bx^{\nu+1}-\bx^\nu)),
\]
hence by Assumption \ref{assumption:surrogation}.(ii),
\begin{align*}
\Phi(\bx^{\nu+1})
&\le
(1-\tau p)\Big[\Phi^\star+\frac{L}{2}\|\bx^\nu-\bx^\star\|^2\Big]
+\tau\sum_{r\in[p]}
\widetilde{\Phi}_r^{\rm all}\Big(
(\hat \bx_{\cC_r}^{\nu+1},\,\bx_{\overline{\cC}_r}^{\,\nu})
\;;\;
(\bx_{\cC_r}^{\,\nu},\,\bx_{\overline{\cC}_r}^{\,\nu},\,\bx_{\cE_r}^{\,\nu})
\Big).
\end{align*}
Thus,
\begin{equation}\label{eq:star}
\begin{aligned}
\Phi(\bx^{\nu+1})-\Phi^\star
\le\;&\frac{L(1-\tau p)+\tau p(\sigma+1)(D+1)}{2}\,\|\bx^\nu-\bx^\star\|^2
-\frac{\widetilde L_{\min}}{2C\tau}\,\|\bx^{\nu+1}-\bx^\star\|^2\\
&\quad
+K_3\,\tau p\,\Delta^\nu
+\beta\tau\,\|\nabla{\Phi}(\bx^\nu)\|^2
+\frac{1}{2\tau}\Big(\widetilde L-\frac{\widetilde\mu}{C}\Big)\,\|\bx^{\nu+1}-\bx^\nu\|^2,
\end{aligned}
\end{equation}
where $K_3=\max_{r\in[p]}K_{3,r}$. Moreover, by \eqref{eq:combined_one_step_bound}, we have
\begin{equation}\label{eq:starstar}
\Phi(\bx^{\nu+1})-\Phi^\star
\le
\Phi(\bx^\nu)-\Phi^\star
-\frac{\tau}{4\widetilde L}\,\|\nabla{\Phi}(\bx^\nu)\|^2
-\frac{\widetilde\mu}{8\tau}\,\|\bx^{\nu+1}-\bx^\nu\|^2
+\tau K_1\,\Delta^\nu,
\end{equation}
where $K_1=A_\cJ+\max_{r:|\cC_r|>1}\tilde A_r$. Choose $\beta=1$ and $\tau$ such that $\frac{\widetilde L_{\min}}{2C\tau}\ > \frac{L(1-\tau p)+\tau p(\sigma+1)(D+1)}{2}$, and let $\nu\cdot\eqref{eq:starstar}+\eqref{eq:star}$, we have
\begin{align*}
V^{\nu+1}\le V^\nu
-\Big[\frac{\widetilde\mu\nu}{8\tau}-\frac{\widetilde L-\widetilde\mu/C}{2\tau}\Big]\|\bx^{\nu+1}-\bx^\nu\|^2+\big(\tau K_1 \nu+\tau p K_3\big)\Delta^\nu,
\end{align*}
where $V^{\nu}=\nu(\Phi(\bx^\nu)-\Phi^\star)+\frac{\widetilde L_{\min}}{2C\tau}\|\bx^\nu-\bx^\star\|^2$. For $\nu>4(\frac{\widetilde L}{\widetilde\mu}-\frac{1}{C})$, $\frac{\widetilde\mu\nu}{8\tau}-\frac{\widetilde L-\widetilde\mu/C}{2\tau}>0$, by the delay-window inequality in \cite[Lemma 5]{feyzmahdavian2023asynchronous}, if 
\[
(D+1)\big(\tau K_1\nu+\tau p K_3\big)
\le
\frac{\widetilde\mu\nu}{8\tau}-\frac{\widetilde L-\widetilde\mu/C}{2\tau},
\]
we have
\[
\Phi(\bx^\nu)-\Phi^\star
\le
\frac{\Phi(\bx^0)-\Phi^\star+\frac{\widetilde L_{\min}}{2C\tau}\,\|\bx^0-\bx^\star\|^2}{\nu}.
\]
Consequently, for sufficiently large $\nu$, it suffices to pick $\tau$ small enough so that the previous inequality holds; in particular,
\[
\tau
\le
\sqrt{
\frac{\frac{\widetilde\mu\nu}{8}-\frac{\widetilde L-\widetilde\mu/C}{2}}
{(D+1)(K_1\nu+p K_3)}
}
\ \asymp\
\sqrt{\frac{\widetilde\mu}{8(D+1)K_1}}.
\]
Therefore, for sufficiently large $\nu$, it is enough to choose $\tau$ satisfying all the above requirements, namely
\[
\tau \ \le\ \min\left\{\frac{1}{p},\ \sqrt{\frac{\widetilde\mu}{16(D+1)\,K_1}},\frac{\tilde L_{\min}}{CL+C(\sigma+1)(D+1)}\right\}.
\]
This completes the proof. \qed

\vspace{-0.3cm}
\section{Proof of Theorem \ref{thm:convergence_ncvx}}
\label{proof_convergence_ncvx}
By \eqref{eq:combined_one_step_bound}, we have 
\begin{align*}
\Phi(\bx^{\nu+1})\le&\Phi(\bx^\nu)-\frac{\tau}{4\tilde L}\,\|\nabla\Phi(\bx^\nu)\|^2-\frac{\tilde\mu}{8\tau}\,\|\bx^{\nu+1}-\bx^{\nu}\|^2+\tau K_1\Delta^\nu.
\end{align*}
where $K_1:=A_\cJ+\max_{r:|\cC_r|>1}\tilde A_r$. Observe that 
\[
\begin{aligned}
&\sum_{\ell=\nu-D}^{\nu-1}\qty[\ell-(\nu-1)+D]\norm{\bx^{\ell+1}-\bx^\ell}^2-\sum_{\ell=\nu+1-D}^{\nu}\qty[\ell-\nu+D]\norm{\bx^{\ell+1}-\bx^\ell}^2\\
=&\sum_{\ell=\nu-D}^{\nu-1}\norm{\bx^{\ell+1}-\bx^\ell}^2-D\norm{\bx^{\nu+1}-\bx^\nu}^2.
\end{aligned}
\]
Under the standard initialization $\bx^{-D}=\cdots=\bx^0$, define 
\[
V^\nu:=\Phi(\bx^\nu)+\tau K_1\sum_{\ell=\nu-D}^{\nu-1}\qty[\ell-(\nu-1)+D]\norm{\bx^{\ell+1}-\bx^\ell}^2.
\]
Then, we have
\[
V^{\nu+1}\leq V^\nu-\qty(\frac{\tilde\mu}{8\tau}-\tau D K_1)\norm{\bx^{\nu+1}-\bx^\nu}^2-\frac{\tau}{4\tilde L}\|\nabla\Phi(\bx^\nu)\|^2.
\]
Choose $\tau\leq\min\qty{\frac{1}{p},\frac{1}{2}\sqrt{\frac{\widetilde\mu}{2DK_1}}}$ so that $\frac{\tilde\mu}{8\tau}-\tau D K_1\ge 0$. Since $(V^\nu)$ is nonincreasing and $V^\nu\ge \Phi^\star$, summing the above inequality over $\nu$ yields \eqref{eq:sublinear_rate_cvx_ncvx}.   \qed

\bibliographystyle{abbrv}
\bibliography{ref}

\end{document}